\newtheorem{theorem}{Theorem}[section]
\newtheorem{lemma}[theorem]{Lemma}
\newtheorem{corollary}[theorem]{Corollary}
\newtheorem{definition}[theorem]{Definition}
\newtheorem{proposition}[theorem]{Proposition}
\newtheorem{remark}[theorem]{Remark}
\def\beq{\begin{equation}\displaystyle}
\def\eeq{\end{equation}}
\def\bel{\begin{equation} \displaystyle \begin{array}{l} }
\def\eel{\end{array} \end{equation} }
\def\bell{\begin{equation} \displaystyle \begin{array}{ll}  }
\def\eell{\end{array} \end{equation} }
\def\bea{\begin{eqnarray}}
\def\eea{\end{eqnarray} }
\def\bean{\begin{eqnarray*}}
\def\eean{\end{eqnarray*} }
\newenvironment{proof}{\noindent{\bf Proof.~}}
{{\mbox{}\hfill {\small \fbox{}}\\}}
\def\RR{\mathbb{R}}
\newcommand{\bepa}{\left\{ \begin{array}{l}}
\newcommand{\eepa}{\end{array} \right.}
\newcommand{\p}{\partial}
\newcommand{\f}{\frac}
\def\ds{\displaystyle}
\def\eps{\varepsilon}
\def\pa{\partial}
\def\calB{{\mathcal B}}
\newcommand{\usar}{\underline{u}}
\newcommand{\ubar}{\overline{u}}
\newcommand{\calL}{\mathcal{L}}
\begin{document}

\title{Hindrances to bistable front propagation: application to \textit{Wolbachia} invasion}
\author{Gr\'{e}goire Nadin\thanks{LJLL, UPMC, 5 place Jussieu, 75005 Paris France} \and Martin Strugarek\thanks{AgroParisTech, 16 rue Claude Bernard, F-75231 Paris Cedex 05 
               \& LJLL, UPMC, 5 place Jussieu, 75005 Paris France, \href{mailto:strugarek@ljll.math.upmc.fr}{strugarek@ljll.math.upmc.fr}}  
               \and Nicolas Vauchelet\thanks{LAGA - UMR 7539
 Institut Galil\'{e}e
 Universit\'{e} Paris 13
 99, avenue Jean-Baptiste Cl\'{e}ment
 93430 Villetaneuse - France}}

\maketitle

\begin{abstract}
We study the biological situation when an invading population propagates and replaces an existing population with different characteristics.
For instance, this may occur in the presence of a vertically transmitted infection causing a cytoplasmic effect similar to the Allee effect ({\it e.g.} {\it Wolbachia} in {\it Aedes} mosquitoes): the invading dynamics we model is bistable.

After quantification of the propagules, a second question of major interest is the invasive power. How far can such an invading front go, and what can stop it?
We rigorously show that a heterogeneous environment inducing a strong enough population gradient can stop an invading front, which will converge in this case to a stable front. We characterize the critical population jump, and also prove the existence of unstable fronts above the stable (blocking) fronts. Being above the maximal unstable front enables an invading front to clear the obstacle and propagate further.

We are particularly interested in the case of artificial {\it Wolbachia} infection, used as a tool to fight arboviruses.

\end{abstract}

\section{Introduction}

The fight against world-wide plague of dengue (see \cite{Bha.Burden}) and of other arboviruses has motivated extensive work among the scientific community. 
Investigation of innovative vector-control techniques has become a well-developed area of research.
Among them, the use of {\it Wolbachia} in {\it Aedes} mosquitoes to control diseases (see \cite{Wal.wMel,Alp.Aedes}) has received considerable attention. This endo-symbiotic bacterium is transmitted from mother to offspring, it induces cytoplasmic incompatibility (crossings between infected males and uninfected females are unfertile) and blocks virus replication in the mosquito's body. Artificial infection can be performed in the lab, and vertical transmission allows quick and massive rearing of an infected colony.
Pioneer mathematical modeling works on this technique include \cite{BT,HugBri.Modelling,Han.Strategies}.

We are mostly interested in the way space interferes during the vector-control processes.
More precisely, we would like to understand when mathematical models including space can effectively predict the blocking of an on-going biological invasion, which may have been caused, for example, by releases during a vector-control program.

The observation of biological invasions, and of their blocking, has a long and rich history. We simply give an example connected with {\it Wolbachia}.
In the experimental work \cite{stableCoexistence}, it was proved that a stable coexistence of several (three) natural strains of {\it Wolbachia} can exist, in a {\it Culex pipiens} population. The authors mentioned several hypotheses to explain this stability.
Our findings in the present paper - using a very simplified mathematical model - partly supports the hypotheses analysis conducted in the cited article. Namely, ``differential adaptation'' cannot explain the blocking, while a large enough ``population gradient'' can, and we are able to quantify the strength of this gradient, potentially helping validating or discarding this hypothesis.

Although the field experiments have not yet been conducted for a significantly long period, artificial releases of {\it Wolbachia}-infected mosquitoes (see~\cite{Hof.Successful,Ngu.Field}) also seem to experience such ``stable fronts'' or blocking phenomena (see \cite{Hof.Stability,Yea.Dynamics}).
This issue was studied from a modeling point of view in~\cite{BT,chankim} (reaction-diffusion models), \cite{Han.Modelling} (heterogeneity in the habitat) and \cite{Han.Density} (density-dependent effects slowing the invasion), among others.

\bigskip

In order to represent a biological invasion in mathematical terms as simply as possible, reaction-diffusion equation have been introduced (for the first time in \cite{Fisher1937} and \cite{KPP1937}) in the form
\beq\label{eq:u}
\pa_t u - \Delta u = f(u),
\eeq
where $t \geq 0$ and $x \in \RR^d$ are respectively time and space variables, and $u(t,x)$ is a density of alleles in a population, at time $t$ and location $x$.
This very common model to study propagation across space in population dynamics enhances a celebrated and useful feature: existence (under some assumptions on $f$) of traveling wave solutions.
In space dimension $1$, a traveling wave is a solution $u(t, x) = \widetilde{u} (x - c t)$ to \eqref{eq:u}, where $c \in \RR$, $\widetilde{u}$ is a monotone function
$\RR \to \RR$, and $\widetilde{u} (\pm \infty) \in f^{-1} (0)$.
By convention we will always use decreasing traveling waves. 
They have a constant shape and move at the constant speed $c$.

The quantity $u$ may represent the frequency of a given trait (phenotype, genotype, behavior, infection, etc.) in a population.
In this case, the model below has been introduced in order to account for the effect of spatial variations in the total population
density $N$
(see \cite{Barton,BT}) in the dynamics of a frequency~$p$
\begin{equation}\label{eq:pN}
\pa_t p - \Delta p - 2 \frac{\nabla N\cdot \nabla p}{N} =f(p).
\end{equation}

In some cases, the total population density $N$ may be affected by the trait frequency $p$, and even depend explicitly on it.
In the large population asymptotic for the spread of {\it Wolbachia}, where $p$ stands for the infection frequency, it was proved (in \cite{reduction})
that there exists a function $h : [0, 1] \to (0, +\infty)$ such that $N = h(p) + o(1)$, in the limit when population size and reproduction rate are large and of same order.

Hence we can write the first-order approximation 
\beq\label{eq:p}
\pa_t p - \Delta p - 2 \frac{h'(p)}{h(p)}|\nabla p|^2 =f(p).
\eeq

Our main results are the characterization of the asymptotic behavior of $p$ in two settings: for equation \eqref{eq:pN} when $N$ only depends on
$x$, and for equation \eqref{eq:p} in all generality. Both of them may be seen as special cases of the general problem
\[
 \p_t p - \Delta p - 2 \nabla \big( V\big(x, p(t, x) \big) \big) \cdot \nabla p = f(p).
\]
For \eqref{eq:pN} with $d=1$, our characterization is sharp when $\p_x \log N$ is equal to a constant times the characteristic function of an interval.
Overall, two possible sets of asymptotic behaviors appear. 
On the first hand, the equation can exhibit a sharp threshold property, dividing the initial data between those leading to invasion
of the infection ($p \to 1$) and those leading to extinction ($p \to 0$) as time goes to infinity. In this case, the threshold is constituted by initial data
leading convergence to a ground state (positive non-constant stationary solution, going to $0$ at infinity). It is a sharp threshold, which implies that the ground state is
unstable.
We show that such a threshold property always holds for equation \eqref{eq:p}, and occurs in some cases for equation \eqref{eq:pN}. 
On the other hand, the infection propagation can be blocked by what we call here a ``barrier'' that is a stationary solution or, in the biological context, a blocked propagation front. 
We show that this happens in \eqref{eq:pN}, essentially when $\p_x \log N$ is large enough. 
This asymptotic behavior differs from convergence towards a
ground state in the homogeneous case. 
Indeed, even though the solution converges towards a positive stationary solution, we prove that in this barrier case, the blocking is actually stable. 
Some crucial implications for practical purposes (use of {\it Wolbachia} in the field) of this stable failure of
infection propagation are discussed.

From the mathematical point of view, our work on \eqref{eq:pN} makes use of a phase-plane method that can be found in \cite{lewiskeener} (and also in  \cite{chapuisat} and \cite{polacik}) to study similar problems.
It helps getting a good intuition of the results, coupled with a double-shooting argument. We note that a shooting method was also used in \cite{malaguti} for ignition-type nonlinearity, in a non-autonomous setting, to get similar results under monotonicity assumptions we do not require here.

The paper is organized as follows.
Main results on both \eqref{eq:p} and \eqref{eq:pN} are stated in Section \ref{sec:results}, where their biological meaning is explained.
We also give illustrative numerical simulations. After a brief recall of well-known facts on bistable reaction-diffusion in Section~\ref{sec:gen}, we prove our results on \eqref{eq:p} in Section~\ref{sec:infdep}, and on \eqref{eq:pN} in Section \ref{sec:het}.
Finally, Section \ref{sec:dis} is devoted to a discussion on our results, and on possible extensions.
Moreover, because it was the work that first attracted us to this topic, we expand in Section \ref{sec:loc} on the concept of local barrier
developed by Barton and Turelli in \cite{BT}, and relate it to the present article.

\section{Main results}
\label{sec:results}

\subsection{Statement of the results}

\subsubsection{Results on the infection-dependent case}

Our first set of results is concerned with \eqref{eq:p}, where the total population is a function of the infection frequency.

We notice that the problem \eqref{eq:p} is invariant by multiplying $h$ by any $\lambda \in \RR^*$.
Without loss of generality we therefore fix $\int_0^1 h^2(\xi)\,d\xi = 1$, and state
\begin{theorem}
Let $H$ be the antiderivative of $h^2$ which vanishes at $0$, that is
$H(x) := \int_0^x h^2(\xi)\,d\xi$.
$H$ is a $\mathcal{C}^1$ diffeomorphism from $[0,1]$ into $[0, 1]$.

Let $g : [0, 1 ] \to [0, 1]$ such that for all $x \in [0, 1]$, $g(H(x)) = f(x) h^2 (x)$.

There exists a traveling wave for \eqref{eq:p} if and only if there exists a traveling wave for \eqref{eq:u} with reaction term $g$ ({\it i.e.} $\p_t u - \p_{xx} u = g(u)$).
In addition:
\begin{enumerate}
\item If $f$ satisfies the KPP (named after \cite{KPP1937}) condition $f(x) \leq f'(0) x$ and if $H$ is concave (which is equivalent to $h' \leq 0$), then there
exists a minimal wave speed $c_* := 2 \sqrt{g'(0)}$ for traveling wave solutions to \eqref{eq:p}. This means that for all $c \geq
c_*$, there exists a unique traveling wave solution to \eqref{eq:p} with speed $c$.

\item If $f$ is bistable then there exists a unique traveling wave for \eqref{eq:p}. Its speed has the sign of \[\int_0^1 f(x) h^4
(x) dx.\]
\end{enumerate}
\label{thm:mainInf}
\end{theorem}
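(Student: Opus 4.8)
The key idea is that equation \eqref{eq:p} should be reducible to the standard semilinear equation \eqref{eq:u} by a change of the spatial variable that absorbs the drift term $-2\frac{h'(p)}{h(p)}|\nabla p|^2$. The plan is to look for a traveling wave $p(t,x) = \widetilde p(x - ct)$ and substitute into \eqref{eq:p}, obtaining the ODE $-c\,\widetilde p' - \widetilde p'' - 2\frac{h'(\widetilde p)}{h(\widetilde p)}(\widetilde p')^2 = f(\widetilde p)$. The first thing I would do is recognize that the nonlinear first-order term has integrating-factor structure: multiplying by $h^2(\widetilde p)$ gives $-\big(h^2(\widetilde p)\widetilde p'\big)' = f(\widetilde p)h^2(\widetilde p) + c\,h^2(\widetilde p)\widetilde p'$. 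Since $H'(\widetilde p) = h^2(\widetilde p)$, we have $h^2(\widetilde p)\widetilde p' = \frac{d}{d\xi}H(\widetilde p(\xi))$, which suggests introducing the new unknown $u := H(\widetilde p)$, i.e. $\widetilde u := H \circ \widetilde p$. Because $H$ is a $\mathcal{C}^1$-diffeomorphism of $[0,1]$ (it is increasing since $h>0$, with $H(0)=0$ and $H(1)=\int_0^1 h^2 = 1$), this substitution is invertible, $\widetilde p = H^{-1}(\widetilde u)$, and it preserves monotonicity and the boundary conditions $\widetilde p(\pm\infty) \in f^{-1}(0) \Leftrightarrow \widetilde u(\pm\infty) \in g^{-1}(0)$.

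Next I would carry out the substitution carefully: with $\widetilde u = H(\widetilde p)$ one has $\widetilde u' = h^2(\widetilde p)\widetilde p'$ and $\widetilde u'' = \big(h^2(\widetilde p)\widetilde p'\big)'$, so the ODE above becomes $-\widetilde u'' = f(\widetilde p)h^2(\widetilde p) + c\,\widetilde u'$, that is $-\widetilde u'' - c\,\widetilde u' = g(H(\widetilde p)) = g(\widetilde u)$ by the defining relation $g(H(x)) = f(x)h^2(x)$. This is exactly the traveling-wave ODE for \eqref{eq:u} with reaction $g$ and the \emph{same} speed $c$. Running the computation in reverse establishes the equivalence of the existence statements. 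I should also check that $g$ is well-defined and continuous on $[0,1]$ (it is, since $H$ is a $\mathcal{C}^1$-diffeomorphism so $H^{-1}$ is continuous and $g(s) = f(H^{-1}(s))h^2(H^{-1}(s))$), and that $g$ inherits the qualitative type of $f$: $g$ vanishes exactly where $f$ does (since $h>0$), so $g$ has the KPP or bistable structure whenever $f$ does.

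For item 1, I would compute $g'(0)$. Differentiating $g(H(x)) = f(x)h^2(x)$ at $x=0$ gives $g'(0)H'(0) = f'(0)h^2(0) + f(0)\cdot 2h(0)h'(0) = f'(0)h^2(0)$ (using $f(0)=0$), and $H'(0) = h^2(0)$, hence $g'(0) = f'(0)$; so the minimal speed is $c_* = 2\sqrt{g'(0)} = 2\sqrt{f'(0)}$, by the classical KPP theory (recalled in Section~\ref{sec:gen}) applied to the equation with reaction $g$. The one subtlety is that the classical monostable/KPP theorem requires $g$ itself to satisfy the KPP inequality $g(s)\le g'(0)s$ on $[0,1]$ and $g>0$ on $(0,1)$; I would show this follows from "$f$ KPP" together with "$H$ concave": concavity of $H$ means $h'\le 0$, so $h$ is nonincreasing, and one checks $g(s) = f(H^{-1}(s))h^2(H^{-1}(s)) \le f'(0)H^{-1}(s)h^2(H^{-1}(s))$; since $h$ is nonincreasing, $H$ is concave, so $H(x)\ge H'(0)x \cdot(\text{something})$—more precisely concavity of $H$ with $H(0)=0$ gives $H(x)\le H'(0)x = h^2(0)x$, equivalently $H^{-1}(s)\ge s/h^2(0)$... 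I would need to assemble these inequalities to land on $g(s)\le g'(0)s = f'(0)s$; this estimate, reconciling the two hypotheses, is the one genuinely fiddly point and is where I would spend the most care. For item 2, with $f$ bistable, $g$ is bistable on $[0,1]$ with the same zeros, so the classical bistable theory gives existence and uniqueness of the traveling wave for the $g$-equation, hence (by the equivalence) for \eqref{eq:p}. Finally, the sign of $c$: for the bistable equation $-\widetilde u'' - c\widetilde u' = g(\widetilde u)$, multiply by $\widetilde u'$ and integrate over $\RR$; the $\widetilde u''\widetilde u'$ term integrates to zero (boundary terms vanish), giving $c\int_\RR (\widetilde u')^2 = \int_\RR g(\widetilde u)\widetilde u' \,d\xi = \int_0^1 g(s)\,ds$ with an appropriate orientation, so $\mathrm{sign}(c) = \mathrm{sign}\!\big(\int_0^1 g(s)\,ds\big)$. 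It remains to transport this integral back: substituting $s = H(x)$, $ds = h^2(x)\,dx$, we get $\int_0^1 g(s)\,ds = \int_0^1 g(H(x))h^2(x)\,dx = \int_0^1 f(x)h^2(x)\cdot h^2(x)\,dx = \int_0^1 f(x)h^4(x)\,dx$, which is the claimed expression. The main obstacle overall is thus not the change of variables (which is clean) but verifying that the transformed reaction $g$ genuinely satisfies the hypotheses of the classical theorems—especially the KPP inequality in item 1 under the concavity assumption on $H$.
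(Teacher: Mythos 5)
Your route is the same as the paper's: multiply by $h^2(p)$ and set $u=H(p)$, so that \eqref{eq:p} becomes $\partial_t u-\partial_{xx}u=g(u)$ (the paper performs the change of variables at the PDE level, you at the traveling-wave ODE level, which is equivalent and transports waves with the same speed $c$). Your computation $g'(0)=f'(0)$, the transfer of the monostable/bistable structure of $f$ to $g$, and the sign of the bistable speed via $c\int_{\RR}(\widetilde u')^2=\int_0^1 g(s)\,ds=\int_0^1 f(x)h^4(x)\,dx$ all coincide with the paper's argument, so the equivalence statement and item 2 are complete and correct.

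The one genuine gap is exactly the step you flagged in item 1, and the inequalities you began assembling do not close it. Writing $z=H^{-1}(s)$, the KPP hypothesis on $f$ gives $g(s)=f(z)h^2(z)\le f'(0)\,z\,h^2(z)$, so what is needed is $z\,h^2(z)\le H(z)=s$, i.e. the comparison $zH'(z)\le H(z)$ of $H(z)$ with its \emph{own} derivative at $z$ — not the comparison $H(z)\le H'(0)z$ with the tangent at $0$ (equivalently $H^{-1}(s)\ge s/h^2(0)$) that you wrote down, which bounds $H(z)$ from above and therefore cannot yield the required lower bound on $H(z)$. The fix is one line: since $h'\le 0$, $h^2$ is non-increasing, hence $H(z)=\int_0^z h^2(\xi)\,d\xi\ge z\,h^2(z)$ (this is also the tangent-line inequality for the concave function $H$ at the point $z$). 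Then $g(s)\le f'(0)H(z)=f'(0)s=g'(0)s$, so $g$ satisfies the KPP condition on $[0,1]$ and the classical theory applied to the $g$-equation gives the minimal speed $c_*=2\sqrt{g'(0)}=2\sqrt{f'(0)}$; this is precisely how the paper argues, reducing the KPP condition for $g$ to $h^2(z)\le H(z)/z$, i.e. to the concavity of $H$.
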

Depending on the initial data, in this case, solution can converge to $1$ (``invasion''), initiating a traveling wave with positive speed, or to $0$ (``extinction'').
Note that non-constant $h$ may have a huge impact in the asymptotic behavior, possibly reversing the traveling wave speed: in this case, $0$ would become the invading state instead of $1$. 

In the case of {\it Wolbachia}, we discuss the expression of $h$ in Subsection \ref{subs:h}, and give a numerical example of this situation in Subsection \ref{subs:numerics}.

We can construct a family of compactly supported ``propagules'', that is functions which ensure invasion. 
\begin{proposition}
For all $\alpha \in (\theta_c, 1)$, there exists $v_{\alpha} \in \mathcal{C}^2_p (\RR, [0, \alpha])$ ($v_{\alpha}$ is continuous
and of class $\mathcal{C}^2$ by parts on $\RR$), whose support is equal to $[-L_{\alpha}, L_{\alpha}]$ for a known $L_{\alpha} \in
(0, +\infty)$ (given below by \eqref{eq:Lalpha}), such that $0 \leq v_{\alpha} \leq \alpha$, $\max v_{\alpha} = v_{\alpha} ( 0) = \alpha$,
$v_{\alpha}$ is symmetric and radial-non-increasing, and $v_{\alpha}$ is a sub-solution to \eqref{eq:p}.
\label{prop:propagules}
\end{proposition}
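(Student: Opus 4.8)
The plan is to reduce everything to the classical semilinear heat equation via the diffeomorphism $H$ of Theorem~\ref{thm:mainInf}. First I would observe that if $w$ is piecewise smooth and $v:=H^{-1}(w)$, then on each cell of smoothness
\[
\p_t w-\p_{xx} w-g(w)=h^2(v)\Big(\p_t v-\p_{xx} v-2\tfrac{h'(v)}{h(v)}|\p_x v|^2-f(v)\Big),
\]
so, $h^2$ being positive, $v$ is a classical sub-solution of \eqref{eq:p} on that cell exactly when $w$ is a sub-solution of $\p_t u-\p_{xx}u=g(u)$; and at a corner, $\p_x v=\p_x w/h^2(v)$ with $h^2(v)>0$, so the admissible (upward) jump of $\p_x w$ for a generalized sub-solution is transported to an upward jump of $\p_x v$. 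Hence it suffices to build a continuous, $\mathcal C^2$-by-parts, even, radially non-increasing generalized (time-independent) sub-solution $w_\beta$ of $\p_t u-\p_{xx}u=g(u)$ with support $[-L_\alpha,L_\alpha]$ and $\max w_\beta=w_\beta(0)=\beta:=H(\alpha)\in(H(\theta_c),1)$, and then set $v_\alpha:=H^{-1}(w_\beta)$.

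Next I would build $w_\beta$ by the classical phase-plane construction. With $G(s):=\int_0^s g$, the substitution $s=H(y)$ gives $G(H(x))=\int_0^x f(y)h^4(y)\,dy$; by the definition of $\theta_c$ and the bistable profile of $f$, for $\alpha\in(\theta_c,1)$ the map $y\mapsto\int_0^y fh^4$ attains its maximum over $[0,\alpha]$ strictly at $y=\alpha$, equivalently $G(\beta)>G(\sigma)$ for all $\sigma\in[0,\beta)$, and in particular $G(\beta)=\int_0^\alpha fh^4>0$ while $g(\beta)=f(\alpha)h^2(\alpha)>0$. Let $w$ solve $-w''=g(w)$, $w(0)=\beta$, $w'(0)=0$; it is smooth and even, satisfies the energy identity $\tfrac12(w')^2+G(w)\equiv G(\beta)$, hence $w'=-\sqrt{2(G(\beta)-G(w))}<0$ while $w\in(0,\beta)$, so $w$ decreases from $\beta$ to $0$ on $[0,L_\alpha]$ with
\begin{equation}\label{eq:Lalpha}
L_\alpha=\int_0^\beta\frac{d\sigma}{\sqrt{2(G(\beta)-G(\sigma))}}=\int_0^\alpha\frac{h^2(y)\,dy}{\sqrt{2\int_y^\alpha f(s)h^4(s)\,ds}}<\infty,
\end{equation}
the integral converging because the integrand is $O\big((\alpha-y)^{-1/2}\big)$ at $y=\alpha$ (as $f(\alpha)>0$) and bounded at $0$. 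Setting $w_\beta:=w$ on $[-L_\alpha,L_\alpha]$ and $w_\beta\equiv 0$ elsewhere yields a continuous, $\mathcal C^2$-by-parts, even, radially non-increasing function which solves $-w_\beta''=g(w_\beta)$ away from $\pm L_\alpha$, and whose derivative jumps up from $-\sqrt{2G(\beta)}$ to $0$ at $\pm L_\alpha$, so that $-w_\beta''-g(w_\beta)=-\sqrt{2G(\beta)}\,(\delta_{-L_\alpha}+\delta_{L_\alpha})$ is a non-positive measure; thus $w_\beta$ is the desired generalized sub-solution.

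Finally I would set $v_\alpha:=H^{-1}(w_\beta)$: since $H^{-1}$ is an increasing $\mathcal C^1$ diffeomorphism of $[0,1]$, $v_\alpha$ is continuous and $\mathcal C^2$ by parts, even and radially non-increasing, $0\le v_\alpha\le\alpha$ with $v_\alpha(0)=H^{-1}(\beta)=\alpha$ and support exactly $[-L_\alpha,L_\alpha]$ for the constant in \eqref{eq:Lalpha}, and by the first step it is a sub-solution of \eqref{eq:p}. I expect the only delicate points to be the bookkeeping of the generalized (corner) sub-solution inequality and its invariance under $H$, and the verification used above that $\int_y^\alpha fh^4>0$ for every $y\in[0,\alpha)$ whenever $\alpha>\theta_c$ — this is precisely where the bistable shape of $f$ and the definition of $\theta_c$ enter; the finiteness of $L_\alpha$ is then a routine Abel-type estimate. (Equivalently one may avoid $H$ and integrate the stationary equation for \eqref{eq:p} directly after multiplying by $-h^4(v)v'$, which produces the conserved quantity $\tfrac12 h^4(v)(v')^2+\int_0^v fh^4$ and the same value of $L_\alpha$.)
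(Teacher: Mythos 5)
Your construction is, at bottom, the same as the paper's. The authors work directly on the stationary equation for \eqref{eq:p}: multiplying by $h^2(p)$ gives $\big(h^2(p)p'\big)'=-f(p)h^2(p)$, multiplying again by $h^2(p)p'$ and integrating gives the conserved quantity $\tfrac12\big(h^2(p)p'\big)^2+\mathcal{F}(p)$ with $\mathcal{F}'=fh^4$, and they solve the Cauchy problem $p(0)=\alpha$, $p'(0)=0$ on the decreasing branch and truncate at zero, obtaining exactly the $L_\alpha$ of \eqref{eq:Lalpha}. This is literally your parenthetical ``avoid $H$'' remark; your main route through $w=H(v)$ and the classical bubble for $g$ is the same computation written in the transformed variable (note $\big(h^2(p)p'\big)'=(H(p))''$), and your formula for $L_\alpha$ coincides with \eqref{eq:Lalpha} after the substitution $\sigma=H(y)$. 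You are in fact more explicit than the paper about why the kink at $\pm L_\alpha$ is an admissible corner for a generalized sub-solution and why this property is transported by $H^{-1}$, which is a welcome addition.

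One step should be flagged: you assert that ``by the definition of $\theta_c$'' one has, for every $\alpha\in(\theta_c,1)$, that $y\mapsto\int_0^y fh^4$ attains its strict maximum on $[0,\alpha]$ at $\alpha$, in particular $\int_0^\alpha f(s)h^4(s)\,ds>0$. This does not follow from the definition of $\theta_c$, which involves $\int_0^{\theta_c}f\,dx=0$ \emph{without} the weight $h^4$; for non-constant $h$ (say $h$ large where $f<0$) the weighted integral can be nonpositive for some, or even all, $\alpha>\theta_c$ --- this is precisely the speed-reversal phenomenon of Theorem \ref{thm:mainInf} and Figure \ref{fig:1}, where $\int_0^1 fh^4<0$ is possible, and in that case no bubble of any height exists. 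The strict positivity $\int_y^\alpha fh^4>0$ for all $y\in[0,\alpha)$ is genuinely needed (otherwise the trajectory turns around at a positive level, or only reaches $0$ asymptotically, and $\max(p,0)$ fails to have compact support), and the correct threshold is the zero of $x\mapsto\int_0^x fh^4$, i.e.\ $H^{-1}$ of the $\theta_c$ associated with $g$, which agrees with $\theta_c$ only when $h$ is constant. To be fair, the paper's own statement and proof carry the same implicit assumption (the proof invokes only $\alpha>\theta$ for convergence of the integral and tacitly assumes $\mathcal{F}(\alpha)>\mathcal{F}(p)$ on $[0,\alpha)$), so you have reproduced rather than introduced the gap; but your phrase ``by the definition of $\theta_c$'' claims a justification that is false in general and should be replaced by the explicit hypothesis $\int_0^\alpha fh^4>0$ (automatic for $\alpha>\theta_c$ when $h\equiv1$).
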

We name $v_{\alpha}$ $\alpha$-bubble (associated with \eqref{eq:p}), or $\alpha$-propagule, following the definition in \cite{BT}.

\subsubsection{Results on the heterogeneous case}
Our second set of results deals with the situation where the total population of mosquitoes strongly increases in a 
given region of the domain. In this case, the total population $N$ is given and we consider
the model \eqref{eq:pN}. 
Before stating our main result on equation \eqref{eq:pN}, we introduce the concept of {\it propagation barrier} (which we will simply call {\it barrier} below).

To fix the ideas and get a tractable problem, we assume that $N$ increases (exponentially)
in a given region of spatial domain and is constant in the rest of the domain. 
We consider that the domain is one-dimensional and therefore investigate the differential equation
\beq
\pa_t p - \pa_{xx} p - 2 \pa_x(\log N) \pa_x p = f(p).
\label{eq:onR}
\eeq

In view of the setting we have in mind for $N$ we let, for some $C, L > 0$:
\beq
\pa_x \log (N) = \left\{\begin{array}{ll}
\displaystyle\f{C}{2}, \qquad & \mbox{ on } [-L,L], \\
0, \qquad & \mbox{ on } \RR\setminus[-L,L].
\end{array}\right.
\label{eq:logN}
\eeq

Existence of a stationary wave for this problem boils down to the existence of a solution to
\beq \label{eq:TW}
\left\{\begin{array}{ll}
-p'' -  C p' = f(p), \qquad &\mbox{ on } [-L,L],  \\
-p'' = f(p), \qquad & \mbox{ on } \RR\setminus[-L,L], \\
p(-\infty)=1, \quad  p(+\infty) = 0, \quad  p>0.
\end{array}\right.
\eeq

In the context of our study, stationary solutions to \eqref{eq:onR} with prescribed behavior at infinity, that is solutions of
\eqref{eq:TW}, play the role of {\it barriers}, blocking the propagation of the infection.
\begin{definition}
 We name a $(C, L)$-barrier any solution to \eqref{eq:TW}.
  For any bistable function $f$ we define the barrier set 
 \beq
  \calB (f) := \big\{ (C, L) \in (0, + \infty)^2, \text{ there exists a } (C, L)\text{-barrier} \big\}.
 \eeq
 \label{def:barriers}
\end{definition}

As we will recall in Section \ref{sec:gen}, in the bistable case there exists a unique (up to translations) traveling wave solution. This solution can be seen as a solution to the limit problem of \eqref{eq:TW} as $L \to +\infty$. We make this intuition more precise in this paper (see in particular Proposition \ref{prop:Lstarasympt} below).

The bistable traveling wave is associated with a unique speed that we denote $c_*(f)$ (see Section \ref{sec:gen} for definitions and a brief review of classical results on bistable reaction-diffusion).

\begin{theorem}
Let $C > 0, L > 0$ and assume $N$ is given by \eqref{eq:logN}.
For $C > c_*(f)$, there exists $L_* (C) \in (0, +\infty)$ such that $(C, L) \in \calB(f)$ if and only if $L \geq L_* (C)$.
\label{thm:mainHet}
\end{theorem}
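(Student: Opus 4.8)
The plan is to run the phase-plane/shooting argument announced in the introduction. Write $q=p'$. Any solution of \eqref{eq:TW} is a $C^1$ function obtained by gluing three orbits: on $(-\infty,-L]$, an orbit of the conservative system $p''+f(p)=0$ with $p(-\infty)=1$, hence lying on the branch $\Gamma_\ell:=\{(p,-\sqrt{2(F(1)-F(p))}):0<p<1\}$ of the unstable manifold of $(1,0)$, where $F(x):=\int_0^x f$; on $[L,+\infty)$, an orbit with $p(+\infty)=0$, hence on the branch $\Gamma_r:=\{(p,-\sqrt{-2F(p)}):0<p<p_2\}$ of the stable manifold of $(0,0)$, with $p_2\in(\theta,1)$ the unique zero of $F$ in $(0,1)$; and on $[-L,L]$, an orbit of the damped system $p''+Cp'+f(p)=0$. (The prescribed limits, together with $0<p\le1$, force the outer pieces onto these manifolds.) So $(C,L)\in\calB(f)$ if and only if some orbit of the damped system runs from $\Gamma_\ell$ to $\Gamma_r$ inside $\{p>0\}$ in total ``time'' exactly $2L$. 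Note that along the damped flow $\frac{d}{dx}\bigl(\tfrac12 q^2+F(p)\bigr)=-Cq^2\le0$, while $\Gamma_\ell$ and $\Gamma_r$ sit on the energy levels $F(1)$ and $0$; in particular a barrier forces $F(1)>0$, i.e. $c_*(f)>0$, which is implicit in the statement and which we take for granted.

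Parametrize $\Gamma_\ell$ by $a=p(-L)\in(0,1)$, so the orbit enters at $P_\ell(a):=(a,-\sqrt{2(F(1)-F(a))})$ and we follow it forward. It starts strictly decreasing ($q<0$), and the energy $\tfrac12 q^2+F(p)$ --- equal to $F(1)>0$ initially and nonincreasing --- would be negative at the orbit's first turning point (which, as $q$ can only return to $0$ with $-f(p)\ge0$, lies in $\{0<p\le\theta\}$); hence the orbit, unless it first leaves $\{p>0\}$ or is asymptotic to $(0,0)$, necessarily meets $\Gamma_r=\{\tfrac12 q^2+F=0,\ q<0\}$, exactly once and while $p$ is still strictly decreasing. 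Call $G\subseteq(0,1)$ the set of $a$ for which the orbit meets $\Gamma_r$ in finite time before leaving $\{p>0\}$, and for $a\in G$ let $T(a)>0$ be this first meeting time. Gluing the matching conservative pieces at $x=\pm T(a)/2$ then yields a genuine monotone barrier with $0<p<1$, and conversely every barrier comes from some $a\in G$ at a meeting time $\ge T(a)$. Therefore, at fixed $C$, $\calB(f)=\{L>0:\,2L\in T(G)\}$, and the theorem reduces to showing $T(G)=[\,T_*(C),+\infty)$ for some $T_*(C)\in(0,+\infty)$; then $L_*(C):=T_*(C)/2$.

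It remains to analyze $G$ and $T$. Nonemptiness: for $C>c_*(f)$ the orbit issued from $(1,0)$ along the relevant branch of its unstable manifold ``undershoots'' --- it turns back inside $\{p<\theta\}$ instead of reaching $\{p=0\}$, which is the classical characterization of the bistable speed recalled in Section~\ref{sec:gen} --- hence crosses $\Gamma_r$; an orbit from $P_\ell(a)$ with $a$ near $1$ shadows it and so does too, giving $a\in G$. Moreover $T$ is continuous on $G$ (continuous dependence); $\inf_G T>0$, because $\mathrm{dist}(\Gamma_\ell,\Gamma_r)>0$ while orbit speeds are bounded on the relevant compact region; and $T(a)\to+\infty$ both as $a\to1^-$ (the orbit lingers ever longer near the saddle $(1,0)$, since $P_\ell(a)\to(1,0)$ off the unstable eigendirection) and as $a$ tends to the other endpoint of $G$ (there $P_\ell(a)$ falls on the stable manifold of $(0,0)$ for the damped flow, so the transit time diverges). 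Continuity, these two blow-ups and $\inf_G T>0$ then force $T$ to attain a minimum $T_*(C)\in(0,+\infty)$ with $T(G)=[T_*(C),+\infty)$, which completes the proof.

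The crux is this last paragraph --- the topology of the shooting set. One must show that $G$ is connected, equivalently that the overshoot/undershoot dichotomy has a \emph{single} threshold, i.e. that $\Gamma_\ell$ meets the stable manifold of $(0,0)$ for the damped flow at a unique point in the region concerned, and that $T$ really blows up at both ends of $G$. The blow-up as $a\to1^-$ is a routine hyperbolic-saddle estimate, but the blow-up at the other endpoint and the connectedness of $G$ demand a careful monotone-separation argument in the phase plane, the key structural fact being that distinct orbits of the damped flow never cross. This is also where one would read off, if wanted, the monotonicity and limiting behavior of $C\mapsto L_*(C)$ and the $L\to+\infty$ convergence toward the traveling wave (cf. Proposition~\ref{prop:Lstarasympt}).
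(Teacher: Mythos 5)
Your reduction of \eqref{eq:TW} to a phase-plane shooting problem (outer pieces forced onto the level sets of the energy, inner piece an orbit of the damped flow running from $\Gamma_\ell$ to $\Gamma_r$ in time exactly $2L$) is correct and is essentially the reformulation the paper uses in Subsections \ref{subs:shoot}--\ref{phaseplane}. But the theorem is precisely the assertion that, at fixed $C>c_*(f)$, the set of admissible transit times is a half-line $[2L_*(C),+\infty)$, and that is exactly the part you do not prove: the connectedness of your shooting set $G$, the identification of its ``interior'' endpoint with a unique intersection of $\Gamma_\ell$ with the stable manifold of $(0,0)$ for the damped flow, the divergence of $T$ there (and, rigorously, as $a\to 1^-$), and the continuity of $T$ up to those ends. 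You acknowledge this yourself in the final paragraph (``the crux \dots demands a careful monotone-separation argument''), so what you have is a correct setup plus an unproven core. The uniqueness of the threshold and the ordering of orbits are genuinely nontrivial here; in the paper they correspond to Lemma \ref{keyineq} (trajectories of \eqref{sys:XY} do not cross, hence are ordered) and Proposition \ref{prop:acbc} (uniqueness of $\alpha_C$, $\beta_C$), and they are used later for the finer structure of the barrier set (Proposition \ref{prop:barriers}), not handed out for free.

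Note also that the paper's proof of Theorem \ref{thm:mainHet} deliberately avoids the topology-of-$G$ question. The ``half-line'' structure in $L$ is obtained from comparison principles: a $(C,L)$-barrier is decreasing (Lemma \ref{lem:decreasingbarrier}), hence is a supersolution of the problem with larger $C$ and $L$, and a translated $\alpha$-bubble is a subsolution below it, so Proposition \ref{prop:subsup} produces a barrier for all larger parameters (Proposition \ref{prop:positive}). What then remains is (i) existence of at least one barrier for every $C>c_*(f)$, obtained from the first-order reformulation $w'=C\sqrt{2(w-F)}$ and the monotonicity and limit properties of the shooting maps $\gamma,\lambda$ (Propositions \ref{prop:exists}, \ref{prop:glmonotonicity}, \ref{prop:gllimits}, \ref{prop:glleps}); (ii) $L_*(C)>0$, from the lower bound $2\lambda\gamma\geq 1-\sqrt{-F(\theta)/(F(1)-F(\theta))}$ of Lemma \ref{lem:lambdaC}; and (iii) attainment of the infimum, by continuity of $\gamma,\lambda$ (Lemma \ref{lem:continuitygl}) and compactness. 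If you want to salvage your route, you must actually carry out the monotone-separation argument (the analogue of Lemma \ref{keyineq} plus the uniqueness statement of Proposition \ref{prop:acbc}); otherwise the shorter path is to replace your ``$T(G)$ is a half-line'' claim by the supersolution/subsolution monotonicity of $\calB(f)$, which yields the if-and-only-if statement without any connectedness argument.
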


Existence of a barrier, as stated in Theorem \ref{thm:mainHet}, has strong and direct consequences on the asymptotic behavior of solutions to \eqref{eq:pN}.
\begin{proposition}
Assume $N$ is defined by \eqref{eq:logN}. If $(C, L) \in \calB(f)$ we denote by $p_B$ a solution to the standing wave problem
\eqref{eq:TW}.
 Then any solution of \eqref{eq:onR}
with initial value $p^0$ satisfying $p^0 \leq p_B$ has stopped propagation, which means that $\forall x \in \RR, \limsup_{t \to
\infty} p(t, x) < 1$.
  More precisely, 
  \[
   \forall t \geq 0, \, p (t, x) \leq p_B (x).
  \]
 
On the contrary, assume that either \eqref{eq:TW} has no solution (i.e. $(C, L) \not\in \calB(f)$) and $\exists \lim_{- \infty} p^0
=1$, or there exists a solution $p_B$ to \eqref{eq:TW} which is unstable from above (in the sense of Definition \ref{def:belabo}),
such that $p_0 > p_B$ and there is no other solution $p_{B'}$ to \eqref{eq:TW} satisfying $p_{B'} > p_B$. In this case $p$
propagates, that is:
 \[
  \forall x \in \RR, \, \limsup_{t \to \infty} p(t, x) = 1.
 \]
 \label{prop:passblock}
\end{proposition}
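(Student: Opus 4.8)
I would treat the two assertions separately. The blocking statement is essentially the comparison principle. Because of \eqref{eq:logN}, any solution $p_B$ of \eqref{eq:TW} is a \emph{stationary} solution of \eqref{eq:onR}; since $\pa_x\log N$ is bounded and $f$ is Lipschitz, the parabolic comparison principle applies on $\RR$, so $p^0\le p_B$ forces $p(t,\cdot)\le p_B$ for every $t\ge0$, which is the quantitative conclusion. The bound $\limsup_{t\to\infty}p(t,x)<1$ then follows once we know $p_B<1$ pointwise: $p_B$ solves $-p_B''=f(p_B)$ near $\pm\infty$ with $p_B(-\infty)=1$, and if $p_B(x_0)=1$ at a finite $x_0$ then $p_B'(x_0)=0$ (maximum of $p_B\le1$), whence ODE uniqueness and $f(1)=0$ give $p_B\equiv1$, contradicting $p_B(+\infty)=0$.

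\textbf{Heart of the propagation statement.} I would reduce both sub-cases to the following claim: \emph{a stationary solution $w$ of \eqref{eq:onR} with $0<w\le1$ and $w(-\infty)=1$ must be $\equiv1$, provided that either $(C,L)\notin\calB(f)$, or $w>p_B$ while no $(C,L)$-barrier lies strictly above $p_B$.} To prove it, suppose $w\not\equiv1$. First $w\not\equiv1$ on $(-\infty,-L]$, for otherwise $(w,w')(-L)=(1,0)$ and ODE uniqueness would propagate $w\equiv1$ everywhere. On $(-\infty,-L]$ the equation is autonomous, so the energy $E:=\tfrac12(w')^2+F(w)$, with $F(x)=\int_0^xf$, is constant, equal to its limit $F(1)$ at $-\infty$; since (in the invasion regime $c_*(f)>0$) $F$ attains its maximum over $[0,1]$ only at $1$, we get $w<1$ on $(-\infty,-L]$, with $w$ strictly decreasing there and $w'(-L)<0$. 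On $[-L,L]$ one computes $E'=-C(w')^2\le0$, and $w'\neq0$ near $-L$, hence $E(L)<F(1)$ strictly; on $[L,+\infty)$ the equation is autonomous again, so $E\equiv E(L)<F(1)$. Consequently $w(+\infty)$, necessarily a zero of $f$, is neither $1$ (which would need $E=F(1)$) nor $\theta$ (a non-hyperbolic center of $-w''=f(w)$, to which no solution converges), so $w(+\infty)=0$; as moreover $w>0$ on $\RR$ by the strong maximum principle, $w$ is a $(C,L)$-barrier, contradicting $(C,L)\notin\calB(f)$ in the first case and the absence of a barrier above $p_B$ in the second. This proves the claim.

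\textbf{Producing the sub-solution.} It then suffices, in each sub-case, to squeeze under $p$ a sub-solution that is nondecreasing in time and whose limit meets the hypotheses of the claim. In the barrier sub-case, instability of $p_B$ from above (Definition~\ref{def:belabo}) provides, for $M$ large and $\eps>0$ small, a compactly supported strict stationary sub-solution $p_B+\eps\phi_M$ of \eqref{eq:onR}, where $\phi_M>0$ is the Dirichlet principal eigenfunction of the linearization at $p_B$ on $[-M,M]$ (whose eigenvalue is $<0$ for $M$ large). Since $p^0\ge p_B$, $p^0\not\equiv p_B$, the strong maximum principle gives $p(t_1,\cdot)>p_B$ for every $t_1>0$, so $\min_{[-M,M]}(p(t_1,\cdot)-p_B)>0$ and $\eps$ may be picked with $p_B+\eps\phi_M\le p(t_1,\cdot)$. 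The solution $q$ of \eqref{eq:onR} started from this datum is nondecreasing in $t$ and $\le1$, hence converges in $\mathcal{C}^2_{\mathrm{loc}}$ to a stationary $q_\infty$ with $p_B<q_\infty\le1$; the bistable phase portrait and $q_\infty\ge p_B$ force $q_\infty(-\infty)=1$, the claim yields $q_\infty\equiv1$, and $p(t,\cdot)\ge q(t,\cdot)$ for $t\ge t_1$ gives $p(t,x)\to1$ for all $x$. In the no-barrier sub-case I would instead use $\lim_{-\infty}p^0=1$: with $\delta\in(0,1-\theta)$ small and $R>L$ such that $p^0\ge1-\delta$ on $(-\infty,-R]$, take the classical bistable ``propagule'' sub-solution equal to $1-\delta$ on a left half-line, then decreasing along the trajectory of $-v''=f(v)$ on the energy level $F(1-\delta)$ down to $0$, then $\equiv0$ — supported in $(-\infty,-L]$, where \eqref{eq:onR} reduces to the homogeneous bistable equation — translated so that its support lies in $(-\infty,-R]$, hence below $p^0$. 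Its evolution is nondecreasing in $t$ and $\le1$, and converges to a stationary $w$ with $w\ge1-\delta$ near $-\infty$, so $w(-\infty)=1$ and $w>0$; the claim forces $w\equiv1$, and $p\ge\underline p\to1$ concludes.

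\textbf{Main obstacle.} The conceptual crux is the classification claim, and in particular the identity $E'=-C(w')^2$, which shows that the mechanical energy is \emph{strictly} dissipated across the drift window $[-L,L]$: this is exactly what forbids a ``turned-back'' stationary profile with $w(\pm\infty)=1$, $w\not\equiv1$, leaving only a genuine barrier as alternative. The most delicate technical point is to fit a time-increasing sub-solution \emph{beneath} $p(t_1,\cdot)$ — for this one really needs instability from above to deliver a \emph{compactly supported} sub-solution, so that a positive lower bound of $p(t_1,\cdot)-p_B$ on a compact set suffices, rather than a matching exponential decay at infinity. Everything else — monotone convergence of orbits issued from ordered sub-solutions, with $\mathcal{C}^2_{\mathrm{loc}}$ compactness and the strong maximum principle to discard degenerate limits, and the bistable phase portrait — is standard and recalled in Section~\ref{sec:gen}.
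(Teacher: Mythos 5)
Your overall strategy is the paper's: comparison with the stationary supersolution $p_B$ for the blocking half, and, for propagation, squeezing under $p$ a time-nondecreasing orbit issued from a subsolution and classifying its stationary limit; the blocking argument and the no-barrier subsolution are fine. The genuine problem is the step you yourself single out as the crux of the barrier sub-case: you assert that instability from above in the sense of Definition \ref{def:belabo} ``provides'' a subsolution $p_B+\eps\phi_M$ with compactly supported excess, $\phi_M$ being the principal Dirichlet eigenfunction of the linearization at $p_B$ on $[-M,M]$ with negative eigenvalue for $M$ large. Definition \ref{def:belabo} provides only \emph{some} subsolution squeezed between $p_B$ and $p_B+\eps$, with no structure whatsoever; negativity of the principal eigenvalue on large intervals is linearized instability, a strictly stronger property that is neither assumed nor proved anywhere (and the linearization is not even well defined under the paper's standing hypothesis that $f$ is merely Lipschitz). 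Since the compact support of the excess is exactly what you use to fit the subsolution below $p(t_1,\cdot)$, this step fails as written. The paper instead uses directly the subsolution furnished by the instability hypothesis (the phase-plane perturbation $\beta_++\eps$ constructed in the proof of Proposition \ref{prop:barriers}); if you insist on the eigenfunction route you must first prove linear instability of the maximal barrier, which is an additional, nontrivial claim.

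There is a second gap, in the classification claim itself: you write ``consequently $w(+\infty)$, necessarily a zero of $f$,'' but the existence of this limit is not automatic. You only know the exit energy $E(L)<F(1)$; if $E(L)\in(F(\theta),0)$ the tail on $[L,+\infty)$ is a periodic orbit of $-w''=f(w)$ oscillating about $\theta$, which has no limit and is neither $\equiv 1$ nor a barrier, so the dichotomy you need does not yet follow. This case must be excluded: a negative energy forces the trajectory to cross the level set $\Gamma_A$ strictly inside the drift window, which yields a $(C,L')$-barrier with $L'<L$ and hence, by Proposition \ref{prop:positive}, a $(C,L)$-barrier, contradicting the no-barrier hypothesis; in the maximal-barrier sub-case one needs a little more (for instance Lemma \ref{keyineq} combined with the continuity and boundary limits of $\lambda$) to produce a barrier strictly above $p_B$ and contradict maximality. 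Likewise, $w(+\infty)=\theta$ should be discarded by noting that it forces $E\equiv F(\theta)$, hence $w\equiv\theta$ on the tail, and then backward uniqueness propagates $w\equiv\theta$, contradicting $w(-\infty)=1$ --- the slogan ``no solution converges to a center'' does not cover the constant solution. With these two repairs your argument becomes a correct, more explicit version of the paper's (admittedly terse) proof.
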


We also characterize the barriers
\begin{proposition}
Let $(C, L) \in \calB(f)$. Then 
\begin{enumerate}
\item Any $(C, L)$-barrier ({\it i.e.} solution of \eqref{eq:TW}) is decreasing.
\item If $L > L_* (C)$ then there exists at least two $(C, L)$-barriers.
\item The $(C, L)$-barriers are totally ordered, hence we can define a maximal and a minimal element among them.
\item The maximal $(C,L)$-barrier is unstable from above and the minimal one is stable from below (in the sense of Definition \ref{def:belabo} below).
\end{enumerate}
\label{prop:barriers}
\end{proposition}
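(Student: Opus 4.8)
The plan is to work in the phase plane of \eqref{eq:TW}, writing $q:=p'$. On $\RR\setminus[-L,L]$ the system $(p,q)'=(q,-f(p))$ is Hamiltonian, with conserved energy $\mathcal E(x):=\tfrac12q(x)^2+F(p(x))$, $F(p):=\int_0^pf$; since $p(-\infty)=1$, $p(+\infty)=0$ and $p>0$, the orbit must lie for $x\le-L$ on the branch $\mathcal H_1:\ q=-\sqrt{2(F(1)-F(p))}$ of the invariant manifolds of the saddle $(1,0)$, and for $x\ge L$ on the branch $\mathcal H_0:\ q=-\sqrt{-2F(p)}$, $0\le p\le p_1$ (where $F(p_1)=0$, $p_1\in(\theta,1)$), of the stable manifold of the saddle $(0,0)$. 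Both branches lie in $\{q\le0\}$, so $\mathcal E\equiv F(1)$ on $(-\infty,-L]$ and $\mathcal E\equiv0$ on $[L,\infty)$, and matching at $x=\pm L$ fixes $p'(-L)=-\sqrt{2(F(1)-F(a))}<0$, $p'(L)=-\sqrt{-2F(b)}\le0$, with $a:=p(-L)\in(0,1)$, $b:=p(L)\in(0,p_1]$. On $[-L,L]$, where $p''+Cp'+f(p)=0$, one has $\mathcal E'=-C(p')^2\le0$, so $\mathcal E$ decreases from $F(1)$ at $-L$ to $0$ at $L$; since $p$ is non-constant this already forces $F(1)>0$, i.e. $c_*(f)>0$ whenever $\calB(f)\neq\emptyset$.

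For item 1 I would prove $p'<0$ on all of $\RR$. On the two outer intervals this is read off the descriptions of $\mathcal H_0$ and $\mathcal H_1$. On $[-L,L]$, let $x^*$ be the first point where $p'$ vanishes; as $p'<0$ on $[-L,x^*)$ one has $p''(x^*)\ge0$, hence $f(p(x^*))\le0$ from the equation, hence $p(x^*)\in(0,\theta]$ (recall $0<p(x^*)<p(-L)<1$), hence $\mathcal E(x^*)=F(p(x^*))<0$. Since $\mathcal E$ is non-increasing this gives $\mathcal E(L)\le\mathcal E(x^*)<0$, contradicting $\mathcal E(L)=0$. Therefore $p'<0$ on $(-L,L]$ as well; in particular $b<p_1$ and $p'(L)<0$.

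Items 2 and 3 I would obtain from the double-shooting argument of \cite{lewiskeener}. Fix $C>0$ and parametrize candidate barriers by $b=p(L)\in(0,p_1)$: the profile on $[L,\infty)$ is then determined, and integrating $p''+Cp'+f(p)=0$ backwards from $(b,-\sqrt{-2F(b)})$ to $x=-L$ produces a solution of \eqref{eq:TW} exactly when $\mathcal E(-L)=F(1)$, i.e. when $\mathcal I_L(b):=C\int_{-L}^{L}p'(x;b)^2\,dx=F(1)$ (the remaining sign and positivity requirements being supplied by item 1). Here $b\mapsto\mathcal I_L(b)$ is continuous and $L\mapsto\mathcal I_L(b)$ is strictly increasing; analysing the shape of this shooting curve through the phase portrait --- the very analysis underlying Theorem \ref{thm:mainHet} --- shows that $\mathcal I_L(\cdot)=F(1)$ has no solution for $L<L_*(C)$, exactly one (a tangency) at $L=L_*(C)$, and at least two for $L>L_*(C)$, which is item 2 (and shows $L_*(C)$ is attained). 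For item 3, two distinct barriers coincide, on each outer interval, with time-translates of a single orbit of $-p''=f(p)$, hence are ordered there; an interior crossing is excluded by uniqueness for the Cauchy problem together with the strong maximum principle applied to the difference $w$, which solves $w''+C\mathbf 1_{[-L,L]}w'+c(x)w=0$ on $[-L,L]$ with $c$ bounded and has a definite sign at both endpoints. Hence the $(C,L)$-barriers form a totally ordered family, with a minimal element $p_B^{\min}$ and a maximal element $p_B^{\max}$.

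It remains to establish item 4. For $p_B^{\min}$: any $p^0$ with $p_B^{\min}-\delta\le p^0\le p_B^{\min}$ and $\lim_{-\infty}p^0=1$ stays below $p_B^{\min}$ for all $t$ (Proposition \ref{prop:passblock}, or plain comparison with the stationary solution $p_B^{\min}$); since \eqref{eq:onR} admits the Lyapunov functional $\int N^2(\tfrac12|\partial_xp|^2-F(p))\,dx$, the solution converges to a stationary solution $\le p_B^{\min}$ with value $1$ at $-\infty$, i.e. to a barrier $\le p_B^{\min}$, necessarily $p_B^{\min}$ --- stability from below. For $p_B^{\max}$: one first checks, again via the phase plane, that the only stationary solutions of \eqref{eq:onR} with value $1$ at $-\infty$ are barriers and the constant $1$, so the order interval $[p_B^{\max},1]$ contains no stationary solution besides its endpoints (item 3 and maximality excluding barriers above $p_B^{\max}$); then one shows that $p_B^{\max}$ is \emph{strictly} unstable when $L>L_*(C)$ --- its linearization $-\psi''-C\mathbf 1_{[-L,L]}\psi'-f'(p_B^{\max})\psi=\mu\psi$ has a negative principal eigenvalue, which can be read off the sign of the (nonzero) shooting derivative $\partial_b\mathcal I_L$ at the corresponding transverse root. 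A small subsolution $p_B^{\max}+\epsilon\psi$ sitting just above $p_B^{\max}$ then exists, the solution issued from it is non-decreasing, and having no stationary state below $1$ to converge to it must tend to $1$; so $p_B^{\max}$ is unstable from above in the sense of Definition \ref{def:belabo}. The main obstacle is precisely this last point --- establishing strict instability of the maximal barrier (equivalently, the negativity of that principal eigenvalue, or the transversality of the shooting curve there) --- since the soft monotonicity/comparison arguments that settle $p_B^{\min}$ are inconclusive above $p_B^{\max}$; it is crucial that the heterogeneity destroys the translation invariance of \eqref{eq:onR}, which would otherwise force the principal eigenvalue to vanish, and is what makes the instability both subtle to prove and actually true.
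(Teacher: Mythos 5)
Your item 1 is essentially the paper's Lemma \ref{lem:decreasingbarrier}, and the skeleton of item 2 (a shooting quantity that blows up at both ends of its parameter interval, hence at least two roots when $L>L_*(C)$) is the paper's argument via Lemma \ref{lem:claim}; but you only gesture at ``the analysis underlying Theorem \ref{thm:mainHet}'', whereas the divergence of $L^C(\beta)$ as $\beta\to\beta_C$ and $\beta\to 1$ rests on the half-line limit problems of Proposition \ref{prop:acbc}, and your extra claim of a unique, tangential root at $L=L_*(C)$ is neither proved nor needed (the paper only conjectures this generically). The genuine gap is item 3. Your exclusion of interior crossings by ``the strong maximum principle applied to the difference $w$'' does not work: $w=p_1-p_2$ solves $w''+Cw'+c(x)w=0$ with $c(x)=\big(f(p_1)-f(p_2)\big)/(p_1-p_2)$, and for a bistable $f$ this coefficient is positive on a whole range of values (near $\theta$), so the maximum principle gives nothing and sign changes of $w$ cannot be ruled out this way. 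Worse, the ``definite sign at both endpoints'' is itself the nontrivial statement: a priori the barrier that is larger at $-L$ could be smaller at $L$. The paper proves precisely these two points by nonlinear/phase-plane arguments: Proposition \ref{prop:aborder} (the $\alpha$'s and $\beta$'s are ordered consistently, via the first-order formulation \eqref{eq:w2}) and the confinement Lemma \ref{keyineq} (energy decay along trajectories plus the inward flow on level sets traps one orbit in a compact region bounded by the other), and you need a substitute for them.

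For item 4 your route diverges from the paper's, and the step you yourself flag as the main obstacle --- strict linearized instability of the maximal barrier, i.e.\ a negative principal eigenvalue detected by a transverse shooting derivative $\partial_b\mathcal I_L\neq 0$ --- is indeed missing, and it is also more than is required: the paper assumes only that $f$ is Lipschitz, so $f'(p_B^{\max})$ need not exist, and transversality can fail (e.g.\ at $L=L_*(C)$). The paper's softer construction avoids all of this: for small $\epsilon>0$ it shoots the interior ODE from $\beta_++\epsilon$ at $x=-L$ with the energy matching, glues the outer pieces, uses the \emph{maximality} of $\beta_+$ together with the continuity of $\beta\mapsto E(2L;C,\beta)$ to get a derivative jump at $x=L$ of the subsolution sign, and uses Lemma \ref{keyineq} again to keep the profile within $\epsilon$ above $p_{\lambda_+}$; this is exactly the subsolution demanded by Definition \ref{def:belabo}, and the symmetric construction below $\beta_-$ gives stability from below of the minimal barrier. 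Your alternative for the minimal barrier also has problems: the functional $\int N^2\big(\tfrac12|\partial_xp|^2-F(p)\big)\,dx$ diverges on front-like profiles (the integrand tends to $-N^2F(1)\neq0$ as $x\to-\infty$), convergence to a stationary state with limit $1$ at $-\infty$ is not justified, and in any case a dynamical attraction statement is not what item 4 asserts, which is the existence of a subsolution sandwiched between $p_B^{\min}-\epsilon$ and $p_B^{\min}$ in the sense of Definition \ref{def:belabo}.
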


We also get a picture of the behavior of $L_* (C)$:
\begin{proposition}
The function $L_*$ is decreasing and satisfies
\[
	\lim_{C \to c_*(f)^+} L_*(C) = +\infty, \quad L_*(C) \sim \f{1}{4 C} \log \big( 1 - \f{F(1)}{F(\theta)} \big) \text{ when } C \to +\infty.
\]
\label{prop:Lstarasympt}
\end{proposition}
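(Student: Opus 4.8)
The plan is to treat the three assertions in turn, leaning on the phase-plane picture behind \eqref{eq:TW} and on the fact, recalled in Proposition \ref{prop:barriers}, that every barrier is decreasing, so $p'\le 0$ throughout. For the monotonicity of $L_*$, I would show that $\calB(f)$ is nondecreasing in $C$, i.e. $(C_1,L)\in\calB(f)$ and $C_2>C_1$ imply $(C_2,L)\in\calB(f)$; by Theorem \ref{thm:mainHet} this gives $L_*(C_2)\le L_*(C_1)$. Indeed, if $p_B$ is a $(C_1,L)$-barrier then on $[-L,L]$ one has $-p_B''-C_2 p_B'-f(p_B)=-(C_2-C_1)p_B'\ge 0$, while outside $[-L,L]$ the drift is absent and $p_B$ solves the equation exactly; thus $p_B$ is a (generalized) stationary supersolution of \eqref{eq:onR} for the parameter $C_2$. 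Running \eqref{eq:onR} from the datum $p_B$, the solution is nonincreasing in $t$ and converges to a stationary solution $p_\infty\le p_B$; near $x=-\infty$ the equation is unperturbed by $N$ and $1$ is a stable zero of $f$, so $p_\infty(-\infty)=1$, while $p_\infty\le p_B$ forces $p_\infty(+\infty)=0$, and the strong maximum principle gives $p_\infty>0$. Hence $p_\infty$ solves \eqref{eq:TW}.

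Since $L_*$ is nonincreasing, $\lim_{C\to c_*(f)^+}L_*(C)$ exists in $(0,+\infty]$, and it suffices to exclude a finite value. If $L_*(C_n)\le M$ along some $C_n\downarrow c_*(f)$, I would take $(C_n,L_*(C_n))$-barriers, normalize the translation (pinning, say, the level set $\{p=(1+\theta)/2\}$ at $x=0$), and pass to the limit using ODE estimates, obtaining $q$ solving the analogue of \eqref{eq:TW} with $C=c_*(f)$ and some $L_\infty\in[0,M]$; because the outer pieces satisfy $-q''=f(q)$, for which $\theta$ is a center so no trajectory converges to $\theta$ at $\pm\infty$, the limit still connects $1$ to $0$ and is decreasing and positive, hence is a $(c_*(f),L_\infty)$-barrier (if $L_\infty>0$) or an undamped standing wave (if $L_\infty=0$). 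Both are impossible: the second would force $F(1)=0$; for the first, the trace of $q$ at $x=-L_\infty$ lies on the orbit $\Gamma_1:=\{\tfrac12(p')^2+F(p)=F(1),\ p'<0\}$, which at each height $p\in(0,1)$ lies strictly below the bistable traveling-wave orbit $\Gamma_{\mathrm{tw}}$ (whose energy is $<F(1)$ away from the limit point $(1,0)$); by uniqueness the damped-$c_*(f)$ trajectory issuing from that trace stays below $\Gamma_{\mathrm{tw}}$, keeps $p'<0$, and hence reaches $\{p=0\}$ with energy $\tfrac12(p')^2+F(0)>0$, so its (monotonically decreasing) energy never attains the value $0$ that matching the right outer piece would require. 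This contradiction forces $L_*(C_n)\to+\infty$.

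For $C\to+\infty$, integrating $\tfrac{d}{dx}\big(\tfrac12(p')^2+F(p)\big)=-C(p')^2$ over $[-L,L]$ and using the boundary energies $F(1)$ and $0$ yields, for every $(C,L)$-barrier, $\int_{-L}^{L}(p')^2\,dx=F(1)/C$. Granting the a priori bound $CL_*(C)=O(1)$ (obtainable by exhibiting an explicit barrier of length $O(1/C)$ whose plateau sits near $\theta$), Cauchy--Schwarz gives $|p(L)-p(-L)|\le\sqrt{2L\,F(1)/C}\to 0$, so a near-critical barrier is asymptotically constant on $[-L,L]$, with common plateau value $\bar p$. Rescaling $x=\xi/C$ turns the interior equation into $p_{\xi\xi}+p_\xi+C^{-2}f(p)=0$ on $[-CL,CL]$, whence $p=\bar p+C^{-1}\psi(\xi)+o(C^{-1})$ with $\psi_{\xi\xi}+\psi_\xi=0$, i.e. $\psi=\mu+\nu e^{-\xi}$; matching $C^{-1}\psi_\xi$ at the endpoints against the slopes $p'(-L)\to-\sqrt{2(F(1)-F(\bar p))}$ and $p'(L)\to-\sqrt{-2F(\bar p)}$ coming from the outer pieces gives $\nu e^{\ell}=\sqrt{2(F(1)-F(\bar p))}$ and $\nu e^{-\ell}=\sqrt{-2F(\bar p)}$ with $\ell:=\lim CL$, so
\[
 e^{4\ell}=1-\frac{F(1)}{F(\bar p)}.
\]
Therefore, for $C$ large, a $(C,L)$-barrier exists exactly when $CL$ is asymptotically at least $\min_{\bar p\in(0,\gamma)}\tfrac14\log\!\big(1-F(1)/F(\bar p)\big)$, where $\gamma\in(\theta,1)$ is the zero of $F$; since $F<0$ on $(0,\gamma)$ with minimum at $\theta$, this minimum is reached at $\bar p=\theta$, so $CL_*(C)\to\tfrac14\log\!\big(1-F(1)/F(\theta)\big)$, which is the asserted equivalent.

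The hard part is making this last step rigorous: one needs uniform-in-$C$ bounds on barriers, a genuine proof that $CL_*(C)$ is bounded, and — most delicately — the identification that the plateau of the \emph{critical} barrier tends to $\theta$, equivalently that $L_*$ realizes the minimum over admissible plateaus of $\tfrac14\log(1-F(1)/F(\bar p))$. An asymptotics-free alternative I would try is to write $2L$ as an explicit quadrature in $p$ along the damped trajectory and estimate its infimum over the shooting parameter directly as $C\to\infty$; continuity and a convexity-type property of that infimum would simultaneously re-prove the monotonicity of $L_*$ and the structural statements of Proposition \ref{prop:barriers}.
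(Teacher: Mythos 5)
There is a genuine gap, and it sits precisely at the heart of the statement: the equivalent $L_*(C)\sim \f{1}{4C}\log\big(1-\f{F(1)}{F(\theta)}\big)$ is never actually proved in your proposal. Your treatment of this part is a formal matched-asymptotics computation (rescaling $x=\xi/C$, plateau value $\bar p$, slope matching giving $e^{4\ell}=1-F(1)/F(\bar p)$), and you yourself list the missing ingredients: an a priori bound $C L_*(C)=O(1)$, uniform-in-$C$ control of near-critical barriers, and — most importantly — the identification that the plateau of the \emph{critical} barrier tends to $\theta$, i.e.\ that $C L_*(C)$ is asymptotically the \emph{minimum} over admissible plateaus of $\f14\log(1-F(1)/F(\bar p))$. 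The delicate direction is the lower bound on $C L_*(C)$, and the formal expansion cannot deliver it: one needs an estimate valid for \emph{all} shooting parameters, not just for profiles that are already known to be flat. This is exactly what the paper obtains through the Berestycki--Nicolaenko--Scheurer variable $w(p)=\f12 p'^2+F(p)$: the exact quadrature $2\lambda(\alpha,\beta)\gamma(\alpha,\beta)=\int_\alpha^\beta \f{w'}{2(w-F)}\,dp$, the bound \eqref{lambda1} showing that small length forces $\beta-\alpha\to0$ with $\beta<\theta_c$, and the limit formula \eqref{lim:lambdaC} of Lemma \ref{lem:lambdaC}, whose minimum over $\beta$ is attained at $\theta$ since $F$ is minimal there. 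Interestingly, the ``asymptotics-free alternative'' you mention at the end (a quadrature in $p$ along the damped trajectory, minimized over the shooting parameter) is essentially the paper's actual proof, but you do not carry it out.

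Two further points. First, the statement says $L_*$ is \emph{decreasing}; your supersolution argument (a $(C_1,L)$-barrier is a supersolution for $C_2>C_1$ because $p'\le0$) only yields that $L_*$ is non-increasing, whereas the paper proves strict monotonicity via the Wronskian computation on $(X_C,Y_C)$, $(X_\beta,Y_\beta)$ in Proposition \ref{prop:strmonotonicity}. Moreover, as written, your claim that the decreasing-in-time limit $p_\infty$ satisfies $p_\infty(-\infty)=1$ is unjustified — nothing prevents collapse below the barrier unless you insert a subsolution from below (a translated $\alpha$-bubble, as in Proposition \ref{prop:positive}); this is fixable but not optional. Second, your compactness argument for $\lim_{C\to c_*(f)^+}L_*(C)=+\infty$ is a genuinely different (and essentially viable) route from the paper's use of $\gamma,\lambda$ and Proposition \ref{prop:glleps}; note, however, that ``normalizing the translation'' is not legitimate here, since the damping region $[-L,L]$ is fixed and the problem is not translation-invariant — the correct version passes to a local $C^1$ limit of the barriers themselves and excludes degeneration of the traces on $\Gamma_B$ and $\Gamma_A$ (the phase-plane non-crossing argument with the traveling-wave orbit then does give the contradiction).
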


Instead of restricting to a constant (logarithmic) population gradient, we can very well let it vary freely.
To do so we introduce a set of gradient profiles which we denote by $X$. For example,
\beq
X := \{ h : \RR \to \RR_+, \, h \in L^{\infty} \text{ with compact support.} \}
\label{set:X}
\eeq
Then, the barriers may be defined in a similar fashion as before.
\begin{definition}
 For $h \in X$, a $h$-barrier is any solution to the ``standing wave equation''
 \beq
 \bepa
 - p'' - h(x) p' = f(p) \text{ on } \RR,
 \\[10pt]
 p(-\infty) = 1, \quad p(+\infty) = 0.
 \eepa
 \label{eq:GSW}
 \eeq
 
 We define the barrier set associated with \eqref{set:X}
 \[
  \calB_X (f) := \{ h \in X, \text{ there exists a } h\text{-barrier} \}.
 \]
\end{definition}

In this setting, a meaningful extension of Theorem \ref{thm:mainHet} is the following
\begin{corollary}
 Let $h \in X$. If $(C, L) \in \calB(f)$ and $h \geq C \chi_{[-L, L]}$ then $h \in \calB_X (f)$.
  Conversely, if $(C, L) \not\in \calB(f)$ and $h \leq C \chi_{[-L, L]}$ then $h \not\in \calB_X (f)$.
\label{cor:nonconstantgradient}
\end{corollary}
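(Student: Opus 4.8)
The plan is to use the sub- and super-solution method together with the monotonicity of the problem in the drift term $h$, leveraging the barriers provided by Theorem \ref{thm:mainHet} as building blocks. First I would fix $(C,L) \in \calB(f)$ and $h \in X$ with $h \geq C\chi_{[-L,L]}$, and let $p_B$ be a $(C,L)$-barrier, i.e.\ a solution to \eqref{eq:TW}. The key observation is that, since $p_B$ is decreasing (Proposition \ref{prop:barriers}, item 1) we have $p_B' \leq 0$, hence
\[
-p_B'' - h(x) p_B' \geq -p_B'' - C\chi_{[-L,L]}(x) p_B' = f(p_B) \quad \text{on } \RR,
\]
using that the inequality $h \geq C\chi_{[-L,L]}$ bites in the right direction precisely because $-p_B' \geq 0$. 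Thus $p_B$ is a super-solution to the $h$-standing wave equation \eqref{eq:GSW}. For a sub-solution I would take $\underline{p} \equiv 0$ (which is an exact solution since $f(0)=0$), noting $0 \leq p_B$. An ordered pair of sub/super-solutions with the correct limits at $\pm\infty$ then yields, by a standard monotone iteration (or by taking the long-time limit of the parabolic flow started from $p_B$, which is non-increasing in $t$ by the parabolic comparison principle), a solution $p_\infty$ of the elliptic equation $-p'' - h(x)p' = f(p)$ with $0 \leq p_\infty \leq p_B$.

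The remaining issue is to ensure $p_\infty$ satisfies the correct boundary conditions $p_\infty(-\infty)=1$, $p_\infty(+\infty)=0$, so that it is genuinely an $h$-barrier and not the trivial solution $0$. For the limit at $+\infty$: outside a compact set $h$ vanishes and the equation reduces to $-p'' = f(p)$ with $0 \leq p_\infty \leq p_B \to 0$, so a phase-plane / sliding argument (as recalled in Section \ref{sec:gen}) forces $p_\infty(+\infty) \in \{0\}$, the only rest point near $0$ on the relevant side. For the limit at $-\infty$ I would argue that $p_\infty$ cannot converge to $0$ or to $\theta$ there: one shows $p_\infty$ is bounded below by a nontrivial object. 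A clean way is to observe that the sub-solution can be improved — any $\alpha$-bubble-type compactly supported sub-solution supported far to the left, of the kind furnished in the spirit of Proposition \ref{prop:propagules} (adapted to the drift, which only helps since $h \geq 0$), lies below $p_B$ after translation, hence below the flow; letting such bubbles slide to $-\infty$ and invade shows $\liminf_{x\to-\infty} p_\infty(x) \geq \theta_c$, and then the standard bistable argument on $\RR\setminus\mathrm{supp}(h)$ upgrades this to $p_\infty(-\infty)=1$. This is the step I expect to be the main obstacle: controlling the behavior of $p_\infty$ at $-\infty$ requires that the construction does not collapse to the trivial solution, and one must be careful that the drift term $h$, although sign-favourable, does not spoil the bubble sub-solution estimate.

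For the converse, suppose $(C,L)\notin\calB(f)$ and $h \leq C\chi_{[-L,L]}$, and assume for contradiction that $p$ is an $h$-barrier. Now the inequality runs the other way: a solution to \eqref{eq:GSW} satisfying the barrier boundary conditions must be decreasing (the same phase-plane argument as in Proposition \ref{prop:barriers} applies, since the monotonicity proof there only uses $h \geq 0$ and the bistable structure), so $p' \leq 0$ and therefore
\[
-p'' - C\chi_{[-L,L]}(x) p' \geq -p'' - h(x) p' = f(p) \quad \text{on } \RR,
\]
so $p$ is a super-solution of \eqref{eq:TW} with the right limits; combined with the sub-solution $0$, the monotone iteration produces a genuine $(C,L)$-barrier (a solution of \eqref{eq:TW}), using the same $-\infty$ argument as above to exclude the trivial limit. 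This contradicts $(C,L)\notin\calB(f)$, completing the proof. \qed
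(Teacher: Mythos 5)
Your argument is correct and is essentially the paper's: the paper packages exactly this reasoning as Proposition \ref{prop:Gbase} (adding a nonnegative drift to a barrier preserves barriers, proved via the decreasing barrier as super-solution together with a far-translated $\alpha$-bubble sub-solution and the sub/super-solution method), and then deduces the corollary from that positivity property and Theorem \ref{thm:mainHet}, which is what you do in inlined form for both directions. One small caveat: your parenthetical claim that the drift ``only helps'' the bubble sub-solution is wrong on the decreasing half of the bubble (there $-h v_\alpha' \geq 0$ works against the sub-solution inequality), but this is harmless because, as you in fact arrange and as the paper does, the bubble is translated so that its support avoids the compact support of $h$, where it is a genuine sub-solution.
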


\subsection{Biological interpretation}

Our results on possible propagation failures can be summarized and interpreted easily.

On the first hand, if the size of the population is regulated only by the level of the infection (or the trait frequency), then in a homogeneous medium no stable blocked front can appear (this is the sharp threshold property implied by Theorem \ref{thm:mainInf}),
except in the very particular case when $\int_0^1 f(x) h^4 (x) dx = 0$. 
This situation can be understood as the limit when local demographic equilibrium is reached much faster than the infection process (or when the population is typically large, as in the asymptotic from \cite{reduction}), which makes sense in the context of {\it Wolbachia} because the infection is vertically transmitted.

On the second hand, if the carrying capacity (or ``nominal population size'') is heterogeneous (in space), then an increase in the population size raises a hindrance to propagation, that can be sufficient to effectively block an invading front (Theorem \ref{thm:mainHet}), and give rise to a stable transition area (as observed in~\cite{stableCoexistence}), even if the infection status does not modify the individuals' fitness.
This situation is particularly adapted to a wide range of {\it Wolbachia} infections, when several natural or artificial strains do not have very different impacts on the host's fitness.
We note that the case when the heterogeneity concerns the diffusivity rather than the population size was treated in \cite{lewiskeener}, yielding the same conclusion: a large-enough area of low-enough diffusivity stops the propagation.

From our results, we draw two conclusions that are relevant in the context of biological invasions.

First, fitness cost (and cytoplasmic incompatibility level, in the case of {\it Wolbachia}) determines the existence of an invading front in a homogeneous setting, and eventually its speed.  However, ecological heterogeneity (rather than fitness cost) seems to play a prominent role in propagation failure - or success - of a given infection.

Second, the existence of a stable (from below) front implies the existence of an unstable (from above) one, as stated in Proposition \ref{prop:barriers}. Therefore, any of the heterogeneity-induced hindrances to propagation that have been identified (here and in \cite{lewiskeener}) can be jumped upon. It suffices that the infection wave reaches the unstable front level. 
Computing the location and level of this theoretical ``unstable front'', in the presence of an actual ``stable front'', is extremely useful: either to estimate the risk that the infection propagates through the barrier into the sound area, or to know the cost of the supplementary introduction to be performed in order to propagate the infection through the obstacle (in the case of blocked propagation following artificial releases of {\it Wolbachia}, for example, as seems to be the case in the experimental situation described in \cite{Hof.Stability}).

\subsection{Numerical illustration}
\label{subs:numerics}

\begin{figure}[h!] 
 \includegraphics[width=\textwidth]{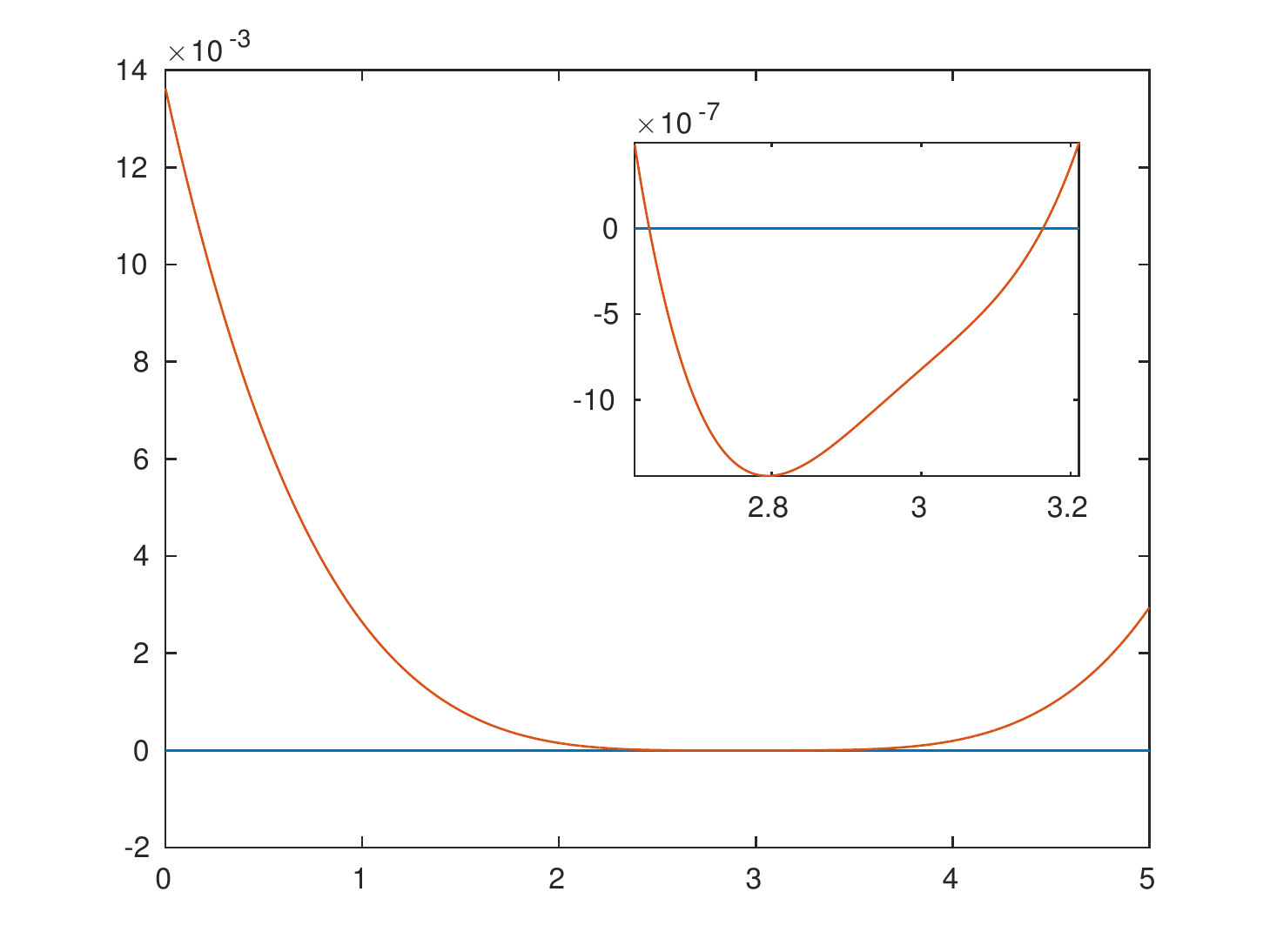}
 \caption{The function $\epsilon \mapsto \int_0^1 f(p) h_{\epsilon}^4 (p) dp$, whose sign is equal to that of the bistable traveling wave speed. The top-right angle plot is a zoom in the region where this sign is negative.}
 \label{fig:1}
\end{figure}

Figure \ref{fig:1} is an illustration of Theorem \ref{thm:mainInf}.
We choose $f$ and $h$ from the case of {\it Wolbachia} (see discussion on $h$ in Subsection \ref{subs:h}) with perfect vertical transmission and biological parameters selected after the choices in~\cite{reduction}:
\begin{align*}
 f(p) &= d_s p \frac{-s_h \delta p^2 + \big( \delta(1+s_h) - (1-s_f)\big) p + (1-s_f) - \delta}{s_h p^2 - (s_f + s_h)p + 1},
 \\
 h_{\epsilon}(p) &= 1 - \epsilon\frac{d_u}{\sigma F_u} \frac{(\delta-1)p+1}
{s_h p^2-(s_f+s_h)p +1}.
\end{align*}
We stick to this choice of $f$ for the other figures of this paper.
\begin{figure}[h!] 
 \includegraphics[width=.5\textwidth]{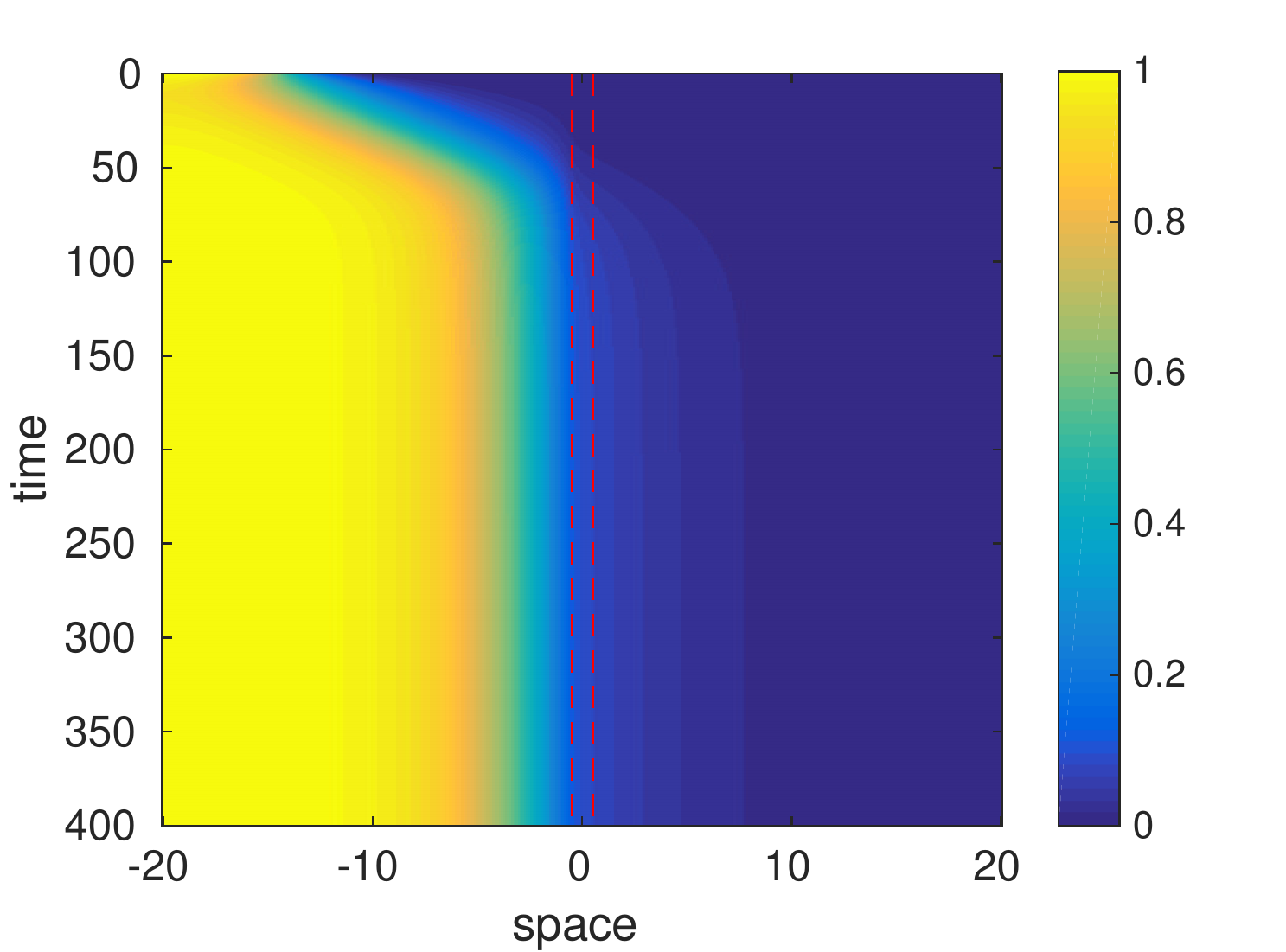}
 \includegraphics[width=.5\textwidth]{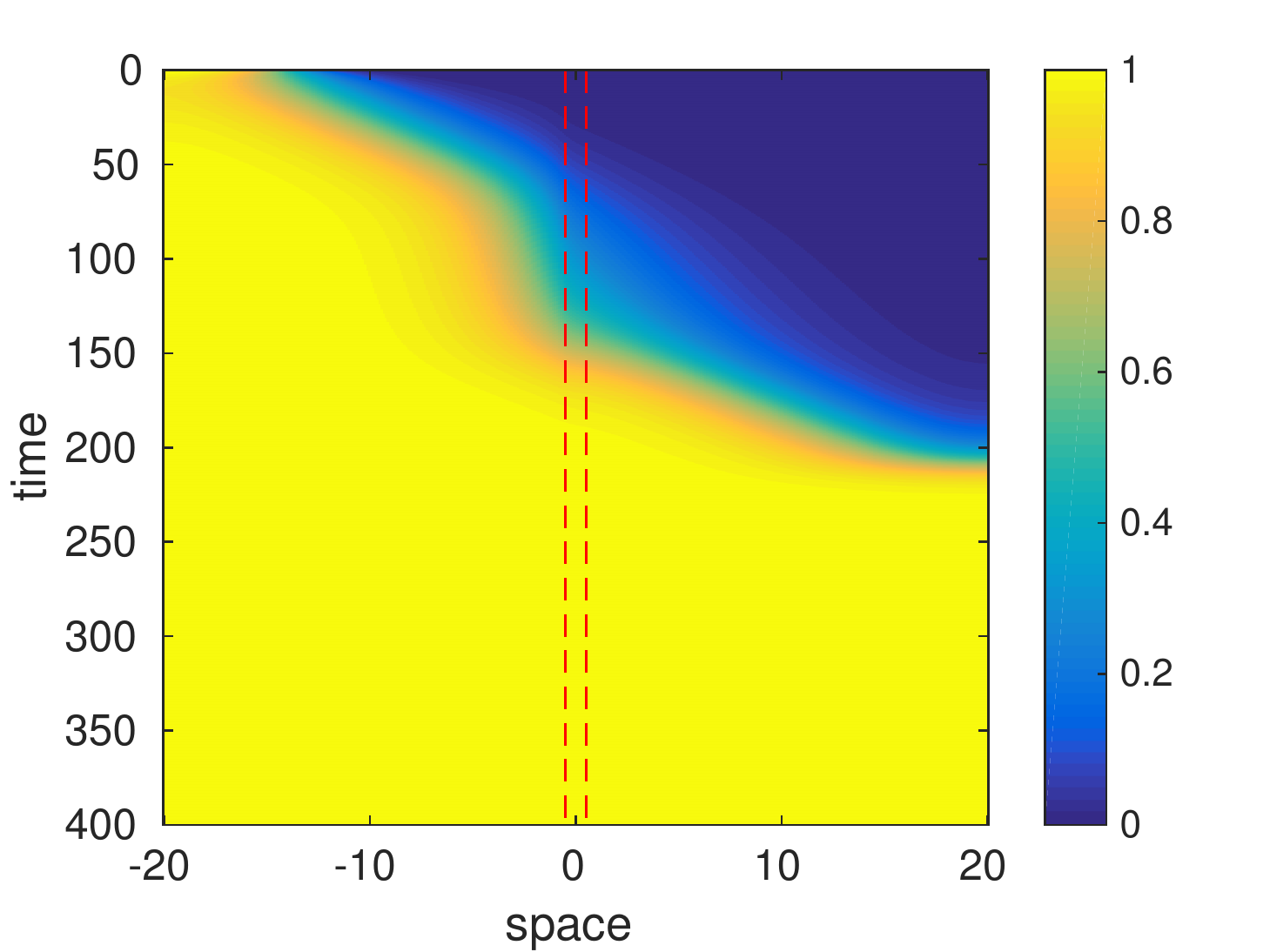}
 \caption{Plot of the proportion of the invading population with respect to time (y-axis) and space (x-axis). Two different population gradients are used with the same front-like initial data. The vertical red dotted lines mark the region $[-L, L]$ where the spatial gradient is applied. {\it Left}: Blocking with $L=0.5$ and $C=2$. {\it Right}: Propagation with $L=0.5$ and $C=1$.}
 \label{fig:2}
\end{figure}

\begin{figure}[h!] 
 \includegraphics[width=.5\textwidth]{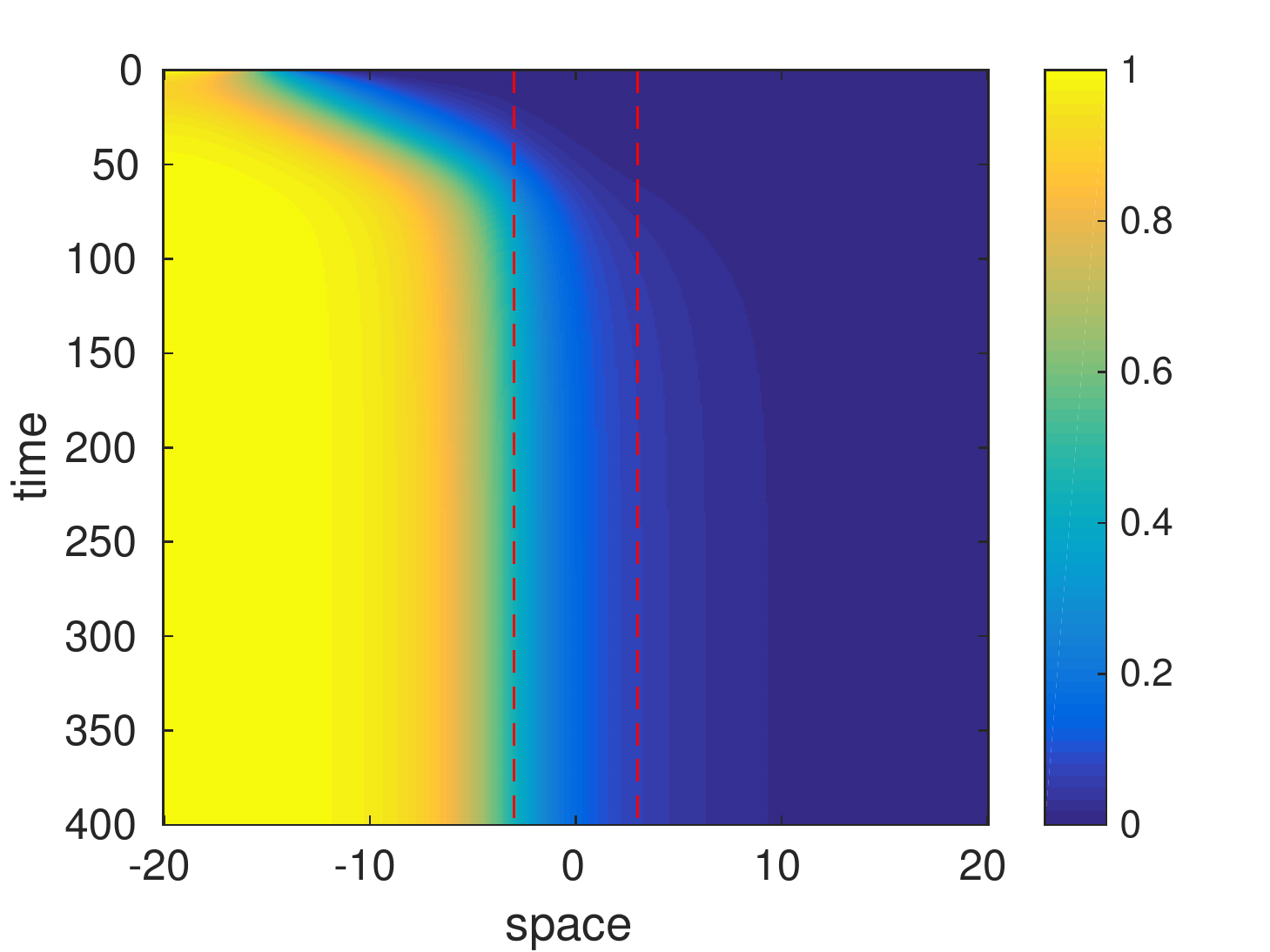}
 \includegraphics[width=.5\textwidth]{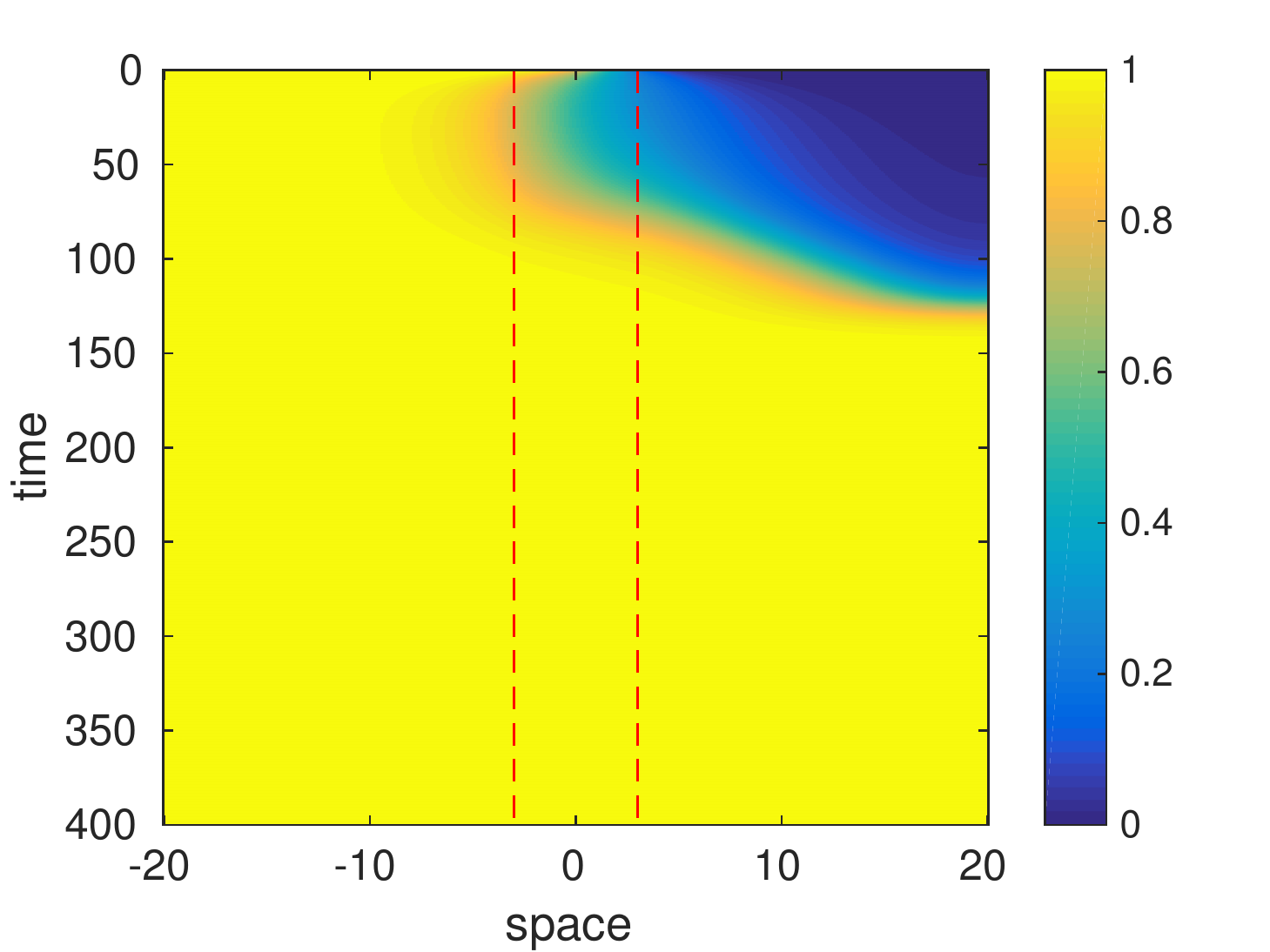}
 \caption{Plot of the proportion of the invading population with respect to time (y-axis) and space (x-axis). Two different front-like initial data are used with the same population gradient, $L = 3$ and $C=0.35$. The vertical red dotted lines mark the region $[-L, L]$ where the spatial gradient is applied. {\it Left}: Blocking with a Heaviside initial datum located at $-15$. {\it Right}: Propagation with a Heaviside initial datum located at $2$.}
 \label{fig:3}
\end{figure}

Figures \ref{fig:2}, \ref{fig:3} and \ref{fig:4} must be interpreted as follows: the $y$-axis, oriented to the bottom, is time $t \in [0, 400]$, while the $x$-axis is the space, $x \in [-20, 20]$.
The value of $p(t, x) \in [0, 1]$ is represented by a color, with the legend on the right-side of the plots.
Simulations were done using a centered finite-difference scheme for diffusion and Euler implicit for time, with discretization steps $\Delta t= 0.05 $ in time and $\Delta x = 0.1$ in space.
Vertical dotted red lines mark the spatial range (=support) of the population gradient.

\begin{figure}[h!] 
 \includegraphics[width=.5\textwidth]{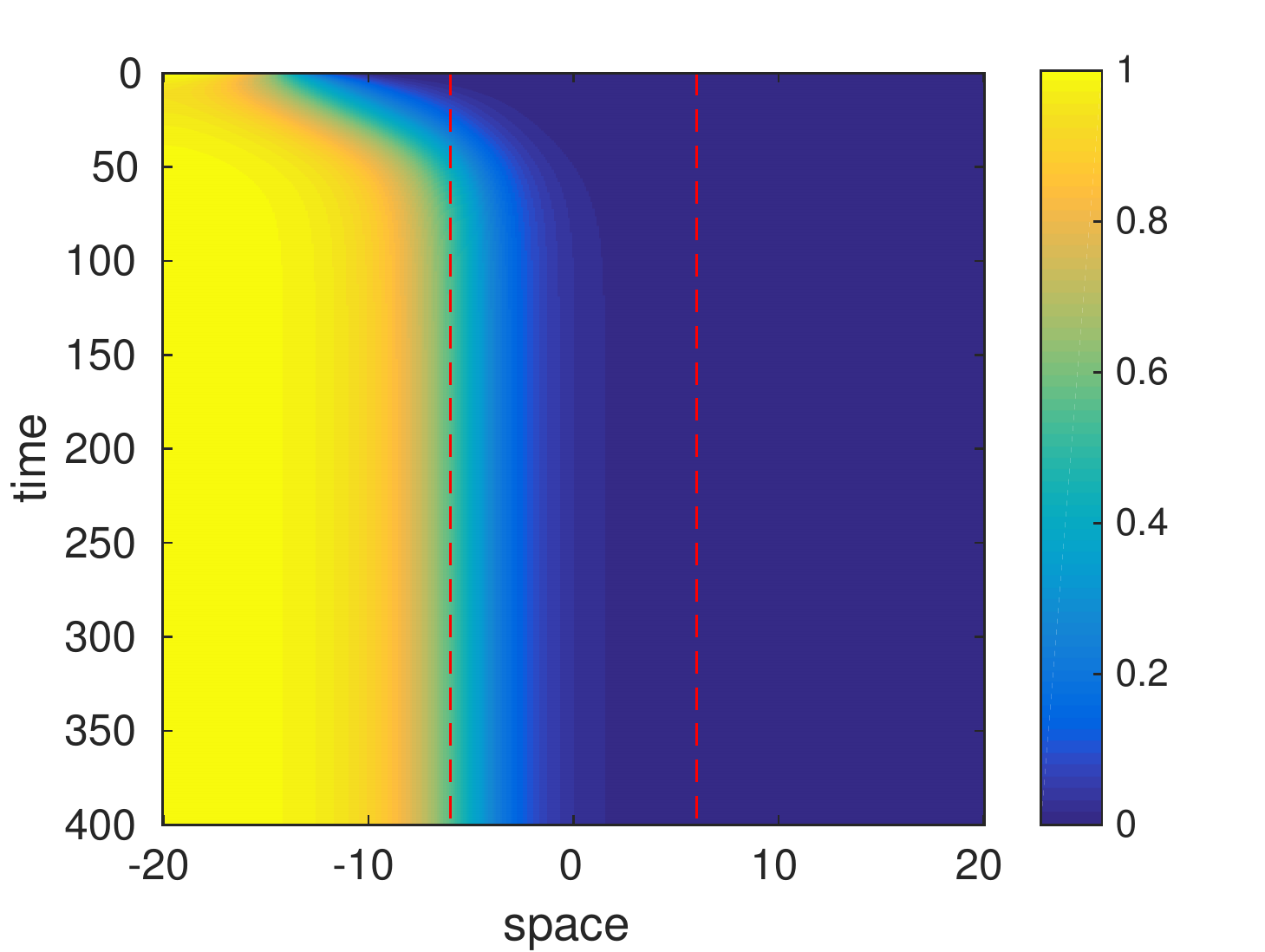}
 \includegraphics[width=.5\textwidth]{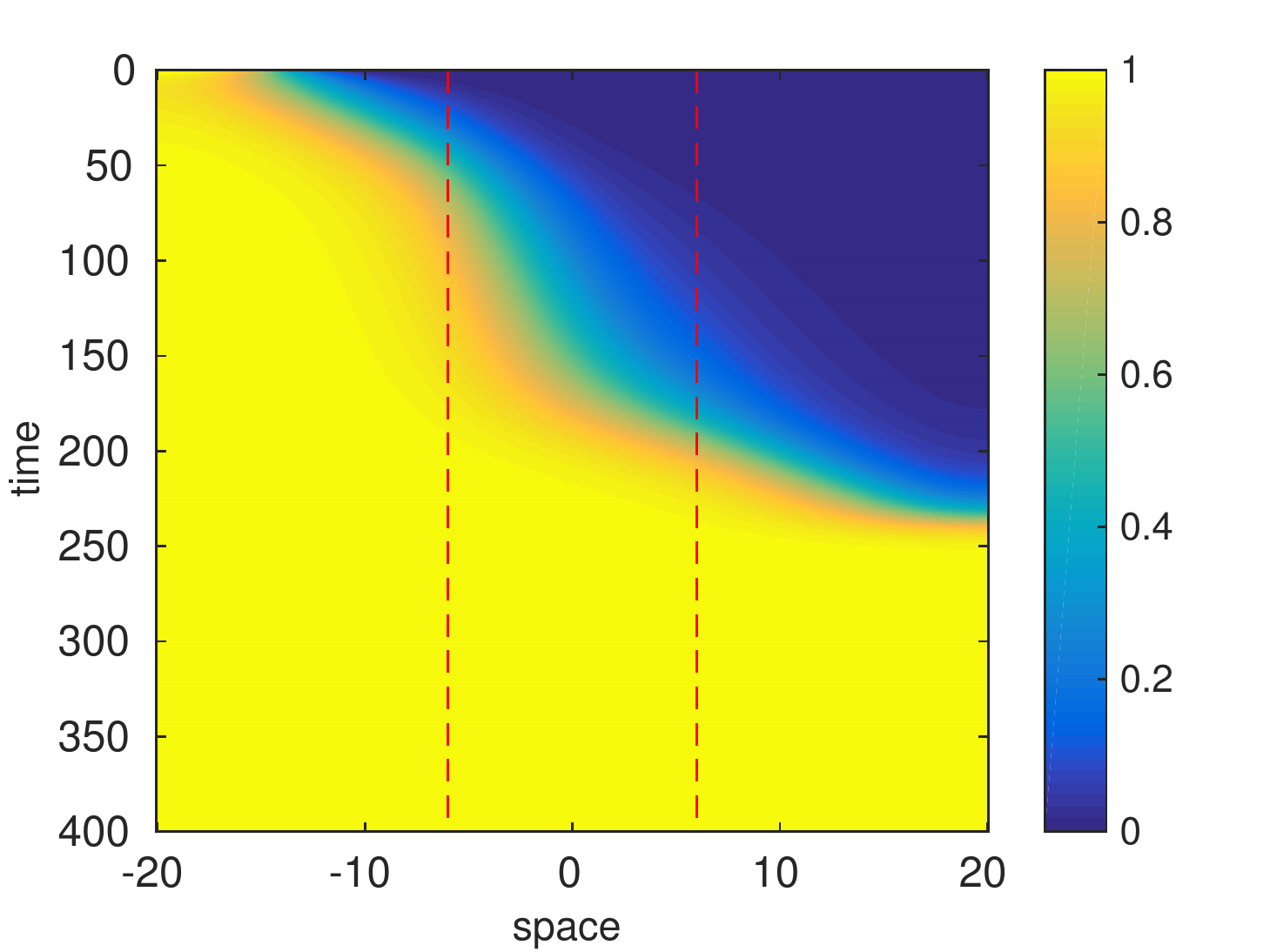}
 \caption{Plot of the proportion of the invading population with respect to time (y-axis) and space (x-axis). Two different, nontrivial population gradients ($h(x) = 4 C (x-L) (x+L)/L^2$) are used, with the same front-like initial data. The vertical red dotted lines mark the region $[-L, L]$ where the spatial gradient is applied. {\it Left}: Blocking with $L=6$, $C=0.5$. {\it Right}: Propagation with $L=6$, $C=0.2$.}
 \label{fig:4}
\end{figure}

\begin{figure}[h!]
\centering
 \includegraphics[width=.6\textwidth]{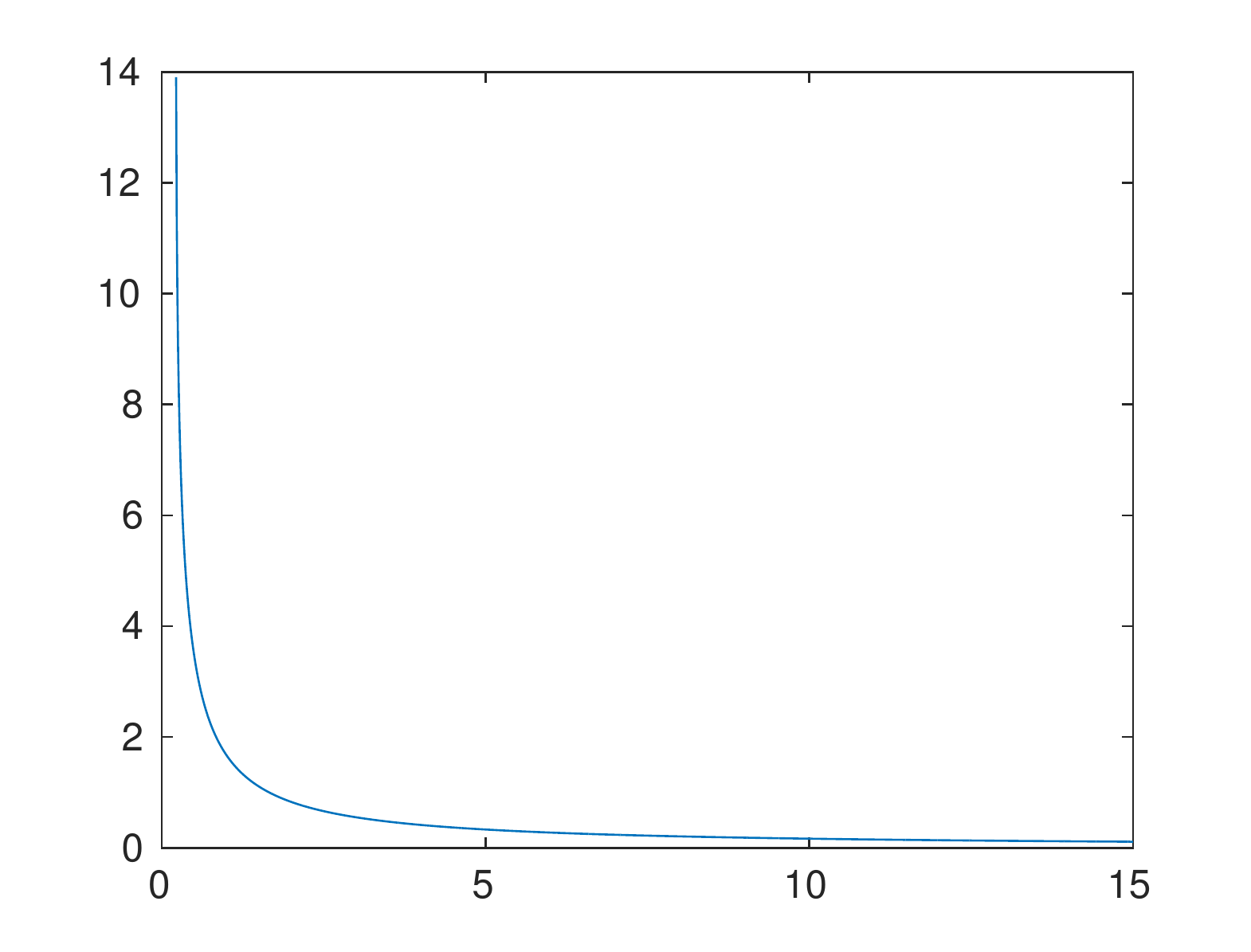}
 \caption{The minimal interval length $C \mapsto L_*(C)$ for which a logarithmic gradient constant equal to $C$ is sufficient to block invasion.}
 \label{fig:5}
\end{figure}

Figures \ref{fig:2} and \ref{fig:3} are illustrations of Proposition \ref{prop:passblock}.
On Figure \ref{fig:2}, the two plots differ only by the value of the population gradient $C$ (respectively equal to $2$ and $1$), imposed in both cases on the interval $[-0.5, 0.5]$.
The initial data is front-like, {\it i.e.} equal to $1$ on $[-20, -14]$.
On Figure \ref{fig:3}, the population gradient is fixed at $C=0.35$ with $L = 3$. The two plots differ by their initial data: they are still front-like, but on $[-20, -15]$ on the left-hand side, and on $[-20, 2]$ on the right-hand side.
On Figure \ref{fig:2}, on the left-hand plot we notice that a wave forms and propagates at a constant speed before being blocked, giving rise to a stable front ; while on the right-hand plot, the propagation occurs, and its speed is perturbed first by the heterogeneity, and then by the boundary of the discretization domain.
The interpretation is similar for Figure \ref{fig:3}.

Then, Figure \ref{fig:4} is an illustration of Corollary \ref{cor:nonconstantgradient}: it reproduces the behavior shown in Figure \ref{fig:2} for more sophisticated population gradients. We choose $h(x) = 4 C (x-L)(x+L)/L^2$, with $L=6$ and respectively $C=0.5$ (left-hand side) and $C=0.2$ (right-hand side), yielding blocking or propagation.

Finally Figures \ref{fig:5} and \ref{fig:6} illustrate Proposition \ref{prop:Lstarasympt}. Because of the high convergence speed of $C L_*(C)$ towards its finite limit for large $C$, we draw its logarithm in Figure \ref{fig:6} to get a better picture of convergence order.

We also note on Figure \ref{fig:6} that $C \mapsto C L_* (C)$ appears to be decreasing. 
We were only able to prove this fact asymptotically (as $C \to \infty$) and we refer to \cite{strug:these} for the explicit computations.

\begin{figure}[h!]
\includegraphics[width=.5\textwidth]{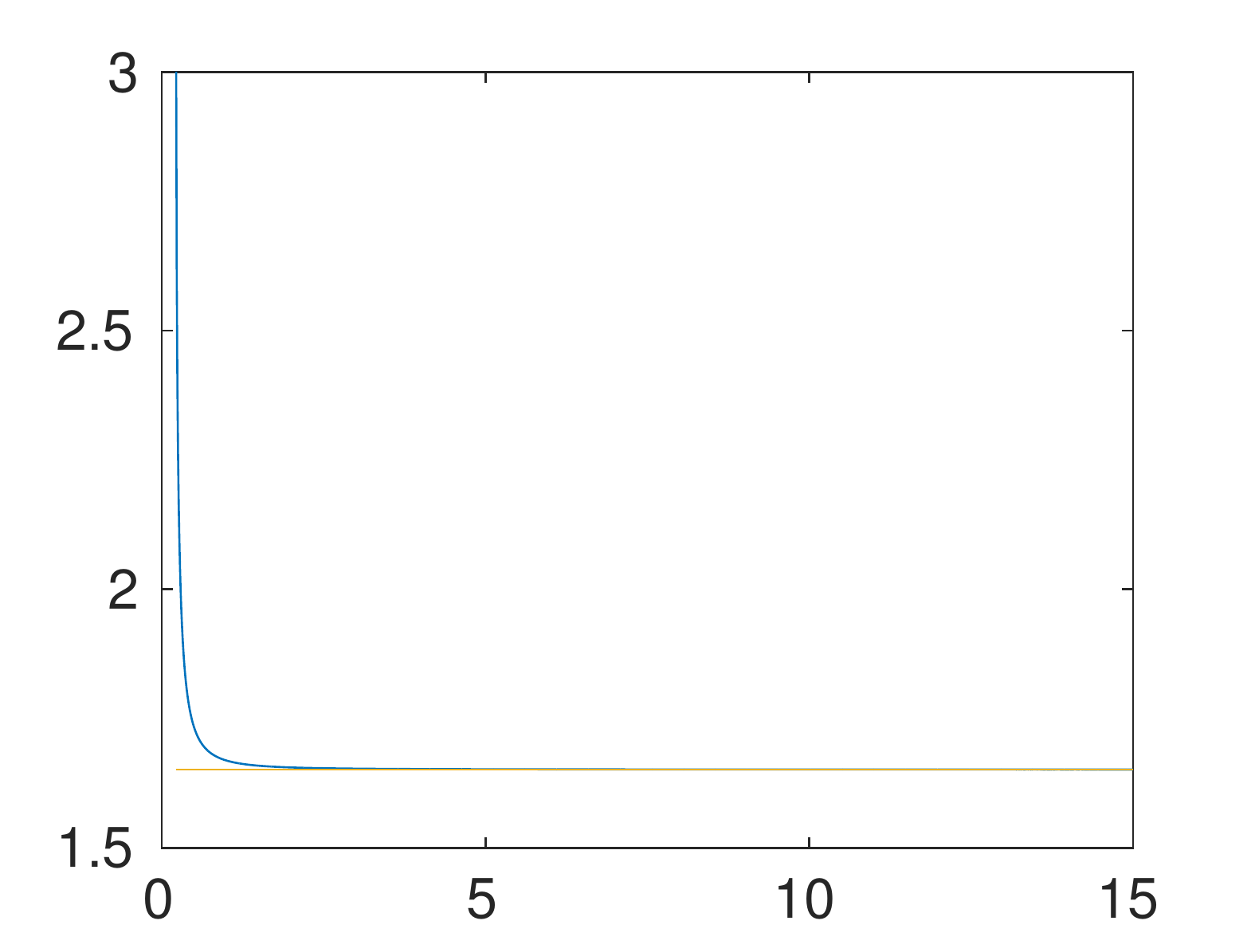}
\includegraphics[width=.5\textwidth]{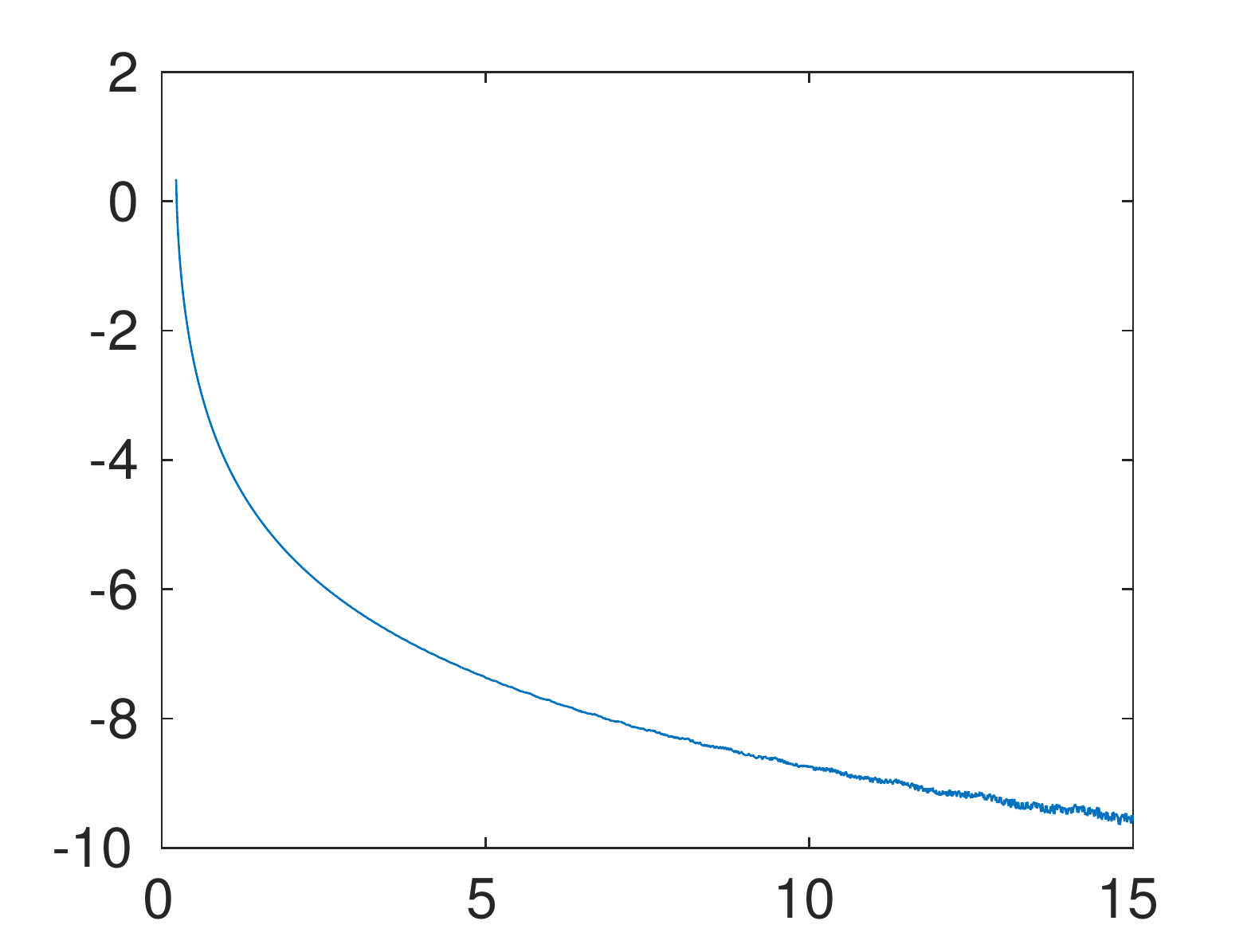}
\caption{{\it Left}: The curve $C \mapsto 4 C L_*(C)$ converges to the constant $\log\big(1 - F(1) / F(\theta) \big)$. {\it Right}: Visualization of the exponential rate of convergence: $C \mapsto \log \Big(4 C L_*(C) - \log\big(1-\f{F(1)}{F(\theta)}) \Big)$.}
\label{fig:6}
\end{figure}

\section{A brief recall on bistable reaction-diffusion in $\RR$.}
\label{sec:gen}

From now on we assume that
\beq\label{hypf}
f \text{ is Lipschitz},\,  f(0)=0 \text{ and }  f(1)=0. 
\eeq
We call $f$ {\bf monostable} if, in addition to \eqref{hypf},  $f>0$ on $(0,1)$.
We call $f$ {\bf bistable} if, in addition to \eqref{hypf}, there exists $\theta\in(0,1)$
such that $f(\theta)=0$,  $f<0$ on $(0,\theta)$
and $f>0$ on $(\theta,1)$. 
In all cases, we also assume that $f<0$ on $(-\infty,0)\cup (1,+\infty)$ (this is a technical assumption to facilitate some proofs,
$p$ being actually a frequency it will always remain between $0$ and $1$).

In the bistable case, we also assume $\int_0^1 f(x) dx > 0$ and define $\theta_c$ as the unique real number in $(0, 1)$ such that
$\int_0^{\theta_c} f(x) dx = 0$. (Obviously, $\theta_c > \theta$).
We define $F (x) = \int_0^x F(\xi) d\xi$, so that $F(\theta_c) = 0$.

We recall the following fact (see classical literature \cite{Fife1977} and \cite{Aronson1978} or \cite{chen1997} for a more recent proof)
\begin{proposition}[Bistable traveling wave]
 If $f$ is bistable, then there exists a unique $c= c_*(f)$, and a unique (up to translations) $p_*$ solution of
 \[
  -p_*'' - c p'_* = f(p_*) \text{ in } \RR, \quad p_*(-\infty) = 1, \, p_*(+\infty) = 0.
 \]
  In addition, $p_*$ is positive and decreasing. We call $c_*$ the {\it bistable wave speed}, and $p_*$
  the {\it bistable traveling wave}, because $u(t, x) = p_*(x - ct)$ is a solution to \eqref{eq:u} on $\RR$.
  \label{prop:bistbase}
\end{proposition}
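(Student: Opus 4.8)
The plan is to work in the phase plane of the traveling-wave ODE $p'' + c p' + f(p) = 0$. Writing $q = p'$, a decreasing front with speed $c$ corresponds to a heteroclinic orbit of the planar system $p' = q$, $q' = -cq - f(p)$ joining the equilibrium $(1,0)$ to the equilibrium $(0,0)$ and staying in the strip $\{0 < p < 1,\ q < 0\}$. Under bistability, near $p=1$ the reaction crosses zero decreasingly, so $(1,0)$ carries a one-dimensional branch of trajectories entering $\{p<1,\ q<0\}$; near $p=0$ the reaction is negative on both sides, so there is likewise a one-dimensional family of trajectories with $p\to 0^+$, $q\to 0^-$ (this end may be non-hyperbolic when $f'(0)=0$, which is why I will avoid invoking $C^1$ stable-manifold theory and instead use one-sided ODE comparison throughout, $f$ being only Lipschitz). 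The goal is to find $c$ for which these two branches coincide.

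Existence of the connecting speed would come from a shooting argument in the parameter $c$. Parametrizing orbits by $p$ on $\{q<0\}$ gives $\frac{dq}{dp} = -c - f(p)/q$, whose dependence on $c$ is strictly monotone; hence the trajectory $\Gamma_c$ issued from $(1,0)$ is ordered monotonically in $c$, with larger $c$ pushing it toward $\{q=0\}$. Multiplying the ODE by $p'$ and integrating yields the energy identity $c\int_{\RR}(p')^2 = F(1)$, and since we assume $F(1)=\int_0^1 f>0$, any connecting $c$ must be positive; so one can restrict to $c>0$. For $c$ large, one checks (comparison with the damped linear equation) that $\Gamma_c$ ``turns back'', meeting $\{q=0\}$ at some $p\in(0,1)$; for $c$ small (or negative), $\Gamma_c$ ``overshoots'', reaching $p=0$ transversally with $q<0$. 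Both conditions are open in $c$, so defining $c_*$ as the infimum of the turn-back regime, continuity of the flow and compactness in the strip force $\Gamma_{c_*}$ to be neither of these, hence a genuine heteroclinic limiting onto $(0,0)$. By construction $\Gamma_{c_*}\subset\{q<0\}$, so the profile $p_*$ is strictly decreasing, positive, and has the prescribed limits at $\pm\infty$.

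For uniqueness I would use the sliding method together with the comparison principle for $\partial_t p = \partial_{xx} p + f(p)$. Let $(c_1,p_1)$ and $(c_2,p_2)$ be two fronts with $c_1\le c_2$. Near $+\infty$, where both profiles tend to $0$, the reaction is lower order, so $p_i(s)\sim A_i e^{-c_i s}$; near $-\infty$ the profiles approach $1$ with nondegenerate (exponential) rate since $f$ crosses $0$ decreasingly at $1$. Using these rates, for $\tau$ large enough $p_1(\cdot+\tau) > p_2(\cdot)$ on all of $\RR$; let $\tau^*$ be the smallest such shift. If $\min_{\RR}\big(p_1(\cdot+\tau^*)-p_2\big)=0$ is attained at a finite point, that point is an interior zero of a nonnegative function satisfying a linear elliptic inequality, and the strong maximum principle forces $p_1(\cdot+\tau^*)\equiv p_2$, whence $c_1=c_2$ and the profiles agree up to translation. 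If the infimum is only approached at $\pm\infty$, the quantitative decay rates at the two ends exclude this unless $c_1=c_2$, in which case one concludes again by the strong maximum principle. Running the argument symmetrically gives $c_1=c_2$ and $p_1=p_2(\cdot+\text{const})$.

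I expect two delicate points. First, in the shooting step, rigorously ruling out pathological trajectories — non-uniqueness of orbits because $f$ is merely Lipschitz, $\Gamma_c$ escaping the strip through $p=0$ or $p=1$, and proving that the turn-back and overshoot sets of $c$ are open, nonempty, and complementary apart from the single value $c_*$: this is where the monotonicity in $c$ of the slope field and the energy identity do the real work. Second, in the uniqueness step, the loss of compactness: making the ``contact at $\pm\infty$'' alternative precise requires the above decay estimates at the two equilibria, and because $f'(0)$ may vanish one must check that the right-end decay is still exponential with rate $c>0$ rather than algebraic. Both are standard for $C^1$ bistable $f$, as in the cited references \cite{Fife1977,Aronson1978,chen1997}; the only extra care here is the Lipschitz regularity and the possibly degenerate end at $0$, which one accommodates by replacing manifold-theoretic statements with one-sided comparison arguments.
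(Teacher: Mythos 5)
The paper itself offers no proof of Proposition \ref{prop:bistbase}: it is recalled as a classical fact with references to \cite{Fife1977,Aronson1978,chen1997}, and your phase-plane shooting plus sliding argument is indeed the classical route (and is consonant with the $w$-formulation the paper later uses for the barrier analysis). However, as written your shooting step has a genuine gap: the alternative ``turn back across $\{q=0\}$ / overshoot through $\{p=0\}$'' is not exhaustive, because the planar system has a third equilibrium $(\theta,0)$, and for $c>0$ this point is asymptotically stable (the energy $E=\tfrac12 q^2+F(p)$ has a strict local minimum there and decreases along orbits, cf.\ Lemma \ref{lem:decrenergy}). In fact, for every $c$ at or above the minimal speed of the monostable front connecting $1$ to $\theta$ (finite since $f$ is Lipschitz), and at least when $f$ is differentiable at $1$ with $f'(1)<0$ so that the branch $\Gamma_c$ your argument uses is well defined, $\Gamma_c$ \emph{is} that monostable connection: it converges to $(\theta,0)$ inside $\{\theta<p<1,\ q<0\}$ and never meets $\{q=0\}$ at finite time, nor reaches $p=0$. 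So your claim that large $c$ forces turn-back is false, and, more seriously, ``neither overshoot nor turn-back at $c_*$'' does not yield a heteroclinic to $(0,0)$: the limiting orbit could a priori be the connection from $1$ to $\theta$, which the energy identity cannot exclude since $E(\theta,0)=F(\theta)<F(1)$. The standard repair is to put the $(\theta,0)$-alternative into the ``failure'' set: the set of $c$ for which $\Gamma_c$ either crosses $\{q=0\}$ (necessarily transversally, at some $p\in(0,\theta)$, since a first touching with $q'=-f(p)<0$ is impossible from below) or enters the basin of attraction of $(\theta,0)$. Both events are open in $c$ (transversality, respectively openness of the basin combined with continuous dependence on finite time intervals); the overshoot set is open and, by your monotonicity in $c$, an initial segment; taking $c_*$ to be its supremum, $\Gamma_{c_*}$ lies in neither open set, stays in the closed strip, and LaSalle with $E'=-c\,q^2$ forces convergence to an equilibrium on $\{q=0\}$, which can then only be $(0,0)$.

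Two smaller inaccuracies are repairable. The decay you quote at $+\infty$, $p_i(s)\sim A_i e^{-c_i s}$, is the correct rate only in the degenerate case $f'(0)=0$ (approach along the strong stable direction); when $f'(0)<0$ the rate is the stable root of $\lambda^2+c\lambda+f'(0)=0$, of modulus larger than $c$ --- exponential either way, which is all the sliding needs. In the uniqueness step, when $c_1\neq c_2$ the two profiles solve different ODEs, so an interior touching point cannot be handled by the strong maximum principle ``for the same equation'' as written; the usual order of operations is first to force $c_1=c_2$ by comparing the two fronts as entire solutions of the parabolic equation \eqref{eq:u} (a faster front, shifted far to one side, overtakes the slower one, contradicting the parabolic comparison principle), and only then to slide at the common speed and apply the strong maximum principle. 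With the trichotomy above handled and these two points rearranged, your outline matches the classical proofs the paper cites.
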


\begin{definition}
 Let $\Omega \subset \RR^d$ be a regular, open set (bounded or not), $f : \RR \to \RR$ and $g : \p \Omega \to \RR$
 be two smooth functions.
 
 Let $\calL$ be an elliptic operator $\calL := \Delta + k(x) \nabla$, where $k$ is a smooth function $\Omega \to \RR$.
 
 A subsolution (resp. a supersolution) of the elliptic problem
 \beq
  - \calL u = f(u) \text{ in } \Omega, \quad u = g \text{ on } \p \Omega
  \label{eq:elliptic}
 \eeq
is $\usar \in \mathcal{C}^2 (\Omega) \cap \mathcal{C}^0 (\overline{\Omega})$ (resp. $\ubar \in \mathcal{C}^2 (\Omega) \cap \mathcal{C}^0 (\overline{\Omega})$) such that
\[
 - \calL \usar \leq f(\usar) \text{ in } \Omega, \quad \usar \leq g \text{ on } \p \Omega
\]
(respectively such that
\[
 - \calL \ubar \geq f(\ubar) \text{ in } \Omega, \quad \ubar \geq g \text{ on } \p \Omega.)
\]

  Similarly, a subsolution (resp. a supersolution) to the parabolic problem 
  \beq
\p_t u - \calL u = f(u) \text{ in } \Omega, \quad \forall t > 0, u(t, \cdot) = g(t, \cdot) \text{ on } \p \Omega, \quad u(0, \cdot) = u^0
(\cdot) \text{ in } \Omega.
  \label{eq:parabolic}
  \eeq
  is $\usar \in \mathcal{C}^1 \big(\RR_+; \mathcal{C}^2 (\Omega) \cap \mathcal{C}^0 (\overline{\Omega}) \big)$ such that
  \[
\p_t \usar - \calL \usar \leq f(\usar) \text{ in } \Omega, \quad \forall t > 0, \usar(t, \cdot) \leq g(t, \cdot) \text{ on } \p
\Omega, \quad \usar(0, \cdot) \leq u^0 (\cdot) \text{ in } \Omega.
  \]
  (respectively $\ubar \in \big(\RR_+; \mathcal{C}^2 (\Omega) \cap \mathcal{C}^0 (\overline{\Omega}) \big)$ such that
  \[
\p_t \ubar - \calL \ubar \geq f(\ubar) \text{ in } \Omega, \quad \forall t > 0, \ubar(t, \cdot) \geq g(t, \cdot) \text{ on } \p
\Omega, \quad \ubar(0, \cdot) \geq u^0 (\cdot) \text{ in } \Omega.)
  \]
  By definition, a solution is any function which is simultaneously a sub- and a super-solution.
\end{definition}

Sub- and supersolutions are used in the classical comparison principle:
\begin{proposition}[Sub- and super-solution method]
  Let $\usar$ be a subsolution (respectively $\ubar$ a supersolution) to \eqref{eq:elliptic}.
If $\usar < \ubar$ (which means $\usar(x) \leq \ubar(x)$ and $\ubar \not=\usar$) then there exist minimal and maximal solutions $u_*
\leq u^*$ such that $\usar \leq u_* \leq u^* \leq \ubar$.
 \label{prop:subsup}
  \end{proposition}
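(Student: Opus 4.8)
The plan is to prove this by the classical monotone iteration scheme (in the spirit of Sattinger), using only that $f$ is Lipschitz. Since $\usar$ and $\ubar$ lie in $\mathcal{C}^0(\overline{\Omega})$, their ranges are contained in a bounded interval $[m, M_0] \subset \RR$ on which $f$ is Lipschitz with some constant $K$; I fix $M \geq K$ so that $s \mapsto f(s) + M s$ is nondecreasing on $[m, M_0]$. The key device is the linear ``one-step'' operator $\calT$: given $w$ with $m \le w \le M_0$, let $\calT w$ solve
\[
 -\calL (\calT w) + M\, \calT w = f(w) + M w \ \text{ in } \Omega, \qquad \calT w = g \ \text{ on } \p\Omega.
\]
Because the zeroth-order coefficient $M$ is positive, this linear problem is uniquely solvable (on bounded $\Omega$ by Lax--Milgram together with Schauder theory; on unbounded $\Omega$ by exhausting $\Omega$ with bounded subdomains and passing to the limit, the bounds being uniform thanks to the maximum principle), and the maximum principle makes $\calT$ order-preserving on $[m, M_0]$.

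Next I would build two monotone sequences. Set $u^{(0)} = \ubar$ and $u^{(n+1)} = \calT u^{(n)}$. Using that $\ubar$ is a supersolution, one has $-\calL u^{(1)} + M u^{(1)} = f(\ubar) + M\ubar \le -\calL\ubar + M\ubar$, hence $u^{(1)} \le \ubar$ by the maximum principle; similarly $u^{(1)} \ge \usar$ since $\usar$ is a subsolution, and then monotonicity of $\calT$ propagates the ordering, giving $\usar \le \cdots \le u^{(n+1)} \le u^{(n)} \le \cdots \le \ubar$. Symmetrically, starting from $v^{(0)} = \usar$ the sequence $v^{(n+1)} = \calT v^{(n)}$ is nondecreasing and bounded above by $\ubar$ (and, by induction, by each $u^{(n)}$). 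Both sequences are monotone and bounded, hence converge pointwise to limits $u^*$ (from above) and $u_*$ (from below), with $\usar \le u_* \le u^* \le \ubar$.

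Then I would upgrade the convergence to pass to the limit in the equation. The right-hand sides $f(u^{(n)}) + M u^{(n)}$ are uniformly bounded, so interior elliptic $L^p$ and Schauder estimates give uniform local $\mathcal{C}^{2,\alpha}$ bounds on the $u^{(n)}$; combined with the monotone (hence, by Dini's theorem, locally uniform) convergence, this yields that $u^*$ and $u_*$ are $\mathcal{C}^2$ solutions of \eqref{eq:elliptic} attaining the boundary value $g$. For the minimality and maximality assertion: if $v$ is any solution with $\usar \le v \le \ubar$, then $v = \calT v$, and induction using monotonicity of $\calT$ gives $v^{(n)} \le v \le u^{(n)}$ for all $n$, whence $u_* \le v \le u^*$; thus $u_*$ and $u^*$ are respectively the minimal and maximal solutions in the order interval $[\usar, \ubar]$.

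The main obstacle I anticipate is not the iteration itself but the functional-analytic bookkeeping when $\Omega$ is unbounded: one must solve the linear problem in a class where the comparison principle applies (bounded solutions), which is cleanest if one first solves on $B_R \cap \Omega$ with data $g$ on $\p\Omega$ and, say, $\ubar$ or $\usar$ on the artificial boundary $\p B_R \cap \Omega$, and then lets $R \to \infty$ using the uniform bounds and local compactness. Alternatively, one can bypass the iteration entirely and run the parabolic flow \eqref{eq:parabolic} from $\ubar$ (resp.\ $\usar$): by the parabolic comparison principle the flow is monotone in $t$, stays in $[\usar, \ubar]$, and its monotone limit as $t \to \infty$ is a stationary solution, yielding $u^*$ (resp.\ $u_*$); this viewpoint, used elsewhere in the paper, needs only parabolic well-posedness and regularity.
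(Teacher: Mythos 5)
The paper does not actually prove this proposition: it is stated in Section~3 as recalled classical background (alongside Propositions \ref{prop:bistbase} and \ref{prop:comparison}), with no argument given, so there is no in-paper proof to compare against. Your monotone iteration (Sattinger) scheme is the standard proof of this statement and is essentially correct: the choice of $M$ making $s\mapsto f(s)+Ms$ nondecreasing, the order-preserving one-step operator $\calT$, the two monotone sequences started from $\ubar$ and $\usar$, the passage to the limit via interior elliptic estimates plus Dini, and the extremality argument ($v=\calT v$ for any solution $v\in[\usar,\ubar]$, then induction) are all sound. Two small points deserve care but are not gaps. First, with $\calL=\Delta+k(x)\cdot\nabla$ the bilinear form for $-\calL+M$ is coercive only once $M$ is also taken large compared with $\|k\|_\infty^2$ (or one sidesteps Lax--Milgram and solves the linear problem by the maximum principle and Perron/Schauder); since you are free to enlarge $M$, this is harmless. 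Second, on unbounded $\Omega$ (the relevant case in the paper, $\Omega=\RR$) the comparison principle must be applied in the class of bounded functions, and the exhaustion by $B_R\cap\Omega$ with $\ubar$ (resp. $\usar$) as artificial boundary data must be checked to preserve the ordering uniformly in $R$; you flag exactly this and your proposed fix, as well as the alternative of taking the monotone limit of the parabolic flow started from $\ubar$ or $\usar$ (which is the mechanism the paper itself uses inside the proofs of Propositions \ref{prop:passblock} and \ref{prop:positive}), both work.
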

  
  \begin{proposition}[Parabolic comparison principle]
	For all $T > 0$ we introduce the ``parabolic boundary''
  \[
	\p_T \Omega :=  \Big( [0, T) \times \p \Omega \Big) \, \bigcup \, \Big( \{ 0 \} \times \Omega \Big).
  \]

If $\usar$ (resp. $\ubar$) is a sub-solution (resp. a super-solution) to \eqref{eq:parabolic}, and $u$ is a solution such that $u
\geq \ubar$ (resp. $u \leq \usar$) on $\p_T \Omega$ then the inequality holds on $\Omega \times [0, T]$.
 
 \label{prop:comparison}
\end{proposition}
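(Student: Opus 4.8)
The plan is to prove the slightly more general statement that if $\usar$ is a subsolution and $\ubar$ a supersolution of \eqref{eq:parabolic} with $\usar \le \ubar$ on $\p_T\Omega$, then $\usar \le \ubar$ on $\Omega\times[0,T]$; the two assertions of the proposition are then recovered by taking one of $\usar,\ubar$ to be $u$, since a solution is both a sub- and a supersolution. First I would set $w:=\ubar-\usar$ (bounded, since in all our applications the functions at play are frequencies valued in $[0,1]$), subtract the two differential inequalities, and use the Lipschitz continuity of $f$ to write $f(\ubar)-f(\usar)=b(t,x)\,w$ with $|b|\le K:=\mathrm{Lip}(f)$ (setting $b:=0$ where $w=0$). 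This reduces the claim to showing $w\ge 0$, knowing
\[
\p_t w - \calL w - b(t,x)\,w \ge 0 \text{ in } \Omega\times(0,T), \qquad w \ge 0 \text{ on } \p_T\Omega.
\]

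Next I would remove the sign-indefinite zeroth-order term by the exponential substitution $v:=e^{-\lambda t}w$ with $\lambda>K$, which yields $\p_t v - \calL v + (\lambda-b)\,v \ge 0$ with $\lambda-b\ge\lambda-K=:\delta_0>0$ and $v\ge 0$ on $\p_T\Omega$; it then suffices to show $v\ge 0$. When $\Omega$ is bounded this follows directly from the parabolic maximum principle: $v$ attains its infimum on the compact set $\overline\Omega\times[0,T]$, and a negative infimum would be attained at some $(t_0,x_0)$ with $x_0\in\Omega$ interior and $t_0\in(0,T]$ (the parabolic boundary being excluded by $v\ge 0$ there), where $\p_t v\le 0$, $\nabla v=0$, $\Delta v\ge 0$ hence $-\calL v\le 0$, and $(\lambda-b)v\le\delta_0 v<0$, contradicting $\p_t v-\calL v+(\lambda-b)v\ge 0$.

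For unbounded $\Omega$ I would localize with a barrier: pick $\rho\in\mathcal{C}^2(\overline\Omega)$ with $\rho\ge 1$, $\rho(x)\to+\infty$ as $|x|\to\infty$, and $\calL\rho\le C_0\,\rho$ on $\Omega$ for some constant $C_0$ (e.g. $\rho=\sqrt{1+|x|^2}$ when the drift $k$ is bounded, which is the case for \eqref{eq:logN} and \eqref{set:X}). Then for $\mu\ge C_0$ and $\eps>0$ the function $v_\eps:=v+\eps\,\rho\,e^{\mu t}$ still satisfies $\p_t v_\eps - \calL v_\eps + (\lambda-b)v_\eps\ge 0$, is strictly positive on $\p_T\Omega$, and — using that $v$ is bounded — tends to $+\infty$ as $|x|\to\infty$ uniformly on $[0,T]$; hence a negative infimum of $v_\eps$ would be attained at an interior point with $t_0>0$, and the derivative test above again gives a contradiction, so $v_\eps\ge 0$. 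Letting $\eps\to 0$ yields $v\ge 0$, hence $w\ge 0$, which is the desired inequality.

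The hard part is precisely this last step, the unbounded-domain case: it relies on an a priori bound on $w$ (available here because the unknowns are frequencies) together with a barrier function matched to the growth of the drift $k$ — for bounded $k$ the choice above works verbatim, while for a drift of at most linear growth one would replace $\sqrt{1+|x|^2}$ by a suitable exponential or polynomial weight. Everything else is the standard parabolic comparison argument.
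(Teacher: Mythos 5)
Your proof is correct, but there is nothing in the paper to compare it with: Proposition \ref{prop:comparison} appears in Section \ref{sec:gen}, which is explicitly a recall of classical facts, and the authors give no proof of it (nor of Proposition \ref{prop:subsup}). Your argument is the standard one: reduce to a linear inequality for $w=\ubar-\usar$ via the Lipschitz bound on $f$, kill the zeroth-order term with the factor $e^{-\lambda t}$, apply the interior derivative test at a negative minimum on a bounded domain, and handle $\Omega=\RR$ (or $\RR^d$) by adding the perturbation $\eps\,\rho\,e^{\mu t}$ and letting $\eps\to 0$. The only caveats are the two extra hypotheses you yourself flag: boundedness of $\usar,\ubar$ and a growth condition on the drift $k$ compatible with the barrier $\rho$. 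Neither is stated in the proposition, but both hold in every use the paper makes of it ($p$ is a frequency in $[0,1]$, and the drifts $2\p_x\log N$ from \eqref{eq:logN}, $h\in L^\infty$ from \eqref{set:X}, and $2h'(p)/h(p)$ with $h$ smooth and positive are all bounded), so your proof covers exactly what the paper needs; a fully general statement without these restrictions would require a Phragm\`en--Lindel\"of-type argument instead. One last remark: the pairing of inequalities in the statement as printed ($\usar$ a sub-solution with $u\geq\ubar$ on $\p_T\Omega$) is evidently a typo, and your formulation ($\usar\leq\ubar$ on $\p_T\Omega$ implies $\usar\leq\ubar$ everywhere, specialized to $u$ being one of the two) is the intended reading.
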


In addition, the maximum (resp. the minimum) of two sub-solutions (resp. super-solutions) is again a sub-solution (resp. a
super-solution).
We also define the stability from below and above:
\begin{definition}
A solution $u$ to an elliptic problem is said to be stable from below (resp. above) if for all $\epsilon > 0$ small enough, there
exists a subsolution $\usar$ (resp. a supersolution $\ubar$) to the problem such that $u - \epsilon \leq \usar \leq u$. (resp. $u
\leq \ubar \leq u + \epsilon$).

It is said unstable from below (resp. above) if for all $\epsilon > 0$ small enough there exists a supersolution $\ubar$ (resp. a
subsolution $\usar$) to the problem such that $u - \epsilon \leq \ubar \leq u$ (resp. $u \leq \usar \leq u +\epsilon$).
\label{def:belabo}
\end{definition}

\section{Proofs for the infection-dependent population gradient model}
\label{sec:infdep}

We recall equation \eqref{eq:p}, in dimension $d=1$, for which we are going to prove Theorem \ref{thm:mainInf}
\[
 \p_t p - \p_{xx} p - 2 \f{h'(p)}{h(p)} \lvert \p_x p \rvert^2 = f(p).
\]

After giving an expression for $h$ in the case of {\it Wolbachia}, we prove that there exist traveling wave solutions to~\eqref{eq:p}, whose speed sign can be determined easily,
and eventually compared with traveling waves for \eqref{eq:u}. They can be initiated by ``$\alpha$-propagules'' (or
``$\alpha$-bubbles'') as in the case of \eqref{eq:u}, which was studied in \cite{BT} and \cite{SVZ}.
Due to the classical sharp-threshold phenomenon for bistable reaction-diffusion (see \cite{Zla} for the first proof with initial data as characteristic functions of intervals, \cite{Pol.Threshold} for extension to higher dimensions, \cite{DuMat.Convergence,Mat} and \cite{MurZho.Threshold} for extension to localized initial data in dimension $1$) solutions then have a simple asymptotic behavior.
The infection can either invade the whole space or extinct (or, for a ``lean'' set of initial data, converge to a ground state profile, and this is an unstable phenomenon).

  Hence when the population gradient is a function of the infection rate, there is no wave-blocking phenomenon.

\subsection{In the case of {\it Wolbachia}, $h$ is not monotone}
\label{subs:h}
Clearly, if $h$ is non-increasing, $h'\leq 0$, then the solution $p$ to \eqref{eq:p} is a sub-solution to
\eqref{eq:u}, assuming we complete them with the same initial data.
Hence $p\leq u$ for all time.

However, in the case of {\it Wolbachia}, the function $h$ (computed in the large population asymptotic developed in \cite{reduction}) is not monotone. It reads
$$
N=h(p)=1 - \epsilon\frac{d_u}{\sigma F_u} \frac{(\delta-1)p+1}
{s_h p^2-(s_f+s_h)p +1},
$$
hence
$$
h'(p)=\epsilon\frac{d_u}{\sigma F_u} \frac{(\delta-1)s_h p^2+2 s_h p -(\delta -1+s_f+s_h)}
{\big( s_h p^2-(s_f+s_h)p +1 \big)^2}.
$$
We can compute $h'(0)<0$, $h'(1)>0$, for $\delta s_h - \delta + 1 -s_f >0$ (this condition being necessary to ensure bistability in the limit equation, see details in \cite{reduction}).

We can show that $h'$ vanishes at a single point in $[0,1]$, where its sign changes.
This point is
$$\theta_0:=\ds \frac{1}{\delta-1}\left(-1+\sqrt{1+(\delta-1)(\frac{\delta-1+s_f}{s_h}+1)}\right)$$ 
for $\delta \neq 1,$
and if $\delta=1$, then $\theta_0=\ds \frac 12+\frac{s_f}{2s_h}$.

Hence if $p\leq \theta_0$ then $h'(p)\leq 0$.
As a consequence, for an initial datum $u^{\text{init}}=p^{\text{init}}$ 
such that  $\|p^{\text{init}}\|_\infty \leq \theta_0$, $p\leq u$ holds as long as $\|p\|_\infty \leq \theta_0$.
But no more can be said simply from \eqref{eq:u}.

\subsection{A change of variable to recover traveling waves}

\begin{proof}[Theorem \ref{thm:mainInf}]

First, we note that the function $H(x) = \int_0^x h^2 (\xi) d\xi$ is invertible on $[0, 1]$, since it is increasing ($h^2 > 0$).

Multiplying \eqref{eq:p} by $h^2(p)$ yields
$$
h^2(p)\pa_t p - \p_x (h^2(p) \p_x p ) = f(p) h^2(p).
$$
We set $y(x)=H(p(x))$ (equivalently, $p(x)=H^{-1}(y(x))$). 
Then
$$
\pa_t y - \p_{xx} y = f(H^{-1}(y)) h^2(H^{-1}(y)).
$$

And we are left with the following problem
\beq\label{eq:y}
\pa_t y -\p_{xx} y = g(y), \qquad g(y)=f(H^{-1}(y)) h^2(H^{-1}(y)).
\eeq

Since $f$ is defined on $[0,1]$, $g$ is also defined on $[H(0),H(1)]=[0,1]$. Because of \eqref{hypf},
$$
g(0)=g(H(0))=0, \quad g(1)=g(H(1)) = 0, \quad g \mbox{ has the same sign as } f\circ H^{-1}.
$$

Hence if $f$ is monostable then $g$ is monostable. If $f$ is bistable with
$f(\theta)=0$ for some $\theta\in (0,1)$, then $g$ is also bistable with
$g(H(\theta))=0$, and $H(\theta)\in (H(0),H(1)) = (0, 1)$.

We compute
$$
g'(y)=f'\big(H^{-1}(y)\big)+ 2 f(H^{-1}(y))
\frac{h'(H^{-1}(y))}{h(H^{-1}(y))}.
$$
In particular, $g'(0)= f'\big(0\big)$.

Obviously, if there exists a traveling wave for \eqref{eq:y}, $y(t,x)=\widetilde{y}(x-c t)$,
connecting $1$ to $0$, then
$p(t,x):=H^{-1}\big(H(0)+(H(1)-H(0)) \widetilde{y}(x-c t)\big)$ is a traveling wave for
 \eqref{eq:p}, connecting $1$ to $0$.

Then we can compare the wave speeds for \eqref{eq:y} and for \eqref{eq:u}.
\begin{enumerate}
\item If $f$ is monostable, then there exists a minimal traveling speed $c^*$.
such that for all $c\geq c^*$, there exists a unique, decreasing, traveling wave
$0\leq y \leq 1$ for \eqref{eq:y}, connecting $1$ to $0$.
Moreover, if KPP condition $g(x)\leq g'(0) x$ holds on $[0,1]$, then
$c^* = 2\sqrt{g'(0)}=2\sqrt{f'(0)}$.

We notice that the KPP condition $g(x)\leq g'(0) x$ for all $x \in (0, 1)$ holds if and only if
$f(z) h^2(z) \leq H(z) f'(0)$, by setting $z=H^{-1}(x)$.
Hence if $f$ itself satisfies the KPP condition, {\it i.e. }satisfies $f(z)\leq f'(0) z$, it suffices to check
$h^2(z) \leq \frac{H(z)-H(0)}{z}$, $\forall\,z\in(0,1)$. 
This condition is equivalent to concavity of $H$ on $(0,1)$, {\it  i.e. }$h'\leq 0$
on $(0,1)$.

\item If $f$ is bistable, then there exists a unique traveling wave $(c_*,v)$
for \eqref{eq:y}, decreasing, connecting $1$ to $0$ and
$
c_* <0 \mbox{ if } G(1)<0, \quad c^* =0 \mbox{ if } G(1)=0, \quad
c_* >0 \mbox{ if } G(1)>0,
$
where $G(1)=\int_0^1 g(v)\,dv$ (see \cite{Perthame}).
Using the definition of $g$ in \eqref{eq:y} we get
$$
G(1)=\int_{0}^{1} g(y) dy = \int_0^1 f(x) h^4(x) dx.
$$
\end{enumerate}

\end{proof}

\begin{remark}
If $h \equiv 1$ then $H=Id$ and we recover $f=g=\widetilde{g}$.
\end{remark}

\begin{remark}
In the monostable case we find $c^*= 2\sqrt{f'(0)}$, so the minimal speed for \eqref{eq:p} and for \eqref{eq:u} are the same.
\end{remark}

If $f$ is bistable and $G(1)>0$, the sharp threshold property (see \cite{Mat}) applies to equation \eqref{eq:y}, hence to equation \eqref{eq:p}.

\subsection{Critical propagule size}

To identify the initial data that induce invasion, we can compute ``propagules'' (also called ``bubbles''), that is compactly supported subsolutions to the parabolic problem \eqref{eq:p}. This was stated in Proposition \ref{prop:propagules}, that we are going to prove below.

The concept of critical propagule size, that is the minimal ``size'' of an initial data to ensure invasion, was studied in \cite{BT}. 
We reproduce here for equation \eqref{eq:y} the computations that can be found in \cite{BT} and \cite{SVZ}, and deduce an expression of the critical propagule for equation \eqref{eq:p}.

\begin{proof}[Proposition \ref{prop:propagules}]
We introduce the following Cauchy system associated with \eqref{eq:p}
\beq
\bepa
p''+ 2 \displaystyle\frac{h'(p)}{h(p)} (p')^2 + f(p) = 0, \qquad \mbox{ on } [0,+\infty) 
\\[10pt]
 p(0)=\alpha, \qquad p'(0)=0
\eepa
\label{eq:propagule1}
\eeq
Multiplying equation \eqref{eq:propagule1} by  $h(p)^2$  yields
$
\big(h(p)^2 p'\big)'= -f(p)h(p)^2.
$
Then, multiplying by $h(p)^2 p'$ and integrating over $[0,x)$ yields
$$
\frac{1}{2} \Big(\big(h(p)^2 p')^2 - \big(h(\alpha)^2 p'(0)\big)^2 \Big)=
-\mathcal{F}(p) + \mathcal{F}(p(0)),
$$
where $\mathcal{F}$ is an antiderivative of $p\mapsto f(p)h(p)^4$.

We are looking for a decreasing solution $p$ on $[0, +\infty)$.
Since $p'(0)=0$ we get
$$
p' = - \frac{\sqrt{2(\mathcal{F}(\alpha) - \mathcal{F}(p))}}{h(p)^2}.
$$
Note that since $h(p)^4 > 0$, $\mathcal{F}'$ has the same sign as $f$.
If $h$ is constant, we recover the case of equation \eqref{eq:p} without correction term.

We make a change of variable and check that $v_{\alpha} := \max(p, 0)$ has support equal to $[0,L_\alpha]$ where
\beq
L_\alpha := \int_0^\alpha \frac{h(p)^2}{\sqrt{2(\mathcal{F}(\alpha)-\mathcal{F}(p))}}\,dp.
\label{eq:Lalpha}
\eeq
As for the ``classical case'' (without $h$) treated in \cite{SVZ}, convergence of this integral is straightforward (recalling $\alpha > \theta$). Thus $L_{\alpha} < \infty$.

Hence we constructed a family $(v_{\alpha})_{\theta_c < \alpha < 1}$ of compactly supported sub-solutions,
where $0 \leq v_{\alpha} \leq \alpha$.
\end{proof}

\section{Proofs for the heterogeneous case: blocking waves and barrier sets}
\label{sec:het}

This section is devoted to the proof of the main results concerning existence of blocking fronts, {\it i.e.} Theorem~\ref{thm:mainHet} and Proposition \ref{prop:barriers}.
This proof is divided in several steps. In Subsection \ref{subs:prel} we prove Proposition \ref{prop:passblock} and first point of Proposition \ref{prop:barriers}.
In Subsection \ref{subs:shoot} we reformulate the existence problem as a double shooting problem and establish the first properties.
In Subsection \ref{phaseplane} we introduce a phase-plane method.
This allows us to state useful properties on the barrier set.
Then, Theorem \ref{thm:mainHet} and Proposition \ref{prop:Lstarasympt} are proved in Subsection~\ref{subs:advanced},
whereas Proposition \ref{prop:barriers} is proved in Subsection \ref{subs:gathering}.
We conclude in Subsection \ref{subs:generalizing} by proving Corollary \ref{cor:nonconstantgradient}.

\subsection{Preliminaries}
\label{subs:prel}

 The first fact we prove about the barriers (see Definition \ref{def:barriers}) is that they are decreasing. This is the first point of Proposition \ref{prop:barriers}.
 \begin{lemma}
  If $(C, L) \in \calB (f)$ and $p$ is a $(C,L)$-barrier, then $p$ is decreasing.
  \label{lem:decreasingbarrier}
 \end{lemma}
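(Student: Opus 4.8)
The plan is to show that a $(C,L)$-barrier $p$ solving \eqref{eq:TW} is strictly decreasing by a combination of phase-plane / maximum-principle arguments, treating the outer regions $\RR \setminus [-L,L]$ and the inner region $[-L,L]$ separately, and then gluing the information together using the $\mathcal{C}^1$ matching of $p$ at $x = \pm L$.

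\textbf{Step 1: the outer regions.} On $(L, +\infty)$ the equation is the autonomous one $-p'' = f(p)$ with $p(+\infty) = 0$ and $p > 0$. This is the standard bistable phase plane: multiplying by $p'$ and integrating gives the conserved energy $E(x) := \tfrac12 (p'(x))^2 + F(p(x))$, which is constant on $(L,+\infty)$; since $p(+\infty) = 0$ and $p'(+\infty) = 0$ (a bounded positive solution of an autonomous second-order ODE converging at $+\infty$ must have vanishing derivative there), we get $E \equiv F(0) = 0$, so $(p'(x))^2 = -2F(p(x))$ for $x > L$. Because $p > 0$ near $+\infty$ and $F < 0$ on $(0,\theta_c)$, this forces $p(x) \in (0,\theta_c)$ and $p'(x) \neq 0$ on $(L,+\infty)$ — indeed $p' $ cannot vanish at any point $x_0 > L$, since that would give $F(p(x_0)) = 0$ with $p(x_0) \in (0,\theta_c)$, impossible. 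Hence $p'$ has constant sign on $(L,\infty)$, and since $p(+\infty)=0$ with $p>0$ it must be $p' < 0$ there; similarly $p(-\infty) = 1$ forces, on $(-\infty,-L)$, the energy to equal $F(1)$ and a parallel argument gives $p' < 0$ on $(-\infty,-L)$ as well (here one uses $F(1) > 0$ together with the sign structure of $f$ near $1$ to rule out $p \geq 1$). In particular $p'(L^-) = p'(L^+) < 0$ and $p'(-L^+) = p'(-L^-) < 0$.

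\textbf{Step 2: the inner region.} On $[-L,L]$ we have $-p'' - Cp' = f(p)$ with the boundary data $p'(-L) < 0$ and $p'(L) < 0$ just obtained, and also $0 < p(\pm L) < \theta_c < 1$ from Step 1. I want $p' < 0$ on all of $[-L,L]$. Suppose not; then $q := p'$ satisfies the linear equation $-q'' - Cq' = f'(p)\, q$ (differentiating the ODE, valid since $f$ is at least Lipschitz — one argues by a bootstrap that $p \in \mathcal{C}^2$ hence $q \in \mathcal{C}^1$, and the identity $-q' - Cq = \int f'(p)q$ plus continuity suffices if $f$ is merely Lipschitz, using a weak/viscosity form of the strong maximum principle), a linear second-order equation without zeroth-order-sign obstruction of the usual kind — so instead I phase-plane it: if $q$ vanishes at some interior $x_0 \in (-L,L)$ then at $x_0$ the point $(p,p') = (p(x_0), 0)$ sits in the $(C)$-modified phase plane, and one tracks the energy-type functional $E_C(x) := \tfrac12 q(x)^2 + F(p(x))$, which satisfies $E_C' = q q' + f(p) q = q(-Cq - f(p)) + f(p)q = -C q^2 \leq 0$, so $E_C$ is nonincreasing on $[-L,L]$. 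Combined with $E_C(-L) = \tfrac12 p'(-L)^2 + F(p(-L)) = 0$ (value inherited from the outer region) and $E_C(L) = 0$ likewise, monotonicity forces $E_C \equiv 0$ on $[-L,L]$, whence $Cq^2 \equiv 0$, i.e. $q \equiv 0$, contradicting $p(-L) = p(L)$ being false — wait, one must double-check $p(-L) \ne p(L)$: indeed $p$ decreasing on each outer ray with $p(-\infty)=1$, $p(+\infty)=0$ gives $p(-L) > p(L)$, so $p$ cannot be constant on $[-L,L]$, contradiction. Hence $p' < 0$ on $[-L,L]$, and together with Step 1 we conclude $p' < 0$ on all of $\RR$.

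\textbf{Main obstacle.} The delicate point is the energy bookkeeping across the junctions $x = \pm L$ and the sign determination of $p'$ in the outer regions: one must rigorously justify that $p'(\pm\infty) = 0$ (elementary for bounded monotone-or-not solutions of autonomous ODEs converging at infinity, but worth a line), that the energy values picked up are exactly $0$ at $+\infty$ and $F(1)$ at $-\infty$, and that these propagate to $x=\pm L$ by the $\mathcal{C}^1$ matching. A secondary subtlety is the low regularity of $f$ (only Lipschitz), so the differentiated equation for $q=p'$ should be handled in integrated form, or one simply runs the whole argument through the energy functional $E_C$, which needs only $f$ continuous. I would present the proof via $E_C$ throughout, as it sidesteps differentiating $f$ and handles inner and outer regions uniformly (taking $C=0$ outside).
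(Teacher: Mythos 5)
Your Step 2 contains a genuine error that destroys the central contradiction. You claim $E_C(-L)=\tfrac12 p'(-L)^2+F(p(-L))=0$ ``inherited from the outer region'', but the value inherited from the \emph{left} outer region, where $p\to 1$ and $p'\to 0$, is $E_C(-L)=F(1)>0$ (you even state this correctly in your own Step 1, and it is exactly the boundary condition in \eqref{eq:alphabeta}). With the correct values $E_C(-L)=F(1)$ and $E_C(L)=0$, the monotonicity $E_C'=-C(p')^2\le 0$ gives no contradiction at all: every genuine barrier realizes precisely this energy drop from $F(1)$ to $0$ across $[-L,L]$. Indeed, if your claimed boundary values were right, your argument would show $q\equiv 0$ for \emph{every} solution of \eqref{eq:TW}, i.e.\ that no $(C,L)$-barrier exists, contradicting Theorem \ref{thm:mainHet}. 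So the nonincreasing energy alone does not rule out an interior zero of $p'$; a finer argument at the first zero is needed. There is also a secondary gap in Step 1: on $(L,+\infty)$ the identity $(p')^2=-2F(p)$ only forces $p\in(0,\theta_c]$, and a vanishing of $p'$ at a point where $p=\theta_c$ (a local maximum at height $\theta_c$, after which $p$ descends along the zero-energy level to $0$) is not excluded by the outer analysis alone; consequently your assertion $p'(L)<0$ is not yet justified at that stage (the paper obtains it only as an output of the inner-region argument).

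The repair is the paper's case analysis. Set $x_m:=\inf\{x>-L,\ p'(x)=0\}$ (one first checks $p'<0$ on $(-\infty,-L]$, where the conserved energy equals $F(1)$ so $p'=0$ would force $p=1$, excluded by the maximum principle) and suppose $x_m<L$ (more precisely $x_m\le L$, the case $x_m=L$ being handled the same way or being harmless for monotonicity). Either $p(x_m)<\theta_c$: then $E_C(x_m)=F(p(x_m))<0$, while $E_C$ is nonincreasing on $[-L,L]$ and equals $0$ at $L$, a contradiction since $x_m<L$. Or $\theta_c\le p(x_m)<1$: then $-p''(x_m)=-p''(x_m)-Cp'(x_m)=f(p(x_m))>0$, so $p$ has a strict local maximum at $x_m$, contradicting $p'<0$ on $(-L,x_m)$. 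This yields $p'<0$ on $[-L,L]$, hence $p'(L)<0$ and $F(p(L))<0$, and then the zero-energy identity on $[L,+\infty)$ shows $p'$ cannot vanish there either (it would require $p=0$ or $p=\theta_c$, both impossible since $p>0$ and $p<p(L)<\theta_c$). Your instinct to run everything through $E_C$ (avoiding differentiation of the merely Lipschitz $f$) is sound, but the energy functional must be combined with this dichotomy at $x_m$ rather than used on its own.
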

 \begin{proof}
 For any  $x \in (- \infty, -L]$, we have
 \[
  \f{1}{2} p' (x)^2 + F( p(x) ) = F(1).
 \]
 Hence $p' = 0$ if and only if $p(x) = 1$, but the maximum principle forbids it ($1$ is a super-solution so $p$ cannot touch it).
 
 Similarly, $p'$ does not change its sign on $[L, +\infty)$, except possibly if $p = \theta_c$ or $p = 0$.
 $p=0$ is impossible by the same argument as before. Assume $p(L) < \theta_c$. Then:
  \[
    \f{1}{2} p'(L)^2 + F(p(L)) = F(0) = 0.
  \]
  In addition we claim $p'(L) < 0$.  
  To prove this last fact we introduce
  \[
   x_m := \inf \{ x > -L, p'(x) = 0 \}.
  \]
  By contradiction, we assume $x_m < L$. There are two possibilities.
  
  Either $p(x_m) < \theta_c$. In this case, $\f{1}{2} p'(x_m)^2 + F (p(x_m)) < 0$.
  Since $\psi : x \mapsto \f{1}{2} p'(x)^2 + F(p(x))$ is decreasing and is equal to $0$ at $x= L$,
  this contradicts $x_m < L$. (Indeed, for all $x \in (-L, L)$,  $\psi(x) = F(1) - C_N \int_{-L}^x p'(x')^2 dx'$.)
   Or $p(x_m) \geq \theta_c$. If $1 > p(x_m) \geq \theta_c$ then $-p''(x_m) = -p''(x_m) - C p'(x_m) = f(p(x_m)) > 0$, hence
  $p$ reaches a local maximum at $x_m$, which is absurd because this contradicts the definition of $x_m$.
  Hence $p' < 0$ on $[-L, L]$.  
  
  Because $0 \leq p \leq 1$ and because of its limits at $\pm \infty$, $p$ is
 necessarily decreasing on $(-\infty, -L] \cup [L, +\infty)$.
\end{proof}

Existence of a barrier means that the (logarithmic) gradient of total population is enough to stop the bistable propagation.
On the contrary, when there is no barrier, then bistable propagation takes place. This is the object of Proposition \ref{prop:passblock}, which we prove below.

\begin{proof}[Proposition \ref{prop:passblock}]
The first point comes directly from the comparison principle (Proposition \ref{prop:comparison}), since $p_B$ is a stationary
solution, hence a super-solution to \eqref{eq:onR}. It is easily checked that $p_{B}<1$ by considering a maximum of this function.

First, assume $(C, L) \in \calB(f)$ and $p^0 > p_B$ for the maximal barrier $p_B$.
By hypothesis, it is unstable from above, hence there exists a sub-solution $\phi$ to \eqref{eq:TW} between $p_B$ and $p^0$.
Hence by the comparison principle $p(t, \cdot)$ is bounded from below by $p_{\phi} (t, \cdot)$, for all $t \geq 0$, where $p_{\phi}$
is the solution to \eqref{eq:onR} with initial datum $\phi$.
Since $p_{\phi}$ is increasing in $t$ (because initial datum is a subsolution), it converges to some $p_{\phi}^*$ as $t \to \infty$.
However, $p_{\phi}^*$ is a solution to \eqref{eq:TW} with the last hypotheses on $p(\pm\infty)$ relaxed. Because $p_B$ is a maximal
barrier (there is no element above it) and $p_{\phi}^* > p_B$,
this implies that $p_{\phi}^* (+\infty)$ is a zero of $f$ which is not $0$, hence it must be either $\theta$ or $1$. Since
$-p_{\phi}'' = f(p_{\phi})$ on $[L, +\infty)$, $p_{\phi}^*$ has to go below $\theta$. Otherwise
it is decreasing (by Lemma \ref{lem:decreasingbarrier}) and concave, hence cannot converge to a finite value.

Finally, if $(C, L) \not\in \calB(f)$, because $p_0 > p_B$ or $\lim_{-\infty} p_0 = 1$, we can always pick a sub-solution $\phi$
which is below $p^0$. For example, a translated $\alpha$-bubble (from Proposition \ref{prop:propagules} in the case $h = 0$) $v_{\alpha}
(\cdot - \tau)$ for some $\tau > 0$ large enough. The solution to \eqref{eq:onR} with initial datum $\phi$, say $p_{\phi} (t, \cdot)$
is increasing in $t$, and by the comparison principle it is below $p$ for all $t$.
Because it is increasing, its limit as $t \to \infty$ is well-defined and it is a solution to \eqref{eq:TW} without the final
conditions (on $p(\pm \infty)$). Since \eqref{eq:TW} has no solution, this implies that $p_{\phi} (t, \cdot) \to 1$. Hence $p \to 1$.
\end{proof}

To simplify notably the study of the barrier set $\calB(f)$, we first obtain a simple positivity property by using the comparison
principle (Proposition \ref{prop:comparison})
and the super- and sub-solutions method.

\begin{proposition}
 For all $B_1 \in \calB (f)$ and $B_2 \in [0, +\infty)^2$, $B_1 + B_2 \in  \calB (f)$.
 \label{prop:positive}
\end{proposition}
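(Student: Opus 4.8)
\textbf{Proof plan for Proposition \ref{prop:positive}.}

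The statement says that if $B_1 = (C_1, L_1) \in \calB(f)$ then for any $B_2 = (C_2, L_2)$ with non-negative coordinates, $(C_1 + C_2, L_1 + L_2) \in \calB(f)$; that is, enlarging either the logarithmic gradient or the width of the interval preserves the blocking property. The natural strategy is to build, for the larger pair, a suitable super- and sub-solution pair to \eqref{eq:TW} and invoke Proposition \ref{prop:subsup}. The existing $(C_1, L_1)$-barrier $p_B$ will be the raw material from which a sub-solution (or a modified profile) for the $(C_1+C_2, L_1+L_2)$ problem is manufactured.

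First I would handle the two monotonicities separately, since it suffices to prove $(C,L)\in\calB(f) \Rightarrow (C',L)\in\calB(f)$ for $C'\geq C$ and $(C,L)\in\calB(f) \Rightarrow (C,L')\in\calB(f)$ for $L'\geq L$, then compose. For the $L$-monotonicity: given a $(C,L)$-barrier $p_B$, which by Lemma \ref{lem:decreasingbarrier} is decreasing, I would construct a super-solution for the $(C,L')$-problem by "stretching" — keep the outer exponential-type tails of $p_B$ and insert in the middle an extra slab on which one uses that the added region only helps. Concretely, on the enlarged interval $[-L', L']$ one wants $-p'' - C p' \geq f(p)$; a clean way is to note that the constant-in-the-middle trick or a translate-and-glue of $p_B$ with itself produces a super-solution, while $1$ on the left / a small $\alpha$-bubble-type profile on the right (or simply the minimal barrier's own subsolution from Proposition \ref{prop:barriers}, item 4, which is stable from below) gives a sub-solution lying below it; Proposition \ref{prop:subsup} then yields an actual $(C,L')$-barrier. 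For the $C$-monotonicity: since $p_B$ is decreasing, $p_B' \leq 0$, so $-p_B'' - C' p_B' = f(p_B) + (C - C')p_B' \geq f(p_B)$ on $[-L,L]$ when $C' \geq C$ — hence $p_B$ is itself a super-solution to the $(C',L)$-problem on the whole line. Pairing it with a sub-solution below it (again a stable-from-below barrier of some pair known to be in $\calB(f)$, or a translated $\alpha$-bubble pushed far to the right so it sits under $p_B$) and applying Proposition \ref{prop:subsup} produces a $(C',L)$-barrier.

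The step I expect to be the main obstacle is producing a genuine \emph{sub-solution} to \eqref{eq:TW} sitting strictly below the candidate super-solution while still satisfying the prescribed limits $p(-\infty)=1$, $p(+\infty)=0$, $p>0$: the boundary conditions at $\pm\infty$ must be respected by the sub/super pair, and one must check the gluing points are handled correctly (sub-solutions may have downward corners, super-solutions upward corners). I would address this by taking as sub-solution a profile that equals $1$ for $x$ very negative, decreases through the barrier region, and is a compactly supported $\alpha$-bubble (from Proposition \ref{prop:propagules} with $h\equiv 1$, or its $C$-shifted analogue) translated far enough to the right that it stays below $p_B$ pointwise — its corners open downward, as required of a sub-solution. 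Finally I would remark that once the two one-parameter monotonicities are in hand, $B_1 + B_2 \in \calB(f)$ follows by applying them successively (first increase $C$, then increase $L$), which completes the proof.
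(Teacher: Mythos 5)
Your plan is essentially the paper's proof: use the decreasing $(C_1,L_1)$-barrier as a supersolution of the enlarged problem, a translated $\alpha$-bubble from Proposition \ref{prop:propagules} as a subsolution lying below it, and Proposition \ref{prop:subsup} to trap a genuine solution in between. Three remarks on the execution. First, your $L$-step is overcomplicated: no stretching, gluing or ``constant-in-the-middle'' surgery is needed, because the sign computation you already use for the $C$-step applies verbatim on the added intervals $[-L',-L]\cup[L,L']$, where the old barrier satisfies $-p_B''=f(p_B)$ and hence $-p_B''-Cp_B'\ge f(p_B)$ since $p_B'\le 0$ (Lemma \ref{lem:decreasingbarrier}); this is exactly how the paper treats both enlargements simultaneously, showing that $p_1$ itself is a supersolution of \eqref{eq:TW} with $C=C_1+C_2$, $L=L_1+L_2$. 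Beware that the constant-slab variant you float would in fact fail: at the junction where the decreasing left piece meets the slab the derivative jumps upward, which is a subsolution-type corner, and on a slab at height $c>\theta$ one has $-p''-Cp'=0<f(c)$.

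Second, the $\alpha$-bubble must be translated towards $-\infty$, where $p_B$ is close to $1>\alpha$, not ``far to the right'' where $p_B\to 0<\alpha$; your stated goal (``so it sits under $p_B$'') is the right one, but the direction as written would break the comparison. Also, the alternative subsolution you suggest -- the stable-from-below minimal barrier via Proposition \ref{prop:barriers} -- is circular here, since it presupposes the existence of a barrier for the enlarged pair, which is what is being proved. Finally, note that sub- and supersolutions need not satisfy the conditions at $\pm\infty$; what remains (and what the paper also only sketches) is to check that the solution trapped between the bubble and $p_B$ is decreasing and has limits $1$ at $-\infty$ and $0$ at $+\infty$, the bubble guaranteeing in particular that it stays above $\theta_c$ somewhere and so cannot degenerate.
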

\begin{proof}
Let $B_1 = (C_1, L_1)$, $p_1$ be a solution to \eqref{eq:TW} where $C= C_1$ and $L = L_1$.
Let $B_2 = (C_2, L_2)$.
Then, $p_1$ is \textit{decreasing} (by Lemma \ref{lem:decreasingbarrier}), hence
\begin{align*}
 - p_1 '' - (C_1+ C_2) p_1 ' &\geq p_1 '' - C_1 p_1 ' = f(p_1) \text{ on } [-L_1, L_1],\\
 - p_1 '' - (C_1 + C_2) p_1 ' &\geq p_1 '' = f(p_1) \text{ on } [-(L_1 + L_2), -L_1] \bigcup [L_1, L_1 + L_2],\\
 - p_1 '' &= f(p_1) \text{ on } \RR \backslash [-(L_1 + L_2) , L_1 + L_2].
\end{align*}
In other words, $p_1$ is a supersolution of \eqref{eq:TW} for $C = C_1 + C_2$, $L = L_1 + L_2$.

On the other hand, the $\alpha$-bubbles from Proposition \ref{prop:propagules} give us subsolutions, and we can select any of them.
Upon moving it far enough towards $-\infty$, it will be below $p_1$.
We simply need to consider $v_{\alpha} (\cdot - \tau)$ for $\tau > 0$ large enough, which will be the required subsolution.

This implies that we can construct a solution $p$ to \eqref{eq:TW} for $C = C_1 + C_2$ and $L = L_1 + L_2$,
lying between the $\alpha$-bubble and $p_1$, by Proposition \ref{prop:subsup}.
As $p_1$ is decreasing, one could check that $p$ is decreasing as well, and thus it admits limits at $\pm \infty$.
Then one could check that $p(+\infty) = 0$ and $p(-\infty)=1$, whence $p$ is a barrier.
Hence $B_1 + B_2 \in \calB(f)$.
\end{proof}

\subsection{A double shooting-argument.}
\label{subs:shoot}
To get a better description of $\calB(f)$, we introduce a double shooting-argument.
We separate the study of equation~\eqref{eq:TW} on $[-L, L]$ by introducing 
\[
 \beta = p(-L), \quad \alpha = p(L).
\]
We are left with a slightly differently rephrased problem: given $0 < \alpha < \beta < 1$, we are looking for
$C, L > 0$ such that
\beq
\bepa
 - p'' - C p' = f(p),
 \\[10pt]
 p(-L) = \beta, \quad p(L) = \alpha,
 \\[10pt]
 \f{1}{2} p'(-L)^2 + F(\beta) = F(1), \quad \f{1}{2} p'(L)^2 + F(\alpha) = 0.
\eepa
\label{eq:alphabeta}
\eeq

The two equations \eqref{eq:TW} and \eqref{eq:alphabeta} are obviously directly related.
\begin{proposition}
 Let $C, L > 0$. If $(C, L) \in \calB(f)$, then there exists $(\alpha, \beta)$ such that  \eqref{eq:alphabeta} has a solution.
 Conversely, if there are $\alpha, \beta$ and $C, L$ such that \eqref{eq:alphabeta} has 
 a solution, then its solutions are also solutions to \eqref{eq:TW}.
\end{proposition}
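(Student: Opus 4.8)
The plan is to prove both implications by carefully transferring between the two-sided boundary-value data of \eqref{eq:TW} on the outer half-lines and the "matched Cauchy data" encoded in \eqref{eq:alphabeta}. First I would establish the forward implication. Suppose $(C,L)\in\calB(f)$ and let $p$ be a $(C,L)$-barrier, i.e.\ a solution of \eqref{eq:TW}. Set $\beta := p(-L)$ and $\alpha := p(L)$. By Lemma \ref{lem:decreasingbarrier}, $p$ is decreasing, so $0<\alpha<\beta<1$ (the strict inequalities at $0$ and $1$ coming from the maximum principle, exactly as in the proof of Lemma \ref{lem:decreasingbarrier}). On $(-\infty,-L]$ the function $p$ solves $-p''=f(p)$ with $p(-\infty)=1$, $p'(-\infty)=0$; multiplying by $p'$ and integrating gives the first-order conservation law $\tfrac12 p'(x)^2 + F(p(x)) = F(1)$ for all $x\le -L$, and evaluating at $x=-L$ yields $\tfrac12 p'(-L)^2 + F(\beta) = F(1)$. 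Symmetrically, on $[L,+\infty)$ one has $\tfrac12 p'(x)^2 + F(p(x)) = F(0) = 0$, whence $\tfrac12 p'(L)^2 + F(\alpha) = 0$. The restriction of $p$ to $[-L,L]$ then solves the ODE and the four pointwise conditions of \eqref{eq:alphabeta} with these values of $\alpha,\beta,C,L$, which is the claim.

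For the converse, suppose we are given $0<\alpha<\beta<1$ and $C,L>0$ together with a solution $p$ of \eqref{eq:alphabeta} on $[-L,L]$. The idea is to extend $p$ to all of $\RR$ by solving the autonomous equation $-q''=f(q)$ on each outer half-line with Cauchy data $(p(\pm L), p'(\pm L))$, and to check that the energy conditions in \eqref{eq:alphabeta} force the correct limits at $\pm\infty$. On $[L,+\infty)$, the extension $q$ satisfies $\tfrac12 q'(x)^2 + F(q(x)) \equiv \tfrac12 p'(L)^2 + F(\alpha) = 0$; since $F(0)=0$ and $F>0$ on $(0,\theta_c)$ while $\theta_c$ is the only zero of $F$ in $(0,1)$, the orbit through $(\alpha,p'(L))$ with $\alpha<\theta_c$ (this needs to be argued — see below) lies on the level set $\{\tfrac12 v^2 + F(w)=0\}$, which is a homoclinic-type trajectory asymptotic to $(0,0)$; a standard phase-plane / Gronwall argument then gives $q(x)\to 0$ and $q'(x)\to 0$ as $x\to+\infty$, and positivity of $q$ follows because the trajectory cannot cross $w=0$. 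Symmetrically, on $(-\infty,-L]$ the conserved energy equals $F(1)$, which is the level of the equilibrium $(1,0)$, so the backward extension is asymptotic to $1$. Finally one checks that the concatenation is globally $C^1$ (matching values and derivatives at $\pm L$ by construction) and that it is a genuine solution of \eqref{eq:TW}; hence $(C,L)\in\calB(f)$ and the extended function is the asserted solution of \eqref{eq:TW}.

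The main obstacle is the second implication, specifically verifying that the Cauchy data produced by \eqref{eq:alphabeta} actually lie on the \emph{correct} branch of the corresponding energy level set, so that the outer extensions converge to $1$ and $0$ rather than blowing up or oscillating. Two points need care: (i) one must show $\alpha<\theta_c$ — otherwise the level set $\{\tfrac12 v^2 + F = 0\}$ need not contain a trajectory reaching $0$; this should follow from $F(\alpha) = -\tfrac12 p'(L)^2 \le 0$ together with the sign properties of $F$ (namely $F\le 0$ on $[\theta_c,1]$ is false, so one genuinely gets $\alpha \in [0,\theta_c]$, and $\alpha=\theta_c$ forces $p'(L)=0$, a degenerate case handled by the same maximum-principle argument as in Lemma \ref{lem:decreasingbarrier}); and (ii) one must rule out the extension re-entering the region where the energy argument fails, which is where monotonicity — already available via Lemma \ref{lem:decreasingbarrier} once we know the object is a barrier, but here needed a priori — enters: on the outer half-lines the sign of $q'$ is controlled by the energy relation exactly as in the proof of Lemma \ref{lem:decreasingbarrier}, forcing $q$ to be monotone and hence convergent. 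I would also note explicitly that, modulo translation, the solution of \eqref{eq:alphabeta} together with its outer extensions is unique given $(\alpha,\beta,C,L)$, so the correspondence between the two formulations is tight; this is implicitly what makes the double-shooting reformulation in the next subsections useful.
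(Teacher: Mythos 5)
Your overall strategy (energy identity on the outer half-lines for the forward direction; $C^1$ extension by the autonomous equation for the converse) is exactly the ``straightforward computation'' the paper has in mind, and your forward direction is complete modulo the routine remark that $p'(\pm\infty)=0$. The gap is in the converse, precisely at the point you flag as (ii) but then resolve incorrectly. You write that ``on the outer half-lines the sign of $q'$ is controlled by the energy relation exactly as in the proof of Lemma \ref{lem:decreasingbarrier}''. It is not: the energy relation only prescribes $|q'|$, and the argument in Lemma \ref{lem:decreasingbarrier} pins down the sign by using the limits $p(-\infty)=1$, $p(+\infty)=0$, which in the converse are exactly what you are trying to establish, so the appeal is circular. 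The sign of $q'(\pm L)$ is inherited from $p'(\pm L)$, and the boundary conditions in \eqref{eq:alphabeta} only fix $p'(\pm L)^2$. On the right half-line this is harmless: whichever branch of the zero-energy level set one starts on at $(\alpha,p'(L))$ with $0<\alpha\le\theta_c$, the orbit stays in $(0,\theta_c]$ and converges to $0$ (if $p'(L)>0$ it rises to $\theta_c$ and comes back down). On the left half-line it is fatal: if $p'(-L)>0$, the backward extension has constant energy $F(1)$, so $q'$ cannot vanish while $q\in[0,\beta]$ (there $F<F(1)$), hence $q$ decreases below $0$ going backward and either escapes to $-\infty$ or dips negative before returning; in either case the extension violates $p>0$ (or $p(-\infty)=1$) and is not a solution of \eqref{eq:TW}.

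Moreover, such ``wrong-sign'' solutions of \eqref{eq:alphabeta} genuinely exist, so this case cannot simply be declared empty: take $\beta\in(\theta_c,1)$, $p(-L)=\beta$, $p'(-L)=+\sqrt{2(F(1)-F(\beta))}$ and $C$ large; the damping dissipates the initial kinetic energy in a fast transient during which $p$ barely moves, after which the trajectory descends quasi-statically with $Y\approx -f(X)/C$, and its energy $\tfrac12 Y^2+F(X)$ reaches $0$ at some first time $T$ with $X(T)\in(0,\theta_c]$ (it cannot reach $0$ at $X\le 0$ since $F>0$ there); setting $\alpha:=X(T)$ and $L:=T/2$ gives a solution of \eqref{eq:alphabeta} with $p'(-L)>0$ whose $C^1$ extension is not a barrier. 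The correct repair is to read the converse with the decreasing-branch convention $p'(-L)=-\sqrt{2(F(1)-F(\beta))}$, $p'(L)=-\sqrt{-2F(\alpha)}$ (equivalently, to restrict to decreasing solutions of \eqref{eq:alphabeta}); this is the convention the paper uses implicitly in all subsequent constructions (e.g.\ the decreasing $p_\alpha$ in Proposition \ref{prop:exists}), and with it your extension argument, including your point (i) that $F(\alpha)=-\tfrac12 p'(L)^2\le 0$ forces $\alpha\le\theta_c$, goes through.
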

The proof is a straightforward computation.
A first property of \eqref{eq:alphabeta} can easily be proven:
\begin{proposition}
For any $0 < \alpha < \beta < 1$ with $\alpha < \theta_c$, there exists a unique $C = \gamma(\alpha, \beta)$ such that the system
\eqref{eq:alphabeta}
has a solution, associated with a unique $L = \lambda(\alpha, \beta)$.
\label{prop:exists}
\end{proposition}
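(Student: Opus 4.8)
The plan is to treat \eqref{eq:alphabeta} as an ODE with a free parameter $C$ and to extract, from the two boundary conditions on $p'$ at $\pm L$, the shooting conditions that determine $C$ and $L$. First I would solve the Cauchy problem for $-p'' - Cp' = f(p)$ starting from $x = -L$ with $p(-L) = \beta$ and $p'(-L) = -\sqrt{2(F(1) - F(\beta))}$ (the negative root is forced because a barrier is decreasing, by Lemma \ref{lem:decreasingbarrier}, and $F(1) > F(\beta)$ since $\beta < 1$ and $\theta_c < 1$, so the square root is well-defined). For each $C > 0$ this gives a unique trajectory $p_C$. Multiplying the equation by $p'$ and integrating shows that the ``energy'' $\psi_C(x) := \tfrac12 p_C'(x)^2 + F(p_C(x))$ satisfies $\psi_C'(x) = -C p_C'(x)^2$, so $\psi_C$ is strictly decreasing as long as $p_C' \neq 0$. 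The idea is to follow $p_C$ until it first reaches the level $\alpha$ (at a point $x = x_C$, which exists as long as $p_C$ stays decreasing; one checks it cannot reach a local minimum above $\alpha$ before that, using $f < 0$ on $(0,\theta)$ together with the concavity/energy argument exactly as in the proof of Lemma \ref{lem:decreasingbarrier} and using $\alpha < \theta_c$). Then the remaining boundary condition $\tfrac12 p_C'(x_C)^2 + F(\alpha) = 0$, i.e. $\psi_C(x_C) = 0$, is one scalar equation for the one unknown $C$; once $C$ is pinned down, $L$ is recovered from $2L = x_C - (-L) = x_C + L$, that is $L = \lambda(\alpha,\beta)$ is simply half the length of the trajectory interval — automatically unique given $C$.

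Concretely, I would define $\Phi(C) := \psi_C(x_C) = \tfrac12 p_C'(x_C)^2 + F(\alpha)$, the value of the energy at the moment $p_C$ hits $\alpha$, and prove $\Phi$ is continuous and strictly monotone in $C$, with a sign change. Monotonicity is the crux: larger $C$ means the damping term $-Cp'$ drains more energy (since $\psi_C' = -Cp_C'^2 < 0$ with $p_C' < 0$ throughout), so $\psi_C$ decreases faster; a comparison between two trajectories $p_{C_1}$ and $p_{C_2}$ with $C_1 < C_2$ should give $\psi_{C_1}(x_{C_1}) > \psi_{C_2}(x_{C_2})$, hence $\Phi$ strictly decreasing. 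For the endpoint behavior: as $C \to 0^+$, the equation degenerates to $-p'' = f(p)$, whose energy is conserved, so $\Phi(0^+) = \tfrac12 p_0'(x_0)^2 + F(\alpha) = \tfrac12 \cdot 2(F(1) - F(\beta)) + F(\alpha) = F(1) - F(\beta) + F(\alpha) > 0$ (using $F(1) > F(\beta)$ and $F(\alpha) \geq F(0) = 0$ isn't quite it — rather $F(\alpha) > 0$ is false for $\alpha < \theta_c$; but $F(1) - F(\beta) > 0$ dominates once one checks $F(1) - F(\beta) + F(\alpha) > 0$, which holds because $F(1) > \max F$ on $[0,\theta_c]$... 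I would verify this inequality carefully). As $C \to +\infty$, the strong damping forces $\psi_C$ to plummet, so $p_C$ overshoots below $\alpha$ with $\psi_C < 0$ there, giving $\Phi(+\infty) < 0$ (or $= -\infty$). By the intermediate value theorem there is a unique $C = \gamma(\alpha,\beta)$ with $\Phi(C) = 0$, and then $L = \lambda(\alpha,\beta)$ follows.

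The main obstacle I anticipate is the rigorous control of the trajectory $p_C$ before it reaches $\alpha$: I must rule out that $p_C$ develops a spurious local minimum, turns around, or exits $[0,1]$ prematurely, for \emph{all} $C > 0$ simultaneously, and I must ensure $x_C$ (the first hitting time of level $\alpha$) depends continuously — indeed monotonically — on $C$. This requires a careful qualitative analysis combining the energy identity $\psi_C' = -Cp_C'^2$, the sign conditions on $f$ (negative on $(0,\theta)$, with $\alpha < \theta_c$ placing the relevant dynamics in a controlled region), and the maximum-principle-type arguments already used for Lemma \ref{lem:decreasingbarrier}. A secondary technical point is the degenerate limit $C \to 0^+$: one must justify that $p_C \to p_0$ and $x_C \to x_0$ locally uniformly, so that $\Phi(0^+)$ is computed from the conservative system. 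Once these continuity/monotonicity facts are in hand, the existence-and-uniqueness of $C$ — and hence of $L$ — is immediate from the intermediate value theorem and strict monotonicity.
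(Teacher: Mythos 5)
Your skeleton is in essence the paper's own proof of Proposition \ref{prop:exists} --- a one-parameter shooting in $C$ on the energy, continuity plus strict monotonicity plus a sign change at the two ends, then $L$ read off from the selected trajectory --- just run in the mirror direction: you shoot from the level $\beta$ with energy $F(1)$ and impose energy $0$ at the level $\alpha$, whereas the paper shoots from $\alpha$ with energy $0$ and imposes the value $F(1)$ at $\beta$. As written, though, the plan has three genuine soft spots. First, the $C\to 0^+$ endpoint is miscomputed: by conservation of energy the correct value is $\Phi(0^+)=F(1)>0$, not $F(1)-F(\beta)+F(\alpha)$ (you substituted the initial velocity $p'(-L)$ for the velocity at the hitting point); this is not cosmetic, since your expression can be negative (take $\beta$ close to $1$, so $F(1)-F(\beta)\approx 0$, while $F(\alpha)<0$ because $\alpha<\theta_c$), which would destroy the sign change your intermediate value argument needs --- the correct value rescues it. Second, the large-$C$ endpoint, and the ``main obstacle'' you flag, are misdiagnosed: when $\alpha<\theta$ and $C$ is large the trajectory does not ``overshoot below $\alpha$ with negative energy''; it loses its kinetic energy, creeps down in the overdamped regime while $f>0$, and stalls just below $\theta$ (there $p'=0$ and $p''=-f>0$, a local minimum), turning around before it ever reaches the level $\alpha$. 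So $\Phi(C)$ is simply undefined for large $C$, and the turnaround cannot be ``ruled out for all $C>0$ simultaneously''; instead one must argue that $\Phi$ already crosses $0$ while the hitting point still exists (as the critical $C$ is approached the hitting velocity tends to $0$ and $\Phi\to F(\alpha)<0$, which is exactly where the hypothesis $\alpha<\theta_c$ enters). Third, the strict monotonicity of $\Phi$ in $C$ --- which carries the entire uniqueness claim --- is only asserted (``should give''); in the $x$-parametrization two trajectories with different $C$ reach the level $\alpha$ at different points, so a direct comparison of the dissipated energies is not straightforward.

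All three difficulties are what the paper's one extra device is designed to remove: parametrize the energy by the level $p$ rather than by $x$ (the Berestycki--Nicolaenko--Scheurer change of variables), i.e. set $w(p)=\tfrac12 p'\big(X^{-1}(p)\big)^2+F(p)$, which turns \eqref{eq:alphabeta} into the scalar first-order problem $w'(p)=C\sqrt{2\,(w(p)-F(p))}$ with $w(\alpha)=0$ in the paper's orientation (or $w(\beta)=F(1)$ in yours). On this equation, existence on the relevant range of $p$ (using $F<0$ on $(0,\theta_c)$), continuity and strict monotonicity of $w$ with respect to $C$ (an elementary ODE comparison, or differentiation in $C$), the explicit two-sided bounds $w(p)\ge \tfrac12 C^2(p-\alpha)^2$ and $w(p)\le F(\theta)+\big(\tfrac{1}{\sqrt2}C(p-\alpha)+\sqrt{-F(\theta)}\big)^2$ giving the sign change in $C$, and the recovery of a uniquely determined $L$ through the explicit integral \eqref{eq:lambda}, all become immediate. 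I recommend rebuilding your plan around this change of variables; with it, your mirrored shooting closes into a complete proof, and without it the monotonicity and the large-$C$ step remain unproven.
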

\begin{proof}
Here we employ a shooting argument.
Let $p_{\alpha}$ be the unique (by Cauchy-Lipschitz theorem), decreasing (by similar arguments as in Lemma \ref{lem:decreasingbarrier}) solution to
\beq
\bepa
-p_{\alpha}'' - C p_{\alpha}' = f(p_{\alpha}),
\\[10pt]
p_{\alpha}(L) = \alpha, \quad \f{1}{2} p_{\alpha}' (L)^2 + F(\alpha) = 0.
\eepa
\label{eq:palpha}
\eeq
Because $p_{\alpha}$ is decreasing, we can introduce $X_{\alpha} : [p_{\alpha}(L), p_{\alpha}(-L)] \to [-L, L]$ such that
$p_{\alpha}(X_{\alpha}(p)) = p$.
Using the method of \cite{BerNicSch} we also introduce $w_{\alpha} (p) := \f{1}{2} p_{\alpha}' (X_{\alpha}^{-1} (p) )^2 + F(p)$.
Then:
\beq
\bepa
w_{\alpha}' (p) = C \sqrt{2 \big( w_{\alpha}(p) - F(p) \big)},
\\[10pt]
w_{\alpha}(\alpha) = 0.
\eepa
\label{eq:w}
\eeq

The solution of this problem exists as long as $w_{\alpha}(p)\geq F(p)$. 
For $\alpha <\theta_c$, since $F(p)<0$ for $p\in(0,\theta_c)$, we deduce that 
the solution exists at least on $(\alpha,\theta_c)$.
Let us denote $p_0\leq 1$ such that $(\alpha,p_0)$ is the maximum interval in $(\alpha,1)$
of existence of a solution to \eqref{eq:w}. We have $p_0\geq \theta_c$.

Then, let $\beta > \alpha$. We are going to show that we can choose $C$ such that $w_{\alpha} (\beta) = F(1)$.
We first notice that on $(\alpha,\theta_c)$, we have $F(p)<0$ thus 
$w'_{\alpha}(p)>C\sqrt{2w_{\alpha}(p)}$. It implies that $w_{\alpha}(p)> \frac{1}{2} C^2 (p-\alpha)^2$ 
on $(\alpha,\theta_c)$.
Thus if $C$ is large enough, surely we will have $w_{\alpha}(\beta)>F(1)$.

Conversely, we have $w'_{\alpha}(p)\leq C \sqrt{2(w_{\alpha}(p)-F(\theta))}$, since $F(\theta)=\min_{[0,1]} F$.
Integrating on $(\alpha,p)$, we deduce 
$w_{\alpha}(p)\leq F(\theta)+ \big(\frac{1}{\sqrt{2}} C (p-\alpha)+\sqrt{-F(\theta)}\big)^2$.
Thus we may choose $C$ small enough such that $w_{\alpha}(\beta)<F(1)$.
Finally, by deriving \eqref{eq:w} with respect to $C$, we deduce that 
the solution $w$ is increasing with respect to $C$.

Hence for each $\beta$ there exists a unique $C = \gamma(\alpha, \beta)$ such that $w_{\alpha} (\beta) = F(1)$. We rename this solution as $w_{\alpha, \beta}$, so that
\beq
\bepa
w'_{\alpha, \beta} (p) = \gamma (\alpha, \beta) \sqrt{2 \big(w_{\alpha, \beta} (p) - F(p) \big) },
\\[10pt]
w_{\alpha, \beta} (\alpha) = 0, \, w_{\alpha, \beta} (\beta) = F(1).
\eepa
\label{eq:w2}
\eeq

To retrieve the value of $L$, such that $w_{\alpha, \beta}$ comes from a $p_{\alpha}$ 
solution of \eqref{eq:palpha} with $p_{\alpha} (-L) = \beta$, 
$\f{1}{2} \big( p'_{\alpha} (-L) \big)^2 + F(p_{\alpha}(-L)) = F(1)$, 
we simply have to remark that $L = \f{1}{2} \int_{\beta}^{\alpha} \big( X_{\alpha}^{-1} \big)' (p) dp$.
To compute it from $w_{\alpha, \beta}$ we notice that $(X_{\alpha}^{-1}) ' (p) = 1 / p_{\alpha}' \big(X_{\alpha}^{-1} (p) \big)$.
Hence we define
\beq
 \lambda (\alpha, \beta) := \f{1}{2} \int_{\alpha}^{\beta} \f{1}{\sqrt{2 \big( w_{\alpha, \beta}(p) - F(p) \big) }}  dp.
 \label{eq:lambda}
\eeq
(Indeed, recall that $p' < 0$ on $(-L, L)$). Then $L = \lambda(\alpha, \beta)$ is uniquely defined.

\end{proof}

\begin{lemma}
Functions $\gamma$ and $\lambda$ defined in Proposition \ref{prop:exists} are continuous on $\{ (\alpha, \beta), \, 0 < \alpha <
\theta_c, \text{ and } \alpha < \beta < 1 \}$.
\label{lem:continuitygl}
\end{lemma}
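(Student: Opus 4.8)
The plan is to exploit the integral formulas obtained in the proof of Proposition \ref{prop:exists}: $\gamma(\alpha,\beta)$ is the unique constant making the solution $w_{\alpha,\beta}$ of the ODE \eqref{eq:w} reach the value $F(1)$ at $p=\beta$, and $\lambda(\alpha,\beta)$ is then given by the explicit integral \eqref{eq:lambda}. First I would establish continuity (indeed smoothness, by standard ODE theory) of the map $(C,\alpha,\beta,p)\mapsto w(p)$, where $w$ solves $w'(p)=C\sqrt{2(w(p)-F(p))}$, $w(\alpha)=0$, on the region where $w(p)>F(p)$ (so that the square root is a locally Lipschitz function of $w$ and Cauchy--Lipschitz with parameters applies). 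On the interval $(\alpha,\theta_c)$ one has $F<0$, so $w>0>F$ there with a quantitative gap, and the bounds derived in the proof of Proposition \ref{prop:exists} (namely $\tfrac12 C^2(p-\alpha)^2 < w_\alpha(p) \le F(\theta)+\big(\tfrac1{\sqrt2}C(p-\alpha)+\sqrt{-F(\theta)}\big)^2$) show that $w_{\alpha,\beta}(p)-F(p)$ stays bounded away from $0$ uniformly for $(\alpha,\beta)$ in a small neighborhood of a fixed point $(\alpha_0,\beta_0)$, at least for $p$ away from $\beta$; this is what keeps the ODE non-degenerate.

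Next I would obtain continuity of $\gamma$. Fix $(\alpha_0,\beta_0)$. Since, by the proof of Proposition \ref{prop:exists}, $C\mapsto w_{\alpha,\beta}(\beta)$ is continuous and strictly increasing, tending to $+\infty$ as $C\to+\infty$ and being $<F(1)$ for $C$ small, the equation $w_{\alpha,\beta}(\beta)=F(1)$ has a unique root $\gamma(\alpha,\beta)$, and moreover $\partial_C w_{\alpha,\beta}(\beta)>0$ strictly at that root (deriving \eqref{eq:w} in $C$ gives a linear ODE for $\partial_C w$ with strictly positive source, hence $\partial_C w>0$). Continuity of $\gamma$ then follows from the implicit function theorem applied to $\Phi(C,\alpha,\beta):=w(C,\alpha,\beta;\beta)-F(1)$, whose partial derivative in $C$ is nonzero; here one must be slightly careful that evaluating $w$ at the moving endpoint $p=\beta$ is still a continuous operation, which it is since $w$ is jointly continuous in $(C,\alpha,\beta,p)$ up to the endpoint (the solution extends a bit past $\beta$ because $p_0>\theta_c$, and $\beta_0<1\le p_0$, or if $\beta_0$ is close to $1$ one uses that $F(1)<F(\text{anything near }1)$ only marginally — see the obstacle below).

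Finally, continuity of $\lambda$: once $\gamma$ is continuous, $w_{\alpha,\beta}=w(\gamma(\alpha,\beta),\alpha,\beta;\cdot)$ is continuous in $(\alpha,\beta)$ uniformly on $[\alpha,\beta]$, and \eqref{eq:lambda} expresses $\lambda$ as an integral of $1/\sqrt{2(w_{\alpha,\beta}(p)-F(p))}$. The integrand has an integrable singularity at the endpoints (as noted after \eqref{eq:lambda}, convergence is "straightforward", coming from $w_{\alpha,\beta}(\alpha)-F(\alpha)=-F(\alpha)>0$ actually being bounded away from $0$ near $p=\alpha$ so there is no singularity there, and near $p=\beta$ from $w_{\alpha,\beta}(\beta)-F(\beta)=F(1)-F(\beta)$, which is $>0$ when $\beta<1$). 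To pass the limit under the integral I would produce a dominating function uniform in $(\alpha,\beta)$ near $(\alpha_0,\beta_0)$ — using the lower bound $w_{\alpha,\beta}(p)-F(p)\ge \tfrac12\gamma^2(p-\alpha)^2$ type estimates on $(\alpha,\theta_c)$ and the strict positivity of $w_{\alpha,\beta}-F$ on the rest — and then invoke dominated convergence. The main obstacle I anticipate is precisely the uniform control of $w_{\alpha,\beta}-F$ from below near the right endpoint $p=\beta$ when $\beta$ is allowed close to $1$: there $F(1)-F(\beta)$ can be small and one needs a lower bound on $w_{\alpha,\beta}-F$ that does not collapse, which should follow from the differential inequality $w'_{\alpha,\beta}\le \gamma\sqrt{2(w_{\alpha,\beta}-F(\theta))}$ together with monotonicity of $\gamma$ in a neighborhood; handling this carefully, and checking that the endpoint evaluations in the implicit-function step remain valid up to $\beta=1$ excluded, is where the real work lies, the rest being routine continuous-dependence arguments.
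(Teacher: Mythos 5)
Your plan follows essentially the same route as the paper's proof: continuous dependence of the solution of \eqref{eq:w} on the parameters $(\gamma,\alpha,\beta)$ (justified by the fact that $w-F$ stays bounded away from $0$ along the relevant trajectories, which is what the paper means by ``uniformly Lipschitz along any forward trajectory''), combined with strict monotonicity in $C$ to invert the endpoint condition $w(\beta)=F(1)$ and obtain continuity of $\gamma$, and then continuity of $\lambda$ from the integral formula \eqref{eq:lambda}. Your version is more detailed (implicit function theorem, dominated convergence, explicit lower bounds near $p=\beta$), but it is the same argument, and the endpoint issues you flag are indeed harmless since $\beta$ stays locally away from $1$.
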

\begin{proof}
We transform problem \eqref{eq:w2} into a ordinary differential equation $w' (p) = \gamma J(w(p), p)$, with either $w(\alpha) = 0$ or
$w(\beta) = F(1)$, and $\gamma > 0$.

On the prescribed set for $\alpha, \beta$, the function $J$ is uniformly Lipschitz along any forward trajectory.
This implies the continuity of $w$ with respect to $\gamma$, and finally the continuity of $\gamma$ with respect to $\beta$ (in the
case when we impose $w(\alpha) = 0$), and with respect to $\alpha$ (when we impose $w(\beta) = F(1)$).

This implies the continuity of $\lambda$.
\end{proof}

\begin{proposition}
 Let $L > 0$. If $(C, L) \in \calB(f)$ then $C > c_* (f)$.
 \label{prop:cstar}
 \end{proposition}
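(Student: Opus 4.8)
The plan is to show that if there exists a $(C,L)$-barrier then $C$ must exceed the bistable wave speed $c_*(f)$, by exhibiting a \emph{moving} subsolution to \eqref{eq:onR} that would force propagation whenever $C \le c_*(f)$, thereby contradicting the blocking property already established in Proposition \ref{prop:passblock} (first point). Concretely, suppose $(C,L) \in \calB(f)$ with $C \le c_*(f)$, and let $p_B$ be the associated barrier. Recall that on $[-L,L]$ the barrier satisfies $-p_B'' - C p_B' = f(p_B)$, i.e. it is a stationary solution of $\p_t p - \p_{xx} p - C \p_x p = f(p)$, which is exactly the equation obtained by looking at \eqref{eq:u} in a frame moving at speed $C$. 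The key idea: the bistable traveling wave $p_*$ of Proposition \ref{prop:bistbase}, written as $u(t,x)=p_*(x-c_*t)$, gives a genuine entire solution of \eqref{eq:u}; in the moving frame $\xi = x - Ct$ it becomes $\tilde p_*(t,\xi) = p_*(\xi + (C-c_*)t)$, which is a supersolution (indeed a solution) of the constant-coefficient equation $\p_t p - \p_{\xi\xi} p - C \p_\xi p = f(p)$ and, since $C - c_* \le 0$, drifts to the left (or is stationary).

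First I would localize the obstacle: on $[-L,L]$ equation \eqref{eq:onR} reads $\p_t p - \p_{xx}p - C\p_x p = f(p)$, while outside it reads $\p_t p - \p_{xx} p = f(p)$. Since $C>0$, on $\RR \setminus [-L,L]$ the drift term helps: a left-moving bistable profile is still a subsolution there because adding $-C\p_x p$ to a decreasing profile only increases the left-hand side in the region $x<-L$ and we can handle $x>L$ by the same monotonicity (a decreasing front has $\p_x p \le 0$, so $-C\p_x p \ge 0$, making any bistable traveling front a subsolution of the driftless equation once we incorporate $-C\p_x p \ge 0$ with the correct inequality direction). The cleaner route is: build a subsolution of the \emph{full} heterogeneous equation \eqref{eq:onR} out of a slightly sped-up bistable front. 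Take $\phi_\delta(t,x) = p_*\big(x - (c_* + \delta) t - \tau\big)$ for small $\delta>0$ chosen so that $c_*+\delta$ is still $> C$ is \emph{false}; instead, since we are assuming $C \le c_*$, we want a front that moves \emph{faster to the right than the obstacle can resist}. Here I would instead invoke the sub/supersolution characterization: because $C \le c_*(f)$, I claim $p_B$ cannot block a front-like datum with $\lim_{-\infty} p^0 = 1$. Indeed, place an $\alpha$-bubble $v_\alpha(\cdot - \sigma)$ (Proposition \ref{prop:propagules} with $h\equiv 0$) far to the left; it is a subsolution of \eqref{eq:onR} since its support is in $(-\infty,-L)$ where \eqref{eq:onR} is the driftless bistable equation, so the solution $p$ starting from it is increasing in $t$ and converges to a stationary solution $p_\infty$ of \eqref{eq:TW} with the limit conditions relaxed. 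If $p_\infty(+\infty)=0$ then $p_\infty$ is a barrier; then comparing with $p_B$ and its translates, and using that the barrier blocks, one must check $p_\infty \le$ some barrier — but then the solution $p(t,\cdot)$ stays below that barrier, so $p_\infty$ would itself have to sit below the \emph{maximal} barrier, and I need to derive a contradiction with $C \le c_*$.

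The cleanest argument, and the one I would actually write, uses the comparison between the barrier equation on $[-L,L]$ and the bistable wave: on $[-L,L]$, $p_B$ solves $-p_B''-Cp_B'=f(p_B)$ with $p_B$ decreasing; extend the analysis by the phase-plane / energy identity. Define $\psi(x)=\tfrac12 p_B'(x)^2 + F(p_B(x))$. On $\RR\setminus[-L,L]$, $\psi$ is constant ($=F(1)$ for $x<-L$, $=0$ for $x>L$), while on $[-L,L]$, $\psi'(x) = -C p_B'(x)^2 \ge 0$... wait, $p_B'<0$ so $\psi' = p_B'(p_B''+F'(p_B)) = p_B'(-f(p_B)-Cp_B' + f(p_B)) = -C (p_B')^2 \le 0$. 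Hence $F(1) = \psi(-L) \ge \psi(L) = 0$, consistent, and moreover $F(1) = \int_{-L}^L C (p_B')^2\,dx$. This gives a lower bound $F(1) \le C \cdot \|p_B'\|_\infty \cdot \mathrm{Var}(p_B) = C \|p_B'\|_\infty$, but $\|p_B'\|_\infty$ itself is controlled by $F(1)$ via the energy identity: $\tfrac12(p_B')^2 \le \psi - F(\theta) \le F(1) - F(\theta)$ on $[-L,L]$. This yields $F(1) \le C\sqrt{2(F(1)-F(\theta))}$, i.e. a lower bound on $C$, but not obviously $c_*(f)$ itself — so more is needed.

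\textbf{The main obstacle}, and where I expect to spend most effort, is precisely identifying the threshold as \emph{exactly} $c_*(f)$ rather than some other explicit constant. The right tool is the monotonicity and comparison structure: I would argue that a $(C,L)$-barrier, being a stationary solution of $\p_t p = \p_{xx}p + C\p_x p + f(p)$ on $[-L,L]$ that connects near-$1$ to near-$0$, combined with the driftless bistable equation outside, can be compared on the whole line with the bistable traveling wave $p_*(x-c_*t)$ viewed in the frame moving at speed $C$. If $C \le c_*$, the wave $p_*$ put initially below $p_B$ and to the left will, under the flow of \eqref{eq:onR}, move right (its effective speed in the heterogeneous medium is at least $c_*$ outside $[-L,L]$ and the obstacle only occupies a bounded region), overtake $p_B$, and force $p \to 1$ somewhere — contradicting $p(t,\cdot)\le p_B$ from Proposition \ref{prop:passblock}. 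Making "overtake the bounded obstacle" rigorous requires a sliding/intersection-number argument or an explicit supersolution on $[-L,L]$ showing the obstacle's "resistance budget" $F(1)$ is insufficient when $C\le c_*$; I would carry this out by noting that for $C = c_*$ the stationary problem $-p''-c_*p'=f(p)$ on $\RR$ has \emph{no} solution with $p(-\infty)=1,p(+\infty)=0$ other than translates of $p_*$, which is \emph{not} in $L^\infty$-blocking position since it is a traveling wave — so no finite $L$ can produce a barrier at $C=c_*$, and by the monotonicity of the blocking condition in $C$ (Proposition \ref{prop:positive}: enlarging $C$ only helps), $C>c_*$ is necessary. I expect the delicate point to be ruling out the boundary case $C=c_*$ cleanly, which I would do via a strict comparison / strong maximum principle argument against translates of $p_*$.
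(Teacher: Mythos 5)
Your proposal does not close the key step, and the route you sketch for it would not work as stated. The reduction at the end is fine: by Proposition \ref{prop:positive}, if some $(C,L)$-barrier existed with $C<c_*(f)$ then $(c_*(f),L)\in\calB(f)$ as well, so it indeed suffices to rule out barriers at $C=c_*(f)$. But your argument for that exclusion — ``for $C=c_*$ the stationary problem $-p''-c_*p'=f(p)$ on $\RR$ has no solution connecting $1$ to $0$ other than translates of $p_*$, hence no finite $L$ can produce a barrier'' — concerns the \emph{wrong equation}: a $(c_*,L)$-barrier satisfies $-p''-c_*p'=f(p)$ only on $[-L,L]$ and $-p''=f(p)$ outside, so uniqueness of the traveling wave for the homogeneous drift-everywhere problem says nothing about it. Likewise, your dynamic idea of letting a translate of $p_*(x-c_*t)$ ``overtake the bounded obstacle'' cannot be implemented by direct comparison: inside $[-L,L]$ one has $-C\,\p_x p_*>0$ for the decreasing profile, so $p_*$ is a \emph{super}solution of \eqref{eq:onR} there, not a subsolution, and this is exactly the mechanism by which a large $C$ blocks the wave; a sliding or intersection-number argument is not supplied and is not obviously available. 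Your energy identity $F(1)=C\int_{-L}^{L}(p_B')^2\,dx$ is correct and yields a lower bound on $C$, but, as you note yourself, not the sharp threshold $c_*(f)$.

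The missing idea is the one the paper uses: work in the variable $w(p)=\tfrac12 (p')^2+F(p)$, i.e.\ the reformulation \eqref{eq:alphabeta}--\eqref{eq:w2} of Proposition \ref{prop:exists}. A $(C,L)$-barrier gives a solution $w_{\alpha,\beta}$ of $w'=C\sqrt{2(w-F)}$ with $w_{\alpha,\beta}(\alpha)=0$ and $w_{\alpha,\beta}(\beta)=F(1)$ for some $0<\alpha<\beta<1$, while the bistable wave of Proposition \ref{prop:bistbase} gives $w_{0,1}$ solving the same ODE with coefficient $c_*(f)$, $w_{0,1}(0)=0$, $w_{0,1}(1)=F(1)$, and $w_{0,1}$ strictly increasing (its associated length $\lambda(0,1)$ is infinite). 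If $C\le c_*(f)$, ODE comparison gives $w_{\alpha,\beta}\le w_{0,1}$ on $[\alpha,\beta]$, hence $F(1)=w_{\alpha,\beta}(\beta)\le w_{0,1}(\beta)<w_{0,1}(1)=F(1)$, a contradiction; so no interior pair $(\alpha,\beta)$ — i.e.\ no finite $L$ — can produce a barrier with $C\le c_*(f)$. Your plan, which stays either in the original PDE variables or with the rough energy budget, never reaches this comparison, and this is precisely where the sharp constant $c_*(f)$ enters.
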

\begin{proof}
This comes from the fact that there exists $w_{0, 1}$ such that
\beq
\bepa
w'_{0, 1} = c_*(f) \sqrt{2 ( w_{0, 1} - F)},
\\[10pt]
w_{0, 1} (0) = 0, \, w_{0, 1} (1) = F(1).
\eepa
\eeq
And the associated $\lambda(0, 1)$ is equal to $+\infty$.
By comparison of solutions to \eqref{eq:w2}, no $(\alpha, \beta) \not=(0, 1)$ could give a $w_{\alpha, \beta}$ associated with $C \leq c_*(f)$.
\end{proof}

\subsection{A graphical digression on phase plane analysis.}
\label{phaseplane}
Equation \eqref{eq:alphabeta} can be easily interpreted in the phase plane $(p, p')$. 
For this interpretation, we follow the presentation of \cite{lewiskeener}.
Let $X=p$, $Y = p'$. The equation rewrites into the system
\beq
\bepa
X ' = Y, \quad X(0) = X_0,
\\[10pt]
Y ' = - C Y - f(X), \quad Y(0) = Y_0.
\eepa
\label{sys:XY}
\eeq
The energy $E : \RR^2 \to \RR$ may be defined as
\beq
E (X, Y) := \f{1}{2} Y^2 + F(X).
\label{eq:energy}
\eeq

Two interesting curves appear:
\begin{align}
E^{-1} \big( F(1) \big) \supset \Gamma_{B} &:= \Big\{ (x, y) \in [0, 1] \times (-\infty, 0], \, y = - \sqrt{2\big(F(1) - F(x) \big)} \Big\},
\\
 E^{-1} \big( 0 \big) \supset \Gamma_{A} &:= \Big\{ (x, y) \in [0, \theta_c] \times (-\infty, 0], \, y = - \sqrt{-2 F(x)} \Big\}.
\end{align}

A $(C, L)$-barrier can be seen there as a trajectory of system \eqref{sys:XY} with $\big(X(-L), Y(-L) \big) \in \Gamma_{B}$
such that $\big(X(L), Y(L) \big) \in \Gamma_A$.
Therefore, we are left studying the image of $\Gamma_B$ by the flow of \eqref{sys:XY},
which we denote by $\phi^C_t : \RR^2 \times \RR^2$, at time $t$.

\begin{lemma}
 The energy decreases along trajectories:
 \[
  \f{d}{dt} E \big( X(t), Y(t) \big) = - C Y(t)^2.
 \]
 At the three equilibrium points of the system it is equal to:
 \[
  E(0, 0) = 0, \, E(\theta, 0) = F(\theta) < 0, \, E(1, 0) = F(1) > 0.
 \]
 It is therefore minimal at $(\theta, 0)$.
 \label{lem:decrenergy}
\end{lemma}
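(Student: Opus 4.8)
The plan is to verify the two displayed identities by direct differentiation and evaluation, which is entirely elementary once the system \eqref{sys:XY} and the definition \eqref{eq:energy} of $E$ are on the table.

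First I would compute the time-derivative of $t \mapsto E\big( X(t), Y(t) \big)$ along a trajectory of \eqref{sys:XY} by the chain rule:
\[
 \f{d}{dt} E\big( X(t), Y(t) \big) = F'(X) X' + Y Y' = f(X) \, Y + Y \big( - C Y - f(X) \big) = - C Y^2,
\]
using $F' = f$ (which holds by the definition $F(x) = \int_0^x f(\xi)\,d\xi$ recalled in Section \ref{sec:gen}), the first equation $X' = Y$, and the second equation $Y' = - CY - f(X)$. This gives the first claim. In particular, since $C > 0$, the energy is non-increasing, and strictly decreasing except where $Y = 0$.

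Next I would evaluate $E$ at the three equilibria. The equilibria of \eqref{sys:XY} are the points $(X, Y)$ with $Y = 0$ and $f(X) = 0$, hence $X \in \{0, \theta, 1\}$ by bistability of $f$. Plugging into \eqref{eq:energy} with $Y = 0$ gives $E(X, 0) = F(X)$, so $E(0,0) = F(0) = 0$, $E(\theta, 0) = F(\theta)$, and $E(1, 0) = F(1)$. The signs follow from the standing assumptions of Section \ref{sec:gen}: $F(\theta) = \min_{[0,1]} F < 0$ since $F$ decreases on $(0, \theta)$ (where $f < 0$) and $\theta < \theta_c$ so $F(\theta) < F(\theta_c) = 0$; and $F(1) = \int_0^1 f > 0$ by hypothesis. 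Comparing the three values, $F(\theta) < 0 < F(1)$, shows $(\theta, 0)$ realizes the minimum among the equilibria, which is the last assertion.

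There is essentially no obstacle here — the statement is a bookkeeping lemma recording the dissipative structure of \eqref{sys:XY} that will be exploited in the phase-plane analysis; the only point requiring a glance back is that the sign conditions $F(\theta) < 0 < F(1)$ are exactly the bistability-plus-positive-area hypotheses already imposed on $f$ in Section \ref{sec:gen}. I would simply present the two chain-rule computations and the substitution $Y = 0$, then cite those hypotheses for the signs.
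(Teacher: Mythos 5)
Your proof is correct and is exactly the computation the paper has in mind — the paper simply states ``This is a straightforward computation,'' and your chain-rule derivation of $\f{d}{dt}E = f(X)Y + Y(-CY - f(X)) = -CY^2$ together with the evaluation $E(X,0)=F(X)$ at the zeros of $f$, with the signs $F(\theta)<0<F(1)$ coming from the bistability and positive-area hypotheses of Section \ref{sec:gen}, supplies precisely those details.
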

This is a straightforward computation.

Let $\chi \in [\theta_c, 1]$. We define the level set of $E$
\[
\Gamma_{\chi} := E^{-1} \big( F(\chi) \big) = \Big\{ (x, y) \in [0, \chi] \times (-\infty, 0], y = - \sqrt{2 \big( F(\chi) - F(x) \big)}
\Big\} .
\]
Note that $\Gamma_1 = \Gamma_A$ and $\Gamma_{\theta_c} = \Gamma_B$, by definition.

For $\chi \in [\theta_c, 1]$ and $P \in \Gamma_{\chi}$, let $\nu_{\chi} (P)$ be the inward normal vector (``inward'' meaning pointing
towards $y=0$).
Then we claim
\begin{lemma}
For all $\chi \in [\theta_c, 1]$, $P \in \Gamma_{\chi}$, $C > 0$, the flow of \eqref{sys:XY} is inward: $\f{d}{dt}\phi^C_0 (P) \cdot
\nu_A (P) > 0$.
 \label{lem:inward}
\end{lemma}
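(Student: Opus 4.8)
The plan is to compute the sign of the inner product $\frac{d}{dt}\phi^C_0(P)\cdot\nu_\chi(P)$ directly from the vector field of \eqref{sys:XY} and the explicit parametrization of $\Gamma_\chi$. First I would write $\Gamma_\chi$ as the graph $y = -\sqrt{2(F(\chi)-F(x))}$ for $x\in[0,\chi]$, and compute an outward (or inward) normal vector from this graph representation: if $G(x,y) := E(x,y) - F(\chi) = \tfrac12 y^2 + F(x) - F(\chi)$, then $\Gamma_\chi \subset G^{-1}(0)$ and $\nabla G = (f(x), y)$ (recall $F' = f$ here — though I notice the paper wrote $F(x)=\int_0^x F$, this must be a typo for $\int_0^x f$, since $F(\theta_c)=0$ and $\tfrac12 p'^2 + F(p)$ is the energy). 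Since on $\Gamma_\chi$ we have $y \le 0$, the gradient $\nabla G = (f(x),y)$ points from the region $\{G<0\}$ (which near $\Gamma_\chi$ with $y<0$ is the region of larger $|y|$, i.e.\ away from the axis $y=0$) toward $\{G>0\}$; hence $-\nabla G$ is the inward direction, so $\nu_\chi(P)$ is positively proportional to $-(f(x),y) = (-f(x), -y)$.

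Next I would dot this with the velocity field $\big(X'(0), Y'(0)\big) = \big(y, -Cy - f(x)\big)$ at $P=(x,y)$:
\[
\big(y,\,-Cy-f(x)\big)\cdot\big(-f(x),\,-y\big) = -y f(x) + Cy^2 + y f(x) = C y^2.
\]
So the inner product equals $C y^2$ (up to the positive normalization constant of $\nu_\chi$), which is $>0$ precisely when $y\neq 0$ and $C>0$. This is, of course, just the statement of Lemma \ref{lem:decrenergy} that $\frac{d}{dt}E = -Cy^2 < 0$: energy strictly decreases, so trajectories cross every level curve $\Gamma_\chi$ strictly inward, in the direction of decreasing energy, which for $y<0$ on the lower branch means toward the axis $y=0$.

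The only genuine points to address carefully are the boundary/degenerate cases of $\Gamma_\chi$. At the endpoint $x=\chi$ we have $y=0$, so $Cy^2 = 0$ and the claimed strict inequality fails at that single point; I would note that $\Gamma_\chi$ as defined is the closed arc but the relevant barrier trajectories hit it at interior points (for a $(C,L)$-barrier we need $(X(L),Y(L))\in\Gamma_A$ with $Y(L) = p'(L) < 0$ by Lemma \ref{lem:decreasingbarrier}), or alternatively restrict the statement to $P$ with $y<0$; either way the phrasing ``the flow is inward'' should be read on the open arc. At the other endpoint $x=0$ (only relevant for $\chi=1$, where $\Gamma_1=\Gamma_A$ reaches $(0, -\sqrt{2F(1)})$) there is no degeneracy since $y<0$ there. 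I would also remark that $\nu_\chi$ is well-defined and continuous on the open arc because $\nabla G = (f(x),y)$ never vanishes there: $y<0$ already forces $\nabla G\neq 0$. So the whole proof is the one-line computation above plus these boundary remarks; there is no real obstacle, the content is entirely contained in Lemma \ref{lem:decrenergy}.

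\begin{proof}[Lemma \ref{lem:inward}]
Fix $\chi\in[\theta_c,1]$ and $P=(x_0,y_0)\in\Gamma_\chi$. By definition $\Gamma_\chi$ lies in the level set $E^{-1}(F(\chi))$, and writing $G(x,y):=E(x,y)-F(\chi)=\tfrac12 y^2+F(x)-F(\chi)$ we have $\Gamma_\chi\subset G^{-1}(0)$ with $\nabla G(x,y)=(f(x),y)$ (using $F'=f$). On the open arc $y_0<0$, so $\nabla G(P)=(f(x_0),y_0)\neq 0$ and $\nabla G$ points in the direction of increasing energy, i.e.\ away from the axis $\{y=0\}$; hence the inward normal is $\nu_\chi(P)=-\nabla G(P)/|\nabla G(P)|=(-f(x_0),-y_0)/|\nabla G(P)|$. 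The velocity of the flow at $P$ is $\tfrac{d}{dt}\phi^C_0(P)=\big(y_0,\,-Cy_0-f(x_0)\big)$, so
\[
\tfrac{d}{dt}\phi^C_0(P)\cdot\nu_\chi(P)=\f{1}{|\nabla G(P)|}\Big(-y_0 f(x_0)+Cy_0^2+y_0 f(x_0)\Big)=\f{C y_0^2}{|\nabla G(P)|}>0,
\]
since $C>0$ and $y_0<0$. (This is consistent with Lemma \ref{lem:decrenergy}: $\tfrac{d}{dt}E(X(t),Y(t))=-Cy_0^2<0$, so trajectories cross $\Gamma_\chi$ strictly inward, toward lower energy.) At the endpoint $x_0=\chi$ of $\Gamma_\chi$ one has $y_0=0$ and the quantity vanishes, but this point plays no role below: a $(C,L)$-barrier meets $\Gamma_A$ at $(p(L),p'(L))$ with $p'(L)<0$ by Lemma \ref{lem:decreasingbarrier}, hence at an interior point of the arc.
\end{proof}
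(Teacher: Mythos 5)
Your proof is correct and essentially the same as the paper's: both are a one-line computation of the velocity field dotted with the inward normal, and your normal $-\nabla E(P)=(-f(x_0),-y_0)$ is just a positive multiple of the paper's graph-based normal $\bigl(-f(\alpha)/\sqrt{2(F(\chi)-F(\alpha))},\,1\bigr)$, so your $C y_0^2/|\nabla E(P)|$ and the paper's $C\sqrt{2(F(\chi)-F(\alpha))}$ are the same quantity up to positive normalization. Your remark that strict positivity degenerates at the endpoint $(\chi,0)$ (where the paper's normal formula is not even defined) is a fair and harmless refinement, since barriers meet $\Gamma_A$ at points with $p'<0$.
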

\begin{proof}
First, system \eqref{sys:XY} may be rewritten $\dot{u} = G(u), u(0) = u_0$, where $u = (X, Y)$ and $u_0 = (X_0, Y_0)$.
Then, $\f{d}{dt} \phi^C_0 (u_0) = G(u_0)$, obviously (and similarly, $\f{d}{dt} \phi^C_t (u_0) = G( u(t))$).

Now, recall that $\Gamma_{\chi} = \{ (\alpha, -\sqrt{2\big( F(\chi) -  F(\alpha) \big)} \text{ where } 0 \leq \alpha \leq \chi \}$.
Hence if $P = (\alpha, -\sqrt{2 (F(\chi) - F(\alpha) )})$,
\[
 \nu_{\chi} (P) = \begin{pmatrix}
              - \displaystyle\f{f(\alpha)}{\sqrt{2(F(\chi) -  F(\alpha))}} \\
              1
             \end{pmatrix}
             \]
and
\[
  \f{d}{dt} \phi^C_0 (P) = G(p) =  \begin{pmatrix}
                    - \sqrt{2 (F(\chi) - F(\alpha))} \\
                    C \sqrt{2 (F(\chi) - F(\alpha))} - f(\alpha)
                   \end{pmatrix}.
\]
Hence
\[
 D \phi_0^C (P) \cdot \nu_{\chi} (P) = C \sqrt{2 (F(\chi) - F(\alpha))} > 0.
\]

\end{proof}

The following crucial property will make us able to show that barriers are ordered.
Its graphical interpretation is shown on Figure \ref{fig:sketch}.

\begin{lemma}
Let $p_1, p_2 \in (0, 1)$ with $p_1 < p_2$. We denote $(X_1, Y_1)$ (resp. $(X_2, Y_2)$) the unique solution of \eqref{sys:XY} with
$X_1 (0) = p_1$ (resp. $X_2 (0) = p_2$) and $Y_1 (0) = - \sqrt{2 \big( F(1) - F(p_1) \big)}$ (resp. $Y_2 (0) = - \sqrt{2 \big( F(1) -
F(p_2) \big)}$.

Let $t_M > 0$ be such that for all $t < t_M$, $Y_1, Y_2 < 0$, $X_1, X_2 > 0$. Then

\beq
\forall t < t_M, \quad X_1 (t) < X_2 (t).
\eeq
\label{keyineq}
\end{lemma}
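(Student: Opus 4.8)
The plan is to compare the two trajectories by tracking the sign of the difference $D(t) := X_2(t) - X_1(t)$ on the interval $[0, t_M)$. At $t=0$ we have $D(0) = p_2 - p_1 > 0$, so it suffices to show that $D$ cannot reach $0$ before $t_M$. I would argue by contradiction: suppose there is a first time $t_0 \in (0, t_M)$ with $D(t_0) = 0$, i.e. $X_1(t_0) = X_2(t_0) =: q \in (0,1)$, and $D > 0$ on $[0, t_0)$. Since $X_1 < X_2$ on $[0, t_0)$ and $D(t_0) = 0$, we must have $D'(t_0) \leq 0$, that is $Y_2(t_0) \leq Y_1(t_0)$ (using $X_i' = Y_i$). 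So at $t_0$ the two trajectories occupy the same point $q$ on the $X$-axis but trajectory $1$ has the larger (less negative) $Y$-value.

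The key is then to rule this out using the energy. Recall from Lemma \ref{lem:decrenergy} that $\frac{d}{dt} E(X_i(t), Y_i(t)) = -C Y_i(t)^2 \leq 0$, so $E$ is non-increasing along each trajectory; moreover on $[0, t_M)$ we have $Y_i < 0$, so $E$ is in fact \emph{strictly} decreasing along both trajectories. Both start on the same level set: $E(X_i(0), Y_i(0)) = \frac12 (2(F(1) - F(p_i))) + F(p_i) = F(1)$ for $i = 1, 2$. Hence for every $t \in (0, t_M)$,
\[
 E\big(X_1(t), Y_1(t)\big) < F(1) \quad \text{and} \quad E\big(X_2(t), Y_2(t)\big) < F(1).
\]
The contradiction I want is that trajectory $1$ has been running strictly longer ``in energy-loss time'' — but that is not quite automatic, so the honest route is to compare the two energies \emph{at the common point} $q$. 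At $t_0$ we have $E(q, Y_1(t_0)) = \frac12 Y_1(t_0)^2 + F(q)$ and $E(q, Y_2(t_0)) = \frac12 Y_2(t_0)^2 + F(q)$; since $Y_1(t_0) \geq Y_2(t_0)$ and both are negative, $|Y_1(t_0)| \leq |Y_2(t_0)|$, hence
\[
 E\big(q, Y_1(t_0)\big) \leq E\big(q, Y_2(t_0)\big).
\]
This says trajectory $1$ has lost \emph{at least as much} energy as trajectory $2$ by time $t_0$: $F(1) - E(q, Y_1(t_0)) \geq F(1) - E(q, Y_2(t_0))$, i.e. $\int_0^{t_0} C Y_1^2 \, dt \geq \int_0^{t_0} C Y_2^2 \, dt$.

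To turn this into a contradiction I would compare the trajectories in the phase plane by reparametrizing by $X$ rather than $t$. On $[0, t_0)$ both $X_i$ are strictly decreasing (since $Y_i < 0$), so I can write each trajectory as a graph $Y = \eta_i(X)$ over $X$, with $\eta_i$ solving $\eta_i' = -C - f(X)/\eta_i$ and $\eta_i(p_i) = -\sqrt{2(F(1) - F(p_i))}$. Equivalently, with $W_i(X) := \frac12 \eta_i(X)^2 + F(X) = E$-along-the-trajectory, one gets $W_i'(X) = C \eta_i(X) = -C\sqrt{2(W_i(X) - F(X))}$, exactly the ODE \eqref{eq:w} (run backwards in $X$), with $W_i(p_i) = F(1)$. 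Now $p_1 < p_2$ and both solutions pass through the same initial energy $F(1)$ but at different $X$-values; a direct comparison-principle argument on this scalar ODE — comparing $W_1$ and $W_2$ on the overlap $[p_1 \vee \text{(something)}, \ldots]$, or more cleanly noting that the trajectory through $(p_2, -\sqrt{2(F(1)-F(p_2))})$ cannot cross the trajectory through $(p_1, -\sqrt{2(F(1)-F(p_1))})$ because at a would-be crossing point the ODE $W' = -C\sqrt{2(W-F)}$ forces equal slopes, and uniqueness backward from that point forces the trajectories to coincide, contradicting $p_1 \neq p_2$ — gives $X_1(t) < X_2(t)$ throughout. The main obstacle, and the step to write carefully, is precisely this non-crossing argument: the vector field of \eqref{sys:XY} is not monotone, so one cannot invoke a naive comparison principle for the planar system; the trick is to pass to the scalar ODE for $W$ (valid as long as $Y < 0$, which is guaranteed by the hypothesis $t < t_M$), where $W \mapsto -C\sqrt{2(W-F)}$ is Lipschitz away from $W = F$ and the standard uniqueness/comparison theory applies, yielding that distinct trajectories on the level-$F(1)$ family stay ordered in $X$.

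\qed
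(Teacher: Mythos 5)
Your argument is correct in substance and shares the paper's skeleton --- the first crossing time $t_0$, the velocity comparison giving $Y_1(t_0)\ge Y_2(t_0)$, and a non-crossing argument in the phase plane --- but you implement the non-crossing step differently. The paper builds a trapping region bounded by the energy level set $\Gamma_B$, the second trajectory and the vertical line $X=X_1(t_0)$, uses Lemma \ref{lem:inward} to show the first trajectory enters this region and can only exit through the vertical segment, concludes $Y_1(t_0)\le Y_2(t_0)$, hence equality, and then contradicts uniqueness for the planar system \eqref{sys:XY}. You instead write the energy along each trajectory as a function of $X$ (the Berestycki--Nicolaenko--Scheurer variable already used in Proposition \ref{prop:exists}) and invoke uniqueness/comparison for the resulting scalar ODE, obtaining the strict ordering $W_2<W_1$ on the common $X$-range, which clashes with the inequality at the crossing point. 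This buys a purely one-dimensional comparison in place of the planar trapping-region geometry, at the price of justifying the reparametrization (fine here, since $X_i$ is strictly decreasing while $Y_i<0$) and the Lipschitz property of $W\mapsto C\sqrt{2(W-F)}$ away from $W=F$ (also fine, since $W_i-F=\tfrac12 Y_i^2>0$ for $t\le t_0<t_M$).

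Two points to repair in the write-up. First, a sign: since $X$ decreases along the flow, the correct equation is $W_i'(X)=-C\,\eta_i(X)=+C\sqrt{2\big(W_i(X)-F(X)\big)}$, i.e.\ exactly \eqref{eq:w}, not a backward version; this does not affect your use of uniqueness and monotonicity, but it should be fixed. Second, your closing phrase ``uniqueness forces the trajectories to coincide, contradicting $p_1\neq p_2$'' is not the right contradiction: two distinct starting points may perfectly well lie on the same orbit. The correct assembly, for which you already have every ingredient, is: since $E$ strictly decreases along trajectory $2$ (Lemma \ref{lem:decrenergy} with $Y_2<0$) and $X_2$ passes through the value $p_1$ at some time in $(0,t_0)$ (because $X_2(t_0)=q<p_1<p_2$), one has $W_2(p_1)<F(1)=W_1(p_1)$; uniqueness for the scalar ODE then preserves $W_2<W_1$ down to $X=q$, so $\tfrac12 Y_2(t_0)^2+F(q)<\tfrac12 Y_1(t_0)^2+F(q)$, which contradicts the inequality $E\big(q,Y_1(t_0)\big)\le E\big(q,Y_2(t_0)\big)$ that you derived from $Y_1(t_0)\ge Y_2(t_0)$. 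With these two repairs the proof is complete.
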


\begin{figure}[h]
\centering
\includegraphics[width=.8\textwidth]{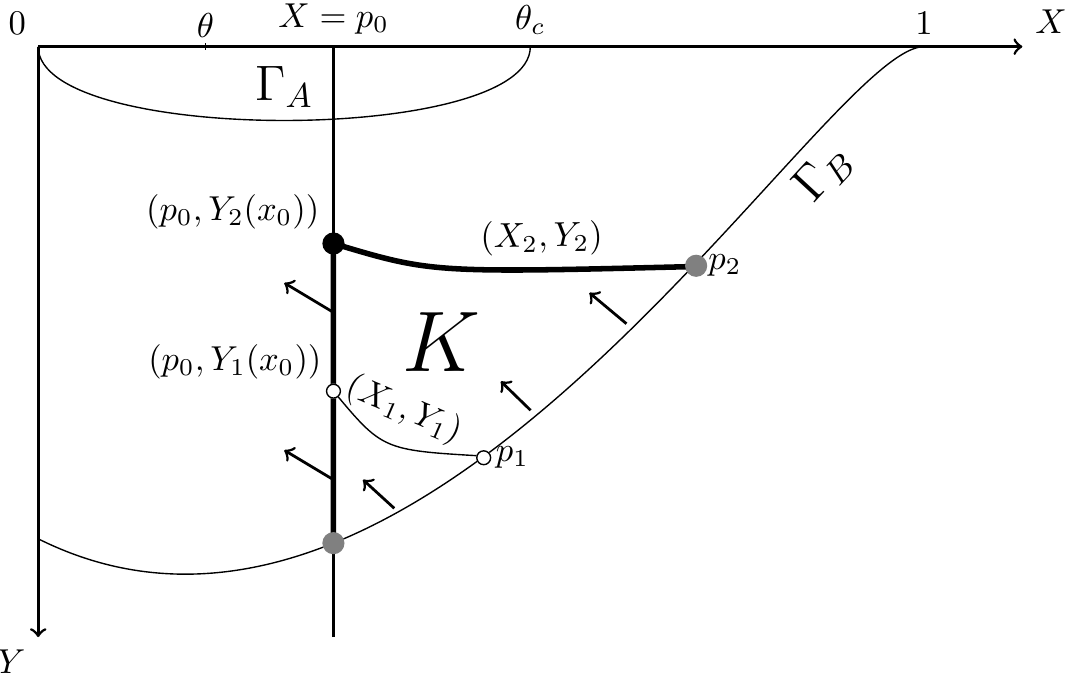}
\caption{Sketch of the phase-plane argument in the proof of Lemma \ref{keyineq}. Because the trajectories satisfy $\dot{X}=Y$, this picture is impossible. On the other hand, $Y_1(x_0) > Y_2 (x_0)$ would imply that the two trajectories cross each other, which is impossible as well. Whence the claim.}
\label{fig:sketch}
\end{figure}

\begin{proof}
To prove this we introduce
\[
	t_0 := \inf \{ t > 0, \, X_1 (t) = X_2 (t) \}.
\]
If $t_0 = + \infty$, we are done. If $t_0 < +\infty$, we first note that if $t < t_0$ then $X_1 (t) < X_2 (t)$, by definition of $t_0$
and continuity of $X_1, X_2$. As a consequence, $\f{d}{dt} (X_2 - X_1) (t_0) \leq 0$, and $Y_1 (t_0) \geq Y_2 (t_0)$.

We show that phase-plane reasoning imposes
\[
	Y_1 (t_0) \leq Y_2 (t_0).
\]
To prove this fact, we first observe that \eqref{sys:XY} has its flow from the right to the left along any vertical line ($X =$
constant), in the quadrant $X> 0, Y<0$ (because $\dot{X} = Y$).

Moreover, $Y_2 (t_0) > - \sqrt{2 \big( F(1) - F(t_0) \big)}$, because $E(X_2 (t_0), Y_2 (t_0) ) < F(1) = E(X_2 (0), Y_2(0) )$, by Lemma \ref{lem:decrenergy} ($E$ was defined in \eqref{eq:energy}).

Hence the trajectory of $(X_1, Y_1)$ enters at $x=0^+$ the compact set $K$ defined by the vertical line $X = X_1 (t_0)$, the
trajectory of $(X_2, Y_2)$ and $\Gamma_B$ (that is, the level set $F(1)$ of $E$). Indeed, $(X_1 (0), Y_1 (0))$ is on the part of
$\Gamma_B$ which defines the border of $K$, and the flow of \eqref{sys:XY} is inward at this point (by Lemma \ref{lem:inward}).

Moreover the trajectory of $(X_1, Y_1)$ cannot exit $K$ but on the line $X = X_1 (x_0) =: p_0$: its energy decreases and it cannot
cross the trajectory of $(X_2, Y_2)$. More precisely, it exits $K$ on the segment
\[
\big[ \big( p_0, \, - \sqrt{2 ( F(1) - F(t_0) )}\big), \quad \big( p_0, \, Y_2 (t_0) \big) \big] \subset \{ X = p_0 \}.
\]
As a consequence, $Y_1 (t_0) \leq Y_2 (t_0)$. 
 
 Hence $Y_1 (t_0) = Y_2 (t_0)$, which contradicts the uniqueness of the solutions of \eqref{sys:XY} (since $X_1 (t_0) = X_2 (t_0)$).
Finally, $t_0 = +\infty$ and Lemma \ref{keyineq} is proved.
\end{proof}

\subsection{Back to the double-shooting.}
Thanks to the double-shooting argument, determining $\calB(f)$ amounts to computing the image of $\{ 0 < \alpha < \beta < 1, \,
\alpha < \theta_c\}$ by $(\gamma, \lambda)$.

These functions $\gamma, \lambda$ have nice monotonicity properties.
\begin{proposition}
Let $\gamma$ and $\lambda$ be defined as in Proposition \ref{prop:exists} on the set $\{ (\alpha, \beta) \in (0, 1)^2, \quad 0 \leq \alpha \leq \theta_c, \, \beta > \alpha \}$.
 $\gamma (\alpha, \beta)$ is increasing in $\alpha$, decreasing in $\beta$.
 $\lambda (\alpha, \beta)$ is increasing in $\beta$.
 \label{prop:glmonotonicity}
\end{proposition}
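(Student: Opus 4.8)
The monotonicity of $\gamma$ and $\lambda$ should follow from a careful comparison argument on the scalar ODE \eqref{eq:w2}, exploiting the fact that the right-hand side $\gamma\sqrt{2(w-F(p))}$ is increasing both in $\gamma$ and in $w$. The plan is to treat the three assertions in turn, each time reducing to a comparison-principle statement for the first-order problem \eqref{eq:w} or \eqref{eq:w2}.

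\textbf{Step 1: $\gamma$ is decreasing in $\beta$.} Fix $\alpha$ and let $\beta_1 < \beta_2$. Recall $w_{\alpha,\beta_i}$ solves $w' = \gamma(\alpha,\beta_i)\sqrt{2(w-F)}$ with $w(\alpha)=0$ and $w(\beta_i)=F(1)$. Starting from the common initial condition at $\alpha$, the solution with the larger speed parameter lies above the one with the smaller parameter (this monotonicity in $C$ was already established in the proof of Proposition \ref{prop:exists} by differentiating \eqref{eq:w} in $C$). If we had $\gamma(\alpha,\beta_1) \leq \gamma(\alpha,\beta_2)$, then $w_{\alpha,\beta_2} \geq w_{\alpha,\beta_1}$ everywhere on their common domain; but $w_{\alpha,\beta_1}$ already reaches the value $F(1)$ at $\beta_1 < \beta_2$, so $w_{\alpha,\beta_2}(\beta_1) \geq F(1)$, and since $w_{\alpha,\beta_2}$ is strictly increasing it would exceed $F(1)$ strictly before $\beta_2$ — contradicting $w_{\alpha,\beta_2}(\beta_2)=F(1)$. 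Hence $\gamma(\alpha,\beta_1) > \gamma(\alpha,\beta_2)$.

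\textbf{Step 2: $\gamma$ is increasing in $\alpha$.} Fix $\beta$ and take $\alpha_1 < \alpha_2 < \theta_c$. Here it is cleaner to use the ``backward'' normalization $w(\beta)=F(1)$ and integrate toward smaller $p$, or equivalently to note that $w_{\alpha_i,\beta}$ solves the same ODE with $w(\alpha_i)=0$. On the interval $(\alpha_2,\beta)$ both functions are defined; at $p=\alpha_2$ we have $w_{\alpha_2,\beta}(\alpha_2)=0$ while $w_{\alpha_1,\beta}(\alpha_2) > 0$ (it started from $0$ at $\alpha_1 < \alpha_2$ and is strictly increasing). So $w_{\alpha_1,\beta}$ starts strictly above $w_{\alpha_2,\beta}$ at $\alpha_2$. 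If $\gamma(\alpha_1,\beta) \geq \gamma(\alpha_2,\beta)$, a comparison argument (larger speed, larger initial value, and the right-hand side increasing in $w$) gives $w_{\alpha_1,\beta} > w_{\alpha_2,\beta}$ on all of $(\alpha_2,\beta]$, contradicting the fact that both equal $F(1)$ at $\beta$. Therefore $\gamma(\alpha_1,\beta) < \gamma(\alpha_2,\beta)$. The main subtlety here — the point I expect to be the crux of the whole proposition — is justifying the comparison when the two solutions have \emph{both} a different speed parameter and a different initial value while the nonlinearity $\sqrt{2(w-F(p))}$ degenerates (has infinite slope) exactly where $w=F(p)$; one must check the ordering is preserved, e.g. by a Gronwall-type estimate on $w_{\alpha_1,\beta}-w_{\alpha_2,\beta}$ away from the degeneracy locus, or by the uniqueness-of-forward-trajectories argument already invoked in Lemma \ref{lem:continuitygl}, where $J(w,p)$ is Lipschitz along forward trajectories.

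\textbf{Step 3: $\lambda$ is increasing in $\beta$.} From \eqref{eq:lambda}, $\lambda(\alpha,\beta) = \tfrac12\int_\alpha^\beta \big(2(w_{\alpha,\beta}(p)-F(p))\big)^{-1/2}\,dp$. Increasing $\beta$ has two effects: it enlarges the interval of integration (which increases the integral, since the integrand is positive), and by Step 1 it decreases $\gamma$, hence — by the $C$-monotonicity of $w$ — decreases $w_{\alpha,\beta}(p)$ pointwise, which \emph{increases} the integrand $(2(w-F))^{-1/2}$. Both effects push $\lambda$ up, so $\lambda(\alpha,\cdot)$ is increasing. One should be slightly careful that the comparison $w_{\alpha,\beta_1} \geq w_{\alpha,\beta_2}$ on $(\alpha,\beta_1)$ is strict away from $p=\alpha$, so that the inequality on $\lambda$ is strict; this follows again from the strong form of the comparison principle for \eqref{eq:w2}. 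Assembling Steps 1–3 completes the proof of Proposition \ref{prop:glmonotonicity}.
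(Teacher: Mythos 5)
Your proposal is correct and follows essentially the same route as the paper: comparison for the first-order problem \eqref{eq:w2} using monotone dependence on the speed parameter, contradiction arguments for both monotonicity statements on $\gamma$, and the integral formula \eqref{eq:lambda} combined with $\tilde w < w$ to get monotonicity of $\lambda$ in $\beta$. The only difference is cosmetic: the paper writes out the $\beta$-monotonicity and declares the $\alpha$-case "similar", whereas you spell out the $\alpha$-comparison (and explicitly flag the degeneracy of $\sqrt{2(w-F)}$, which the paper handles implicitly via the Lipschitz-along-forward-trajectories observation of Lemma \ref{lem:continuitygl}).
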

\begin{proof}
Take $0 < \alpha < \beta$ with $\alpha < \theta_c$, $C = \gamma( \alpha, \beta)$ and $w$ be the solution of \eqref{eq:w2} associated
with $C$ and $\beta$.
Similarly, take $\tilde{\beta} > \beta$ and let $\tilde{C}:= \gamma(\alpha,\tilde{\beta})$ and 
$\tilde{w}$ the solution of \eqref{eq:w2} associated with $\tilde{C}$ and $\tilde{\beta}$ ({\it i.e.} $\tilde{w} (\tilde{\beta}) =
F(1)$).
 Assume by contradiction that $\tilde{C}\geq C$. 
 Then $\tilde{w}$ is a supersolution of the equation satisfied by $w$, 
 with initial datum $\tilde{w}(\alpha)=0$. 
 Hence $\tilde{w} \geq w$ on $[\alpha,\beta]$ and 
 $\tilde{w}(\beta) \geq F(1) = \tilde{w} (\tilde{\beta})$. 
 This is a contradiction since $\tilde{w}$ is increasing. 
 
 Hence, $\tilde{C}< C$ and thus, as $w(\alpha)=\tilde{w}(\alpha)=0$, one gets $\tilde{w}<w$ on $(\alpha, \beta)$. 
 We can therefore compute 
$$\lambda (\alpha,\tilde{\beta}) = \int_{\alpha}^{\tilde{\beta}} \f{dx}{\sqrt{2 \big(\tilde{w}(x) - F(x) \big)}} >
\int_{\alpha}^{\beta} \f{dx}{\sqrt{2 \big(w(x) - F(x) \big)}}=\lambda (\alpha, \beta),$$
 proving the monotonicity of $\lambda$ as a function of $\beta$. 

 The monotonicity of $\gamma$ with respect to $\alpha$ is proved similarly. 
\end{proof}

\begin{proposition}
  Functions $\gamma, \lambda$ satisfy:
 $\gamma(\alpha, \beta) \to +\infty$ as $\beta \searrow \alpha$.
 
 $\lambda (\alpha, \beta) \to +\infty$ as $\beta \to 1$, $\lambda (\alpha, \beta) \to + \infty$ as $\alpha \to 0$.
 $\lambda(\alpha, \beta) \to 0$ as $\beta - \alpha \to 0$.
 \label{prop:gllimits}
\end{proposition}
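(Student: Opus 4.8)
The goal is to establish four limit statements about $\gamma$ and $\lambda$. My plan is to exploit the integral representation \eqref{eq:lambda} together with the ODE characterization \eqref{eq:w2}, much as in the proof of Proposition \ref{prop:exists}.

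\textbf{The limit $\gamma(\alpha,\beta) \to +\infty$ as $\beta \searrow \alpha$.} Recall from the proof of Proposition \ref{prop:exists} the lower bound $w_{\alpha,\beta}(p) \geq \frac{1}{2}\gamma(\alpha,\beta)^2 (p-\alpha)^2$ on $(\alpha,\theta_c)$ (valid because $F < 0$ there). Actually I want an \emph{upper} bound on $w_{\alpha,\beta}$ to force $\gamma$ large when $\beta$ is close to $\alpha$: using $w'_{\alpha,\beta}(p) \leq \gamma(\alpha,\beta)\sqrt{2(w_{\alpha,\beta}(p) - F(\theta))}$ (since $F(\theta) = \min F$) and integrating from $\alpha$ to $\beta$ gives $w_{\alpha,\beta}(\beta) \leq F(\theta) + \big(\tfrac{1}{\sqrt 2}\gamma(\alpha,\beta)(\beta-\alpha) + \sqrt{-F(\theta)}\big)^2$. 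Since $w_{\alpha,\beta}(\beta) = F(1)$ is a fixed positive number, if $\beta - \alpha \to 0$ we must have $\gamma(\alpha,\beta)(\beta-\alpha)$ bounded below by a positive constant, hence $\gamma(\alpha,\beta) \to +\infty$.

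\textbf{The limit $\lambda(\alpha,\beta)\to 0$ as $\beta - \alpha \to 0$.} From \eqref{eq:lambda}, $\lambda(\alpha,\beta) = \frac{1}{2}\int_\alpha^\beta \frac{dp}{\sqrt{2(w_{\alpha,\beta}(p) - F(p))}}$. On the interval of integration $w_{\alpha,\beta}(p) \geq 0$ (it is increasing from $0$) while $F(p) \leq F(\theta) < 0$, so the integrand is bounded by $\frac{1}{2\sqrt{-2F(\theta)}}$, a fixed constant; hence $\lambda(\alpha,\beta) \leq \frac{\beta - \alpha}{2\sqrt{-2F(\theta)}} \to 0$. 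This is the easy one.

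\textbf{The limits $\lambda(\alpha,\beta)\to +\infty$ as $\beta \to 1$ and as $\alpha \to 0$.} Here I use the comparison with the bistable traveling wave, as in Proposition \ref{prop:cstar}: the function $w_{0,1}$ solving $w'_{0,1} = c_*(f)\sqrt{2(w_{0,1}-F)}$, $w_{0,1}(0)=0$, $w_{0,1}(1)=F(1)$ has $\lambda(0,1)=+\infty$ — the integral $\int_0^1 \frac{dp}{\sqrt{2(w_{0,1}(p)-F(p))}}$ diverges, because $w_{0,1}(p) - F(p) \to 0$ as $p \to 1^-$ (both endpoints $w_{0,1}(1)=F(1)$) at a linear rate, making the integrand $\sim 1/\sqrt{1-p}$ — wait, I should check: near $p=1$, $w_{0,1}(p) - F(p)$ vanishes, and since $w'_{0,1} - F' = (w'_{0,1} - F') $ with $w'_{0,1}=c_*\sqrt{2(w_{0,1}-F)}$, writing $u = w_{0,1}-F$ one gets $u' = c_*\sqrt{2u} - F'$, and near $p=1$ with $F'(1)=f(1)=0$ this gives $u \sim \frac{c_*^2}{2}(1-p)^2$, so the integrand $\sim \frac{1}{c_*(1-p)}$, genuinely non-integrable. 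Now for the convergence: I would argue that as $\beta \to 1$ (with $\alpha$ fixed or $\to 0$), by continuous dependence (Lemma \ref{lem:continuitygl} extended, or a direct comparison argument) $w_{\alpha,\beta} \to w_{\alpha,1}$ locally, and $\gamma(\alpha,\beta)$ decreases toward $c_*(f)$ by monotonicity (Proposition \ref{prop:glmonotonicity}) and Proposition \ref{prop:cstar}; then Fatou's lemma on \eqref{eq:lambda} gives $\liminf \lambda(\alpha,\beta) \geq \lambda$ of the limiting profile $= +\infty$. Similarly for $\alpha \to 0$: the solution $w_{0,\beta}$ with $w_{0,\beta}(0)=0$ and $w_{0,\beta}(p) - F(p) \to 0$ as $p \to 0^+$ (since $w_{0,\beta}(0)=0=F(0)$), and near $p=0$ with $F'(0)=f(0)=0$ again the integrand blows up like $1/p$, non-integrable; a Fatou/continuous-dependence argument passes this divergence to the limit of $\lambda(\alpha,\beta)$.

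\textbf{Main obstacle.} The delicate point is the behavior of the integrand in \eqref{eq:lambda} near the endpoints $p = \alpha$ and $p = \beta$, and justifying that the divergence of $\lambda$ for the limiting profiles $(0,\beta)$ and $(\alpha,1)$ actually forces $\lambda(\alpha,\beta)\to+\infty$ along the approaching sequences. At $p = \beta$ the quantity $w_{\alpha,\beta}(p) - F(p)$ stays bounded away from $0$ for $\beta < 1$, so $\lambda(\alpha,\beta)$ is finite, but its size is controlled by how small $w_{\alpha,\beta} - F$ gets on the bulk of $(\alpha,\beta)$; one has to show this infimum tends to $0$ on a set of positive measure as $\beta \to 1$ (resp. $\alpha \to 0$). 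I would handle this by a lower bound: for $p$ in a fixed compact subinterval of $(0,1)$ not containing the degenerate endpoint, monotone convergence of $w_{\alpha,\beta}$ to the limiting solution plus the explicit non-integrable singularity of the limit gives, via Fatou, the desired blow-up; uniform Lipschitz control of the right-hand side $J$ away from the degeneracy (as in Lemma \ref{lem:continuitygl}) makes this convergence rigorous.
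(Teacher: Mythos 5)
Your argument splits into two halves of different quality, so let me take them in turn.

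For $\gamma(\alpha,\beta)\to+\infty$ as $\beta\searrow\alpha$ you use exactly the paper's estimate (evaluate $w_{\alpha,\beta}(\beta)=F(1)$ in the upper bound $w\le F(\theta)+\bigl(\tfrac1{\sqrt2}\gamma(\beta-\alpha)+\sqrt{-F(\theta)}\bigr)^2$), so nothing to add. For $\lambda(\alpha,\beta)\to0$ as $\beta-\alpha\to0$, however, your justification has a sign error: you write $F(p)\le F(\theta)<0$ on the interval of integration, but $F(\theta)=\min_{[0,1]}F$, so the inequality goes the other way, and the claimed uniform bound $\tfrac1{2\sqrt{-2F(\theta)}}$ on the integrand is false in general (take $\alpha$ close to $\theta_c$: near $p=\alpha$ one has $w_{\alpha,\beta}(p)-F(p)\approx-F(\alpha)$, which is small, so the integrand is large). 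The correct elementary bound, which is the paper's, is $w_{\alpha,\beta}-F\ge-F(p)\ge-\tfrac12F(\alpha)>0$ for $p$ near $\alpha<\theta_c$, giving $2\lambda\le(\beta-\alpha)/\sqrt{-F(\alpha)}$; this proves the limit for $\alpha\to\beta\in(0,\theta_c)$ (which is what the paper states and uses), not with a constant independent of $\alpha$. So this step, as written, fails and needs the above repair.

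For the two limits $\lambda\to+\infty$ your route is genuinely different from the paper's. The paper argues by contradiction and compactness on the profile $p$ itself: if $\lambda(\alpha,\beta)$ stayed bounded as $\beta\to1$, monotonicity of $\gamma,\lambda$ would produce a solution of $-p''-C^\infty p'=f(p)$ on a finite interval with $\tfrac12p'(-\lambda^\infty)^2+F(1)=F(1)$, i.e.\ $p'=0$ and $p=1$ at the left end, forcing $p\equiv1$ by Cauchy--Lipschitz uniqueness at the equilibrium and contradicting $\tfrac12p'^2+F(\alpha)=0$ at the right end (the case $\alpha\to0$ is symmetric, with the equilibrium $(0,0)$). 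Your alternative -- lower-bounding the integral \eqref{eq:lambda} via the degeneracy of $w-F$ at the endpoint -- can be completed, but the ingredient that actually matters is not the asymptotics you sketch (your $u\sim\tfrac{c_*^2}2(1-p)^2$ drops the $-f$ term, so the constant is wrong, though quadratic vanishing is correct) but an upper bound $w_{\alpha,\beta}-F\le C'(1-p)^2$, and you can get it directly for the approximating profiles, making the Fatou/limit-profile step (which you correctly flag as the obstacle, including the unproved fact that the limiting profile attains $F(1)$ at $p=1$) unnecessary: with $u=w_{\alpha,\beta}-F$, equation \eqref{eq:w2} gives $u'=\gamma\sqrt{2u}-f$, and integrating backwards from $p=\beta$, where $u(\beta)=F(1)-F(\beta)$ and $f\ge0$ on $[\theta,1]$, yields $u(p)\le F(1)-F(\beta)+\int_p^\beta f\le C'(1-p)^2$ on $[\theta,\beta]$, hence $\lambda(\alpha,\beta)\ge c\,\log\frac{1-\theta}{1-\beta}\to+\infty$; for $\alpha\to0$ the drift term now pushes $u$ up, so use instead a comparison function $Ap^2$ with $2A\ge\gamma\sqrt{2A}+\mathrm{Lip}(f)$ to get $u\le Ap^2$ near $0$ and conclude likewise with a logarithm in $\alpha$. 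Written this way your approach is sound, uniform in the parameters, and more quantitative than the paper's soft compactness argument; as submitted, though, it is a sketch whose key endpoint estimates are missing and whose stated asymptotic constant is inaccurate.
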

\begin{proof}
We have already proved in Proposition \ref{prop:exists} that 
\[
w(p)\leq F(\theta)+ \big(\frac{1}{\sqrt{2}} \gamma(\alpha,\beta) (p-\alpha)+\sqrt{-F(\theta)}\big)^2 .
\]
Hence, taking $p=\beta$, one has 
$F(1)- F(\theta) \leq \big(\frac{1}{\sqrt{2}} \gamma(\alpha,\beta) (\beta-\alpha)+\sqrt{-F(\theta)}\big)^2$.
If $\gamma(\alpha,\beta)$ does not diverge to $+\infty$ when $\beta \searrow \alpha$, this function would be bounded since it is
monotonic, and thus, passing to the limit in the inequality:
$F(1)- F(\theta) \leq \big(\sqrt{- F(\theta)}\big)^2=- F(\theta)$, this would contradict $F(1)>0$.

Now, the function $\gamma(\alpha,\cdot)$ being decreasing and bounded from below by $c^{*}$, it converges to some limit $C^{\infty}$
as $\beta \nearrow 1$. As $\lambda (\alpha,\cdot)$ is increasing, if it does not diverge to $+\infty$ then it converges to some limit
$\lambda^{\infty}$.
We could thus derive a solution $p$ of
$$
\left\{\begin{array}{l}
-p'' - C^{\infty} p' =  f(p),  \hbox{ on } (-\lambda^{\infty},0),\\
\frac{1}{2} (p'(-\lambda^{\infty}))^2 +  F(1) =  F(1),  \\
\frac{1}{2} (p'(\lambda^{\infty}))^2 +  F(\alpha) = 0.
\end{array}\right.
$$
This implies $p'(-\lambda^{\infty})=0$ and thus $p\equiv 1$ by uniqueness, which  contradicts
$\frac{1}{2} (p'(0))^2 +  F(\alpha) = 0$.

The convergence of $\lambda (\cdot, \beta)$ when $\alpha \to 0$ is proved similarly.

Finally, we know that $w_{\alpha, \beta} (p) - F(p) \geq - \min_{[\alpha, \beta]} F$, since $w_{\alpha, \beta} \geq 0$.
Hence if $\beta$ is close enough to $\alpha$, $w_{\alpha, \beta} (p) - F(p) \geq - \f{1}{2}F(\alpha)$ (uniformly in $\beta$).
Then,
\[
2 \lambda(\alpha, \beta) = \int_{\alpha}^{\beta} \f{dp}{\sqrt{2 w_{\alpha, \beta} (p) - F(p) }} \leq \f{\beta -
\alpha}{\sqrt{-F(\alpha)}}
\]
As $\beta \to \alpha$, we deduce that $\lambda(\alpha, \beta) \to 0$, and similarly when $\alpha \to \beta \in (0, \theta_c)$.
\end{proof}

\begin{lemma}\label{lem:lambdaC}
For all $\alpha \in (0, \theta_c)$, $\beta \in (\alpha, 1)$,
\beq
2 \lambda(\alpha, \beta) \gamma(\alpha, \beta) \geq 1 - \sqrt{\f{-F(\theta)}{F(1) - F(\theta)}}.
\label{eq:lambdaC}
\eeq
Moreover, for $0<\beta<\theta_c$, we have 
\beq\label{lim:lambdaC}
\lim_{\alpha \to \beta_-} 2 \lambda(\alpha, \beta) \gamma(\alpha, \beta) = \frac{1}{2} \ln\left(1-\frac{F(1)}{F(\beta)}\right).
\eeq
\end{lemma}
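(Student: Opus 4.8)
The plan is to prove the two assertions of the lemma separately, both starting from the integral formula \eqref{eq:lambda} for $\lambda$ and the ODE \eqref{eq:w2} satisfied by $w_{\alpha,\beta}$. Throughout I write $C := \gamma(\alpha,\beta)$ and $w := w_{\alpha,\beta}$, so that $w' = C\sqrt{2(w-F)}$ with $w(\alpha)=0$, $w(\beta)=F(1)$. Since $0<\alpha<\beta<\theta_c$, we have $F<0$ on $[\alpha,\beta]$ and $w\geq w(\alpha)=0$ there, hence $w-F>0$ on all of $[\alpha,\beta]$; thus $w'>0$ on $[\alpha,\beta]$ and $w$ is a strictly increasing $\mathcal C^1$ bijection from $[\alpha,\beta]$ onto $[0,F(1)]$. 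I will also freely use the upper bound $w(p)\leq F(\theta)+\big(\tfrac{1}{\sqrt2}C(p-\alpha)+\sqrt{-F(\theta)}\big)^2$ established in the proof of Proposition \ref{prop:exists} (and recalled at the beginning of the proof of Proposition \ref{prop:gllimits}).

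For the lower bound \eqref{eq:lambdaC} I would combine two elementary estimates. First, since $w$ is increasing and $F\geq F(\theta)=\min_{[0,1]}F$, one has $w(p)-F(p)\leq w(\beta)-F(\theta)=F(1)-F(\theta)$ for every $p\in[\alpha,\beta]$; inserting this into \eqref{eq:lambda} gives $2\lambda(\alpha,\beta)\geq (\beta-\alpha)\big/\sqrt{2(F(1)-F(\theta))}$. Second, evaluating the quadratic upper bound on $w$ at $p=\beta$, where $w(\beta)=F(1)$, yields $\sqrt{-F(\theta)}+\tfrac{1}{\sqrt2}C(\beta-\alpha)\geq\sqrt{F(1)-F(\theta)}$, i.e. $C(\beta-\alpha)\geq \sqrt2\big(\sqrt{F(1)-F(\theta)}-\sqrt{-F(\theta)}\big)$. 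Multiplying the first estimate by $C$ and plugging in the second gives
\[
2\lambda(\alpha,\beta)\,\gamma(\alpha,\beta)\;\geq\;\frac{C(\beta-\alpha)}{\sqrt{2(F(1)-F(\theta))}}\;\geq\;\frac{\sqrt{F(1)-F(\theta)}-\sqrt{-F(\theta)}}{\sqrt{F(1)-F(\theta)}}\;=\;1-\sqrt{\frac{-F(\theta)}{F(1)-F(\theta)}},
\]
which is exactly \eqref{eq:lambdaC}.

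For the limit \eqref{lim:lambdaC} I would change variables from $p$ to $w$. Writing $P_{\alpha,\beta}:=w^{-1}:[0,F(1)]\to[\alpha,\beta]$ and using $dp = dw\big/\big(C\sqrt{2(w-F(P_{\alpha,\beta}(w)))}\big)$ in \eqref{eq:lambda}, one obtains the identity
\[
2\lambda(\alpha,\beta)\,\gamma(\alpha,\beta)\;=\;\frac12\int_0^{F(1)}\frac{dw}{\,w-F\big(P_{\alpha,\beta}(w)\big)\,}.
\]
Now fix $\beta\in(0,\theta_c)$ and let $\alpha\to\beta_-$. Since $P_{\alpha,\beta}(w)\in[\alpha,\beta]$, we have $|P_{\alpha,\beta}(w)-\beta|\leq\beta-\alpha\to0$ uniformly in $w$, hence $F(P_{\alpha,\beta}(w))\to F(\beta)$ uniformly (as $F$ is Lipschitz); moreover $w-F(P_{\alpha,\beta}(w))\geq -F(P_{\alpha,\beta}(w))$ stays bounded below by a positive constant (close to $-F(\beta)>0$) once $\alpha$ is near $\beta$. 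Therefore the integrand converges uniformly on $[0,F(1)]$, and $2\lambda(\alpha,\beta)\gamma(\alpha,\beta)$ converges, as $\alpha\to\beta_-$, to
\[
\frac12\int_0^{F(1)}\frac{dw}{w-F(\beta)}\;=\;\frac12\ln\frac{F(1)-F(\beta)}{-F(\beta)}\;=\;\frac12\ln\Big(1-\frac{F(1)}{F(\beta)}\Big),
\]
which is \eqref{lim:lambdaC}.

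None of these steps is deep; the only point requiring care — the ``hard part'' — is the legitimacy of the change of variables $p\leftrightarrow w$ and of passing to the limit under the integral sign: one must check that $w$ is strictly increasing and $\mathcal C^1$, that $w-F$ does not degenerate at the endpoints $p=\alpha$ (where $w=0$) and $p=\beta$ (where $w=F(1)$), and that the convergence $F(P_{\alpha,\beta}(w))\to F(\beta)$ is uniform with denominators bounded away from $0$. All of this is guaranteed precisely by the hypothesis $0<\alpha<\beta<\theta_c$, which forces $F$ to be strictly negative, and bounded away from $0$, on $[\alpha,\beta]$.
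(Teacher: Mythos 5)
Your proof is correct and follows essentially the same route as the paper: the lower bound combines exactly the same two estimates (namely $w-F\le F(1)-F(\theta)$ in the formula for $\lambda$, and the quadratic upper bound on $w$ from Proposition \ref{prop:exists} evaluated at $p=\beta$), and the limit rests on the same identity $2\lambda\gamma=\int_\alpha^\beta \frac{w'(x)}{2(w(x)-F(x))}\,dx$ with $F(x)$ replaced by $F(\beta)$ in the limit $\alpha\to\beta_-$. The only cosmetic difference is in the second part: you substitute $u=w(p)$ and pass to the limit by uniform convergence of the integrand (using that $w-F$ stays bounded away from $0$ on $[\alpha,\beta]\subset(0,\theta_c)$), whereas the paper keeps the variable $x$ and bounds the difference of the two integrals explicitly by $\frac{F(1)}{2}\left\lvert \frac{1}{F(\beta)}-\frac{1}{F(\alpha)}\right\rvert$; both arguments are valid.
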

\begin{proof}
The estimate from below is only based on the following inequalities
\[
 F(1) \geq w_{\alpha, \beta} (p) \geq F(p) \geq F(\theta).
\]
They imply, as stated before (in the proof of Proposition \ref{prop:gllimits}):
\[
 \gamma(\alpha, \beta) \geq \f{\sqrt{2}}{\beta - \alpha} \left(\sqrt{F(1) - F(\theta)}- \sqrt{-F(\theta)}\right).
\]
Moreover, $\sqrt{w_{\alpha, \beta}(p) - F(p)} \leq \sqrt{F(1) - F(\theta)}$.
Thus, 
\beq\label{lambda1}
 2 \lambda(\alpha, \beta) \geq (\beta - \alpha) \f{1}{\sqrt{2 \big(F(1) - F(\theta)\big)}}.
\eeq
Combining these estimates yields \eqref{eq:lambdaC}.

Let us fix $\beta\in(0,\theta_c)$, for $0<\alpha<\beta$, we have, 
using \eqref{eq:lambda} and \eqref{eq:w2},
$$
2 \lambda(\alpha, \beta) \gamma(\alpha,\beta) = \int_\alpha^\beta \frac{w'(x)}{2(w(x)-F(x))}\,dx.
$$
On the one hand, we have
\[
\int_\alpha^\beta \frac{w'(x)}{2(w(x)-F(x))}\,dx-\int_\alpha^\beta \frac{w'(x)}{2(w(x)-F(\beta))}\,dx
= \int_\alpha^\beta \frac{w'(x)}{2}\frac{F(x)-F(\beta)}{(w(x)-F(x))(w(x)-F(\beta))}\,dx.
\]
For any $0<\alpha<\beta<\theta_c$, we have $0\leq w(x)\leq F(1)$ then 
$$
\frac{|F(x)-F(\beta)|}{(w(x)-F(x))(w(x)-F(\beta))}\leq \frac{|F(x)-F(\beta)|}{F(x)F(\beta)} 
\leq \left|\frac{1}{F(\beta)}-\frac{1}{F(x)} \right|.
$$
Then, for $\alpha$ close enough to $\beta$, we have 
\begin{align*}
\left|\int_\alpha^\beta \frac{w'(x)}{2}\frac{F(x)-F(\beta)}{(w(x)-F(x))(w(x)-F(\beta))}\,dx\right| 
& \leq \int_\alpha^\beta \frac{w'(x)}{2}\,dx \left|\frac{1}{F(\beta)}-\frac{1}{F(\alpha)} \right|  \\
& = \frac{F(1)}{2} \left|\frac{1}{F(\beta)}-\frac{1}{F(\alpha)} \right|.
\end{align*}
We deduce that 
$$
\int_\alpha^\beta \frac{w'(x)}{2(w(x)-F(x))}\,dx-\int_\alpha^\beta \frac{w'(x)}{2(w(x)-F(\beta))}\,dx 
\to 0, \qquad \mbox{ as } \alpha \to \beta_-.
$$

On the other hand, we compute
$$
\int_\alpha^\beta \frac{w'(x)}{2(w(x)-F(\beta))}\,dx = \frac{1}{2} \ln\left(1-\frac{F(1)}{F(\beta)}\right).
$$
Combining these last identities allows to recover \eqref{lim:lambdaC}.
\end{proof}

\begin{proposition}
 For all $\epsilon > 0$ small enough, there exists $\alpha_{\epsilon} < \beta_{\epsilon}$ with
 \[
  \gamma(\alpha_{\epsilon}, \beta_{\epsilon}) = c_*(f) + \epsilon,
 \]
  and $\alpha_{\epsilon} \to 0$, $\beta_{\epsilon} \to 1$ as $\epsilon \to 0$.
  Moreover,
  $
   \lambda (\alpha_{\epsilon}, \beta_{\epsilon}) \xrightarrow{\epsilon \to 0} +\infty.
  $
 \label{prop:glleps}
\end{proposition}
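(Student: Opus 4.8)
The plan is to locate, for each small $\epsilon>0$, a point on the level set $\{\gamma=c_*(f)+\epsilon\}$ of the parameter rectangle near its ``corner'' $(\alpha,\beta)=(0,1)$ — the corner morally corresponds to the bistable traveling wave — and then to read off the divergence of $\lambda$ from the single fact that $\beta_\epsilon\to1$. The first ingredient is a phase-plane lemma: for every $C>c_*(f)$, the trajectory of \eqref{sys:XY} lying on the stable manifold of $(0,0)$ (so $p(+\infty)=0$, $p<1$, $p'<0$) reaches the value $p=1$ at a finite point, with $p'<0$ there. This is purely qualitative: as $x$ decreases $p$ increases, so if it stayed $\le1$ it would converge to an equilibrium value in $\{\theta,1\}$; the value $\theta$ is excluded by Lemma \ref{lem:decrenergy}, since then $E(-\infty)=F(\theta)\ge E(+\infty)=0$, which is false, and the value $1$ would make $p$ a bistable traveling wave of speed $C\ne c_*(f)$, contradicting Proposition \ref{prop:bistbase}. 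Rewriting in the $w$-variable of \eqref{eq:w2}, as in the proof of Proposition \ref{prop:exists}, this says: the solution of $w'=C\sqrt{2(w-F)}$ with $w(0^+)=0$ crosses the level $F(1)$ at some $\beta_C\in(\theta_c,1)$, and $\beta_C\to1$ as $C\searrow c_*(f)$ (by continuity; for $C=c_*(f)$ this solution is the traveling-wave profile $w_*$, which reaches $F(1)$ only in the limit $p\to1$).

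Next I would analyse the limit coefficient $C^\infty(\alpha):=\lim_{\beta\to1^-}\gamma(\alpha,\beta)=\inf_{\beta}\gamma(\alpha,\beta)\in[c_*(f),+\infty)$, which exists by the monotonicity of Proposition \ref{prop:glmonotonicity}; by Lemma \ref{lem:continuitygl} and Proposition \ref{prop:gllimits} the range of $\gamma(\alpha,\cdot)$ is exactly $(C^\infty(\alpha),+\infty)$, and I set likewise $\gamma(0^+,\beta):=\inf_{\alpha}\gamma(\alpha,\beta)$. Two facts are needed. First, $C^\infty(\alpha)>c_*(f)$ for $\alpha>0$ (and $\gamma(0^+,\beta)>c_*(f)$ for $\beta\in[\theta_c,1)$): with coefficient $c_*(f)$ the solution of \eqref{eq:w} started at $(\alpha,0)$ stays strictly below $w_*$ (same equation, smaller datum) and hence cannot reach $F(1)$; inspecting the monotone quantity $w_*-w$ shows it must in fact meet the curve $\{w=F\}$ at some $p_0(\alpha)<1$, and this persists for coefficients slightly above $c_*(f)$. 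Second, $C^\infty(\alpha)\to c_*(f)$ as $\alpha\to0^+$: by the phase-plane lemma, transported from $\alpha=0$ to small $\alpha>0$ by continuous dependence of solutions of \eqref{eq:w} on the initial point, for every $\delta>0$ and all $\alpha$ small the solution from $(\alpha,0)$ with coefficient $c_*(f)+\delta$ does reach $F(1)$ before $p=1$, so $c_*(f)+\delta$ lies in the range of $\gamma(\alpha,\cdot)$, i.e.\ $C^\infty(\alpha)\le c_*(f)+\delta$.

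With this in hand the construction is immediate. Since $C^\infty$ is non-decreasing, equals $c_*(f)$ at $0^+$ and is strictly above $c_*(f)$ on $(0,\theta_c)$, for small $\epsilon>0$ one can choose $\alpha_\epsilon>0$ with $C^\infty(\alpha_\epsilon)<c_*(f)+\epsilon$ and $\alpha_\epsilon\to0$ as $\epsilon\to0$ (if $\alpha_\epsilon\ge\eta$ along a subsequence, then $C^\infty(\eta)\le c_*(f)$, absurd). Then $c_*(f)+\epsilon$ belongs to the range of the continuous decreasing map $\gamma(\alpha_\epsilon,\cdot)$, giving a unique $\beta_\epsilon\in(\alpha_\epsilon,1)$ with $\gamma(\alpha_\epsilon,\beta_\epsilon)=c_*(f)+\epsilon$; moreover $\beta_\epsilon>\theta_c$ for $\epsilon$ small because $\gamma(\alpha,\beta)\ge\gamma(0^+,\theta_c)>c_*(f)$ whenever $\beta\le\theta_c$, and $\beta_\epsilon\to1$ because a subsequential limit $\beta_\infty\in[\theta_c,1)$ would give, for $k$ large, $c_*(f)+\epsilon_k=\gamma(\alpha_{\epsilon_k},\beta_{\epsilon_k})\ge\gamma(0^+,\beta_\infty+\eta)>c_*(f)$ (using the monotonicity of $\gamma(0^+,\cdot)$ and $\beta_\infty+\eta<1$), impossible as $k\to\infty$. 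Finally, since $f$ is Lipschitz with constant $L_f$ and $f(1)=0$ we have $F(1)-F(p)\le\tfrac{L_f}{2}(1-p)^2$, while $w_{\alpha_\epsilon,\beta_\epsilon}$ is increasing with $w_{\alpha_\epsilon,\beta_\epsilon}(\beta_\epsilon)=F(1)$, hence $w_{\alpha_\epsilon,\beta_\epsilon}(p)-F(p)\le\tfrac{L_f}{2}(1-p)^2$ on $(\alpha_\epsilon,\beta_\epsilon)$; plugging this into \eqref{eq:lambda} gives $\lambda(\alpha_\epsilon,\beta_\epsilon)\ge\frac{1}{2\sqrt{L_f}}\log\frac{1-\alpha_\epsilon}{1-\beta_\epsilon}\to+\infty$.

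The main obstacle is the phase-plane lemma together with the transfer step, i.e.\ showing that $\{\gamma=c_*(f)+\epsilon\}$ genuinely accumulates at the corner $(0,1)$: that a damping only slightly above $c_*(f)$ suffices to realize a connection from $\Gamma_B$ to $\Gamma_A$ whose endpoints are close to $1$ and $0$. The two delicate points are the square-root singularity of the $w$-equation at the starting point $p=\alpha$ and the degeneracy near $p=1$, where $w-F$ and $F(1)-F$ both vanish; both are handled by comparison with the traveling-wave profile $w_*$ and the uniqueness of $c_*(f)$ from Proposition \ref{prop:bistbase}. Everything else — continuity and monotonicity of $\gamma,\lambda$ and the integral estimate for $\lambda$ — is routine given the earlier lemmas.
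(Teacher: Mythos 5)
Your proposal takes a genuinely different route from the paper's, and most of it is sound. The paper argues in one stroke: by the monotonicity of Proposition \ref{prop:glmonotonicity} the limit of $\gamma(\alpha,\beta)$ as $(\alpha,\beta)\to(0,1)$ exists and is $\geq c_*(f)$; the $w_{\alpha,\beta}$ being uniformly bounded with bounded derivatives, one passes to the limit in \eqref{eq:w2} and gets a solution of $w'=c\sqrt{2(w-F)}$, $w(0)=0$, $w(1)=F(1)$, which forces $c=c_*(f)$ by uniqueness of the bistable wave; the points $(\alpha_\epsilon,\beta_\epsilon)$ then come from continuity (Lemma \ref{lem:continuitygl}) and the divergence of $\lambda$ from Proposition \ref{prop:gllimits}. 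You instead work at fixed $C>c_*(f)$: a shooting/phase-plane lemma producing $\beta_C<1$, a transfer to small $\alpha>0$, strict bounds $\gamma(0^+,\beta)>c_*(f)$ to force $\beta_\epsilon\to1$, and the explicit estimate $\lambda(\alpha_\epsilon,\beta_\epsilon)\geq\frac{1}{2\sqrt{L_f}}\log\frac{1-\alpha_\epsilon}{1-\beta_\epsilon}$. That estimate is correct and is a nice gain: it gives the joint divergence of $\lambda$ directly from $\beta_\epsilon\to1$, whereas the paper cites Proposition \ref{prop:gllimits}, which is stated one variable at a time; your $C^\infty(\alpha)$ and $\gamma(0^+,\beta)$ also anticipate the quantities $C_\alpha$, $C^\beta$ of Proposition \ref{prop:acbc}.

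One step, however, is incomplete as written: the dichotomy inside your phase-plane lemma. Going backwards in $x$ from $p(+\infty)=0$ you assert that either $p$ exceeds $1$ or converges to an equilibrium in $\{\theta,1\}$; this tacitly assumes $p'<0$ persists. The missing case is a backward turning point, $p'(x_0)=0$ with $p(x_0)=p_0<1$: Lemma \ref{lem:decrenergy} only gives $F(p_0)\geq 0$, i.e.\ $p_0\geq\theta_c$, so energy monotonicity does not rule it out. In the $w$-variable this is exactly the possibility that the solution of \eqref{eq:w} from the origin meets $\{w=F\}$ at some $p_0\in[\theta_c,1)$ before reaching the level $F(1)$, in which case no $\beta_C$ exists and the whole construction stalls. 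To exclude it you need $w_C\geq w_*$ on $(0,1)$ for $C>c_*(f)$: for instance, near $p=0$ the decaying trajectory satisfies $w_C(p)\sim\frac{1}{2}C\,\lvert\mu_-(C)\rvert\,p^2$ with $\lvert\mu_-(C)\rvert$ the modulus of the stable eigenvalue at $0$ and $C\mapsto C\lvert\mu_-(C)\rvert$ increasing, so $w_C>w_*$ near $0$, and then comparison (where $w>F$ the right-hand side is locally Lipschitz, and $w_C$ is a supersolution of the $c_*$-equation) propagates the order; since $w_*>F$ on $(0,1)$, turning is impossible and your dichotomy becomes valid. You do invoke ``comparison with $w_*$'' for the delicate points, but you locate them only at the endpoints, while the interior turning point is where it is genuinely needed, so it should be spelled out. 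Relatedly, the ``continuous dependence'' transfer at the degenerate corner and the strictness claims $C^\infty(\alpha)>c_*(f)$, $\gamma(0^+,\beta)>c_*(f)$ all rest on uniqueness of the decaying trajectory at the saddle (i.e.\ $f'(0)<0$); that is the same level of rigor as the paper's own appeal to uniqueness of the traveling wave for the degenerate problem, so it is acceptable, but deserves a sentence.
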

\begin{proof}
The limit of $\gamma(\alpha, \beta)$ as $\alpha \to 0$ and $\beta \to 1$ exists because
of the monotonicity properties of Proposition \ref{prop:glmonotonicity}.
Moreover, $\gamma(\alpha, \beta)$ is bounded from below by $c_*(f)$.
Simultaneously, we know that $\lambda (\alpha, \beta) \to +\infty$ as $\alpha \to 0$ and $\beta \to 1$ by Proposition
\ref{prop:gllimits}.

The uniqueness of the bistable traveling wave and continuity of $\gamma$ (Lemma~\ref{lem:continuitygl}) imply that
\[
 \lim_{\alpha \to 0, \beta \to 1} \gamma(\alpha, \beta) = c_*(f).
\]
Indeed, let $c$ be this limit. At the limit ($w_{\alpha, \beta}$ and its derivative being uniformly bounded), we get a solution of
\begin{equation*}
\bepa
w' = c \sqrt{2 (w - F)}
\\[10pt]
w(0) = 0, \, w(1) = F(1).
\eepa
\end{equation*}
This exists if and only if $c = c_*(f)$, by uniqueness of the traveling wave solution to the bistable reaction-diffusion equation.
These facts imply the existence of $\alpha_{\epsilon}, \beta_{\epsilon}$.
\end{proof}

The following fact may be proved using Lemma \ref{keyineq}, but also enjoys a simple proof using the properties of $\gamma$, which we propose below.
\begin{proposition}
If $\gamma(\alpha_1, \beta_1) = \gamma(\alpha_2, \beta_2)$, then $\alpha_1 < \alpha_2$ if and only if $\beta_1 < \beta_2$.
\label{prop:aborder}
\end{proposition}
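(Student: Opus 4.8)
The plan is to exploit the strict monotonicity of $\gamma$ in each variable established in Proposition \ref{prop:glmonotonicity} together with a continuity/connectedness argument, or more directly to argue by contradiction using the one-dimensional structure of the level sets of $\gamma$. Suppose $\gamma(\alpha_1,\beta_1)=\gamma(\alpha_2,\beta_2)=:C$ and, without loss of generality, that $\alpha_1<\alpha_2$. We want to conclude $\beta_1<\beta_2$. First I would rule out $\beta_1\geq\beta_2$. If $\beta_1=\beta_2$, then $\gamma(\alpha_1,\beta_1)<\gamma(\alpha_2,\beta_1)$ because $\gamma$ is \emph{strictly} increasing in its first argument (as shown in the proof of Proposition \ref{prop:glmonotonicity}, where one compares the solutions $w$ of \eqref{eq:w2} with the same terminal condition $w(\beta)=F(1)$ but different starting points), contradicting the assumed equality. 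If $\beta_1>\beta_2$, then combining the two monotonicities gives $\gamma(\alpha_1,\beta_1)<\gamma(\alpha_2,\beta_1)$ (strict increase in $\alpha$ since $\alpha_1<\alpha_2$) and $\gamma(\alpha_2,\beta_1)<\gamma(\alpha_2,\beta_2)$ (strict decrease in $\beta$ since $\beta_2<\beta_1$, so moving from $\beta_1$ down to $\beta_2$ increases $\gamma$), whence $\gamma(\alpha_1,\beta_1)<\gamma(\alpha_2,\beta_2)$, again a contradiction. Hence $\beta_1<\beta_2$. The converse implication (if $\beta_1<\beta_2$ then $\alpha_1<\alpha_2$) follows by the symmetric argument, swapping the roles of the two variables and using that a strict decrease in $\beta$ forces, at fixed $C$, a strict change in $\alpha$ in the same direction.

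The one subtlety I would address carefully is the case $\alpha_1=\alpha_2$: if $\alpha_1=\alpha_2$ and $\beta_1\neq\beta_2$, say $\beta_1<\beta_2$, then strict monotonicity in $\beta$ gives $\gamma(\alpha_1,\beta_1)>\gamma(\alpha_1,\beta_2)$, contradicting equality; so $\alpha_1=\alpha_2$ forces $\beta_1=\beta_2$, consistent with the claimed equivalence. Thus the statement is really the assertion that along a level set $\{\gamma=C\}$ the correspondence $\alpha\mapsto\beta$ is strictly increasing, which is exactly what the above case analysis establishes. One should note that Proposition \ref{prop:glmonotonicity} as stated asserts monotonicity but the proof there in fact yields strict inequalities (the comparison of supersolutions is strict away from the matching endpoint), so I would either invoke that strictness directly or add a one-line remark justifying it.

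The main (minor) obstacle is simply making sure the monotonicity invoked is strict rather than merely weak; everything else is a short deterministic case split on the sign of $\beta_1-\beta_2$. No phase-plane input (Lemma \ref{keyineq}) is needed for this route, although as the paper remarks it would also give the result via the ordering of trajectories: two barriers with the same $C$ cannot cross, so the one starting higher on $\Gamma_B$ (larger $\beta$) stays above and exits on $\Gamma_A$ at a larger value (larger $\alpha$). I would present the $\gamma$-monotonicity proof as the main argument since it is the most self-contained.
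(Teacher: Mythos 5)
Your argument is correct, but it follows a different route from the paper's. You reduce Proposition \ref{prop:aborder} to the \emph{strict} monotonicity of $\gamma$ in each variable (increasing in $\alpha$, decreasing in $\beta$, Proposition \ref{prop:glmonotonicity}) and conclude by a short case split on the sign of $\beta_1-\beta_2$; note that the intermediate point $(\alpha_2,\beta_1)$ you evaluate $\gamma$ at indeed lies in the admissible set, since $\alpha_2<\beta_2<\beta_1$ (resp.\ $\alpha_2<\alpha_1<\beta_1$ in the converse), so the case analysis is legitimate. The paper instead argues directly at the common value $C$: it compares the two solutions $w_1=w_{\alpha_1,\beta_1}$ and $w_2=w_{\alpha_2,\beta_2}$ of \eqref{eq:w2}, which satisfy the same first-order equation with the same $C$; since $w_1(\alpha_2)>0=w_2(\alpha_2)$, the comparison gives $w_1>w_2$ up to $\beta_1$, hence $w_2(\beta_1)<w_1(\beta_1)=F(1)$, and monotonicity of $w_2$ forces $\beta_2>\beta_1$. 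The two proofs rest on the same underlying comparison principle for \eqref{eq:w2}; yours has the advantage of being a purely formal consequence of already-stated monotonicity (no new ODE argument), but it does require the strict version of Proposition \ref{prop:glmonotonicity} --- which, as you correctly observe, is what its proof actually delivers (the contradiction there yields $\tilde{C}<C$), though the statement only says ``increasing/decreasing''; the paper's fixed-$C$ comparison avoids that dependence and is self-contained. Your remark that the ordering could alternatively be read off from the phase-plane Lemma \ref{keyineq} matches the paper's own comment preceding the proposition.
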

\begin{proof}
Let $C = \gamma(\alpha_1, \beta_1) = \gamma(\alpha_2, \beta_2)$. Assume $\alpha_1 < \alpha_2$.
We can compare $w_1 := w_{\alpha_1, \beta_1}$ and $w_2 := w_{\alpha_2, \beta_2}$ because
$w_2 (\alpha_2) = 0 < w_1 (\alpha_1)$ and as long as $w_2 < w_1$ we also get $w'_2 < w'_1$.
Hence $w_1 (\beta_1) - w_2 (\beta_1) > w_1 (\alpha_1)$. Since $w_1 (\beta_1) = F(1)$ we get
\[
w_2 (\beta_1) < F(1) - w_1 (\alpha_1) < F(1).
\]
Since $w_2$ is increasing and $w_2 (\beta_2) = F(1)$, this implies $\beta_2 > \beta_1$.
\end{proof}

\subsection{Advanced properties of the barrier set.}
\label{subs:advanced}

At this stage, we are ready to prove the following description of $\calB(f)$, which encompasses Theorem \ref{thm:mainHet} and first point of Proposition \ref{prop:Lstarasympt}.
\begin{proposition}
 For all $L > 0$, there exists $C_*(L) > c_* (f)$ such that $(C, L) \in \calB(f) \iff C \geq C_*(L)$.
 For all $C > c_* (f)$, there exists $L_* (C) > 0$ such that $(C, L) \in \calB(f) \iff L \geq L_* (C)$.
 
Furthermore, $C_*(L_*(C)) = C$ and $L_* (C_* (L)) = L$.
\label{prop:strmonotonicity}
\end{proposition}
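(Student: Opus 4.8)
The plan is to translate the barrier set $\calB(f)$ entirely into properties of the two functions $\gamma,\lambda$ constructed in Proposition \ref{prop:exists} and then to exploit the monotonicity and limit results already established (Propositions \ref{prop:glmonotonicity}, \ref{prop:gllimits}, \ref{prop:glleps} and Lemma \ref{lem:continuitygl}). Recall that, by the double-shooting argument, $(C,L)\in\calB(f)$ if and only if there exists $(\alpha,\beta)$ with $0<\alpha<\theta_c$, $\alpha<\beta<1$, such that $\gamma(\alpha,\beta)=C$ and $\lambda(\alpha,\beta)=L$. So the statement is equivalent to understanding, for fixed $L$ (resp. fixed $C$), the set of $C$'s (resp. $L$'s) that are hit by $(\gamma,\lambda)$ on this triangle.

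First I would fix $C>c_*(f)$ and study the level set $\{\gamma(\alpha,\beta)=C\}$. By Proposition \ref{prop:glmonotonicity}, $\gamma$ is strictly increasing in $\alpha$ and strictly decreasing in $\beta$; by Proposition \ref{prop:gllimits}, $\gamma(\alpha,\beta)\to+\infty$ as $\beta\searrow\alpha$, and by Proposition \ref{prop:cstar}/\ref{prop:glleps}, $\gamma>c_*(f)$ always, with $\gamma(\alpha,\beta)\to c_*(f)$ as $(\alpha,\beta)\to(0,1)$. Hence for each $\alpha\in(0,\theta_c)$ small enough there is (by continuity, Lemma \ref{lem:continuitygl}, and the intermediate value theorem) a unique $\beta=B_C(\alpha)$ with $\gamma(\alpha,B_C(\alpha))=C$, and $\alpha\mapsto B_C(\alpha)$ is continuous and, by Proposition \ref{prop:aborder}, strictly increasing. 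So the level set is a continuous increasing curve parametrized by $\alpha$ ranging over some interval $(0,\alpha_{\max}(C))$; as $\alpha\to 0$ we have $B_C(\alpha)\to 1$, and $\lambda$ along this curve tends to $+\infty$ by Proposition \ref{prop:glleps}. On the other hand, as $\alpha\to\alpha_{\max}(C)$ (the endpoint where either $\beta\searrow\alpha$, forcing $\gamma\to\infty>C$, a contradiction, so actually the endpoint is where $\alpha\to\theta_c$ or $\beta\to\alpha$), one checks $\lambda\to 0$: indeed when $\beta-\alpha\to0$, $\lambda\to0$ by Proposition \ref{prop:gllimits}. Thus, along the level curve $\{\gamma=C\}$, $\lambda$ takes all values in $(0,+\infty)$; moreover I would like it to be monotone so that for each target $L$ there is exactly the expected threshold behavior. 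Monotonicity of $\lambda$ in $\beta$ holds by Proposition \ref{prop:glmonotonicity}, and along $\{\gamma=C\}$, $\alpha$ and $\beta$ move together (Proposition \ref{prop:aborder}), so $\lambda$ is monotone increasing along the curve. Therefore for fixed $C>c_*(f)$, the set of $L$ such that $(C,L)\in\calB(f)$ is exactly $(0,+\infty)$; wait — that would contradict the theorem, so the key correction is that the level curve $\{\gamma=C\}$ does \emph{not} extend all the way to $\beta-\alpha\to0$ while staying in the admissible triangle: the constraint $\alpha<\theta_c$ (and $\alpha<\beta$, with the energy conditions) bounds $\lambda$ from below. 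Concretely, Lemma \ref{lem:lambdaC} gives $2\lambda(\alpha,\beta)\gamma(\alpha,\beta)\geq 1-\sqrt{-F(\theta)/(F(1)-F(\theta))}>0$, so along $\{\gamma=C\}$ one has $\lambda\geq \frac{1}{2C}\big(1-\sqrt{-F(\theta)/(F(1)-F(\theta))}\big)>0$. Hence the infimum of $\lambda$ on the curve is a strictly positive number $L_*(C)$, attained (by continuity, since the curve, closed up, is compact, or by the monotone limit), and by monotonicity of $\lambda$ along the curve we get $(C,L)\in\calB(f)\iff L\geq L_*(C)$.

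Symmetrically, I would fix $L>0$ and study $\{\lambda(\alpha,\beta)=L\}$. The analogous reasoning — monotonicity of $\lambda$ in $\beta$, the limits $\lambda\to+\infty$ as $\beta\to1$ or $\alpha\to0$, and $\lambda\to0$ as $\beta-\alpha\to0$ — shows this level set is again a continuous curve, along which $\gamma$ varies monotonically from $c_*(f)$ (approached as $(\alpha,\beta)\to(0,1)$) up to $+\infty$ (as $\beta-\alpha\to0$, using $\gamma\to+\infty$). Hence $\{C:(C,L)\in\calB(f)\}=[C_*(L),+\infty)$ for a unique $C_*(L)>c_*(f)$. The equalities $C_*(L_*(C))=C$ and $L_*(C_*(L))=L$ then follow because both identities say that the pair $(C,L)$ lies simultaneously on the ``lower-left boundary'' of $\calB(f)$ in the $C$-direction and in the $L$-direction, and this boundary is a single curve: $L_*$ and $C_*$ are mutually inverse as strictly monotone functions. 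To see strict monotonicity of $L_*$ (needed to make ``inverse'' meaningful) I would argue that if $C_1<C_2$ then, by Proposition \ref{prop:positive}, $\calB(f)$ is stable under adding positive vectors, so $(C_1,L_*(C_1))\in\calB(f)$ implies $(C_2,L_*(C_1))\in\calB(f)$, whence $L_*(C_2)\leq L_*(C_1)$; ruling out equality uses that at $L=L_*(C_1)$ the pair $(C_1,L)$ is on the boundary, so a strictly larger $C$ must allow a strictly smaller threshold (again via the monotone-curve structure, or a strict comparison in \eqref{eq:w2}).

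The main obstacle I anticipate is precisely the lower bound on $\lambda$ along the level set $\{\gamma=C\}$: naively the parametrizing curve seems to run down to $\lambda=0$, which would wrongly give $\calB(f)=(c_*(f),\infty)\times(0,\infty)$. The resolution is the admissibility constraint $\alpha\le\theta_c$ together with the quantitative lower bound $2\lambda\gamma\geq 1-\sqrt{-F(\theta)/(F(1)-F(\theta))}$ from Lemma \ref{lem:lambdaC} — this is what forces $L_*(C)>0$. A secondary delicate point is verifying that the level curves are genuinely graphs (single-valued) and that the relevant infimum is attained rather than merely approached; here Proposition \ref{prop:aborder} (pairing $\alpha$ and $\beta$ along a level set of $\gamma$) plus the continuity Lemma \ref{lem:continuitygl} do the work, letting one pass to the limit and produce an actual barrier realizing $(C,L_*(C))$. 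Everything else — the limit behavior at the corners and the mutual-inverse relation — is a matter of assembling Propositions \ref{prop:glmonotonicity}, \ref{prop:gllimits}, \ref{prop:glleps} and \ref{prop:positive}.
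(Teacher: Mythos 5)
Your reduction of $\calB(f)$ to the image of $(\gamma,\lambda)$ is the right starting point, but the structural picture you build on it is wrong, and it is load-bearing. Along the level curve $\{\gamma=C\}$ (for $C>c_*(f)$) the endpoints are \emph{not} ``$(\alpha,\beta)\to(0,1)$'' and ``$\beta-\alpha\to 0$'': the curve runs from $(0,\beta_C)$ to $(\alpha_C,1)$ with $0<\alpha_C<\theta_c$ and $\beta_C<1$ (cf.\ Proposition \ref{prop:acbc}), and it never approaches the diagonal, precisely because $\gamma\to+\infty$ there (Proposition \ref{prop:gllimits}). Moreover $\lambda$ is \emph{not} monotone along this curve: it diverges to $+\infty$ at \emph{both} endpoints (this is Lemma \ref{lem:claim}) and attains an interior minimum; your inference ``$\lambda$ increasing in $\beta$ + $\alpha,\beta$ move together $\Rightarrow$ $\lambda$ monotone along the curve'' is a non sequitur, since none of the cited results control the dependence of $\lambda$ on $\alpha$ (indeed $\lambda\to+\infty$ as $\alpha\to 0$, which pulls the other way). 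The same defect is worse in your fixed-$L$ case: the corner $(\alpha,\beta)\to(0,1)$, where $\gamma\to c_*(f)$, has $\lambda\to+\infty$, so it does not lie on any level set $\{\lambda=L\}$ with $L$ finite; in fact $\gamma$ blows up at both ends of that level set and has an interior minimum. Taken literally, your description (``$\gamma$ decreases monotonically to $c_*(f)$ along $\{\lambda=L\}$'') would give $\{C:(C,L)\in\calB(f)\}=(c_*(f),+\infty)$ with $C_*(L)=c_*(f)$ and no attainment, contradicting the very statement you are proving. The correct route to upward-closedness of the sections (the ``$\Leftarrow$'' halves of both equivalences) is not monotonicity along level curves but either Proposition \ref{prop:positive} (adding a nonnegative vector preserves membership in $\calB(f)$), which is what the paper uses, or an intermediate-value argument exploiting the divergence of $\lambda$ at both ends of $\{\gamma=C\}$. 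Your use of Lemma \ref{lem:lambdaC} to get $L_*(C)>0$ is fine, but the ``resolution'' you attribute to the constraint $\alpha<\theta_c$ is not the real mechanism.

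The second genuine gap is the final point. The identities $C_*(L_*(C))=C$ and $L_*(C_*(L))=L$ do reduce, as you sense, to strict monotonicity of $L_*$, but this is exactly the hard half of the paper's proof: it is established there by differentiating the energy $E$ along the shooting trajectories with respect to $C$ and $\beta$, and showing via the Wronskian $Y_CX_\beta-Y_\beta X_C>0$ that $\partial_C E<0$ at the minimizing $\beta$. You dispose of it in one clause (``a strict comparison in \eqref{eq:w2}''), partly leaning on the false monotone-curve structure. There is in fact a short argument in the spirit you hint at — take $(\alpha_1,\beta_1)$ realizing $L_*(C_1)$, use that $\gamma(\alpha_1,\cdot)$ is continuous, strictly decreasing and blows up at $\alpha_1^+$ to find $\beta_2<\beta_1$ with $\gamma(\alpha_1,\beta_2)=C_2>C_1$, and conclude $L_*(C_2)\le\lambda(\alpha_1,\beta_2)<\lambda(\alpha_1,\beta_1)=L_*(C_1)$ by the strict monotonicity of $\lambda$ in $\beta$ — but this argument is neither stated nor justified in your proposal, and it also requires knowing that the infimum defining $L_*(C_1)$ is attained by an admissible pair $(\alpha_1,\beta_1)$, which you should extract (as the paper does) from compactness plus Lemma \ref{lem:continuitygl}. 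As written, the proposal does not prove the strict monotonicity, and hence not the mutual-inverse identities.
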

\begin{proof}
By Propositions \ref{prop:glmonotonicity} and \ref{prop:gllimits}, for any $\alpha \in ( 0, \theta_c)$ and $L>0$, there exists a
unique $\beta_L (\alpha) > \alpha$
such that $\lambda (\alpha, \beta_L (\alpha)) = L$.
In particular, $\big( \gamma(\alpha, \beta_L(\alpha)), L \big) \in \calB(f)$.

Hence $C_* (L) := \inf \{ C > 0, \, (C, L) \in \calB(f) \}$ is well-defined
and because of Proposition \ref{prop:positive}, if $C > C_*(L)$ then $(C, L) \in \calB(f)$.
Moreover, $C_*(L) > c_* (f)$ by Proposition \ref{prop:cstar}

Let $C > c_* (f)$. Then we claim there exists $\alpha, \beta$ such that $\gamma(\alpha, \beta) = C$.
First, for $\epsilon > 0$ small enough, there exists $\alpha_{\epsilon}$ (close to $0$) and $\beta_{\epsilon}$ (close to $1$)
such that $\gamma(\alpha_{\epsilon}, \beta_{\epsilon}) = c_*(f) + \epsilon$, by Proposition~\ref{prop:glleps}.

Hence we can find $\alpha_0, \beta_0$ such that $\gamma(\alpha_0, \beta_0) < C$.

Then since $\gamma(\alpha_0, \beta) \to + \infty$ as $\beta \searrow \alpha_0$ (Proposition \ref{prop:gllimits})
and $\gamma(\alpha_0, \beta)$ is decreasing in $\beta$ (Proposition \ref{prop:glmonotonicity}),
there exists a unique $\beta_C (\alpha_0)$ such that $\gamma (\alpha_0, \beta_C (\alpha_0) ) = C$.
Like before, $L_* (C) := \inf \{ L > 0, \, (C, L) \in \calB(f) \}$ fulfills all properties.

Let $\epsilon > 0$. By definition there exists $\alpha_{\epsilon}, \beta_{\epsilon}$ such that
\[
 \gamma( \alpha_{\epsilon}, \beta_{\epsilon} ) = C, \quad \lambda (\alpha_{\epsilon}, \beta_{\epsilon}) = L_* (C) + \epsilon.
\]
Up to extraction we pass to the limit $\epsilon \to 0$ (the couple $(\alpha_{\epsilon}, \beta_{\epsilon})$ is in a compact set).
Since $\gamma$ and $\lambda$ are continuous, we get $(C, L_* (C)) \in \calB(f)$, and $(C_*(L), L) \in \calB(f)$ by
a similar argument.

Last point boils down to strict monotonicity of $L_*$.
The solution $(X(t), Y(t))$ of
\[
\bepa
\dot{X} = Y, \quad X(0) = \beta,
\\[10pt]
\dot{Y} = - C Y - f(X), \quad Y(0) = - \sqrt{2 \big( F(1) - F(\beta) \big)}
\eepa
\]
depends smoothly on $C$ and $\beta$, so we write it $\big( X(t; C, \beta), Y(t; C, \beta) \big)$. 
We note that by definition
\[
	L_* (C) = \inf_{\beta \in (0, 1)}\,  \inf_{t >0} \, \big\{t, \quad E\big( X(t; C, \beta), Y(t; C, \beta) \big) = 0 \big\}
\]

We denote by $(X_C, Y_C)$ (resp. $(X_{\beta}, Y_{\beta})$) its derivative with respect to $C$ (resp. $\beta$).

From now on we only consider solutions such that $Y < 0$, $X \in [0, 1]$, truncating in time if necessary.

Using indifferently the notations $E = E\big(X(t;C, \beta), Y(t;C, \beta) \big) = E(t; C, \beta)$ we find
\begin{align}
	\p_C E (t) &= \f{\p E}{ \p C} (t; C, \beta) = Y_C (t) Y(t) + X_C (t) f(X(t)),\label{eq:pCE}\\
	\p_{\beta} E(t) &= \f{\p E}{\p \beta} (t; C, \beta) = Y_{\beta} (t) Y(t) + X_{\beta} (t) f(X(t)). \label{eq:pbetaE}
\end{align}
Let $t_* = L_*(C) =  \inf_{\beta \in (0, 1)} \inf \{ t > 0, \, E(t; C, \beta) = 0 \}$, and assume $\beta_* (C) \in (0, 1)$ realizes this
infimum.
We claim that if $\p_C E (t_* (C); C, \beta_*(C) ) < 0$, then $L_*$ is strictly monotone at $C$.

Indeed, let $t_*, \beta_*$ be minimal such that $E\big( X(t_*), Y(t_*) \big) = 0$ and assume $\p_C E(t_*) < 0$.
For $\epsilon > 0$ small enough, $E(t_*; C+ \epsilon, \beta_*) < 0$ by $\p_C E < 0$. Hence there exists $t'_* < t_*$ such that
$E(t'_*; C + \epsilon, \beta_*) = 0$.
This yields $L_* (C + \epsilon) \leq t'_* < t_* = L_* (C)$, that is strict monotonicity.

To prove $\p_C E < 0$, we notice that $(X_C, Y_C)$ and $(X_{\beta}, Y_{\beta})$
are solutions to affine differential systems, with the same linear parts.
\beq
\bepa
\dot{X}_C = Y_C, \quad X_C (0) = 0,
\\[10pt]
\dot{Y}_C = - C Y_C - Y - X_C f'(X), \quad Y_C (0) = 0,
\eepa
\label{eq:wC}
\eeq
and
\beq
\bepa
\dot{X}_{\beta} = Y_{\beta}, \quad X_{\beta} (0) = 1,
\\[15pt]
\dot{Y}_{\beta} = - C Y_{\beta} - X_{\beta} f'(X), \quad Y_{\beta} (0) = \displaystyle\f{f(\beta)}{\sqrt{2 \big( F(1) - F(\beta)
\big)}}.
\eepa
\label{eq:wbeta}
\eeq
Moreover we notice that $X_{\beta} (t) > 0$ for all $t \geq 0$.
Indeed, because of Lemma \ref{keyineq}, $X$ is monotone with respect to its boundary data, that is $X_{\beta} \geq 0$. Then, it suffices to show that $X_{\beta}$ cannot reach $0$ in finite
time. This is a straightforward application of Cauchy-Lipschitz theorem: indeed, since $X_{\beta} \geq 0$, if $X_{\beta} (t_0) = 0$
for some $t_0 > 0$ then $\dot{X}_{\beta} (t_0) = 0$, hence $Y_{\beta} (t_0) = 0$ and finally $(X_{\beta}, Y_{\beta}) \equiv (0, 0)$
by Cauchy-Lipschitz theorem.

Then, we compute the differential equation satisfied by the Wronskian $w(t) := Y_C X_{\beta} - Y_{\beta} X_C$:
\begin{align*}
 w'(t) &= \dot{Y}_C X_{\beta} - \dot{Y}_{\beta} X_C \\
 &= - C w - Y X_{\beta}.
\end{align*}
Because $Y < 0$ and $X_{\beta} > 0$ we get
\[
 \bepa
  (w' + C w)(t) \geq 0 \, \forall t, \quad (w' + C w) (t = 0) > 0,
  \\[10pt]
  w(0) = 0.
 \eepa
\]
Hence if $t > 0$ then $w(t) > 0$.
We can then compute $w$ at $(t_*, \beta_*)$. 
At this point, necessarily $\p_{\beta} E = 0$ (necessary condition for minimality on $\beta$).
And $w(t_*) > 0$ is equivalent to
\begin{align*}
 Y_C X_{\beta} &> X_C Y_{\beta} \\
 \iff Y_C &> \f{X_C Y_{\beta}}{X_{\beta}} \\
 \iff Y_C Y &< \f{X_C Y_{\beta}}{X_{\beta}} Y \text{ by multiplication by } Y < 0\\
 \iff Y_C Y &< - \f{X_{\beta} f(X) X_C}{X_{\beta}} \text{ by } \eqref{eq:pbetaE}\\
 \iff Y_C Y &< - X_C f(X).
\end{align*}
This last inequality is exactly $\p_C E < 0$, and the proof is complete.
\end{proof}
\begin{remark}
Note that we did not use $E = 0$ to prove $\p_C E < 0$.
Therefore, our proof applies for any $t$: the derivative of $E$ with respect to $C$ is negative at the point where $E$ is minimal
(with respect to the initial data $\beta$).
However, we only use this property when the minimum of $E$ is equal to $0$ for our purpose.
\end{remark}

The proposition below is equivalent to Proposition \ref{prop:Lstarasympt}, thanks to Proposition~\ref{prop:strmonotonicity}. 
\begin{proposition}
 The function $C_*$ is non-increasing and satisfies
 
 \begin{enumerate}
  \item[(i)] $\lim_{L \to \infty} C_*(L) = c_*(f)$,
  \item[(ii)] $C_*(L) \sim \displaystyle\f{1}{4 L} \log \big( 1 - \displaystyle\f{F(1)}{F(\theta)} \big)$ when $L \to 0$.
 \end{enumerate}
 \label{prop:cstarlimite}
\end{proposition}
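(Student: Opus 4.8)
The plan is to prove the two limiting behaviors of $C_*$ separately, using the double-shooting parametrization established in Subsections~\ref{subs:shoot} and~\ref{subs:advanced}. Monotonicity of $C_*$ is already a consequence of Proposition~\ref{prop:strmonotonicity} (it is the inverse of the decreasing function $L_*$), so the content is items (i) and (ii).

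\textbf{Item (i): $\lim_{L\to\infty}C_*(L)=c_*(f)$.} First I would recall from Proposition~\ref{prop:glleps} that for every small $\epsilon>0$ there exist $\alpha_\epsilon\to 0$, $\beta_\epsilon\to 1$ with $\gamma(\alpha_\epsilon,\beta_\epsilon)=c_*(f)+\epsilon$ and $\lambda(\alpha_\epsilon,\beta_\epsilon)\to+\infty$. By the monotonicity of $\lambda$ in $\beta$ (Proposition~\ref{prop:glmonotonicity}) and the limits in Proposition~\ref{prop:gllimits}, for each large $L$ one can adjust $\beta$ so that $\lambda(\alpha_\epsilon,\beta)=L$ while keeping $\gamma\leq c_*(f)+\epsilon$; this shows $C_*(L)\leq c_*(f)+\epsilon$ for $L$ large. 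Combined with the uniform lower bound $C_*(L)>c_*(f)$ from Proposition~\ref{prop:cstar}, this gives $\lim_{L\to\infty}C_*(L)=c_*(f)$. The only subtlety is making sure the $\beta$ adjustment stays in the admissible range $(\alpha_\epsilon,1)$, which follows because $\lambda(\alpha_\epsilon,\cdot)$ is continuous, increasing, tends to $0$ as $\beta\to\alpha_\epsilon$ and to $+\infty$ as $\beta\to 1$.

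\textbf{Item (ii): the asymptotics as $L\to 0$.} This is the technical heart. Using $C_*(L_*(C))=C$, it is equivalent to showing $4C\,L_*(C)\to\log\!\big(1-\tfrac{F(1)}{F(\theta)}\big)$ as $C\to+\infty$; note $\theta_c$ here should read $\theta$ since $F(\theta)=\min_{[0,1]}F<0$, making the argument of the logarithm exceed $1$. By definition $L_*(C)=\lambda(\alpha,\beta)$ where $\gamma(\alpha,\beta)=C$; as $C\to\infty$ Proposition~\ref{prop:gllimits} forces $\beta-\alpha\to 0$ (since $\gamma\to+\infty$ only as $\beta\searrow\alpha$), and in fact one should check the common limit of $\alpha,\beta$ is $\theta$ — this is where $F(\theta)=\min F$ enters, because the squeeze in Lemma~\ref{lem:lambdaC} pins down the location. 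The key computation is then Lemma~\ref{lem:lambdaC}, formula~\eqref{lim:lambdaC}: $2\lambda(\alpha,\beta)\gamma(\alpha,\beta)\to\tfrac12\log\!\big(1-\tfrac{F(1)}{F(\beta)}\big)$ as $\alpha\to\beta_-$, so with $\beta\to\theta$ we get $2CL_*(C)=2\lambda\gamma\to\tfrac12\log\!\big(1-\tfrac{F(1)}{F(\theta)}\big)$, i.e. $4CL_*(C)\to\log\!\big(1-\tfrac{F(1)}{F(\theta)}\big)$. I would organize this as: (1) show $\alpha,\beta\to\theta$ as $C\to\infty$ using the lower bound $\gamma\geq\frac{\sqrt2}{\beta-\alpha}(\sqrt{F(1)-F(\theta)}-\sqrt{-F(\theta)})$ together with a matching upper bound to confine the interval near the argmin of $F$; (2) apply~\eqref{lim:lambdaC} with $\beta\to\theta$; (3) translate back via $C_*\circ L_*=\mathrm{id}$ and $L\to 0\iff C\to\infty$.

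\textbf{Main obstacle.} The delicate point is step~(1) of item~(ii): proving that the shooting interval $[\alpha,\beta]$ concentrates at $\theta$ (and not merely that $\beta-\alpha\to0$) as $C\to\infty$. The estimates available are the two-sided bounds on $w_{\alpha,\beta}$ from the proof of Proposition~\ref{prop:exists}, namely $\frac12C^2(p-\alpha)^2<w_{\alpha,\beta}(p)$ on $(\alpha,\theta_c)$ and $w_{\alpha,\beta}(p)\leq F(\theta)+\big(\tfrac{1}{\sqrt2}C(p-\alpha)+\sqrt{-F(\theta)}\big)^2$; evaluating at $\beta$ with $w_{\alpha,\beta}(\beta)=F(1)$ yields $\beta-\alpha=O(1/C)$ and $\beta-\alpha\geq \frac{\sqrt2}{C}(\sqrt{F(1)-F(\theta)}-\sqrt{-F(\theta)})\,$, but the \emph{position} requires noticing that the upper bound on $w$ is only saturated when the trajectory passes through the neighborhood of $(\theta,0)$ where $F$ achieves its minimum; a cleaner route is to argue by contradiction using continuous dependence, extracting a limiting degenerate trajectory concentrated at some $p_\infty$ and showing the constraint $w(\beta)=F(1)$ together with $w(\alpha)=0$ forces $F(p_\infty)=\min F$, hence $p_\infty=\theta$. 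Everything else is routine application of the monotonicity and limit lemmas already proved.
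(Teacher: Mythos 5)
Your treatment of the monotonicity and of item (i) is correct and is essentially the paper's own argument: Proposition \ref{prop:glleps} produces $(\alpha_\epsilon,\beta_\epsilon)$ with $\gamma=c_*(f)+\epsilon$ and $\lambda\to+\infty$, which combined with the lower bound $C_*>c_*(f)$ gives the limit (your extra adjustment of $\beta$ to hit $\lambda=L$ exactly is fine; one can also just invoke Proposition \ref{prop:positive}).

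The gap is in your step (1) of item (ii). You propose to prove that the minimizing interval concentrates at $\theta$ by ``extracting a limiting degenerate trajectory concentrated at some $p_\infty$ and showing the constraint $w(\beta)=F(1)$ together with $w(\alpha)=0$ forces $F(p_\infty)=\min F$.'' This is false: the constraints force nothing of the sort. For \emph{every} fixed $\beta_0\in(0,\theta_c)$, letting $\alpha\to\beta_0^-$ gives admissible configurations with $w(\alpha)=0$, $w(\beta_0)=F(1)$, $\gamma(\alpha,\beta_0)\to+\infty$ and, by \eqref{lim:lambdaC}, $2\lambda\gamma\to\frac12\ln\big(1-\frac{F(1)}{F(\beta_0)}\big)$; the degenerate limit concentrates at $\beta_0$, not at $\theta$. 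Concentration at $\theta$ of the \emph{minimizer} is selected by minimality of $\lambda$ at fixed $\gamma=C$ (the limiting product is smallest where $F$ is smallest), and turning that into a proof requires a lower bound that is uniform over all near-degenerate configurations, including those with $\beta$ near $\theta_c$ where $F(\beta)\to0$ and the error estimate in Lemma \ref{lem:lambdaC} degenerates. Note also that the paper does \emph{not} prove concentration at this stage: it obtains the asymptotics by taking $\min_\beta\ln\big(1-\frac{F(1)}{F(\beta)}\big)=\ln\big(1-\frac{F(1)}{F(\theta)}\big)$ over the possible concentration points, and the statement $\alpha_*(C),\beta_*(C)\to\theta$ is only established afterwards (Lemma \ref{lem:acbctheta}), \emph{using} Proposition \ref{prop:cstarlimite} — so you could not invoke that lemma here without circularity. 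A clean repair along your lines avoids concentration altogether: for the lower bound, since $F(p)\geq F(\theta)$ and $w'\geq0$, every admissible $(\alpha,\beta)$ satisfies $2\lambda\gamma=\int_\alpha^\beta\frac{w'(p)}{2\big(w(p)-F(p)\big)}\,dp\geq\int_\alpha^\beta\frac{w'(p)}{2\big(w(p)-F(\theta)\big)}\,dp=\frac12\ln\big(1-\frac{F(1)}{F(\theta)}\big)$, which is exact and uniform; for the upper bound, fix $\beta=\theta$, pick $\alpha_C$ with $\gamma(\alpha_C,\theta)=C$ (possible for $C$ large by monotonicity and the blow-up of $\gamma$ as $\alpha\to\theta^-$), and apply \eqref{lim:lambdaC} to get $2CL_*(C)\leq2C\lambda(\alpha_C,\theta)\to\frac12\ln\big(1-\frac{F(1)}{F(\theta)}\big)$; together these give $4CL_*(C)\to\log\big(1-\frac{F(1)}{F(\theta)}\big)$, and your step (3) then transfers this to $C_*(L)$ as $L\to0$.
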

\begin{proof}

The proof of (ii) is a direct consequence of Lemma \ref{lem:lambdaC}. 
Indeed from estimate \eqref{lambda1} we deduce that $\lambda$ goes to $0$ only if $\beta-\alpha\to 0$.
It can occur only if $\beta<\theta_c$. Then with \eqref{lim:lambdaC}, we deduce that 
when $L\to 0$, we have 
$$
C_{*}(L) \sim \frac{1}{4L} \min_{\beta}\ln \left(1-\frac{F(1)}{F(\beta)}\right) = 
\frac{1}{4L} \ln \left(1-\frac{F(1)}{F(\theta)}\right).
$$

For the point (i), we have by Proposition \ref{prop:glleps} that for all $\epsilon > 0$, there
exists $\alpha_{\epsilon}$ (close to $0$) and $\beta_{\epsilon}$ (close to $1$) such that
\[
 \gamma(\alpha_{\epsilon}, \beta_{\epsilon}) = c_*(f)+ \epsilon.
\]
Simultaneously, $\lambda(\alpha_{\epsilon}, \beta_{\epsilon}) \to + \infty$ as $\epsilon \to 0$.
Thus $\lim_{L\to +\infty} C_*(L) = c_*(f)$.

\end{proof}

We now state two auxiliary facts before getting to the proof of our last main result (remaining parts of Proposition \ref{prop:barriers}):
\begin{proposition}
For all $C \geq c_* (f)$ there exists unique $\alpha_C$ and $\beta_C$ such that the generalized problem \eqref{eq:alphabeta} ({\it
i.e.} we impose that its solutions are of class $\mathcal{C}^1$ and let $L = +\infty$) has
 solutions with $(\alpha, \beta) = (\alpha_C, 1)$ and $(\alpha, \beta) = (0, \beta_C)$. 
  When $C = c_*(f)$ this property holds with $(\alpha, \beta) = (0, 1)$: $\alpha_{c_*(f)} = 0$ and $\beta_{c_*(f)} = 1$ for the (unique) traveling wave.
 
The functions $C \mapsto \alpha_C$ and $C \mapsto \beta_C$ are respectively increasing and decreasing. They converge to $0$ and $1$,
respectively, as $C \to +\infty$
 
 Conversely, for any $\alpha \in [0, \theta_c)$ there exists a unique $C \geq c_* (f)$ such that $\alpha = \alpha_C$.
 For any $\beta \in (0, 1]$, there exists a unique $C \geq c_*(f)$ such that $\beta = \beta_C$.
 \label{prop:acbc}
\end{proposition}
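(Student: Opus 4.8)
The plan is to express the generalized ($L=+\infty$) problem entirely through the two functions $\gamma$ and $\lambda$ of Subsection \ref{subs:shoot}, and then extract every claim from their monotonicity (Proposition \ref{prop:glmonotonicity}), their limits (Proposition \ref{prop:gllimits}) and the role of $c_*(f)$ (Propositions \ref{prop:cstar} and \ref{prop:glleps}). By the change of variables of Proposition \ref{prop:exists}, a solution of the generalized problem \eqref{eq:alphabeta} with boundary data $(\alpha,\beta)$, $0\le\alpha<\beta\le 1$, $\alpha<\theta_c$, is the same thing as a pair with $\gamma(\alpha,\beta)=C$ and $\lambda(\alpha,\beta)=+\infty$ (this is the meaning of $L=+\infty$); and by Proposition \ref{prop:gllimits} the latter condition holds as soon as $\beta=1$ or $\alpha=0$ (the integrand in \eqref{eq:lambda} fails to be integrable at that endpoint, since there $w_{\alpha,\beta}$ meets $F$ and $f$ vanishes). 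Since $\gamma(\alpha,\cdot)$ is decreasing and bounded below by $c_*(f)$ (Proposition \ref{prop:cstar}), the limits $\Phi(\alpha):=\lim_{\beta\to 1^-}\gamma(\alpha,\beta)$ (on $[0,\theta_c)$) and $\Psi(\beta):=\lim_{\alpha\to 0^+}\gamma(\alpha,\beta)$ (on $(0,1]$) exist; the generalized $(\alpha,1)$-barriers are then exactly the pairs $(\alpha,1)$ with $\Phi(\alpha)=C$, and the generalized $(0,\beta)$-barriers the pairs $(0,\beta)$ with $\Psi(\beta)=C$. That these limiting pairs genuinely yield $\mathcal{C}^1$ profiles on a half-line follows by passing to the limit in $w_{\alpha,\beta}$, using the uniform bounds $0\le w_{\alpha,\beta}\le F(1)$ and the ensuing bound on $w_{\alpha,\beta}'$.

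Next I would show that $\Phi$ is a continuous strictly increasing bijection from $[0,\theta_c)$ onto $[c_*(f),+\infty)$, and $\Psi$ a continuous strictly decreasing bijection from $(0,1]$ onto $[c_*(f),+\infty)$. Monotonicity is inherited from Proposition \ref{prop:glmonotonicity} ($\gamma$ increasing in $\alpha$, decreasing in $\beta$); strictness follows from the comparison argument of Proposition \ref{prop:aborder} applied with $\beta_1=\beta_2=1$ (resp. $\alpha_1=\alpha_2=0$), which rules out a plateau. Continuity comes from continuous dependence in the limiting shooting problem, exactly as in Lemma \ref{lem:continuitygl}: for fixed $\alpha$ the endpoint value at $p=1$ of the solution of \eqref{eq:w} is continuous and strictly increasing in $C$, so its inverse $\Phi(\alpha)$ depends continuously on $\alpha$. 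For the endpoint values: $\Phi(0)=\Psi(1)=c_*(f)$, because the limiting problem $w'=c\sqrt{2(w-F)}$, $w(0)=0$, $w(1)=F(1)$ has a solution if and only if $c=c_*(f)$ by uniqueness of the bistable traveling wave (the fact already used in Proposition \ref{prop:glleps}); $\Psi(\beta)\to+\infty$ as $\beta\to 0^+$ by Proposition \ref{prop:gllimits}; and $\Phi(\alpha)\to+\infty$ as $\alpha\to\theta_c^-$ — otherwise, being monotone it would tend to a finite $C_\infty$, and passing to the limit in $w_{\alpha,1}$ (with the uniform bounds above) would produce a solution of $w'=C_\infty\sqrt{2(w-F)}$, $w(1)=F(1)$, with $w(\theta_c)=0=F(\theta_c)$; then $(w-F)'(\theta_c)=C_\infty\sqrt{0}-f(\theta_c)=-f(\theta_c)<0$ (since $\theta_c>\theta$), contradicting $w\ge F$ just to the right of $\theta_c$.

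The proposition then follows by inverting: $\alpha_C:=\Phi^{-1}(C)$ and $\beta_C:=\Psi^{-1}(C)$ are well-defined and unique for every $C\ge c_*(f)$; $C\mapsto\alpha_C$ is increasing and $C\mapsto\beta_C$ decreasing; $\alpha_{c_*(f)}=0$ and $\beta_{c_*(f)}=1$ (the traveling wave, for which $(\alpha,\beta)=(0,1)$); and $\alpha_C\to\theta_c$, $\beta_C\to 0$ as $C\to+\infty$. The two converse statements are precisely the surjectivity-plus-injectivity of $\Phi$ and $\Psi$: for $\alpha\in[0,\theta_c)$ the unique $C\ge c_*(f)$ with $\alpha=\alpha_C$ is $C=\Phi(\alpha)$, and for $\beta\in(0,1]$ it is $C=\Psi(\beta)$.

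I expect the main difficulty to be the surjectivity of $\Phi$ onto $[c_*(f),+\infty)$ — that is, the blow-up $\gamma(\alpha,1)\to+\infty$ as $\alpha\to\theta_c^-$, which rests on the sign of $f$ at $\theta_c$ and on a clean passage to the limit in the $w$-equation — together with the bookkeeping needed to guarantee that the two degenerate cases $\beta=1$ and $\alpha=0$ are legitimately covered by (limits of) the constructions of Subsection \ref{subs:shoot}, so that $\Phi$ and $\Psi$ are genuinely continuous up to those endpoints. Everything else is a transcription of the monotonicity and limit properties already established.
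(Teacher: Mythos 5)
Your proposal is correct and takes essentially the same route as the paper: the paper also defines $C_\alpha=\lim_{\beta\to 1}\gamma(\alpha,\beta)$ and $C^\beta=\lim_{\alpha\to 0}\gamma(\alpha,\beta)$ (your $\Phi,\Psi$), uses the monotonicity and continuity of $\gamma$, identifies the endpoint value $c_*(f)$ via uniqueness of the bistable wave, proves blow-up as $\beta\to 0$ and as $\alpha\to\theta_c$ (your $w$-plane contradiction, $(w-F)'(\theta_c)=-f(\theta_c)<0$, is exactly the paper's argument that the limiting profile would have $p'(0)=0$ with $f(\theta_c)>0$, just written in the energy variable), and then inverts. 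Note only that your limits $\alpha_C\to\theta_c$, $\beta_C\to 0$ as $C\to+\infty$ are indeed what the paper's proof establishes, the ``$0$ and $1$'' in the statement being a typo.
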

\begin{proof}
First we introduce, for all $\alpha \in (0, \theta_c)$ and $\beta \in (0, 1)$:
\[
	C_{\alpha} := \lim_{\beta \to 1} \gamma(\alpha, \beta), \quad C^{\beta} := \lim_{\alpha \to 0} \gamma(\alpha, \beta).
\]

Let us fix $C>c*(f)$. We are going to show that there exists a unique $\alpha\in (0,\theta_c)$ such that $C_\alpha = C$. To this aim, we notice that $\alpha\mapsto C_\alpha$ is continuous, increasing (from Proposition \ref{prop:glmonotonicity}) and $C_0=c_*(f)$. Then it suffices to prove that $\lim_{\alpha\to \theta_c} C_\alpha = +\infty$.
Once this will be done, defining $\alpha_C$ by $C_{\alpha_C} = C$ will yield the result.

Similarly, we are going show that there exists a unique $\beta \in (0, 1)$ such that $C^{\beta} = C$. Again, we notice that $\beta \mapsto C^{\beta}$ is continuous, decreasing, and $C^1 = c_*(f)$. Then it suffices to prove that $\lim_{\beta \to 0} C^{\beta} = +\infty$.

Let $C_{\theta_c} := \lim_{\alpha \to \theta_c} C_{\alpha}, \quad C^0 := \lim_{\beta \to 0} C^{\beta}.$
We are going to prove $C_{\theta_c} = C^0 = +\infty.$

The claim for $C^0$ is a straightforward consequence of Proposition \ref{prop:gllimits}.
For $C_{\theta_c}$, let us assume by contradiction that $C_{\theta_c} < + \infty$.
In this case we find a solution to
\beq
\bepa
- p'' - C_{\theta_c} p' = f(p) \text{ on } (-\infty, 0)
\\[10pt]
- p'' = f(p) \text{ on } (0, +\infty),
\\[10pt]
p(-\infty) = 1, \, p(+\infty) = 0,
\eepa
\eeq
such that $p(0) = \theta_c$. Multiplying the equation by $p'$ and integrating over $(0, +\infty)$ yields $p'(0) = 0$.
However, this cannot hold because by hypothesis ($f$ is bistable), $f(\theta_c) > 0$,
and then this imposes $p''(0) < 0$: $p$ would reach a local maximum at $0$, which contradicts the fact that is has to decrease on
$(-\infty, 0)$.
(Similarly, Hopf Lemma gives that $p'(0) < 0$, which contradicts $p'(0) = 0$.)
\end{proof}

\begin{remark}
In other words, $\alpha_C$ and $\beta_C$ may be defined respectively as
$\alpha_C = p(0)$ where $p$ is the unique solution of class $\mathcal{C}^1$ of 
\[
\bepa
- p'' - C p' = f(p) \text{ on } (-\infty, 0),
\\[10pt]
- p'' = f(p) \text{ on } (0, +\infty),
\\[10pt]
p(-\infty)=1, \, p(+\infty) = 0, \, p > 0.
\eepa
\]
and as $\beta_C = p(0)$ where $p$ be the unique solution of class $\mathcal{C}^1$ of
\[
\bepa
- p''  = f(p) \text{ on } (-\infty, 0),
\\[10pt]
- p'' - C p' = f(p) \text{ on } (0, +\infty),
\\[10pt]
p(-\infty)=1, \, p(+\infty) = 0, \, p > 0.
\eepa
\]
See \cite{Hamel} for existence and uniqueness of these solutions: the results therein apply directly up to transforming $p(\cdot)$ into $p(-\cdot)$ for the first problem, and into $1-p(\cdot)$ for the second one.
\end{remark}

 \begin{lemma}
  Let $C > c^*(f)$. For all $\beta \in (\beta_C, 1)$, there exists a unique $\alpha_C^+ (\beta) \in (0, \alpha_C)$ such that
  $\gamma(\alpha_C^+ (\beta), \beta) = C.$
  We introduce $L^C (\beta) := \lambda (\alpha_C^+ (\beta), \beta)$.
  
  At the limits, $\alpha_C^+ (\beta_C) = 0$ and $\alpha_C^+ (1) = \alpha_C$.
    In addition,
  \[
   \exists \lim_{\beta \to \beta_C} L^C (\beta) = \lim_{\beta \to 1} L^C (\beta) = + \infty.
  \]
    Hence we can define
  \[
   L_m (C) := \min_{\beta \in (\beta_C, 1)} L^C (\beta).
  \]
    Then, $L_m$ is decreasing and $\lim_{C \to + \infty} L_m(C) = 0$.
  \label{lem:claim}
 \end{lemma}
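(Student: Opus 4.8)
The plan is to prove the various assertions of the lemma in order, each time reducing to the monotonicity, continuity and limit properties of $\gamma$ and $\lambda$ from Propositions~\ref{prop:glmonotonicity}, \ref{prop:gllimits}, \ref{prop:acbc}, \ref{prop:aborder} and Lemma~\ref{lem:continuitygl}. Fix $C > c_*(f)$ and $\beta \in (\beta_C, 1)$. By Lemma~\ref{lem:continuitygl} and Proposition~\ref{prop:glmonotonicity}, $\alpha \mapsto \gamma(\alpha,\beta)$ is continuous and strictly increasing; as $\alpha\to 0^+$ it tends to $C^\beta := \lim_{\alpha\to 0}\gamma(\alpha,\beta)$, and since $\beta\mapsto C^\beta$ is strictly decreasing with $C^{\beta_C}=C$ (Proposition~\ref{prop:acbc}) we have $C^\beta<C$; at $\alpha=\alpha_C$ one has $\gamma(\alpha_C,\beta)>\lim_{\beta'\to 1}\gamma(\alpha_C,\beta')=C_{\alpha_C}=C$ because $\gamma(\alpha_C,\cdot)$ is strictly decreasing. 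By the intermediate value theorem and strict monotonicity there is thus a unique $\alpha_C^+(\beta)\in(0,\alpha_C)$ with $\gamma(\alpha_C^+(\beta),\beta)=C$; strict monotonicity of $\beta\mapsto\alpha_C^+(\beta)$ is Proposition~\ref{prop:aborder} and continuity follows from that of $\gamma$. For the boundary behaviour I would pass to the monotone limits: writing $b:=\lim_{\beta\to\beta_C^+}\alpha_C^+(\beta)$, if $b>0$ then $C=\gamma(\alpha_C^+(\beta),\beta)\ge\gamma(b,\beta)\to\gamma(b,\beta_C)>C^{\beta_C}=C$, which is absurd, so $b=0$; and writing $a:=\lim_{\beta\to 1^-}\alpha_C^+(\beta)$, if $a<\alpha_C$ then $C=\gamma(\alpha_C^+(\beta),\beta)\le\gamma(a,\beta)\to C_a<C$, again absurd, so $a=\alpha_C$.

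Next I would show that $L^C(\beta)=\lambda(\alpha_C^+(\beta),\beta)$ tends to $+\infty$ at both endpoints of $(\beta_C,1)$, by the compactness argument already used in the proof of Proposition~\ref{prop:gllimits}. If $L^C(\beta_n)$ stayed bounded along a sequence $\beta_n\to 1$, then since $\alpha_C^+(\beta_n)\to\alpha_C\in(0,\theta_c)$ and the associated solutions of \eqref{eq:alphabeta} with parameter $C$ (equivalently the solutions $w_{\alpha_C^+(\beta_n),\beta_n}$ of \eqref{eq:w2}) together with their derivatives are uniformly bounded on uniformly bounded intervals, a subsequence would converge to a solution of the generalized problem \eqref{eq:alphabeta} with $\beta=1$, finite half-length, and $\gamma=C$; but $\beta=1$ forces $p'(-L)=0$ through $\tfrac12 p'(-L)^2+F(1)=F(1)$, hence $p\equiv 1$ by Cauchy--Lipschitz, contradicting $p(L)=\alpha_C<1$. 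The case $\beta_n\to\beta_C^+$ is symmetric: there $\alpha_C^+(\beta_n)\to 0$, so a bounded limit would be a solution with $\alpha=0$, which forces $p'(L)=0$ through $\tfrac12 p'(L)^2+F(0)=0$, hence $p\equiv 0$, contradicting $p(-L)=\beta_C>0$. Since $L^C$ is continuous and positive on $(\beta_C,1)$ and blows up at both ends, it attains a positive minimum, which is $L_m(C)$.

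For the monotonicity of $L_m$, take $C_1<C_2$, let $\beta_1\in(\beta_{C_1},1)$ realize $L_m(C_1)$, and set $\alpha^*:=\alpha_{C_1}^+(\beta_1)\in(0,\theta_c)$, so that $\gamma(\alpha^*,\beta_1)=C_1$. Keeping $\alpha^*$ fixed, $\gamma(\alpha^*,\cdot)$ is continuous, strictly decreasing and blows up as $\beta\searrow\alpha^*$ (Propositions~\ref{prop:glmonotonicity} and~\ref{prop:gllimits}), so there is a unique $\beta_2\in(\alpha^*,\beta_1)$ with $\gamma(\alpha^*,\beta_2)=C_2$; moreover $C^{\beta_2}<\gamma(\alpha^*,\beta_2)=C_2=C^{\beta_{C_2}}$ forces $\beta_2>\beta_{C_2}$, hence $\alpha^*=\alpha_{C_2}^+(\beta_2)$, and since $\lambda(\alpha^*,\cdot)$ is increasing we get $L_m(C_2)\le\lambda(\alpha^*,\beta_2)<\lambda(\alpha^*,\beta_1)=L_m(C_1)$. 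For $\lim_{C\to+\infty}L_m(C)=0$, fix once and for all $\alpha_0\in(0,\theta_c)$: given $\epsilon>0$, Proposition~\ref{prop:gllimits} provides $\delta>0$ with $\lambda(\alpha_0,\beta)<\epsilon$ for $\beta\in(\alpha_0,\alpha_0+\delta)$, and since $\gamma(\alpha_0,\cdot)$ is finite on $[\alpha_0+\delta,1)$ and $\to+\infty$ as $\beta\searrow\alpha_0$, for all $C$ large enough the unique $\beta(C)$ with $\gamma(\alpha_0,\beta(C))=C$ lies in $(\alpha_0,\alpha_0+\delta)$ and satisfies $\beta(C)>\beta_C$, so $\alpha_0=\alpha_C^+(\beta(C))$ and $0<L_m(C)\le\lambda(\alpha_0,\beta(C))<\epsilon$.

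I expect the only genuinely delicate step to be the blow-up of $L^C$ at the two endpoints of $(\beta_C,1)$: one has to make the passage to the limit in \eqref{eq:alphabeta} (or \eqref{eq:w2}) precise, identify the degenerate boundary-value problem obtained at the limit, and extract the contradiction. All the remaining steps are bookkeeping with the monotonicity and limit properties of $\gamma$ and $\lambda$.
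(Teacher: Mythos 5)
Your proof is correct, and it is considerably more detailed than the paper's own (very terse) argument, but it takes a partly different route. For existence and uniqueness of $\alpha_C^+(\beta)$ the paper invokes the phase-plane fact that the flow is strictly inward on the level sets of the energy (Lemma \ref{lem:inward}), whereas you argue through the strict monotonicity and continuity of $\gamma$ together with the threshold values $C^{\beta}$ and $C_{\alpha}$ from Proposition \ref{prop:acbc}; both are legitimate, and yours stays entirely inside the $(\gamma,\lambda)$ formalism. For the blow-up of $L^C$ at $\beta_C$ and at $1$, the paper merely says it is a corollary of Proposition \ref{prop:acbc}; your compactness argument, identifying the limit as the degenerate problem with $\beta=1$ (resp.\ $\alpha=0$) and deducing $p\equiv 1$ (resp.\ $p\equiv 0$) by Cauchy--Lipschitz, is exactly the mechanism used in the paper's proof of Proposition \ref{prop:gllimits}, so there you make the intended idea explicit. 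For $\lim_{C\to+\infty}L_m(C)=0$ the paper relies on Proposition \ref{prop:strmonotonicity} (for every $L$ there is $C_*(L)$ with $(C_*(L),L)\in\calB(f)$, hence $L_m(C)\leq L$ once $C\geq C_*(L)$), while you argue directly from $\lambda(\alpha_0,\beta)\to 0$ as $\beta\searrow\alpha_0$ and $\gamma(\alpha_0,\beta)\to+\infty$; your version is self-contained and does not need Proposition \ref{prop:strmonotonicity}. Likewise, your explicit monotonicity argument for $L_m$ (fix $\alpha^*$, decrease $\beta$, use monotonicity of $\gamma$ and of $\lambda$ in $\beta$) is a concrete instantiation of the paper's one-line claim that ``everything is monotone with respect to $C$''.

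One small patch is needed in your existence step: you evaluate $\gamma(\alpha_C,\beta)$, which is only defined when $\beta>\alpha_C$. Since $\beta_C\to 0$ and $\alpha_C\to\theta_c$ as $C\to+\infty$ (this follows from $C^0=C_{\theta_c}=+\infty$, established in the proof of Proposition \ref{prop:acbc}), for large $C$ there genuinely are admissible $\beta\in(\beta_C,\alpha_C]$ for which this endpoint comparison is meaningless. In that case simply replace it by the observation that $\gamma(\alpha,\beta)\to+\infty$ as $\alpha\nearrow\beta$ (the lower bound used in the proof of Proposition \ref{prop:gllimits} only involves $\beta-\alpha$), which again produces a value above $C$ at the right end of the $\alpha$-range; the intermediate value theorem then applies and the unique root automatically lies in $\big(0,\min(\beta,\alpha_C)\big)$, so the rest of your argument goes through unchanged.
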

\begin{proof}
Existence and uniqueness for $\alpha_C^+$ (whence the definition of $L^C$) comes from the fact that the equation's flow is strictly inward on the level sets of $E$ (by Lemma \ref{lem:inward}).

The two limits at $\beta_C$ and $1$ of $\alpha_C^+$ are straightforward, as well as those of $L^C$
(this may be seen as a corollary of Proposition \ref{prop:acbc}).
This justifies the existence of a minimum for $L^C$.

Everything being monotone with respect to $C$, this implies that $L_m$ is decreasing.
Finally, the minimality of $L_m$ implies that $L_m \to 0$ as $C \to +\infty$, because (by Proposition \ref{prop:strmonotonicity})
for all $L> 0$, there exists $C_* (L)$, $(C_* (L), L) \in \calB(f)$.
Hence, for $C \geq C_* (L)$, necessarily $L_m(C) < L$.

\end{proof}

We end this subsection by stating and proving an auxiliary fact on the ``limit'' barrier (with minimal length, equal to $L_*(C)$, at a fixed logarithmic gradient $C$).
This fact is not directly useful for proving results of Section \ref{sec:results} but receives a relevant interpretation for the biological problem in Appendix \ref{sec:loc}.
\begin{lemma}
Let $C > c_*(f)$. Let $\alpha_* (C), \beta_*(C)$ be such that
\[
	\gamma \big(\alpha_* (C), \beta_*(C) \big)  = C, \quad 2 \lambda \big(\alpha_*(C), \beta_*(C) \big) = L_*(C).
\]

Then $\alpha_*$ and $\beta_*$ have a limit as $C \to +\infty$, and
\[
 \lim_{C \to \infty} \alpha_* (C) = \theta = \lim_{C \to \infty } \beta_* (C).
\]

In addition, for all $C > c_*(f)$, $\alpha_*(C) < \theta < \beta_* (C)$, and
\[
 \beta_* (C) - \alpha_*(C) = \f{1}{C} \big(\sqrt{2(F(1)-F(\theta))} - \sqrt{-2F(\theta)} \big) + o(\f{1}{C}).
\]

\label{lem:acbctheta}
\end{lemma}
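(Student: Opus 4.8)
The plan is to exploit the characterization $2\lambda(\alpha_*(C),\beta_*(C)) = L_*(C)$ as a \emph{minimum} over the admissible $\beta$'s, combined with the explicit lower bounds for $\gamma$ and $\lambda$ established in Lemma \ref{lem:lambdaC} and the proof of Proposition \ref{prop:gllimits}. First I would record that, by Proposition \ref{prop:strmonotonicity} and Lemma \ref{lem:claim}, $L_*(C) = L_m(C) \to 0$ as $C \to +\infty$, where $L_m(C) = \min_{\beta \in (\beta_C, 1)} L^C(\beta)$ and the minimizing pair is exactly $(\alpha_*(C), \beta_*(C))$ (up to the factor $2$ in the normalization of $\lambda$). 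So both $\lambda(\alpha_*(C), \beta_*(C)) \to 0$ and $\gamma(\alpha_*(C), \beta_*(C)) = C \to +\infty$. Now the estimate \eqref{lambda1} from Lemma \ref{lem:lambdaC}, namely $2\lambda(\alpha,\beta) \geq (\beta - \alpha)/\sqrt{2(F(1)-F(\theta))}$, forces $\beta_*(C) - \alpha_*(C) \to 0$. This already pins down a common limit (along subsequences) for $\alpha_*$ and $\beta_*$; call it $\ell \in [0,1]$.

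Next I would identify $\ell = \theta$. The key is the energy-minimality interpretation from the proof of Proposition \ref{prop:strmonotonicity}: at the minimizing $\beta_*(C)$ one has $\p_\beta E = 0$ at the exit time, and more concretely the quantity $2\lambda\gamma$ admits the representation $2\lambda(\alpha,\beta)\gamma(\alpha,\beta) = \int_\alpha^\beta \frac{w'(x)}{2(w(x)-F(x))}\,dx$ used in Lemma \ref{lem:lambdaC}. Since $\lambda(\alpha_*(C),\beta_*(C)) \to 0$ while $\gamma = C \to +\infty$, I would show that the product $2\lambda\gamma$ stays bounded away from $0$ (this is precisely the content of \eqref{eq:lambdaC}, $2\lambda\gamma \geq 1 - \sqrt{-F(\theta)/(F(1)-F(\theta))} > 0$). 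Combining this lower bound with the limit formula \eqref{lim:lambdaC}, $\lim_{\alpha\to\beta_-} 2\lambda\gamma = \frac12\ln(1 - F(1)/F(\beta))$, and the fact that $\beta_*(C) - \alpha_*(C) \to 0$, we get that the limiting value $\frac12\ln(1 - F(1)/F(\ell))$ of the product must coincide with the $C\to\infty$ limit of $2\lambda\gamma$; but by the minimality of $L_*$ (hence of $\lambda$, hence of the product, after normalizing by $\gamma=C$) this limit is $\min_\beta \frac12\ln(1 - F(1)/F(\beta))$, attained exactly where $F$ is minimal, i.e. at $\beta = \theta$. Hence $\ell = \theta$, and since the limit is independent of the subsequence, $\alpha_*(C), \beta_*(C) \to \theta$. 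The strict inequalities $\alpha_*(C) < \theta < \beta_*(C)$ for finite $C$ follow from $\alpha_*(C) < \theta_c$ together with the phase-plane picture: a $(C,L)$-barrier must cross $p=\theta$ strictly between $x=-L$ and $x=L$ (the flow is strictly transverse there because $f(\theta)=0$ but $f'$ need not vanish, and more simply because $E$ is strictly decreasing along trajectories while $(\theta,0)$ is the unique energy minimum), so $\alpha_*(C) = p(L) < \theta < p(-L) = \beta_*(C)$.

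Finally, for the sharp asymptotics of $\beta_*(C) - \alpha_*(C)$, I would refine the two-sided bound on $\gamma$ from the proof of Proposition \ref{prop:gllimits}. On one hand $w_{\alpha,\beta}(p) \geq F(p) \geq F(\theta)$ gives, by integrating $w' = \gamma\sqrt{2(w-F)} \leq \gamma\sqrt{2(w - F(\theta))}$ from $\alpha$ to $\beta$ and using $w(\alpha)=0$, $w(\beta)=F(1)$, the bound $\gamma(\beta-\alpha) \geq \sqrt{2(F(1)-F(\theta))} - \sqrt{-2F(\theta)}$; on the other hand, since $\alpha_*(C), \beta_*(C) \to \theta$, on the shrinking interval $[\alpha_*(C), \beta_*(C)]$ we have $\min_{[\alpha_*,\beta_*]} F \to F(\theta)$ and $\max w_{\alpha_*,\beta_*} = F(1)$, so the reverse inequality $w' \geq \gamma\sqrt{2(w - \min F)}$ integrated the same way yields $\gamma(\beta_*-\alpha_*) \leq \sqrt{2(F(1) - \min_{[\alpha_*,\beta_*]}F)} - \sqrt{-2\min_{[\alpha_*,\beta_*]}F} = \sqrt{2(F(1)-F(\theta))} - \sqrt{-2F(\theta)} + o(1)$. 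Multiplying through by $1/C = 1/\gamma$ gives the claimed expansion $\beta_*(C) - \alpha_*(C) = \frac1C(\sqrt{2(F(1)-F(\theta))} - \sqrt{-2F(\theta)}) + o(1/C)$.

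The main obstacle I anticipate is making rigorous the step that the $C\to\infty$ limit of the normalized length $2\lambda\gamma$ equals $\frac12\ln(1-F(1)/F(\theta))$ rather than some larger value associated with a $\beta \neq \theta$: one must be careful that the minimizer $\beta_*(C)$ does not escape toward $1$ (where \eqref{lim:lambdaC} would not directly apply and where $\lambda$ blows up by Proposition \ref{prop:gllimits}, contradicting $\lambda \to 0$) — this is ruled out by $\lambda(\alpha_*,\beta_*) \to 0$ forcing $\beta_* - \alpha_* \to 0$ and $\alpha_* < \theta_c$ forcing $\beta_* < \theta_c + o(1)$ — and one must control uniformity in the convergence \eqref{lim:lambdaC} as $\beta$ itself varies with $C$, which is where the explicit error estimate $\frac{F(1)}{2}|1/F(\beta) - 1/F(\alpha)|$ from the proof of Lemma \ref{lem:lambdaC} does the work.
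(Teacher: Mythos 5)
Your overall strategy is viable and genuinely different in mechanics from the paper's: the paper rescales the minimal barrier to $[-1,1]$, integrates the ODE asymptotically in $1/C$, and identifies the common limit $z_\infty$ of $\alpha_*,\beta_*$ from the two energy boundary conditions as the root of $F(z_\infty)=F(\theta)$, whereas you stay at the level of $w_{\alpha,\beta}$ and the normalized length $2\lambda\gamma$, squeezing the minimizer between a competitor at $\beta=\theta$ and a lower bound coming from Lemma \ref{lem:lambdaC}. Your first step ($\beta_*-\alpha_*\to0$ via \eqref{lambda1} and $L_*(C)\to0$) and your lower bound $\gamma(\beta_*-\alpha_*)\ge\sqrt{2(F(1)-F(\theta))}-\sqrt{-2F(\theta)}$ are correct and match estimates already in the paper. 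However, there are three concrete problems. First, a directional slip in the upper bound of the expansion: $w'=\gamma\sqrt{2(w-F)}\ge\gamma\sqrt{2\bigl(w-\min_{[\alpha_*,\beta_*]}F\bigr)}$ is backwards, and since $\theta\in[\alpha_*,\beta_*]$ the minimum \emph{is} $F(\theta)$, so as written you assert the reverse of the true inequality $w'\le\gamma\sqrt{2(w-F(\theta))}$; you need $F(x)\le M_*:=\max_{[\alpha_*,\beta_*]}F$, giving $\gamma(\beta_*-\alpha_*)\le\sqrt{2(F(1)-M_*)}-\sqrt{-2M_*}$ with $M_*\to F(\theta)$ once the first part is proved. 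This is easily repaired. Second, the compactness issue you flag is not actually closed by "$\beta_*<\theta_c+o(1)$": the error bound $\frac{F(1)}{2}\bigl\lvert 1/F(\beta)-1/F(\alpha)\bigr\rvert$ from the proof of Lemma \ref{lem:lambdaC} degenerates if $F(\beta_*)\to0$, i.e.\ if $\beta_*\to\theta_c$ (or $\to0$), which your argument does not exclude. A clean fix avoids \eqref{lim:lambdaC} at the minimizer altogether: from \eqref{eq:w2},
\begin{equation*}
2\lambda\gamma(\alpha_*,\beta_*)=\int_{\alpha_*}^{\beta_*}\frac{w'(x)}{2\bigl(w(x)-F(x)\bigr)}\,dx\;\ge\;\int_{\alpha_*}^{\beta_*}\frac{w'(x)}{2\bigl(w(x)-M_*\bigr)}\,dx=\frac12\ln\Bigl(1-\frac{F(1)}{M_*}\Bigr),
\end{equation*}
which blows up if $M_*\to0$ and more generally forces $F(\ell)\le F(\theta)$ for any subsequential limit $\ell$, once you have the competitor upper bound $C\,L_*(C)\le 2\gamma\lambda(\alpha_C^+(\theta),\theta)\to\frac12\ln\bigl(1-F(1)/F(\theta)\bigr)$ (which does follow from \eqref{lim:lambdaC} at fixed $\beta=\theta$, using $\theta-\alpha_C^+(\theta)\le\sqrt{2F(1)}/C$).

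The genuine gap is the claim $\alpha_*(C)<\theta<\beta_*(C)$. Your justification — that \emph{any} $(C,L)$-barrier must cross $\theta$ strictly between $-L$ and $L$ by transversality/energy considerations — is not correct: the exit condition only requires $F(p(L))\le0$, so nothing in the phase plane prevents a barrier from having $p(L)\in(\theta,\theta_c)$, and the energy-$F(1)$ trajectory on the left is defined down to $p=0$, so $p(-L)<\theta$ is not excluded either. The straddling of $\theta$ is a property of the \emph{minimal-length} barrier, not of all barriers, and it must be derived from minimality. The paper does this by showing that, along the curve $\{\gamma=C\}$ (i.e.\ $\beta=\beta(C,\alpha)$ increasing in $\alpha$), the length $\lambda$ is increasing in $\alpha$ when $\alpha\ge\theta$ and decreasing when $\beta\le\theta$, a consequence of the formula \eqref{eq:lambda} together with the monotonicity of $F$ on $[0,\theta]$ and $[\theta,1]$; hence a minimizer must satisfy $\alpha_*<\theta<\beta_*$. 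Your proposal needs this (or an equivalent) argument; as it stands, that part of the lemma is unproved.
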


\begin{proof}
For $C > c_*(f)$, there exists $p = p_*^C$ a solution (recall that it is not necessarily unique) of
\[
 \bepa
  - p '' - C p ' = f(p),
  \\[10pt]
  \f{1}{2} p'(-L_*(C))^2 + F(p(-L_*(C))) = F(1), \quad \f{1}{2} p'(L_*(C))^2 + F(p(L_*(C))) = 0.
 \eepa
\]
such that
\[
p_*^C \big( L_*(C) \big) = \alpha_* (C), \quad p_*^C \big( -L_*(C) \big) = \beta_*(C).
\]

We define $v_C : [-1, 1] \to [0, 1]$ by $v_C (x) = p_*^C (x L_*(C))$.
Then $v_C$ satisfies
\[
 \bepa
 -v_C '' - C L_* (C) v_C ' = \big( L_* (C) \big)^2 f(v_C)
 \\[10pt]
 \displaystyle\f{1}{2 \big(L_* (C) \big)^2} v_C' (-1)^2 + F(v_C(-1)) = F(1), \quad \displaystyle\f{1}{2 \big(L_* (C) \big)^2} v_C'(1)^2 + F(v_C(1)) = 0.
 \eepa
\]

We introduce $y = v_C'$.
Recalling that $C L_* (C) \sim_{C \to \infty} \f{1}{4} \log(1 - \f{F(1)}{F(\theta)} )$ (by Proposition \ref{prop:cstarlimite}), for all $z \in (-1, 1)$ we find
\[
 y(z) = y(-1) e^{-C L_*(C)(z+1)} + O( \f{1}{C^2}).
\]
It follows that $ v_C (z) = v_C(-1)+ \f{v_C'(-1)}{C L_*(C)}\big( 1 - e^{-C L_*(C)(z+1)} \big) + O( \f{1}{C^2})$.

Hence $ v_C (1) = v_C(-1) + \f{v_C'(-1)}{C L_*(C)}\big( 1 - e^{-2 C L_*(C)} \big) + O( \f{1}{C^2})$
and $ v'_C(1) = v'_C(-1) e^{-2 C L_*(C)} + O(\f{1}{C^2})$.

From this, we deduce
\beq
\bepa
\f{1}{2 \big(L_* (C) \big)^2} v'_C (-1)^2 + F(v_C (-1)) = F(1)
\\[10pt]
\f{1}{2 \big(L_* (C) \big)^2} v'_C (-1)^2 e^{-4 C L_*(C)} + F\Big(v_C(-1) + \f{v_C'(-1)}{C L_*(C)}\big( 1 - e^{-2 C L_*(C)} \big) \Big) = O(\f{1}{C^2})
\eepa
\eeq
Let $z = v_C(-1)$ and $y = v'_C (-1)$. 
The first equation gives $y = O (1 / C)$, so at the limit $C \to \infty$ we find $\lim_{C \to \infty} v_C(-1) =  \lim_{C \to \infty} v_C (1)$: $v_C$ itself converges to a constant $z_{\infty}$.
Using the first equation in the second we find
\[
  (F(1) - F(z))e^{-4 C L_*(C)} + F\big(z + O(\f{1}{C}) \big) = O(\f{1}{C^2}).
\]
 
Recalling that $e^{4 C L_* (C)} \xrightarrow[C \to \infty]{} 1 - \f{F(1)}{F(\theta)}$
we recover as $C \to \infty$
\[
 F(1) - F(z_{\infty}) + (1 - \f{F(1)}{F(\theta)}) F(z_{\infty}) = 0,
\]
that is $ F(1) \big( 1 - \f{F(z_{\infty})}{F(\theta)} \big) = 0,$
or equivalently $ F(z_{\infty}) = F(\theta)$. 

Hence $\lim_{C \to \infty} z = \theta$.
Recalling $z = v_C(-1) = \beta_*(C)$, we find that both $\alpha_* (C)$ and $\beta_* (C)$ converge to $\theta$.

Let us fix $C > c_*(f)$. For all $\alpha \in (0, \alpha_C)$, there exists a unique $\beta(C, \alpha)$ such that $\gamma(\alpha, \beta(C, \alpha)) = C$.
Obviously, $\alpha \mapsto \beta(C, \alpha)$ is increasing. 

Then, we claim that if $\theta \leq \alpha_0 < \alpha_1 < \alpha_C$ then $\lambda(\alpha_0, \beta(C, \alpha_0)) < \lambda(\alpha_1, \beta(C, \alpha_1))$.
Symmetrically, if $\alpha_0 < \alpha_1 < \alpha_C$ are such that $\beta(C,\alpha_1) < \theta$, then $\lambda(\alpha_0, \beta(C, \alpha_0)) > \lambda(\alpha_1, \beta(C, \alpha_1))$. This is a simple consequence of the expression of $\lambda$ and of the fact that $F$ is decreasing on $[0, \theta]$, increasing on $[\theta, 1]$.

Deriving \eqref{eq:w2} with respect to $p$, choosing $\alpha = \alpha_*(C)$ and $\beta = \beta_*(C)$ and integrating between $\alpha$ and $\beta$ yields
\[
 C \sqrt{2 (w -F)(p)} = C \sqrt{-2 F(\alpha)} + C^2 (p - \alpha) - C \int_{\alpha}^p \f{f(p') dp'}{\sqrt{2 (w - F) (p')}}.
\]
From this we get
\beq
 2 C L_* (C) = C \int_{\alpha}^{\beta} \f{dp}{\sqrt{2 (w -F) (p)}} = \int_{\alpha}^{\beta} \f{dp}{p-\alpha + \f{\sqrt{-2 F(\alpha)}}{C} - \f{1}{C} \int_{\alpha}^p \f{f(p')dp'}{\sqrt{2(w-F)(p')}}}.
 \label{eq:auxbstarastar}
\eeq
By Proposition \ref{prop:cstarlimite} we know that $2 C L_* (C) = \f{1}{2} \log\big(\f{F(1) - F(\theta)}{-F(\theta)} \big) + o(1)$ (where the $o$ is taken as $C \to \infty$).
Rewriting the right-hand side of \eqref{eq:auxbstarastar} (recalling that $\beta_* - \alpha_* = o(1)$), we find
\[
 \f{1}{2} \log\big(\f{F(1) - F(\theta)}{-F(\theta)} \big) = \log\big(1 + C \f{\beta - \alpha}{\sqrt{-2 F(\alpha)}} \big) + o(1).
\]
Since $\alpha \to \theta$ as $C \to +\infty$, taking the exponential of both sides we obtain
\[
 (1 + o(1))\sqrt{2 (F(1) - F(\theta))} = \sqrt{-2 F(\theta)} + C (\beta_*(C) - \alpha_*(C)),
\]
and the claim is proved.

\end{proof}

\subsection{Gathering the results on the barrier set.}
\label{subs:gathering}

We can now prove the remaining parts of Proposition \ref{prop:barriers}, concerning order and extremal elements (recalling the first point has been stated in Lemma \ref{lem:decreasingbarrier}).

\begin{proof}[Proposition \ref{prop:barriers}]
First, we know the $\alpha$s and the $\beta$s are in the same order. More precisely, if there are $(C, L)$-barriers
from $\beta_0$ to $\alpha_0$ and from $\beta_1$ to $\alpha_1$, and $\beta_0 < \beta_1$, then
$\alpha_0 < \alpha_1$ by Proposition~\ref{prop:aborder}.
We then crucially use Lemma \ref{keyineq}.

Applying Lemma \ref{keyineq} to two barriers, on $[-L, L]$ (or equivalently on $[0, 2L]$, to fit the notations in \eqref{sys:XY})
yields
the global ordering of all barriers. Barriers obviously satisfy $X > 0$, $Y < 0$, by Lemma \ref{lem:decreasingbarrier}

Now we take $\lambda_+$ associated with maximal $\beta_+ = p_{\lambda_+} (-L)$ and $\alpha_+ =
p_{\lambda_+} (L)$.
For all $\epsilon > 0$ small enough, we construct a subsolution to \eqref{eq:TW} by letting
\beq
\bepa
-p_{\epsilon}'' - C p_{\epsilon}' = f(p_{\epsilon}) \text{ in } (-L, L), \, p_{\epsilon} (-L) = \beta_+ + \epsilon,
\\[10pt]
-p_{\epsilon}'' = f(p_{\epsilon}) \text{ in } \RR - (-L, L),
\\[10pt]
F(p_{\epsilon} (-L)) + \f{1}{2} (p'_{\epsilon} (-L) )^2 = F(1), \, F(p_{\epsilon} (L)) + \f{1}{2} (p'_{\epsilon} (L^+) )^2 = 0
\eepa
\eeq
where $p_{\epsilon}$ is continuous, but $p'_{\epsilon}$ exhibits a jump at $L$.

Then we can prove that $p_{\epsilon} (L) > p_{\lambda_+} (L)$ and the jump has the good sign to provide a sub-solution $p_{\epsilon}'
(L^-) < p_{\epsilon}' (L_+)$, by maximality of $\beta_+$. The second point can be seen easily in the phase plane.
It is in fact a straightforward consequence of the continuity of $\beta \mapsto E(2L; C, \beta)$

Now, it remains to see that $p_{\epsilon} (x) > p_{\lambda_+} (x)$ for all $x \in [-L, L]$, hence for all $x \in \RR$. In fact, this
is a simple consequence of Lemma \ref{keyineq}. One simply has to check that by continuity of the solutions of differential equations with respect to
the initial data, for $\epsilon > 0$ small enough, $p_{\epsilon}$ remains in $(0, 1)$ on $[-L, L]$ and $p_{\epsilon'}$ remains
negative.

The proof is totally similar for the stability from below of $p_{\lambda_-}$ (defined by minimality of $\beta_- = p_{\lambda_-} (-L)$ and $\alpha_- = p_{\lambda_-}(L)$, making use of Lemma \ref{keyineq} again, hence we
don't reproduce it here.

The last point comes from the fact that $\lambda(\alpha_C^+ (\beta), \beta)$, which is defined on $(\beta_C, 1)$, goes to $+\infty$
at $\beta_C$ and at $1$ (Lemma \ref{lem:claim}), hence reaches its minimum (which is necessarily equal to $L_*(C)$) at some $\beta_0
(C) \in (\beta_C, 1)$.
For $L > L_*(C)$, there exists $(\beta_1, \beta_2)$ with $\beta_C < \beta_1 < \beta_0 (C)$ and $\beta_0(C) < \beta_2 < 1$ such that
$\lambda(\alpha_C (\beta_1), \beta_1) = \lambda (\alpha_C (\beta_2), \beta_2) = L$, yielding two distinct barriers defined by
$(\alpha_C (\beta_i), \beta_i)$ for $i \in \{1, 2\}$.
\end{proof}

\begin{remark}
We interpret Proposition \ref{prop:barriers} in terms of asymptotic behavior of solutions so \eqref{eq:onR} thanks to Proposition
\ref{prop:passblock}.
Any initial datum below $p_{\lambda_-}$ will be unable to pass and propagate (the wave it may have ``initiated'' on $(-\infty, -L)$
will be blocked), while any initial datum above $p_{\lambda_+}$ will propagate.
\end{remark}

\begin{remark}
Proposition \ref{prop:barriers} applies in particular when there exists a unique $(C, L)$ barrier 
(which should generically hold when $L = L_*(C)$). In this case, this barrier is simultaneously stable from below and unstable from
above.
As before, either the solution is blocked below this barrier (``stable from below''), or the solution passes the barrier, in which case
it propagates to $+\infty$ (``unstable from above'').
\end{remark}

\subsection{Generalizing the barriers.}
\label{subs:generalizing}

Now we move to the proof of Corollary \ref{cor:nonconstantgradient}.

\begin{remark}
If $Y := \{ C \chi_{[-L, L]}, \, C, L > 0 \}$, then $\calB (f) = \calB_Y (f)$, our notation for the $(C, L)$-barriers set can be seen as a special case with
$X=Y$, (in fact, \eqref{eq:TW} is a special case of \eqref{eq:GSW}).
\end{remark}

First, we note that these ``generalized'' barriers are still decreasing, as long as $\eta$ is.
\begin{lemma}
 For $\eta \in X$, a $\eta$-barrier is necessarily monotone decreasing.
\end{lemma}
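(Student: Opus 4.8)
The plan is to mimic the proof of Lemma \ref{lem:decreasingbarrier}, replacing the piecewise-constant gradient $C\chi_{[-L,L]}$ by the general profile $\eta\in X$. Since $\eta$ has compact support, fix $R>0$ with $\mathrm{supp}\,\eta\subset[-R,R]$. Then on $(-\infty,-R]$ and on $[R,+\infty)$ the equation reduces to $-p''=f(p)$, so the energy $\psi(x):=\tfrac12 p'(x)^2+F(p(x))$ is constant on each of these half-lines. On the left half-line, since $p(-\infty)=1$, that constant is $F(1)$, hence $\tfrac12 p'(x)^2+F(p(x))=F(1)$ there; in particular $p'(x)=0$ would force $F(p(x))=F(1)$, i.e. $p(x)\in\{\theta_c,1\}$ (using that $F$ is decreasing on $[0,\theta]$, increasing on $[\theta,1]$ and $F(\theta_c)=0<F(1)$). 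The value $1$ is excluded by the maximum principle ($1$ is a stationary solution and $p\not\equiv 1$ since $p(+\infty)=0$), and $p(x)=\theta_c$ on an interval going to $1$ at $-\infty$ is likewise impossible; so $p'<0$ on $(-\infty,-R]$ after possibly shrinking, and $p$ is decreasing there.

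Next I would show $p'<0$ on the whole of $[-R,R]$. Set $x_m:=\inf\{x>-R:\ p'(x)=0\}$ and argue by contradiction that $x_m<R$ (if $x_m\ge R$ we are on the far right half-line, handled below). At $x_m$ we have $p'(x_m)=0$, and $\psi'(x)=p'(x)(p''(x)+f(p(x)))=p'(x)(-\eta(x)p'(x))=-\eta(x)p'(x)^2\le 0$, so $\psi$ is non-increasing on $\RR$; on $[-R,x_m]$ this gives $\psi(x_m)\le\psi(-R)=F(1)$, hence $F(p(x_m))\le F(1)$. Two cases: if $p(x_m)<\theta_c$ then $F(p(x_m))<0$, but then following $\psi$ forward we cannot later reach the value $0$ required on the right half-line without $\psi$ increasing — more directly, one shows $\psi$ strictly decreasing across any interval where $p'\not\equiv0$, contradicting the ordering of energy levels; if instead $\theta_c\le p(x_m)<1$ then $p(x_m)>\theta$ so $f(p(x_m))>0$, and the equation gives $-p''(x_m)=-p''(x_m)-\eta(x_m)p'(x_m)=f(p(x_m))>0$, i.e. $p''(x_m)<0$, so $p$ has a strict local maximum at $x_m$ — impossible since $p'<0$ just to the left of $x_m$. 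The case $p(x_m)=1$ is excluded by the maximum principle / Hopf lemma as before. Hence $p'<0$ on $[-R,R]$.

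Finally, on $[R,+\infty)$ the energy is again constant, equal to its value at $R$, which is strictly less than $F(1)$ and at most $0$ (since $\psi$ decreased and $p(R)<p(-R)<1$); combined with $p(+\infty)=0$ and $p>0$, and the fact that $p'$ cannot vanish there except possibly at $p=\theta_c$ or $p=0$ — both excluded, $0$ by the strong maximum principle and $\theta_c$ because a critical point at level $\theta_c$ with $f(\theta_c)>0$ again produces a forbidden local max — we get $p'<0$ on $[R,+\infty)$. Putting the three pieces together, $p'<0$ on all of $\RR$, so $p$ is (strictly) decreasing. I expect the main obstacle to be the bookkeeping at the interface points $\pm R$ and at the candidate critical point $x_m$: one must be careful that $\psi$ is genuinely monotone (it only decreases where $p'\neq 0$, so one needs that $p'$ does not vanish on a set of positive measure, which follows from Cauchy–Lipschitz applied to the second-order ODE), and that the sign of $f$ at the relevant level forces the contradiction. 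These are exactly the arguments already carried out in Lemma \ref{lem:decreasingbarrier}, now with $C$ replaced by the non-negative function $\eta(x)$, which changes nothing since all that was used about $C$ there was $C\ge 0$.
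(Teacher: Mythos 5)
Your argument is correct and follows essentially the same route as the paper's own proof: energy conservation off the support of $\eta$, the non-increasing energy $E'(x)=-\eta(x)\,p'(x)^2\le 0$ on the support (using only $\eta\ge 0$), and the first-critical-point dichotomy — $p(x_m)<\theta_c$ contradicts the non-increase of $E$ together with the value $0$ forced on the right half-line, while $p(x_m)\ge\theta_c$ produces a forbidden local maximum. One cosmetic simplification: on the left half-line $F(p)=F(1)$ forces $p=1$ alone, since $F<F(1)$ on $[0,1)$ (in particular $F(\theta_c)=0<F(1)$), so your extra case $p=\theta_c$ is vacuous and needs no separate dismissal; likewise $E$ is non-increasing directly from $E'\le 0$, without any concern about where $p'$ vanishes.
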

\begin{proof}
Let $L > 0$ be such that $Supp(\eta) \subseteq [-L, L]$.

For $x \in (-\infty, -L)$, since $-p'' = f(p)$ we get by multiplication by $p'$ and integration:
\[
	\f{1}{2} (p'(x))^2 + F(p(x)) = F(1).
\]
Hence $p'$ cannot vanish unless $p = 1$, which is impossible.

Now, for $x \in (L, +\infty)$ we get similarly
\[
	\f{1}{2} (p'(x))^2 + F(p(x)) = 0,
\]
so $p'$ can vanish only if $p = 0$ or $p = \theta_c$. As before, $p=0$ is impossible.
We will show that $p(L) < \theta_c$, which is equivalent to $p'(L) \not=0$, and will be done.

For $x \in (-L, L)$, we define $E(x) := \f{1}{2}(p'(x) )^2 + F(p(x))$.
Then
\[
E'(x) = - \eta(x) p'(x)^2 \leq 0,
\]
so $E$ is non-increasing. (Here it is crucial that $\eta \in X \implies \eta \geq 0$.)
In addition, $E(-L) = F(1)$ and $E(L) = 0$.

Let $x_m := \inf \{ x > -L, \, p'(x) = 0\}$ and assume by contradiction $x_m \leq L$.
If $p(x_m) < \theta_c$ then $E(x_m) = 0 + F(p(x_m)) < 0$, which is absurd because $E$ is non-increasing and $E(L) = 0$.
We are left with $p(x_m) \geq \theta_c > \theta$. This implies that $p''(x_m) = -f(p(x_m)) - \eta(x_m) p'(x_m) < 0$.
In this case, $p$ reaches a local maximum at $x_m$, which is absurd because by definition of $x_m$, $p' < 0$ on $(-L ,x_m)$.

Hence $p$ is monotone decreasing.
\end{proof}

\begin{proposition}
 For all $\eta, \eta_1 \in X$, $\eta \in \calB_X (f) \implies \eta + \eta_1 \in \calB_X (f)$.
 
 If $\lambda > 0$ then $\eta \in \calB_X (f)$ is equivalent to $\lambda \eta (\lambda \cdot) \in \calB_X (\lambda^{2} f)$.
 This point enables us to assume $F(1) = 1$ without loss of generality.
 \label{prop:Gbase}
\end{proposition}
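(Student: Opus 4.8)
I would follow the blueprint of Proposition~\ref{prop:positive}. Let $\eta\in\calB_X(f)$ and let $p$ be an $\eta$-barrier; by the preceding lemma $p$ is strictly decreasing, hence $0<p<1$ and $p'\le 0$ on $\RR$. Since $\eta_1\ge 0$,
\[
 -p'' - (\eta+\eta_1)p' \ =\ f(p) - \eta_1 p' \ \ge\ f(p)\qquad\text{on }\RR,
\]
so $p$ is a supersolution of the $(\eta+\eta_1)$-barrier problem \eqref{eq:GSW}. For a subsolution lying below $p$, pick $\alpha\in(\theta_c,1)$ and take the classical $\alpha$-bubble for $-u''=f(u)$ (Proposition~\ref{prop:propagules} with constant $h$): a compactly supported, piecewise-$\mathcal{C}^2$ stationary subsolution, the upward corners at the ends of its support only reinforcing the inequality. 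With $\tau>0$ chosen so that $-\tau+L_\alpha<-L$, where $\mathrm{Supp}(\eta+\eta_1)\subseteq[-L,L]$, the translate $v_\alpha(\cdot-\tau)$ is supported where $\eta+\eta_1=0$, hence is still a subsolution for $\eta+\eta_1$; and for $\tau$ large it lies below $p$ (which tends to $1$ at $-\infty$ while $v_\alpha\le\alpha<1$) without being identical to it. Proposition~\ref{prop:subsup} then yields a solution $q$ of $-q''-(\eta+\eta_1)q'=f(q)$ on $\RR$ with $v_\alpha(\cdot-\tau)\le q\le p$.

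\textbf{Why $q$ is a genuine barrier (the main obstacle).} The sub/supersolution method controls $q$ only between the two barriers, not at infinity, so this point needs a separate argument. From $0\le q\le p$ and $p(+\infty)=0$ one gets $q(+\infty)=0$. For the left end, recall $\eta+\eta_1$ has compact support, so on $(-\infty,-L)$ the function $q$ solves $-q''=f(q)$ with $0\le q<1$; hence the energy $e(x):=\tfrac12 q'(x)^2+F(q(x))$ is constant there, say $e\equiv E_0$. Because $q\ge v_\alpha(\cdot-\tau)$ and $v_\alpha$ attains the value $\alpha>\theta_c$ at $x=-\tau<-L$, we have $E_0\ge F(\alpha)>0$. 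In the phase plane of $-q''=f(q)$: for $0<E_0<F(1)$ the component of $\{F\le E_0\}$ meeting $[0,1]$ reaches some $X<0$, forcing $q<0$ somewhere; for $E_0>F(1)$ it reaches some $X>1$, forcing $q>1$ somewhere; both contradict $0\le q<1$. Thus $E_0=F(1)$, and the trajectory of $q$ on $(-\infty,-L)$ lies on the arc $\{e=F(1),\ 0\le X\le 1,\ Y<0\}$, whose only equilibrium is $(1,0)$; since $q$ exists on all of $(-\infty,-L)$ and stays in $[0,1)$, this trajectory must be asymptotic to $(1,0)$ as $x\to-\infty$, i.e. $q(-\infty)=1$. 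Hence $q$ is an $(\eta+\eta_1)$-barrier, i.e. $\eta+\eta_1\in\calB_X(f)$.

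\textbf{Scaling and the normalisation $F(1)=1$.} Given $\lambda>0$ and an $\eta$-barrier $p$ for $f$, set $\tilde p(x):=p(\lambda x)$ and $\tilde\eta(x):=\lambda\,\eta(\lambda x)$. Since $\tilde p'=\lambda p'(\lambda\cdot)$ and $\tilde p''=\lambda^2 p''(\lambda\cdot)$,
\[
 -\tilde p'' - \tilde\eta\,\tilde p' \ =\ \lambda^2\big(-p''-\eta p'\big)(\lambda\cdot)\ =\ \lambda^2 f\big(p(\lambda\cdot)\big)\ =\ (\lambda^2 f)(\tilde p),
\]
while $\tilde p(-\infty)=1$, $\tilde p(+\infty)=0$; moreover $\tilde\eta\in X$ (nonnegative, bounded, with support $\tfrac1\lambda\,\mathrm{Supp}\,\eta$) and $\lambda^2 f$ is again bistable with the same $\theta$ and $\theta_c$. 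Hence $\tilde\eta\in\calB_X(\lambda^2 f)$; applying the same transformation with $(\lambda,f)$ replaced by $(1/\lambda,\lambda^2 f)$ gives the converse, so $\eta\mapsto\lambda\,\eta(\lambda\cdot)$ is a bijection of $X$ carrying $\calB_X(f)$ onto $\calB_X(\lambda^2 f)$. Since $\int_0^1\lambda^2 f=\lambda^2 F(1)$, taking $\lambda=F(1)^{-1/2}$ makes the antiderivative of $\lambda^2 f$ equal $1$ at the point $1$; as each assertion ``$\eta\in\calB_X(f)$'' is, through this bijection, equivalent to the corresponding one for $\lambda^2 f$, one may prove every statement about $\calB_X$ under the normalisation $F(1)=1$ and transport it back. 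Of all this, only the limit verification in the first part requires real work; the rest is a routine change of variables.
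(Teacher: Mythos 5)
Your proof is correct and follows essentially the same route as the paper: the decreasing $\eta$-barrier serves as a supersolution for the $(\eta+\eta_1)$-problem, a far-translated $\alpha$-bubble serves as a subsolution below it, Proposition \ref{prop:subsup} produces the new barrier, and the scaling statement is the same change of variables $p(\lambda\cdot)$. Your only addition is to carry out explicitly (via the conserved energy on $(-\infty,-L)$ and the phase plane) the verification that the constructed solution satisfies $q(-\infty)=1$ and $q(+\infty)=0$, a step the paper leaves as ``one could check''; that verification is sound.
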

\begin{proof}
The last two points are simple: apart from $\eta$ the rest of the problem is translation-invariant;
$q(x) := p(\lambda x)$ satisfies
\[
 - \f{1}{\lambda^2} q'' (x) - \f{1}{\lambda} \eta(\lambda x) q' (x) = f(q(x)) \text{ on } \RR.
\]
Multiplying this equation by $\lambda^2$ yields the result.

The first point however requires a complete proof, which mimics that of Proposition \ref{prop:positive}. Let $p_{\eta}$ be a $\eta$-barrier.
Then
\[
 -p_{\eta} '' - \big( \eta + \eta_1 \big) p'_{\eta} \geq - p_{\eta}'' - \eta p'_{\eta} = f(p_{\eta}).
\]
Hence $p_{\eta}$ is a super-solution to the $(\eta+\eta_1)$-problem.

Simultaneously, as in the proof of Proposition \ref{prop:positive}, the (translated) $\alpha$-bubble
gives a sub-solution to the $(\eta+\eta_1)$-problem which lies below $p_{\eta}$.

By the sub- and super-solution method, this provides a $(\eta+\eta_1)$-barrier.
\end{proof}
Then, Corollary \ref{cor:nonconstantgradient} follows directly from the first point (positivity) in Proposition \ref{prop:Gbase}
and Theorem \ref{thm:mainHet}.

\section{Discussion and extensions}
\label{sec:dis}

\subsection{Summary of the results}

Before discussing the derivation of the models and some extensions of our results, we sum up the content of the article.

On the first hand, thanks to a change of variables, we established a sharp threshold property for equation \eqref{eq:p} in the bistable case and gave a full description of the situation in the KPP case (Theorem \ref{thm:mainInf}).
Therefore in this simple and homogeneous model, when total population is approximated as a function of infection frequency, no stable propagation blocking can occur.
We also described the propagules in this case (Proposition \ref{prop:propagules}).

On the other hand, when the total population is increasing along a line, we characterized the constant logarithmic gradients that create stable blocking fronts (Theorem \ref{thm:mainHet}), and gave a sufficient condition in Corollary \ref{cor:nonconstantgradient} for the non-constant case. 
We stated the asymptotic behavior of solutions in Proposition~\ref{prop:passblock}, when there are no barriers or when initial data can be compared to some of the barriers.
Then, a deeper understanding of the barriers (Proposition \ref{prop:barriers}) and of the barrier set (Proposition \ref{prop:Lstarasympt}) enabled us to describe the important ``unstable front'' associated with stable blocking fronts.
Computing this unstable front in the context of a blocked artificial introduction of {\it Wolbachia}, for example, may help designing future releases of infected mosquitoes in order to clear the propagation hindrance.

The remainder of this section is organized as follows. We explain in Subsection \ref{sec:der} how \eqref{eq:p} and \eqref{eq:pN} are derived from a two-population model, then in Subsection \ref{sec:loc} we discuss the link between the barriers we considered in this paper and the local barrier studied in \cite{BT}, and finally we gather in Subsection~\ref{sec:numconj} some numerical conjectures we were not able to prove so far.

\subsection{Derivation from a two-population model}

\label{sec:der}
Both \eqref{eq:p} and \eqref{eq:pN} may be derived from a single two-population model.

We consider the model for infected and uninfected mosquitoes proposed in \cite{reduction}.
We denote by $n_i$, resp $n_u$, the density of infected, resp. uninfected, mosquitoes.
\begin{align}
\pa_t n_i - \Delta n_i &= (1-s_f) F_u n_i \big(1-\frac{N}{K}\big) - \delta d_u n_i,
\label{eq:ni}  \\
\pa_t n_u - \Delta n_u &= F_u n_u (1-s_h p) \big(1-\frac{N}{K}\big) - d_u n_u.
\label{eq:nu}
\end{align}
The parameters in this system are: $F_u$ fecundity of uninfected mosquitoes,
$s_f\in (0,1)$ is a dimensionless parameter taking into account the fecundity 
reduction for infected mosquitoes ($F_i=(1-s_f) F_u$ is the fecundity for 
infected mosquitoes), $K$ is the environmental capacity, $d_u$ is the death rate,
$d_i=\delta d_u$ is the death rate for infected mosquitoes ($\delta>1$),
$s_h\in (0,1)$ is the cytoplasmic incompatibility parameter.

We introduce the total population $N=n_i+n_u$ and the fraction of infected
mosquitoes $p=\frac{n_i}{n_i+n_u}$. 
After straightforward computations, we obtain the system
\begin{align}
&\pa_t N - \Delta N = N \left( F_u \big(1-\frac{N}{K}\big) \big((1-s_f)p+(1-p)(1-s_hp)\big) - d_u(\delta p + 1 -p) \right),
\label{append:N}  \\
&\pa_t p - \Delta p - 2 \frac{\nabla p \cdot \nabla N}{N}  = p(1-p)
\left(F_u \big(1-\frac{N}{K}\big) (s_h p -s_f) + d_u (1-\delta)\right).
\label{append:p}
\end{align}

We make the assumption of large population and large fecundity (as in~\cite{reduction}) and introduce $\eps\ll 1$, we rewrite \eqref{append:N} as
$$
\pa_t N - \Delta N = N \left( F_u \big(\frac{1}{\eps}-\frac{N}{K}\big) \big((1-s_f)p+(1-p)(1-s_hp)\big) - d_u(\delta p + 1 -p)
\right),
$$
where both $K$ and $F_u$ are replaced by $K / \epsilon$ and $F_u / \epsilon$.
Linking the carrying capacity and the fecundity in this way appeared as a technical assumption to recover a proper limit as the population goes to $+\infty$, as an equation on the infected proportion $p$. 
Bio-ecology of {\it Aedes} mosquitoes gives a quick but relevant justification of this assumption by the process of ``skip oviposition'': the availability of good-quality containers affects the egg-laying behavior of females, inducing more extensive and energy-consuming search when breeding sites are scarce.
This phenomenon has been documented in \cite{Oviposition1} (for {\it Ae. aegypti}) and \cite{Oviposition2} (for {\it Ae. albopictus}), for example.

Setting $n=\frac{1}{\eps}-N$ and assuming moreover that $\frac{1}{K}=1-\eps\sigma_0$, 
the latter equation rewrites
\begin{multline*}
\pa_t n - \Delta n = \big(n-\frac{1}{\eps}\big) 
\Big( F_u \big(n+\sigma_0 - \eps \sigma_0 n \big) \big((1-s_f)p+(1-p)(1-s_hp)\big) \\- d_u(\delta p + 1 -p) \Big).
\end{multline*}
When $\eps\to 0$ we deduce, at least formally that
\begin{equation}\label{eqlimn}
n + \sigma_0 \to h_0(p) := \frac{d_u(\delta p + 1 - p)}{F_u((1-s_h) p + (1-p)(1-s_hp))}.
\end{equation}
Considering the equation for $p$ \eqref{append:p} with the same scaling,
$$
\pa_t p - \Delta p - 2 \frac{\nabla p \cdot \nabla N}{N} = p(1-p)
\left(F_u \big(\frac{1}{\eps}-\frac{N}{K}\big) (s_h p -s_f) + d_u (1-\delta)\right).
$$
Introducing the variable $n$ as above,
$$
\pa_t p - \Delta p - 2 \frac{\nabla p \cdot \nabla N}{N} = p(1-p)
\left(F_u (n+\sigma_0 -\eps \sigma_0 n) (s_h p -s_f) + d_u (1-\delta)\right).
$$
As $\eps\to 0$, with \eqref{eqlimn}
$$
\pa_t p - \Delta p - 2 \frac{\nabla p \cdot \nabla N}{N} = f(p),
$$
and 
$$
\frac{\nabla N}{N} = \frac{\nabla \sigma_0- \nabla h_0(p)}{\frac{1}{\eps}+\sigma_0-h_0(p)}.
$$

Then, if $\sigma_0$ is constant then we recover equation \eqref{eq:p}.
On the other hand, if the variations of $\sigma_0$ are large (of order $1/\epsilon$), then we may neglect $h$ and thus recover equation \eqref{eq:pN}.

\subsection{Critical population jump}

\label{sec:loc}

In this section we make a link with the concept of barrier strength used in \cite{BT}
for local barriers.
First, we define
\begin{definition}
 A \textbf{local barrier} is a jump ({\it i.e.} a discontinuity) in the size of the total population $N$ which is sufficient to block a propagating wave.
\end{definition}

Starting from our $(L, C)$-barriers, we get a local barrier by letting
$L \xrightarrow{} 0$.
Simultaneously, we scale $C$ as $C(\alpha(L), \beta(L); L)$ for some $\alpha(L) < \beta(L)$.
The jump in the total population, from $N_L$ (on the left) to $N_R > N_L$ (on the right) always reads
\[
 N_R = \exp( \int_{-L}^L \f{C}{2} dx) N_L = \exp( L C ) N_L.
\]

The limit equation as $L \to 0$ reads
\beq \label{LBTWeq}
\left\{\begin{array}{ll}
-p'' - \lim_{L \to 0} \big\{ C(\alpha(L), \beta(L); L) \mathds{1}_{-L\leq x \leq L} p' \big\} = f(p) \qquad &\mbox{ on } \RR,  \\
 p(0^-)=\beta_0, \quad p(0^+) = \alpha_0, \\
p(-\infty)=1, \quad  p(+\infty) = 0,
\end{array}\right.
\eeq
where we assumed $\alpha (L) \xrightarrow[L \to 0]{} \alpha_0$, $\beta (L) \xrightarrow[L \to 0]{} \beta_0$.
Now, recall that by \eqref{lambda1}, necessarily $\alpha_0 = \beta_0$.

This means that $N = N_L$ on $(- \infty, 0)$ and $N= N_R$ on $(0, +\infty)$, with
\[
 N_R = e^{K(\alpha_0)} N_L,
\]
where $K(\alpha_0) = \lim_{L \to 0}  L \cdot C(\alpha(L), \beta(L); L)$.
$K$ depends only on $\alpha_0$ indeed: by formula \eqref{lim:lambdaC} in Lemma \ref{lem:lambdaC},
\[
 K(\alpha_0) = \f{1}{4}\log \big( 1 - \f{F(1)}{F(\alpha_0)} \big).
\]
This implies that
\[
 N_R = \Big( 1 - \f{F(1)}{F(\alpha_0)} \Big)^{1/4} N_L.
\]

Equation \eqref{LBTWeq} then rewrites
\beq \label{2LBTWeq}
\left\{\begin{array}{ll}
-p'' + \f{1}{4} \log \big( 1 - \f{F(1)}{F(\alpha_0)} \big) \langle \delta'_0, p \rangle = f(p) \qquad &\mbox{ on } \RR,  \\
 p(0)=\alpha_0,\\
p(-\infty)=1, \quad  p(+\infty) = 0,
\end{array}\right.
\eeq
and the derivation of \eqref{2LBTWeq} is legitimate for $\alpha_0  = \lim_{L \to 0} \alpha (L) = \theta$, by Lemma~ \ref{lem:acbctheta}.

As a consequence, 
\begin{proposition}
The minimal ``jump'' in the total population that can block a wave is:
\[
 N_R = \Big( 1 - \f{F(1)}{F(\theta)} \Big)^{1/4} N_L.
\]
\label{prop:jump}
\end{proposition}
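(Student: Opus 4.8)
The plan is to read off the minimal blocking jump directly from the asymptotics of $(C,L)$-barriers as $L \to 0$, bypassing the (only formal) distributional equation \eqref{2LBTWeq}. First I would record the elementary fact that a $(C,L)$-barrier, through \eqref{eq:logN}, corresponds to a population profile with $N_R = e^{CL} N_L$; hence a \emph{local barrier}, obtained as $L \to 0$, blocks a wave exactly when it arises as such a limit, and its jump ratio is $\lim_{L \to 0} e^{CL}$. Consequently the minimal blocking jump equals $\exp\big(\inf \lim_{L \to 0} CL\big)$, the infimum ranging over admissible families of barriers shrinking to a point.

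Next I would bring in the double-shooting parametrization. By Proposition \ref{prop:exists} a borderline barrier is determined by data $0 < \alpha < \beta < 1$ with $C = \gamma(\alpha,\beta)$ and $L = \lambda(\alpha,\beta)$, so that $CL = \gamma(\alpha,\beta)\lambda(\alpha,\beta)$. Estimate \eqref{lambda1} forces $\beta - \alpha \to 0$ whenever $L \to 0$, and (as in the proof of Proposition \ref{prop:gllimits}) $\lambda$ is bounded away from $0$ unless $\beta < \theta_c$; thus along any such family $\alpha, \beta \to \alpha_0$ for some $\alpha_0 \in (0, \theta_c)$, and the limit formula \eqref{lim:lambdaC} of Lemma \ref{lem:lambdaC} gives
\[
CL = \lambda(\alpha,\beta)\,\gamma(\alpha,\beta) \xrightarrow[L \to 0]{} \f{1}{4}\log\Big(1 - \f{F(1)}{F(\alpha_0)}\Big) =: K(\alpha_0).
\]
It then remains to minimize $K$ over $\alpha_0 \in (0,\theta_c)$. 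Since $F < 0$ there with $\min_{[0,1]} F = F(\theta)$, the quantity $1 - F(1)/F(\alpha_0) = 1 + F(1)/|F(\alpha_0)|$ is decreasing in $|F(\alpha_0)|$, hence minimized at $\alpha_0 = \theta$, where $K(\theta) = \f14\log\big(1 - F(1)/F(\theta)\big)$. Exponentiating yields $N_R = \big(1 - F(1)/F(\theta)\big)^{1/4} N_L$.

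The point needing care is that $\alpha_0 = \theta$ is genuinely attained as a limit of admissible barriers, not merely a formal optimizer. This can be checked directly: for any target $\alpha_0 \in (0,\theta_c)$, taking $\alpha \nearrow \alpha_0 \nearrow \beta$ with $\beta$ fixed below $\theta_c$ produces, by \eqref{lim:lambdaC} together with $\lambda \to 0$ as $\beta - \alpha \to 0$, a family of barriers with $L \to 0$ and $CL \to K(\alpha_0)$; since $K$ is continuous and $K(\alpha_0) \to +\infty$ as $\alpha_0 \to 0^+$ or $\alpha_0 \to \theta_c^-$, the realized values of $CL$ fill $[K(\theta), +\infty)$, with minimum $K(\theta)$. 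Equivalently, one may invoke Lemma \ref{lem:acbctheta}, which identifies $\alpha_*(C), \beta_*(C) \to \theta$ for the minimal-length barriers as $C \to \infty$ (i.e.\ as $L_*(C) \to 0$), together with Proposition \ref{prop:cstarlimite}(ii) giving $L C_*(L) \to \f14\log\big(1 - F(1)/F(\theta)\big)$ as $L \to 0$; either route closes the argument.
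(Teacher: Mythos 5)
Your argument is correct and follows essentially the same route as the paper: you use the forced collapse $\beta-\alpha\to 0$ from \eqref{lambda1}, the limit \eqref{lim:lambdaC} of Lemma \ref{lem:lambdaC} to identify $CL\to\f14\log\big(1-F(1)/F(\alpha_0)\big)$, and the fact that $F(\theta)=\min F$ (exactly the minimization carried out in the proof of Proposition \ref{prop:cstarlimite}(ii)), with Lemma \ref{lem:acbctheta} confirming that the optimizer $\alpha_0=\theta$ is attained. The only difference is cosmetic: you bypass the formal distributional limit equation \eqref{2LBTWeq}, which the paper uses for interpretation rather than for the quantitative conclusion.
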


If we understand \cite{BT} correctly, the authors addressed the situation where for \eqref{LBTWeq}, $ p'(0^-) = p'(0^+)$.
In view of our result, it means $F(1)= 0$.
But simultaneously they wanted $p(0^-) \not= p(0^+)$. 
We find that this cannot be obtained by using equation \eqref{eq:pN}.
However, if the reaction term $f$ depends itself on $N$ (as it is expected to do, see Section \ref{sec:der}), then this becomes possible.

A good intuition is that the stronger the population gradient, the smaller the population ``jump'' required for blocking. In the limit of a real, discontinuous jump, we recover the critical value from Proposition \ref{prop:jump}.

We can state this result in more generality using the notations of this paper.
\begin{proposition}
 Let $H(f, K) := \{ C > c_*(f), \, (C, \f{K}{C}) \in \calB(f) \}$.
 There exists a minimal $K_0(f) > 0$ such that if $K > K_0 (f)$ then $H(f, K)$ is non-empty.
\end{proposition}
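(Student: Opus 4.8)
\textit{Proof plan.} The plan is to reduce the statement to a one-variable minimization over $C$. For $C > c_*(f)$, Theorem~\ref{thm:mainHet} gives $(C,L) \in \calB(f)$ if and only if $L \geq L_*(C)$; taking $L = K/C$, this reads $C \in H(f,K) \iff K \geq C\,L_*(C)$. Hence, setting
\[
 K_0(f) := \inf_{C > c_*(f)} C\,L_*(C),
\]
one has $H(f,K) \neq \emptyset$ whenever $K > K_0(f)$ (pick $C$ with $C\,L_*(C) < K$, which exists by definition of the infimum) and $H(f,K) = \emptyset$ whenever $K < K_0(f)$; so $K_0(f)$ is the claimed minimal threshold, provided it is a finite and strictly positive number. (As a consistency check, Proposition~\ref{prop:positive}, applied for $K' > K$ to $(C, K/C) + (0, (K'-K)/C) = (C, K'/C)$, shows directly that $\{K > 0 : H(f,K) \neq \emptyset\}$ is a half-line, whose left endpoint must then be $K_0(f)$.) So everything reduces to proving $0 < K_0(f) < +\infty$.

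Finiteness is immediate from the asymptotics of $L_*$: by Proposition~\ref{prop:Lstarasympt}, $C\,L_*(C) \to \frac{1}{4} \log\big(1 - \frac{F(1)}{F(\theta)}\big)$ as $C \to +\infty$, a finite number (positive, since $F(\theta) < 0 < F(1)$), so $K_0(f) \leq \frac{1}{4}\log\big(1 - \frac{F(1)}{F(\theta)}\big) < +\infty$. For strict positivity I would invoke Lemma~\ref{lem:lambdaC}: with $(\alpha_*(C), \beta_*(C))$ the pair realizing the minimal length at gradient $C$ (as in Lemma~\ref{lem:acbctheta}, so that $\gamma(\alpha_*(C),\beta_*(C)) = C$ and $L_*(C) = 2\lambda(\alpha_*(C),\beta_*(C))$, and $\alpha_*(C) \in (0,\theta_c)$), the estimate \eqref{eq:lambdaC} gives
\[
 C\,L_*(C) = 2\gamma(\alpha_*(C),\beta_*(C))\,\lambda(\alpha_*(C),\beta_*(C)) \geq 1 - \sqrt{\f{-F(\theta)}{F(1)-F(\theta)}},
\]
uniformly in $C$, and the right-hand side is strictly positive precisely because $F(1) > 0$. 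Hence $K_0(f) \geq 1 - \sqrt{-F(\theta)/(F(1)-F(\theta))} > 0$. (Alternatively one may argue that $C \mapsto C\,L_*(C)$ is continuous, blows up as $C \to c_*(f)^+$ since $L_*(C) \to +\infty$ while $C \to c_*(f) > 0$, and converges to the positive limit above as $C \to +\infty$, so that every minimizing sequence stays in a compact subinterval of $(c_*(f),+\infty)$ on which the continuous, pointwise positive function attains a positive minimum.)

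It then only remains to assemble the pieces: if $K > K_0(f)$, pick $C > c_*(f)$ with $C\,L_*(C) < K$; then $K/C > L_*(C)$, so $(C, K/C) \in \calB(f)$ and $C \in H(f,K)$, i.e. $H(f,K) \neq \emptyset$. If $0 < K < K_0(f)$, then $K/C < L_*(C)$ for every $C > c_*(f)$, so $H(f,K) = \emptyset$, which establishes that $K_0(f)$ is minimal. I do not expect a genuine obstacle: the argument is short once Theorem~\ref{thm:mainHet} and Proposition~\ref{prop:Lstarasympt} are in hand, and the only mildly delicate point is the strict positivity of $K_0(f)$ — equivalently, ruling out $C\,L_*(C) \to 0$ along a sequence of gradients $C$ — which is settled either by the uniform lower bound from Lemma~\ref{lem:lambdaC} or by the compactness remark above.
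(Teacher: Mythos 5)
Your proposal is correct and follows essentially the same route as the paper: reduce via Theorem \ref{thm:mainHet} to the condition $K \geq C\,L_*(C)$ and take $K_0(f)$ to be the infimum of $C \mapsto C\,L_*(C)$ over $C > c_*(f)$. You merely supply more detail than the paper does on why this infimum is finite (Proposition \ref{prop:Lstarasympt}) and strictly positive (the uniform bound \eqref{eq:lambdaC} of Lemma \ref{lem:lambdaC}), which the paper asserts without elaboration.
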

\begin{proof}
 We remark that $(C, K/C) \in \calB(f)$ if and only if $K \geq C L_* (C)$, by Theorem~\ref{thm:mainHet}.
 
 Let $K_0 = \min_C C L_* (C) > 0$, and $K > K_0$. Then there exists at least one $C(K) > c_*(f)$ such that $C(K) L_*(C(K)) = K$.
 
\end{proof}

 Assuming $C \mapsto C L_*(C)$ is decreasing (as seems to be the case, see Figure \ref{fig:6} above), a stronger result holds, which confirms the above intuition. 
 In this case, $H(f, K)$ is equal to a half-line for any $K > \Big( 1 - \f{F(1)}{F(\theta)} \Big)^{1/4}$, and is empty otherwise. We refer to \cite{strug:these} for further discussion on this topic.

\subsection{Numerical conjectures}
\label{sec:numconj}

About Lemma \ref{lem:acbctheta}, it is a numerical conjecture that for {\it generic} bistable function $f$, $\alpha_*$ is increasing, $\beta_*$ is decreasing, and both are uniquely defined (see Figure~\ref{fig:astarbstar}).

\begin{figure}[h!]
\centering
 \includegraphics[width=.8\textwidth]{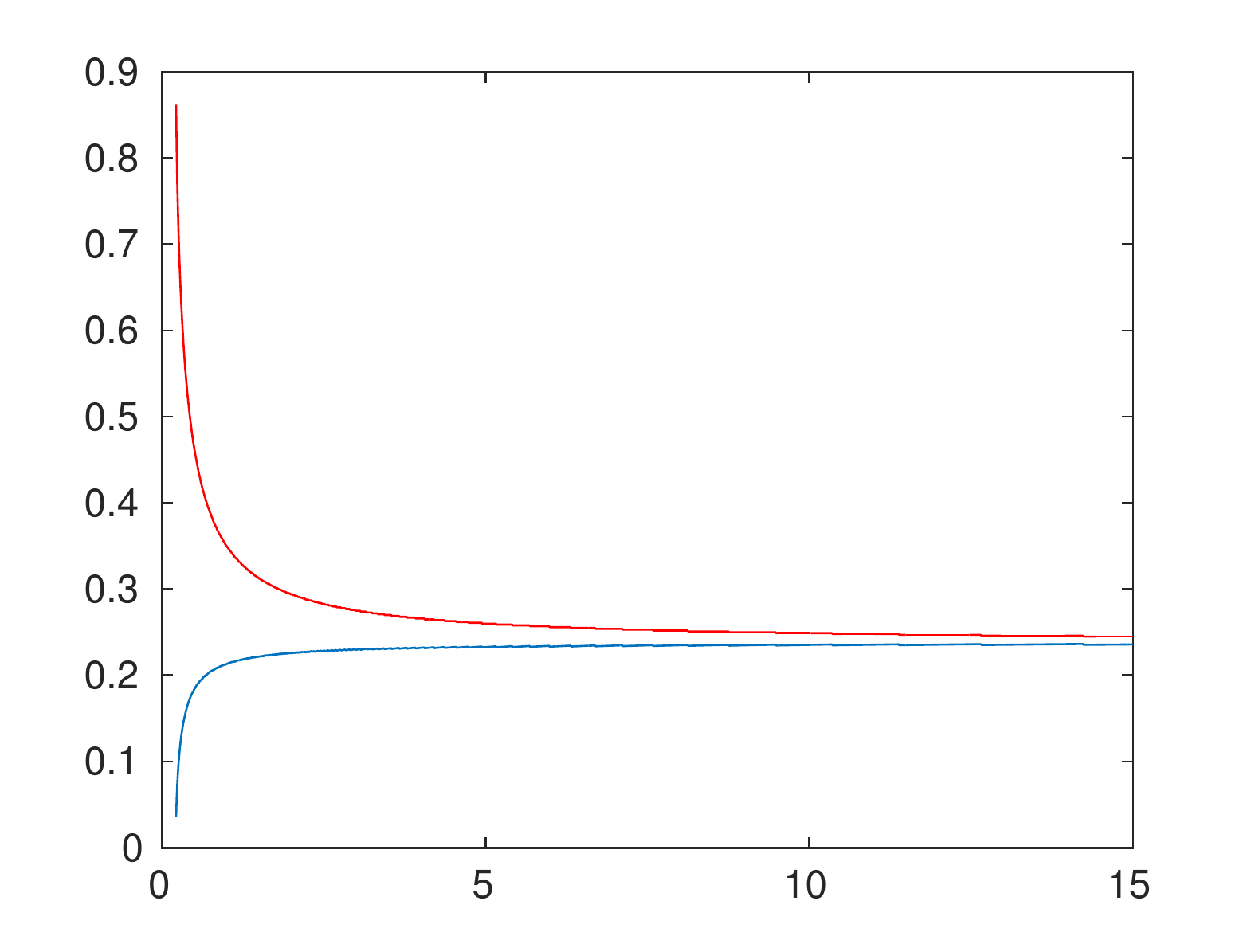}
 \caption{Plot of $\alpha_*$ (in blue, below) and $\beta_*$ (in red, above) as functions of $C$ (respectively increasing and decreasing), obtained by simulating the ODE system \eqref{sys:XY} with $f$ as in Subsection \ref{subs:numerics}.}
 \label{fig:astarbstar}
\end{figure}

For {\it generic} bistable functions, we also conjecture that there exists exactly two barriers when $L > L_*(C)$.

\begin{figure}[h!]
 \includegraphics[width=.5\textwidth]{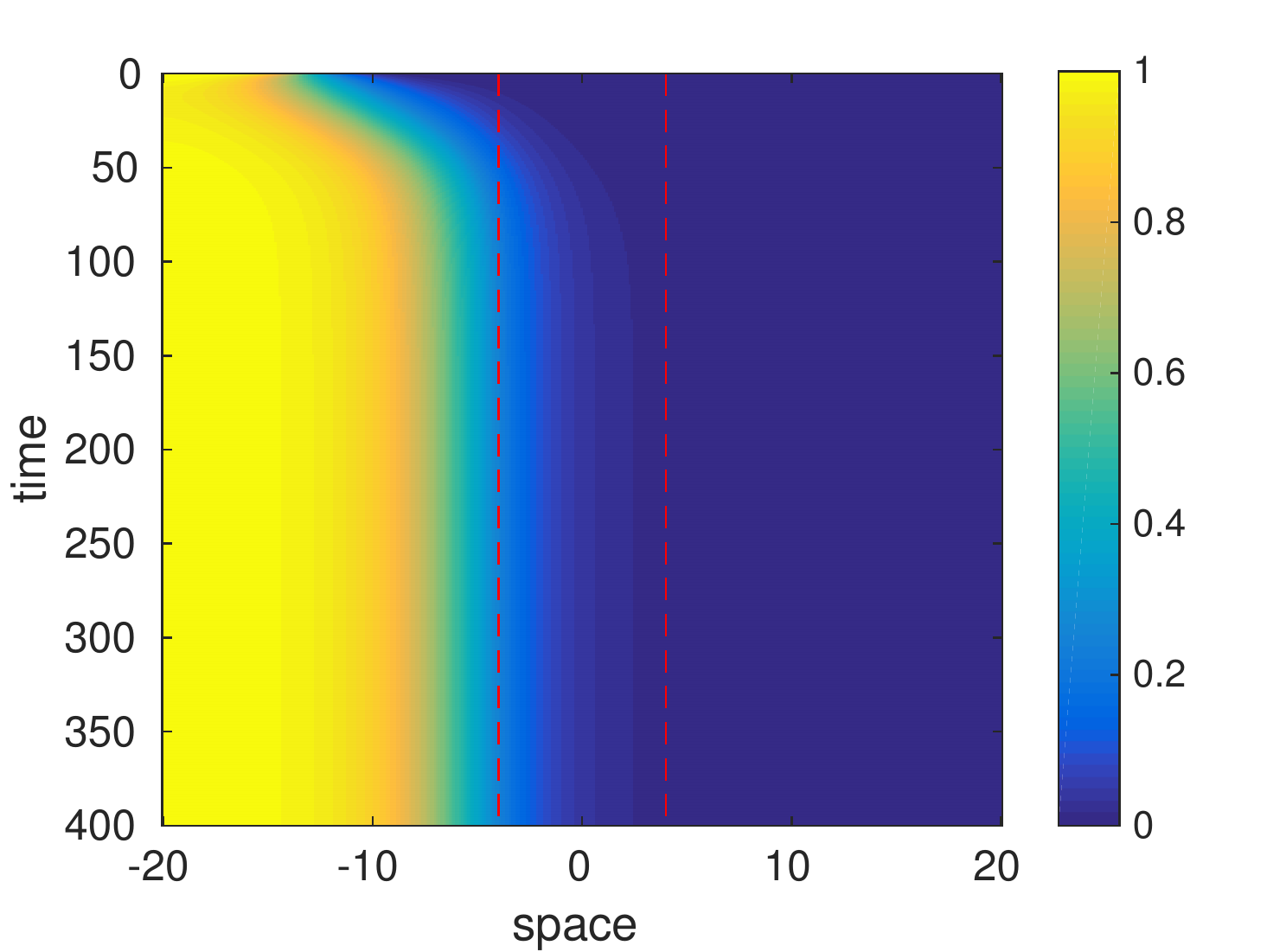}
 \includegraphics[width=.5\textwidth]{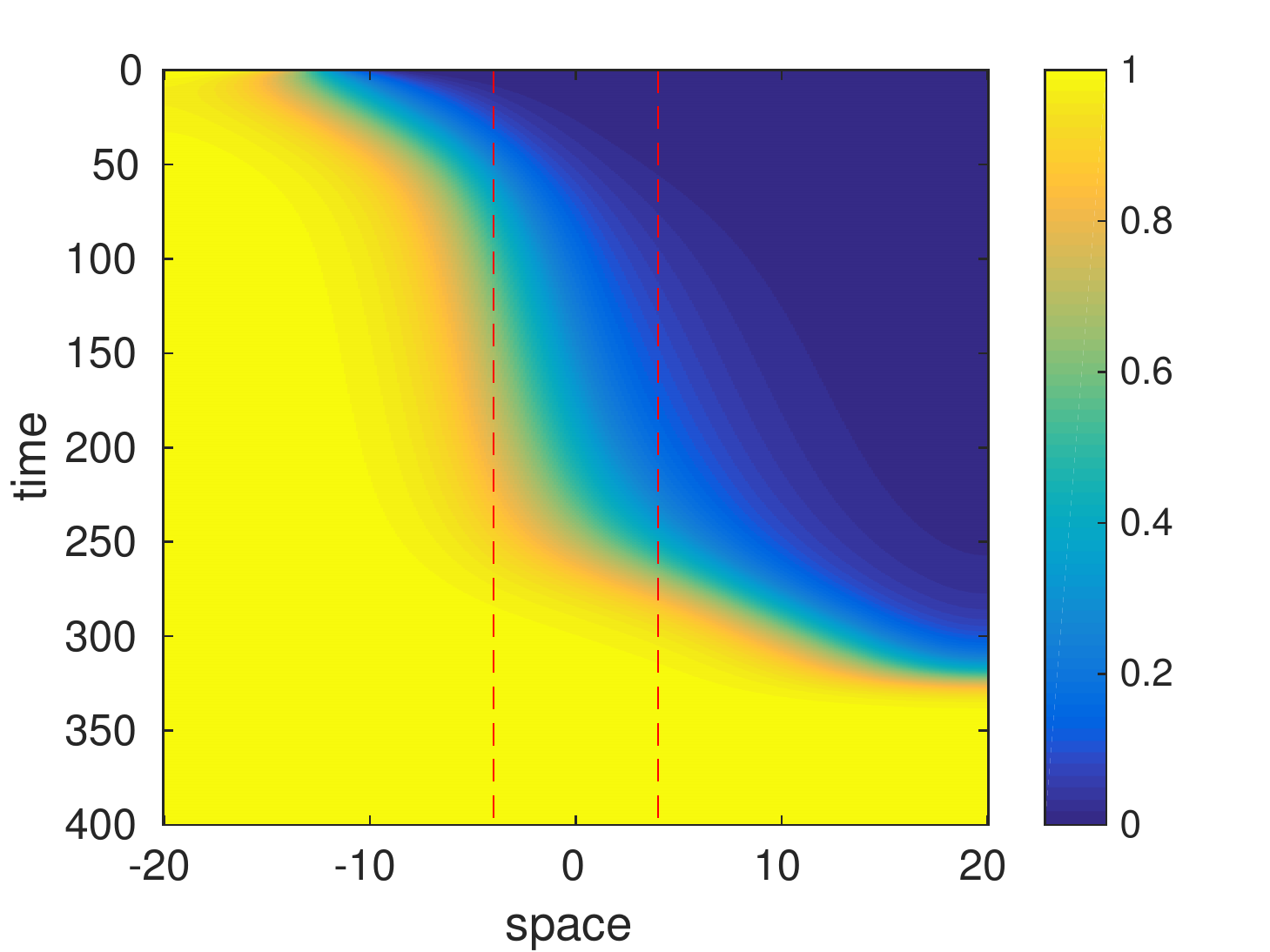}
 \caption{Plot of the proportion of the invading population with respect to time (y-axis) and space (x-axis). These are two numerical simulations of the two-population model \eqref{eq:ni}-\eqref{eq:nu} with same front-like initial data, $L=4$ (space interval with non-zero carrying capacity gradient $[-L, L]$ marked by the two vertical dotted red lines) and two different carrying capacities. We recover the same behavior as for the single population model~\eqref{eq:pN}  {\it Left}: Blocking for $C=0.2$. {\it Right}: Propagation for $C=0.1$.}
 \label{fig:2pop}
\end{figure}

Finally, the behavior we identified appears, numerically, to apply in the case of the two-population model \eqref{eq:ni}-\eqref{eq:nu}, where we take $K = K(x)$ a heterogeneous carrying capacity. Figure \ref{fig:2pop} shows an example of the propagating/blocking alternative in this setting. As in Subsection \ref{subs:numerics}, color represents the value of $p$, which is here equal to $n_i / (n_u + n_i)$.
We fix $L=4$ and choose carrying capacities as
\[
 K(x) = K_L \exp \Big( C \min \big( (x+L)_+, 2L \big) \Big).
\]

%%%%%%%%%%%%%%%%%%%%%%%%%%%%%%%%%%
%
%   APPENDIX
%
%%%%%%%%%%%%%%%%%%%%%%%%%%%%%%%%%%
%%%%%%%%%%%%%%%%%%%%%%%%%%%%%%%%%%%
%
%%%%%% BIBLIO %%%%%%%%%%%%%%%%%%%%%%
%
%%%%%%%%%%%%%%%%%%%%%%%%%%%%%%%%%%%%

%%%%%%%%%%%%%%%%%%%%%%%%%%%%%%%%%%%%%%%%%%%%%%%%%%%%%%%%%%%%%%%%%%%%%%%%%%%%%%%%

\newpage
\bibliographystyle{plain}

\end{document}